\newtheoremstyle{mine}
{\baselineskip}
{\baselineskip}
{\itshape}
{
}
{\bfseries}
{.}
{.5em}%{\newline}%{.5em}
{#1 #2\ifx#3\relax\else~(#3)\fi}
\theoremstyle{mine}
\newtheorem{theorem}{Theorem}
\numberwithin{theorem}{section}
\newtheorem{corollary}[theorem]{Corollary}
\newtheorem{proposition}[theorem]{Proposition}
\newtheorem{lemma}[theorem]{Lemma}
\newtheorem{definition}[theorem]{Definition}
\newtheorem{conjecture}{Conjecture} 
\numberwithin{equation}{section}
\newtheorem{remark}[theorem]{Remark}
\colorlet{shadecolor}{blue!10}
\newcommand{\margin}[1]{\textcolor{magenta}{*}\marginpar[\textcolor{magenta} {  \raggedleft  \footnotesize  #1 }  ]{ \textcolor{magenta} { \raggedright  \footnotesize  #1 }  }}
\def\rm{\reversemarginpar}
\let\qed=\QED
\renewcommand{\epsilon}{\varepsilon}
\newcommand{\A}{\mathbb{A}}
\newcommand{\F}{\mathbb{F}}
\newcommand{\R}{\mathbb{R}}
\newcommand{\C}{\mathbb{C}}
\newcommand{\Z}{\mathbb{Z}}
\newcommand{\N}{\mathbb{N}}
\renewcommand{\S}{\mathbb{S}}
\newcommand{\rec}{\mathop{rec}}
\newcommand{\1}{\mathbf 1}
\newcommand{\V}{\mathcal V} 
\def\H{\mathbb{H}}
\def\calC{\mathcal{C}}
\def\calF{\mathcal{F}}
\def\calN{\mathcal{N}}
\def\calS{\mathcal{S}}
\def\SLE{\mathrm{SLE}}
\def\diam{\mathrm{diam}}
\def\var{\mathop{\mathrm{Var}}}
\def\Var#1{\mathrm{Var}\bigl[ #1\bigr]}
\def\Cov#1{\mathrm{Cov}\bigl[ #1\bigr]}
\def\Im{{\rm Im}\,}
\def\Re{{\rm Re}\,}
\def\P{\mathbb{P}} %{{\bf P}}
\def\E{\mathbb{E}} %{{\bf E}}
\def\md{\mid}
\def \eps {\epsilon}
\def\Bb#1#2{{\def\md{\bigm| }#1\bigl[#2\bigr]}}
\def\Pb{\Bb\P}
\def\Eb{\Bb\E}
\def\FK#1#2#3{{\def\md{\bigm| } \P_{#1}^{\,#2}  \bigl[  #3 \bigr]}}
\def\EFK#1#2#3{{\def\md{\bigm| } \E_{#1}^{\,#2}  \bigl[  #3 \bigr]}}
\def \p {{\partial}}
\def\<#1{\langle #1\rangle}
\newcommand{\red}[1]{{\color{red}#1}}
\newcommand{\blue}[1]{{\color{blue}#1}}
\newcommand{\purple}[1]{{\color{purple}#1}}
\def\Wick#1{\,\colon\!\! \, #1 \, \!\colon}
\def\nn{\nonumber}
\def\bi{\begin{itemize}}  %USE\bi[WHATEVER]
\def\ei{\end{itemize}}
\def\bnum{\begin{enumerate}} % USE \bnum[i)] if want i), ii) .. OR \bnum[{\bf (a)}] etc ..!
\def\enum{\end{enumerate}}
\def\ni{\noindent}
\def\bf{\bfseries}
\def\GFF{\mathrm{GFF}}
\def\ba{\mathbf{a}}
\def\GFF{\mathrm{GFF}}
\def\SG{\mathrm{SG}}
\def\IV{\mathrm{IV}}
\def\bm{\mathbf{m}}
\def\bz{\mathbf{z}}
\newcommand{\Mod}[1]{\ (\mathrm{mod}\ #1)}
\title[Statistical reconstruction of the GFF and KT transition]
{
Statistical reconstruction of the Gaussian free field and KT transition
}
\author{Christophe Garban \and Avelio Sepúlveda}
\address
{Université Claude Bernard Lyon 1, CNRS UMR 5208, Institut Camille Jordan, 69622 Villeurbanne, France}
\email{garban@math.univ-lyon1.fr; sepulveda@math.univ-lyon1.fr}
\begin{document}

\maketitle

\begin{abstract}
In this paper, we focus on the following question. Assume $\phi$ is a discrete Gaussian free field (GFF) on $\Lambda \subset  \frac 1 n \Z^2$ and that we are given $e^{iT \phi}$, or equivalently $\phi \pmod{\frac {2\pi} T}$. Can we recover the macroscopic observables of $\phi$ up to $o(1)$ precision? We prove that this statistical reconstruction problem undergoes the following Kosterlitz-Thouless type phase transition:

\bi
\item If $T<T_{rec}^-$ , one can fully recover  $\phi$ from the knowledge of $\phi \pmod{\frac {2\pi} T}$. In this regime our proof relies on a new type of Peierls argument which we call {\em annealed} Peierls argument and which allows us to deal with an unknown {\em quenched} groundstate. 
\item
If $T>T_{rec}^+$, it is impossible to fully recover the field $\phi$ from the knowledge of $\phi \pmod{\frac {2\pi} T}$.
To prove this result, we generalise the delocalisation theorem by Fröhlich-Spencer to the case of  integer-valued GFF in an inhomogeneous medium. This delocalisation result  is of independent interest and we give an application of our techniques to the {\em random-phase Sine-Gordon model} in Appendix \ref{a.SG}. Also, an interesting connection with Riemann-theta functions is drawn along the proof.
\ei 
\smallskip

This statistical reconstruction problem is motivated by the two-dimensional XY and Villain models. Indeed, at low-temperature $T$, the large scale fluctuations of these continuous spin systems are conjectured to be governed by a Gaussian free field. It is then natural to ask if one can recover the underlying macroscopic GFF from the observation of the spins of the XY or Villain model. 

Another motivation for this work is that it provides us with an ``integrable model'' (the GFF) that undergoes a KT transition. 
\end{abstract}

\section{Introduction}

\begin{figure}[!htp]
\begin{center}
\includegraphics[width=0.4\textwidth]{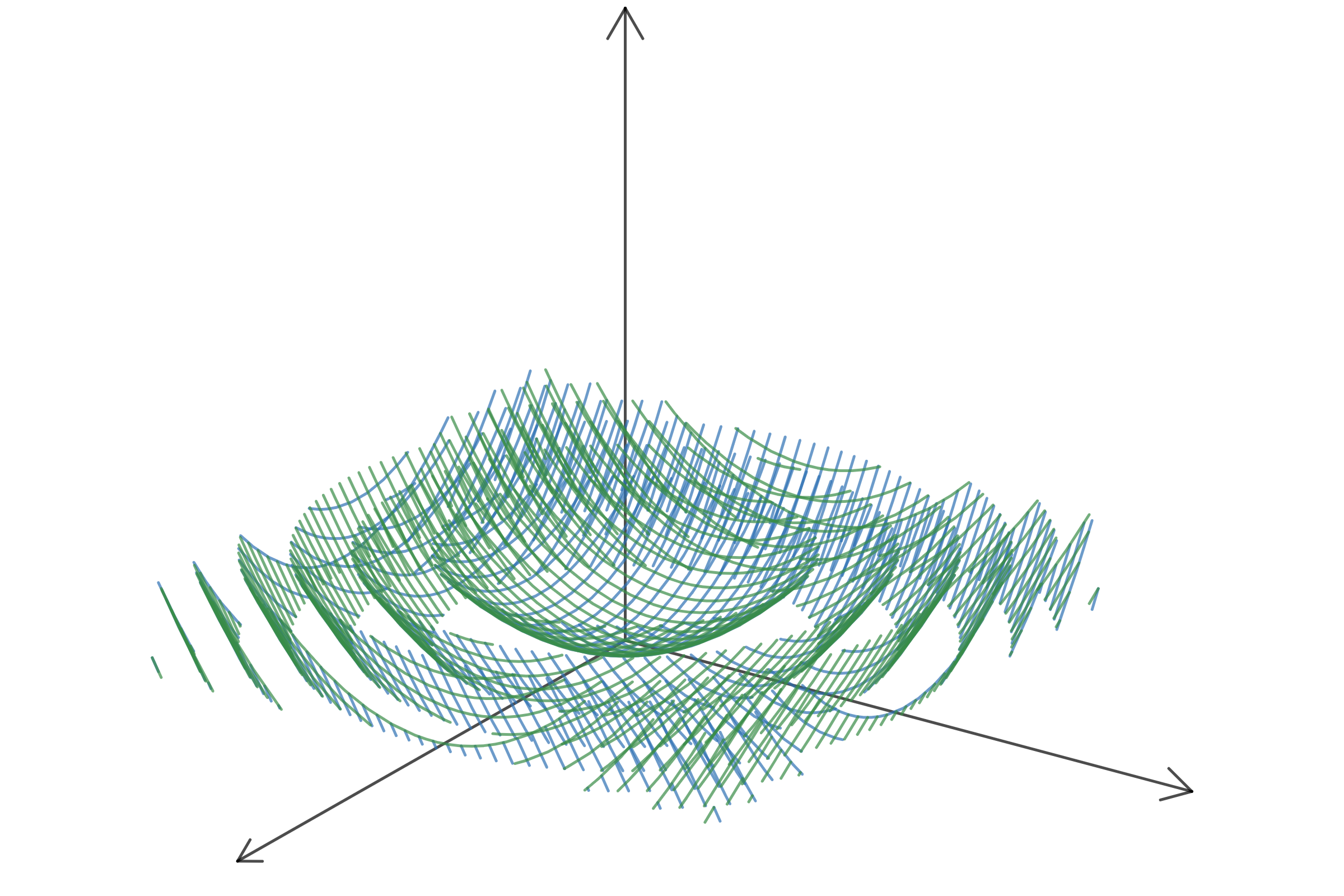}
\includegraphics[width=0.4\textwidth]{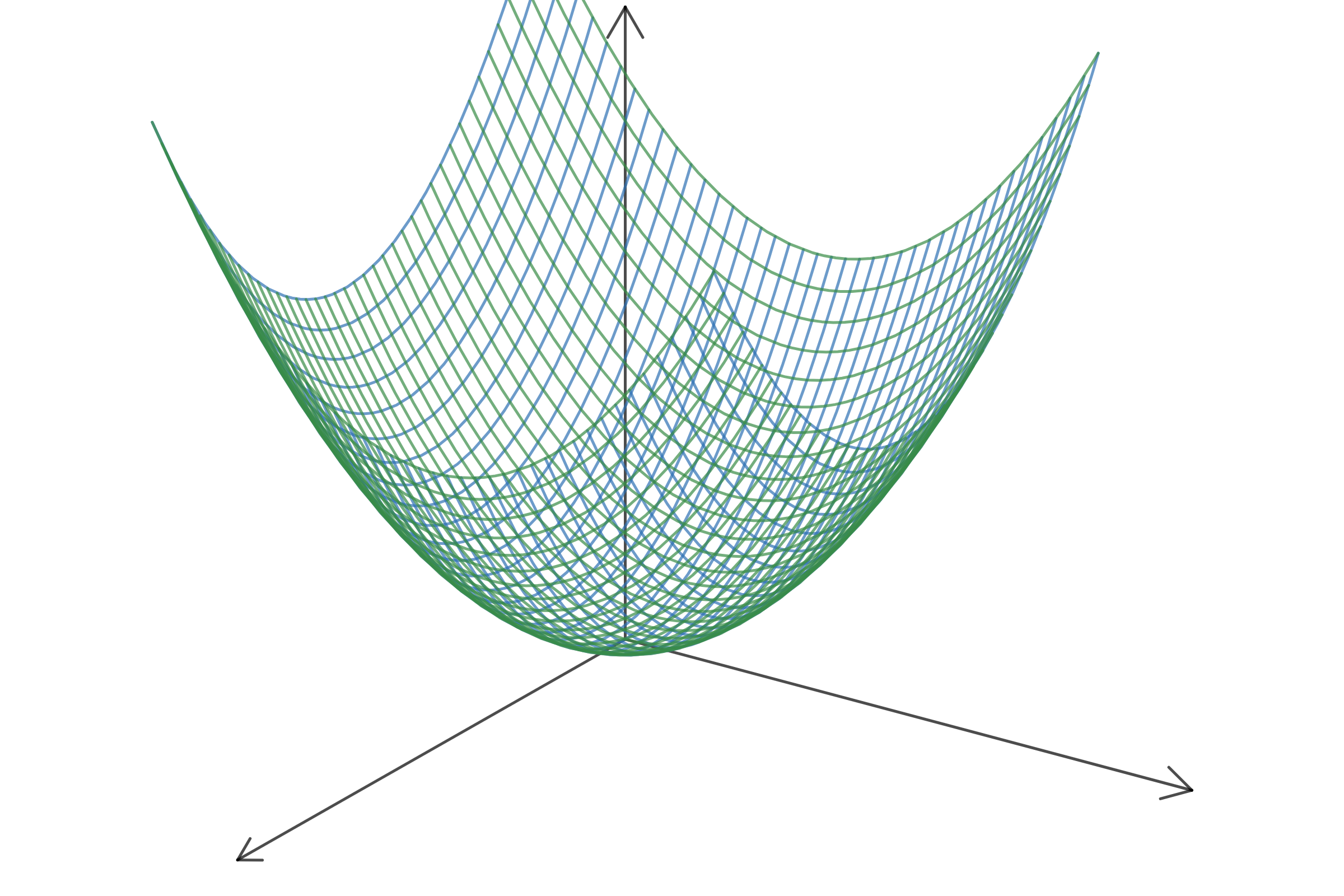}
\end{center}
\caption{If you are given the values of a function $f$ modulo 1 (left), can you reconstruct what $f$ is (right) ? If $f$ is smooth as in this example, sure you may. But what if $f$ is an instance of a $2d$ Gaussian free field? Analyzing this statistical reconstruction problem is the aim of this paper.}\label{}
\end{figure}

\subsection{Main result}
We work on the graph $\Lambda_n:= [-1,1]^2 \cap \frac{1}{n} \Z^2$ and for functions $f,g: \Lambda_n\mapsto \R$, we denote
\begin{equation}
\langle f,g \rangle:= \sum_{x\in \Lambda_n} f(x)g(x).
\end{equation}
For each $n\geq 1$,  $\phi_n$ will denote a GFF\footnote{With either free or Dirichlet boundary condition. We introduce all the relevant definitions in Section \ref{S.Preliminaries}.} on $\Lambda_n$. Recall that for any smooth function $f: [-1,1]^2\mapsto \R$
\begin{equation}\label{E.Convergence GFF}
\frac{1}{n^2} \langle \phi_n,f \rangle\to (\Phi, f) \ \ \ \ \ \ \ \ \text{ in law as } n\to \infty\,,
\end{equation}
where $\Phi$ is a continuous GFF in $[-1,1]^2$, and $(\Phi,f)``:=\int \Phi(x) f(x) dx"$. This tells us that the macroscopic observables related to $\phi_n$, are random variables of the form $n^{-2}\langle\phi_n,f\rangle$.

The main focus of this paper is to understand when we can recover the full macroscopic information of $\phi_n$ by just knowing $\exp(i T \phi_n)$, or equivalently, $\phi_n \pmod{\frac{2\pi}{T}}$. We will give several motivations which lead us to consider this problem later in Section \ref{ss.motiv}. We now state our main result which shows that this statistical reconstruction problem undergoes a phase transition as $T$ varies, which is reminiscent of the \textbf{Berezinskii-Kosterlitz-Thouless transition (BKT transition)} (see Section \ref{ss.BackKT}). 

\begin{theorem}\label{T.0-boundary theorem}
Let $\phi_n$ be a GFF on $\Lambda_n$ with Dirichlet boundary conditions. Then, there exists $0<T_{\rec}^-\leq T_{\rec}^+<\infty$ such that
\begin{enumerate}[(a)]
	\item If $T<T_{\rec}^-$, there exists a (deterministic) reconstruction function $F_T$ such that for any continuous function $f : [-1,1]^2 \to \R$ and any $\eps>0$,
	\[\P\left[|n^{-2}\langle F_T(\exp(iT \phi_n))-\phi_n, f \rangle|\geq \epsilon \right] \to 0, \ \ \  \text{as }n\to \infty. \]	
Furthermore, uniformly in $n$ there exists a constant $C>0$ s.t. for any point $x,y\in \Lambda_n \subset [-1,1]^2$
\begin{align}
&\label{e.1-point}\Eb{(F_{T}(\exp(i T \phi_n))(x) - \phi_n(x))^2} \leq C, \ \ \  \ \text{ and}\\
&\label{e.2-points}\Eb{(F_{T}(\exp(i T \phi_n))(x) - \phi_n(x))(F_{T}(\exp(i T \phi_n))(y) - \phi_n(y)) }\leq e^{-C\|x-y\|n}.
\end{align}	

	\item If $T>T_{\rec}^+$, for any (deterministic) function $F$ and any continuous non-zero function  $f$,
	there exists $\delta>0$ such that
	%\margin{C: Indeed non-zero was important! Should we be more explicit then and say this is $>\delta \Eb{\<{f,(-\Delta)^{-1}f}^{1/2}}$ or so  (with some scaling in $n$ ??) \avelio{It is tricky. Because it is true if $\Delta^{-1}$ is the continuous inverse, but not if it is discrete (there is a $n^{2}$ one has to take care of).} AhOk, indeed, the Peierls donne des sommes de i.i.d et donc on ne peut pas comparer avec ce terme! Notre preuve actuellement ne DONNE pas ca!}
	%\margin{Is it for ANY $\eps>0$ ?: It would be if we had the Laplace transform, I think we only have one $\epsilon$ with our technique}
	\[\liminf_{n\to \infty} \P\left[|n^{-2}\langle F(\exp(iT \phi_n))-\phi_n, f)\rangle |\geq \delta \right]>0. \] 
Also, for any $x\in (-1,1)^2$, there exists $c=c(T,x)>0$ s.t. for any $F$
\begin{align}\label{E.1-point zero boundary}
\liminf_{n\to \infty} \Eb{ (F(\exp(iT \phi_n))(x)-\phi_n(x))^2} \geq c(T,x) \log n  \,.
\end{align}
\end{enumerate}
\end{theorem}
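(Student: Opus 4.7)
The plan is to analyse both parts through the following decomposition: given $e^{iT\phi_n}$, write $\phi_n = \psi_n + \frac{2\pi}{T} m_n^\star$, where $\psi_n : \Lambda_n \to [0, 2\pi/T)$ is a deterministic measurable function of $e^{iT\phi_n}$ (the principal value), and $m_n^\star : \Lambda_n \to \Z$ is the ``integer part'' (both vanishing on $\partial\Lambda_n$). The key observation is that, by the Gaussian density of $\phi_n$, the conditional law of $m_n^\star$ given $\psi_n$ is an \emph{integer-valued Gaussian field in the inhomogeneous background} $\psi_n$:
\begin{equation*}
\P[m_n^\star = m \mid \psi_n] \ \propto\ \exp\Bigl(-\tfrac{1}{2}\|\nabla(\psi_n + \tfrac{2\pi}{T} m)\|_{\Lambda_n}^2\Bigr).
\end{equation*}
Both parts of the theorem then reduce to analyses of this conditional field.

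For part (a) the natural candidate is the energy-minimising lift $F_T(e^{iT\phi_n}) := \psi_n + \tfrac{2\pi}{T}\hat m_n$, where $\hat m_n$ minimises $\|\nabla(\psi_n + \tfrac{2\pi}{T} m)\|^2$ among integer-valued $m$ vanishing on $\partial\Lambda_n$, so that the error $F_T(e^{iT\phi_n}) - \phi_n = \frac{2\pi}{T}(\hat m_n - m_n^\star)$ is $\frac{2\pi}{T}$ times an integer-valued field. I would control it by a Peierls-type contour argument: on each connected component $A \subset \Lambda_n$ on which $\hat m_n \neq m_n^\star$, the gradients of $\phi_n$ across $\partial A$ must be atypically aligned in order for the flip on $A$ to decrease the Dirichlet energy; since these gradients are Gaussian with $O(1)$ variance, the corresponding event has probability at most $\exp(-c/T^2)$ per boundary edge. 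A union bound over contours surrounding $x$ yields \eqref{e.1-point}, and the disjointness of contour systems at macroscopic distance yields \eqref{e.2-points}. The central subtlety, and the reason for the name \emph{annealed Peierls}, is that the reference configuration $m_n^\star$ is quenched by $\phi_n$ and unknown to the reconstructor; one cannot apply the classical Peierls estimate at a fixed boundary condition but must instead average the contour weights jointly over $(\psi_n, m_n^\star)$. This averaging converts the quenched groundstate into a deterministic one at the cost of a combinatorial factor that is absorbed by the $\exp(-c/T^2)$ decay provided $T < T_{\rec}^-$.

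For part (b), any deterministic $F$ satisfies, by conditioning on $\psi_n$,
\begin{equation*}
\Eb{(F(e^{iT\phi_n})(x) - \phi_n(x))^2} \ \geq\ \Eb{\Var{\phi_n(x) \mid \psi_n}} \ =\ \bigl(\tfrac{2\pi}{T}\bigr)^2\, \Eb{\Var{m_n^\star(x) \mid \psi_n}},
\end{equation*}
so it suffices to prove that, with $\psi_n$-probability tending to $1$, the integer-valued field $m_n^\star$ \emph{delocalises} at scale $\log n$. This is an extension of the Fr\"ohlich--Spencer delocalisation theorem to an inhomogeneous Gaussian background. I would follow their strategy: rewrite the partition function as that of a neutral Coulomb gas of charges and run a multi-scale renormalisation argument, showing that the activities remain small at every scale when $T > T_{\rec}^+$. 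The main novelty, and the hardest step, is that the activities now carry random complex phases $e^{iT\langle \psi_n, \rho\rangle}$ attached to each charge configuration $\rho$, so translation invariance is lost and the cancellations in the Fr\"ohlich--Spencer scheme must be re-established ``in law''. This is where the link with Riemann theta functions appears: the inhomogeneous partition function is naturally a theta function whose characteristic is $T\psi_n$, and quantitative estimates on this theta function replace the deterministic Gaussian identities used in the homogeneous case. Once the conditional delocalisation is established, \eqref{E.1-point zero boundary} follows directly, and the macroscopic statement is obtained by testing against $f$ and using that the conditional variance of $n^{-2}\langle m_n^\star, f\rangle$ remains bounded away from zero as $n\to\infty$.
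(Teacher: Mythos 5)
Your part (a) takes exactly the route the paper deliberately avoids, and its key step does not go through as stated. You reconstruct via the energy-minimising lift $\hat m_n$ and assert that on the boundary of a component $A$ where $\hat m_n\neq m_n^\star$ the gradients of $\phi_n$ must be ``atypically aligned'', with probability $e^{-c/T^2}$ per boundary edge. What one actually gets on such an edge $e$ is that $\nabla\hat\phi_n(e)$ and $\nabla\phi_n(e)$ differ by a nonzero multiple of $2\pi/T$ (where $\hat\phi_n:=\psi_n+\tfrac{2\pi}{T}\hat m_n$), hence either $|\nabla\hat\phi_n(e)|\geq\pi/T$ or $|\nabla\phi_n(e)|\geq\pi/T$; but $\hat\phi_n$ is the quenched groundstate, a complicated non-Gaussian functional of $e^{iT\phi_n}$, so the Gaussian tail bound applies to only one of the two alternatives. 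A direct energy-comparison on $A$ is also blocked: the quenched groundstate has none of the symmetries needed for a classical Peierls estimate, and one is far from the perturbative regime where Pirogov--Sinai would apply --- this is precisely the obstruction the authors point out when they reject the maximum-likelihood estimator. The paper's fix is to change the estimator to the conditional expectation $F_T=\E[\phi_n\mid e^{iT\phi_n}]$, write the conditional variance as $\tfrac12\E[\langle\phi_1-\phi_2,f\rangle^2]$ for two conditionally independent copies with $e^{iT\phi_1}=e^{iT\phi_2}$, and run the contour argument on the set where $\phi_1=\phi_2$: there \emph{both} fields entering the contour estimate are marginally genuine GFFs, so the gradient bound (via the Markov property and a $\chi^2$ estimate) applies to each --- that is the ``annealed'' Peierls argument. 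The conditional-independence structure also yields \eqref{e.2-points} directly, which is far from clear for the groundstate estimator.

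Part (b) follows the paper's strategy in outline (lower-bound the error by the conditional variance and prove delocalisation of the $\ba$-shifted integer-valued GFF by extending Fr\"ohlich--Spencer), but your description of the extension mislocates the obstacle and the fix. The difficulty is not that the activities acquire random complex phases or that translation invariance is lost: the activities $z(\beta,\rho,\calN)$ are unchanged and still decay, and the shift enters only as $\cos(\langle\phi,\bar\rho\rangle-\langle\ba,\rho\rangle)$. What breaks is the $\phi\mapsto-\phi$ symmetry that permits Jensen's inequality in the homogeneous case. The paper's substitute is an exact first-order identity (Proposition \ref{c.magical}, obtained by exploiting linearity in $f$ of the first-order term) followed by Cauchy--Schwarz on the second-order term of the Taylor expansion of the Laplace transform; this gives a variance lower bound \emph{uniformly} in $\ba$, with no averaging over $\psi_n$ and no cancellation ``in law''. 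Finally, to pass from the second-moment lower bound to the stated $\liminf$-in-probability, a Paley--Zygmund step is still needed.
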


\ni
The same result holds for a free boundary condition GFF.
\begin{theorem}\label{T.free-boundary theorem}
	Let $\phi_n$ be a GFF on $\Lambda_n:= [-1,1]^2 \cap \frac 1 n \Z^2$ with free boundary conditions and rooted  at a vertex $x_0\in \Lambda_n$. Then, there exists $0<T_{\rec}^-\leq T_{\rec}^+<\infty$ such that
	\begin{enumerate}[(a)]
		\item If $T<T_{\rec}^-$, there exists a reconstruction function $F_T$, s.t. for any smooth function $f$ with $0$-mean (i.e., $\int_{[-1,1]^2}f =0$) and any $\epsilon>0$,
		\[\P\left[|n^{-2}(F_T(\exp(iT \phi_n))-\phi_n, f))|\geq \epsilon \right] \to 0, \ \ \  \text{as }n\to \infty. \]
		\item If $T>T_{\rec}^+$, for any function $F$ and any smooth non-zero function $f$ with $0$-mean there exists $\delta>0$ such that 		\[\liminf_{n\to \infty} \P\left[|n^{-2}(F(\exp(iT \phi_n))-\phi_n, f))|\geq \delta \right]>0. \] 
	\end{enumerate}
\end{theorem}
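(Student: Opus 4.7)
The plan is to deduce Theorem~\ref{T.free-boundary theorem} from the techniques developed for Theorem~\ref{T.0-boundary theorem}. The key structural observation is that both halves of the free boundary statement have been made invariant under the global additive shift $\phi_n \mapsto \phi_n + c$ by restricting to test functions $f$ with $\int f = 0$. Writing $u_n$ for a discrete solution of $-\Delta u_n = f$ on $\Lambda_n$, summation by parts gives $\langle \phi_n, f\rangle = \langle \nabla\phi_n, \nabla u_n\rangle$, so the macroscopic observables depend only on the gradient field $\nabla\phi_n$. This is the natural object to recover from $\exp(iT\phi_n)$, since each edge gradient $\phi_n(y)-\phi_n(x)$ is known modulo $\frac{2\pi}{T}$ regardless of the boundary conditions.

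For part (a), I would run the annealed Peierls argument of Theorem~\ref{T.0-boundary theorem}(a) edge by edge on $\Lambda_n$. The estimate is entirely local: neither the probability that a given edge is ``bad'' (assigned the wrong lift of its gradient) nor the exponential decay of two-point correlations \eqref{e.2-points} is sensitive to whether the boundary is Dirichlet or free. Using the rooting $\phi_n(x_0)=0$ to fix the global constant, the reconstructed field $F_T(\exp(iT\phi_n))$ is obtained by integrating the reconstructed gradients along paths emanating from $x_0$; the exponential decay of correlations of the bad-edge field ensures that the contribution of the bad set to $n^{-2}\langle F_T(\exp(iT\phi_n))-\phi_n,f\rangle$ is $o(1)$ in probability for any zero-mean $f$.

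For part (b), I would transfer the delocalisation argument of Theorem~\ref{T.0-boundary theorem}(b). Conditioning on $\exp(iT\phi_n)$ and choosing any measurable lift $\psi_n$, one has $\phi_n = \psi_n + \frac{2\pi}{T} m_n$ with $m_n$ an integer-valued field whose law is, up to a tilt by $\psi_n$, an inhomogeneous integer-valued GFF at effective inverse temperature of order $T^{-2}$. The generalised Fr\"ohlich--Spencer theorem announced in the abstract then yields delocalisation of $m_n$ when $T > T_{\rec}^+$, so that $\frac{2\pi}{T}\, n^{-2}\langle m_n, f\rangle$ exhibits non-vanishing fluctuation for every zero-mean $f$, and no deterministic $F$ can absorb it.

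The main subtlety I anticipate is in part (b): one must verify that a clever $F$ cannot exploit the rooting $\phi_n(x_0)=0$ to eliminate this conditional ambiguity. Since the rooting pins only one degree of freedom, while delocalisation produces a macroscopic family of candidate lifts all satisfying $m_n(x_0)=0$, the zero-mean hypothesis on $f$ (together, if necessary, with $f$ being supported away from $x_0$) is exactly what keeps genuine fluctuation intact. This is the step where the smoothness and zero-mean assumptions in the statement of Theorem~\ref{T.free-boundary theorem} are used in an essential way, and where the inhomogeneous version of Fr\"ohlich--Spencer (rather than its classical form) becomes indispensable.
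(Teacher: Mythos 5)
Your part (b) is essentially the paper's argument: condition on $\exp(iT\phi_n)$, identify the conditional law as an $\ba$-shifted integer-valued GFF via Lemma \ref{L.aIV}, invoke the inhomogeneous Fr\"ohlich--Spencer bound (Theorem \ref{th.IVgff}, which is proved for free as well as Dirichlet boundary conditions), and conclude that two conditionally independent copies $\phi_1,\phi_2$ with $F(e^{iT\phi_1})=F(e^{iT\phi_2})$ differ macroscopically with positive probability (the paper makes this quantitative with Paley--Zygmund). That half is fine.

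Part (a), however, has a genuine gap. Your reconstruction scheme --- assign to each edge the ``correct'' lift of $\nabla\phi_n(e)$ modulo $\tfrac{2\pi}{T}$ and integrate along paths from $x_0$ --- is not the paper's annealed Peierls argument, and as stated it does not work: the edge-by-edge lifted gradients need not form an exact $1$-form, so integration from $x_0$ is path-dependent, and even after fixing paths the errors accumulated between $x_0$ and a far point $x$ are not controlled by a purely local estimate. The paper avoids this entirely by taking $F_T$ to be the conditional expectation $\E[\phi_n(x)\mid e^{iT\phi_n}]$ and bounding the conditional variance through the coupling $(\phi_1,\phi_2)$ of Definition \ref{D.phis}; there the relevant $1$-form $\nabla(\phi_1-\phi_2)$ \emph{is} exact, and the telescoping sum along a path only picks up contributions from the island $O(x)$. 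More importantly, you assert that the two-point decay \eqref{e.2-points} ``is not sensitive to whether the boundary is Dirichlet or free,'' whereas the paper explicitly states the opposite (see the discussion after Theorem \ref{T.free-boundary theorem} and Remark \ref{r.differece_bc}). The new phenomenon in the free case is precisely that the percolating agreement set $I$ only forces $\phi_1=\phi_2+m_I\tfrac{2\pi}{T}$ for a \emph{random nonzero integer} $m_I$, even though both copies vanish at $x_0$ (the root need not lie in $I$). So the rooting does \emph{not} fix the global constant in the way you use it; what saves part (a) is the zero-mean hypothesis on $f$, which annihilates the constant shift $\tfrac{2\pi m_I}{T}$, combined with the dichotomy in the definition of $I$ (Proposition \ref{P.Peirls free}) and the moment bound $\E[m_I^2]\le C$. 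Your proposal attributes the role of the zero-mean hypothesis only to part (b); in the paper it is already indispensable in part (a).
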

\ni
In fact, in the case of free boundary condition, one also has the equivalent statement of \eqref{e.1-point} and \eqref{E.1-point zero boundary}. However, there is an important difference between both boundary conditions. We do not expect the equivalent of \eqref{e.2-points} to be true for the free case. The main reason is that the conditional law of $\phi_n$ given $e^{iT\phi_n}$ may be decomposed as a convex combination of the laws of $\phi^{k}_n$, for $k\in \Z$, where for each one of the fields $\phi^{k}_n$ one has \eqref{e.2-points}. However, the law of $\phi^{k}_n$ is not centered (see Remark \ref{r.differece_bc}).

\smallskip

We now state two corollaries of the above theorems. The first one rephrases this phase-transition in terms of the continuum GFF. The second one (which will give support to conjectures \ref{c.SLE4} and \ref{c.SLErho}) shows that one can recover macroscopic interfaces from $\phi\Mod{\frac {2\pi} T}$ when $T<T_{rec}^-$.   
\begin{corollary}\label{C.Convergence}
	Let $\phi_n$ be a sequence of GFF in $\Lambda_n$, such that, in probability, $\phi_n\to \Phi$ a continuum GFF in $[-1,1]^2$. Then, if $T<T_{\rec}^-$, the function $F_T(e^{iT\phi_n})$ converges in probability to $\Phi$. Furthermore, if $T>T_{\rec}^+$ there is no deterministic function $F$ such that $F(e^{iT\phi_n})\to \Phi$.
\end{corollary}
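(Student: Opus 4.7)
The plan is to reduce the statement directly to Theorem \ref{T.0-boundary theorem}, using the fact that convergence of a sequence of discrete fields to the continuum GFF $\Phi$ is by definition tested against continuous (or smooth) test functions $f$ on $[-1,1]^2$ via the pairing $n^{-2}\langle \cdot, f\rangle$.

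For the positive direction, assume $T<T_{\rec}^-$. Fix any continuous test function $f:[-1,1]^2\to\R$ and $\epsilon>0$. By the triangle inequality,
\[
\bigl| n^{-2}\langle F_T(e^{iT\phi_n}),f\rangle - (\Phi,f)\bigr|
\leq \bigl| n^{-2}\langle F_T(e^{iT\phi_n})-\phi_n, f\rangle \bigr|
+ \bigl| n^{-2}\langle \phi_n, f\rangle - (\Phi,f)\bigr|.
\]
The first term on the right tends to $0$ in probability by Theorem \ref{T.0-boundary theorem}(a), and the second tends to $0$ in probability by the hypothesis $\phi_n \to \Phi$. Hence $n^{-2}\langle F_T(e^{iT\phi_n}),f\rangle \to (\Phi,f)$ in probability for every continuous $f$, which is exactly what it means for $F_T(e^{iT\phi_n})$ to converge to $\Phi$ in probability (in the sense of distributions).

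For the negative direction, assume $T>T_{\rec}^+$ and suppose for contradiction that there exists a deterministic $F$ such that $F(e^{iT\phi_n})\to \Phi$ in probability. Pick any continuous, non-zero $f$. Then $n^{-2}\langle F(e^{iT\phi_n}),f\rangle \to (\Phi,f)$ and, by hypothesis, $n^{-2}\langle \phi_n,f\rangle \to (\Phi,f)$, both in probability. Subtracting, $n^{-2}\langle F(e^{iT\phi_n})-\phi_n,f\rangle \to 0$ in probability, which directly contradicts Theorem \ref{T.0-boundary theorem}(b). Therefore no such deterministic $F$ can exist.

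The corollary is thus essentially a translation of Theorem \ref{T.0-boundary theorem} into the language of the continuum GFF; the only real content is the remark that the statements of the theorem (phrased for every continuous $f$) match precisely the distributional mode of convergence $\phi_n \to \Phi$. The only mildly delicate point is checking that $F_T(e^{iT\phi_n})$, viewed as a function on $\Lambda_n$ and paired against continuous $f$ via $n^{-2}\langle\cdot,f\rangle$, fits into the same distributional framework as the GFF, but this is immediate from the definition of $F_T$ as a map producing a function on $\Lambda_n$.
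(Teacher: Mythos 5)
Your proof is correct. The positive direction ($T<T_{\rec}^-$) is exactly the paper's argument: decompose $n^{-2}\langle F_T(e^{iT\phi_n}),f\rangle$ as $n^{-2}\langle\phi_n,f\rangle$ plus the reconstruction error and let both pieces converge. For the negative direction your route is genuinely different from the paper's. You subtract the two assumed convergences in probability, $n^{-2}\langle F(e^{iT\phi_n}),f\rangle\to(\Phi,f)$ and $n^{-2}\langle\phi_n,f\rangle\to(\Phi,f)$, and contradict Theorem \ref{T.0-boundary theorem}(b) directly; the paper instead introduces the pair $(\phi_1^{(n)},\phi_2^{(n)})$ of conditionally independent copies from Definition \ref{D.phis}, uses the identity $F(e^{iT\phi_1^{(n)}})=F(e^{iT\phi_2^{(n)}})$, extracts a subsequential joint limit $(\Phi_1,\Phi_2)$ by tightness, and derives the contradiction from $\Phi_1\neq\Phi_2$. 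Your argument is shorter and avoids the coupling and compactness step, but it leans on reading the hypothesis ``$F(e^{iT\phi_n})\to\Phi$'' as convergence in probability to the \emph{same} $\Phi$ on a common probability space (otherwise the subtraction is not licit); this is the intended reading, consistent with the first half of the corollary, so the argument stands. The paper's coupling argument is the mechanism it reuses throughout (e.g.\ in Propositions \ref{P.C Mes implies D Mes} and \ref{P.Con flow lines}), which is presumably why it is phrased that way, but for this corollary your direct appeal to the quantitative lower bound in Theorem \ref{T.0-boundary theorem}(b) is a perfectly valid and arguably cleaner alternative.
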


\begin{corollary}\label{c.Levelines}
	Let $\phi_n$ be a sequence of GFF in $\Lambda_n$, let $\eta^{(n)}$ be the Schramm-Sheffield level line\footnote{For the definition and the context used in this conjecture see Section \ref{ss.LL}} of $\Phi^{(n)}$. Then, there exists a deterministic function $L_T$, such that the Hausdorff distance between $L_T(\exp(iT\phi_n))$ and $\eta^{(n)}$ is $o(1)$. In particular, $L_T(\exp(iT\phi_n))$ converges in law to an $\SLE_4$. 
\end{corollary}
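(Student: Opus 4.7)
The plan is to define $L_T$ by composing the reconstruction function $F_T$ from Theorem \ref{T.0-boundary theorem}(a) with the same discrete level line algorithm $\ell$ that produces $\eta^{(n)}$ from $\phi_n$. That is, writing $\tilde\phi_n := F_T(\exp(iT \phi_n))$, we set $L_T(\exp(iT\phi_n)) := \ell(\tilde\phi_n)$, so that $\eta^{(n)} = \ell(\phi_n)$ and $L_T(\exp(iT\phi_n)) = \ell(\tilde\phi_n)$ differ only through the reconstruction error $\psi_n := \tilde\phi_n - \phi_n$. The goal is then to show that, with probability tending to one, $\ell(\phi_n)$ and $\ell(\phi_n + \psi_n)$ are within Hausdorff distance $o(1)$ of each other.

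First, I would upgrade the pointwise bounds \eqref{e.1-point} and \eqref{e.2-points} on $\psi_n$ into a mesoscopic smallness statement. By \eqref{e.2-points}, $\psi_n$ decorrelates on the microscopic scale $1/n$, so Markov's inequality and a union bound yield: for any mesoscopic scale $r = r(n)$ with $1/n \ll r \ll 1$ and any ball $B(x,r) \subset \Lambda_n$,
\[
\PB{\Bigl| |B(x,r) \cap \Lambda_n|^{-1} \sum_{y \in B(x,r)\cap\Lambda_n} \psi_n(y) \Bigr| > \epsilon} \to 0,
\]
uniformly in $x$. In other words, tested against indicators of mesoscopic balls (and hence against any continuous test function), $\psi_n$ vanishes in probability. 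By contrast, the GFF $\phi_n$ itself has fluctuations of order $\sqrt{\log (rn)}$ at scale $r$, so $\psi_n$ is genuinely negligible compared to $\phi_n$ on every mesoscopic scale.

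Next, I would transfer this smallness to the level lines. The Schramm--Sheffield convergence theorem tells us that $\ell(\phi_n) \to \SLE_4$ in law for the Hausdorff metric. The same proof applies to any sequence of height functions that couple with a discrete GFF up to a macroscopically and mesoscopically negligible perturbation: the key inputs in Schramm--Sheffield are an exploration identity and tightness coming from estimates on the GFF averaged over disks at scales $r \gg 1/n$, both of which are preserved when one replaces $\phi_n$ with $\phi_n + \psi_n$ in view of the previous paragraph. Consequently $\ell(\tilde\phi_n)$ is also tight and converges in law to $\SLE_4$. To obtain convergence of the Hausdorff distance, I would work under a joint coupling: conditional on $\exp(iT\phi_n)$, both curves are measurable functions of $\phi_n$, and because the mesoscopic averages of $\phi_n$ and $\tilde\phi_n$ differ by $o(1)$, any subsequential limit of $(\ell(\phi_n),\ell(\tilde\phi_n))$ is supported on the diagonal $\{(\eta,\eta) : \eta \sim \SLE_4\}$. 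This gives $d_H(\ell(\phi_n),\ell(\tilde\phi_n)) \to 0$ in probability, which is precisely the claim of the corollary; convergence in law of $L_T(\exp(iT\phi_n))$ to $\SLE_4$ then follows directly from $\eta^{(n)} \to \SLE_4$.

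The main obstacle is the stability step. Level lines are not continuous functions of the field in a pointwise sense (a small perturbation can flip the value of $\phi_n$ across $0$ at exceptional microscopic sites), so one cannot simply quote continuity of $\ell$. The right way to address this is to note that the macroscopic trajectory of the Schramm--Sheffield interface depends only on circle averages of the field at mesoscopic scales; this is precisely the content of the Schramm--Sheffield coupling between $\phi_n$ and its exploration, and it is at this level that the $o(1)$ control on mesoscopic averages of $\psi_n$ is used. An alternative, more hands-on route would be to modify the tie-breaking rules in $\ell$ so that microscopic discrepancies between $\phi_n$ and $\tilde\phi_n$ do not propagate beyond local reroutings of size $o(1)$, and then to control these local rerouting events using \eqref{e.1-point}.
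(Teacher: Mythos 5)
Your first step (mesoscopic averages of the reconstruction error $\psi_n$ vanish in probability) is correct and does follow from \eqref{e.1-point}--\eqref{e.2-points}. The proof breaks at the stability step. The Schramm--Sheffield level line $\ell(\phi_n)$ is a combinatorial interface determined by the \emph{signs} of $\phi_n+u_n$ at individual lattice sites; it is not a function of mesoscopic circle averages of the field, and no such statement is ``the content of the Schramm--Sheffield coupling''. The bounds \eqref{e.1-point}--\eqref{e.2-points} only give $\E[\psi_n(x)^2]=O(1)$ pointwise, so the perturbation $\psi_n$ can flip the sign of $\phi_n+u_n$ precisely in the neighbourhood of the interface, where the field is of order one --- exactly where the algorithm $\ell$ is sensitive. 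Whether such a perturbation moves the discrete level line by only $o(1)$ in Hausdorff distance is an open problem of essentially the same nature as Conjecture \ref{C.level lines of the dGFF} in Section \ref{ss.LL}, which the paper explicitly does \emph{not} prove (it is stated there precisely because a local/field-level recovery of the level line is not known); the ``alternative, hands-on'' rerouting argument you sketch is not a proof either.

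The paper's own proof sidesteps this difficulty entirely by a much softer choice of $L_T$: it sets $L^{(n)}(q):=\E\left[\eta^{(n)}(q)\mid \exp(iT\phi_n)\right]$, i.e.\ it conditions the \emph{curve} rather than taking the level line of a reconstructed field. Then, with the two conditionally independent copies $(\phi_1^{(n)},\phi_2^{(n)})$ of Definition \ref{D.phis}, part (a) of Theorem \ref{T.0-boundary theorem} gives $(\phi_1^{(n)},\phi_2^{(n)})\to(\Phi,\Phi)$, the coupling result of \cite{SchSh2} gives $(\phi_i^{(n)},\eta_i^{(n)})\to(\Phi,\eta(\Phi))$ with $\eta$ a deterministic measurable function of $\Phi$, hence $(\eta_1^{(n)},\eta_2^{(n)})\to(\eta,\eta)$ jointly; boundedness of the domain upgrades this to $\E\left[\|\eta_1^{(n)}(q)-\eta_2^{(n)}(q)\|^2\right]\to 0$, i.e.\ the conditional variance of $\eta^{(n)}(q)$ given $\exp(iT\phi_n)$ vanishes, which is exactly what is needed. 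To salvage your approach you would have to actually prove stability of the discrete level line under $O(1)$ pointwise perturbations with exponentially decaying correlations, which is not available; otherwise, switch to conditioning the curve as the paper does.
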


Our work naturally belongs to the class of \textbf{statistical reconstruction problems} which have been the subject of an intense activity recently. For example it shares similarities  with the statistical reconstruction problems analyzed in \cite{Holroyd,PeresSly,Abbe}. In particular in the later work, {\em Groups synchronization on grids}, the authors analyze the following problem: Imagine that each site $x\in \Z^d$ carries a spin or group element $\theta_x\in \mathfrak{S}$, a compact group (for example $\{\pm 1\}$ or $O(n)$). The question they are interested in is the following one: what macroscopic information on $\{ \theta_x\}_{x\in \Z^d}$ can be recovered from the knowledge of 
\[
\{ \theta_i \theta_j^{-1} + \mathrm{noise} \}_{i\sim j, \text{edges of }\Z^d}
\]
where observations of neighboring spins $\theta_i \theta_j^{-1}$ are subjected to a small {\em noise}. Our setting is very similar in flavour as we also have access to $\phi_n(i)-\phi_n(j)$ when $i\sim j$ except  the noise term is replaced in our case by the modulo operation $\pmod{\frac {2\pi} T}$. Similarly as adding a noise term, applying $\pmod{\frac {2\pi} T}$ is also reducing the information we have on $\phi_n(i)-\phi_n(j)$, except it cannot be analyzed as a convolution effect. The second difference with \cite{Abbe} is that our spins belong to $\R$ instead of a compact group $\mathfrak{S}$.

%Before listing some motivations that guided our work, we state 

\subsection{Fluctuations for integer-valued fields.}

Our present statistical reconstruction problem is intimately related to a generalization of the integer-valued Gaussian free field which plays a key role in the proof of the BKT transition for the Villain and XY models in \cite{FS}. Let us briefly recall the classical integer-valued GFF  before introducing its generalisation.

 For simplicity, in this subsection as well as in Sections \ref{ss.BackKT},\, \ref{s.deloc} and Appendix \ref{a}, we will consider an arbitrary finite subset $\Lambda\subset \Z^2$, instead of the scaled box $\Lambda_n=\frac 1 n \Z^2 \cap [-1,1]^2$. This way, it matches the setup in \cite{FS,RonFS}.

%n some sense 
%As we will explain in more details in Subsection \ref{??}, the statistical reconstruction of the GFF given its values modulo $\frac {2\pi} T$ is intimately related to the fluctuations of a generalization of the \textbf{integer valued GFF model} (IV-GFF).

\begin{definition}\label{d.IVgff}
Let $\Lambda\subset \Z^d$ be a finite domain\footnote{Again, the GFF as well as the graph notations $\p \Lambda$ etc. are defined in Section  \ref{S.Preliminaries}.}. The \textbf{integer-valued} GFF (IV-GFF) on $\Lambda$ with Dirichlet-boundary conditions, i.e. $0$ on $\p \Lambda$, and inverse temperature $\beta$ is the $\beta$-GFF $\{\phi(i)\}_{i\in \Lambda}$ conditioned on the singular event $\{\phi(i) \in \Z, \forall i\in \Lambda\}$.   
%$\{\phi(i) \in 2\pi \Z, \forall i\in \Lambda\}$.  
%($2\pi$ is only a convenient scaling factor here 
Equivalently, it can be defined as the probability measure $\P^{\IV}_{\beta,\Lambda}$ on $\Z^{\Lambda}$ defined as follows:
\begin{align}\label{}
\P_{\beta,\Lambda}^\IV(d\phi):=\frac 1 Z \sum_{\bm\in \Z^\Lambda : \bm_{\md \p \Lambda} =0} \delta_{\bm}(d \phi) 
\exp\left(-\frac \beta 2 \<{\nabla \phi, \nabla \phi} \right)
\end{align}
\ni
or also, to avoid any possible confusion, for any $\bm\in \Z^\Lambda$ with 0 boundary conditions, we have that $\P_{\beta,\Lambda}^\IV(\phi=\bm) = \frac 1 Z \exp(-\frac \beta 2 \<{\nabla \bm, \nabla \bm})$.

The IV-GFF with free boundary-conditions is defined in the same manner except we replace $\bm_{\md \p \Lambda}\equiv 0$ by $\bm(x_0)=0$ for any choice of root vertex $x_0\in \Lambda$.
\end{definition}

%\margin{Notations $Z$, and definition of $\p \Lambda$. Probably the best is to follow Ron's notations.}
%\[
%\FK{\beta,\Lambda}{\mathrm{IV}}{\phi}:=\frac 1 {Z} \exp(-\frac \beta 2 \sum_{i\sim j} (\phi_i -\phi_j)^2) 1_{\phi_i \in Z} 1_{\phi_i=0, \forall i\in \p \Lambda}
%\]

This integer-valued undergoes a roughening-phase transition as $T$ increases (i.e. as $\beta$ decreases) as it was proved by Fröhlich-Spencer in \cite{FS} (see also the very useful survey \cite{RonFS}). Fröhlich-Spencer proved this striking phase transition for periodic and free boundary conditions on large square boxes $\Lambda$ and explained in \cite[Appendix D.]{FS} how to adapt their proof to the case of Dirichlet boundary conditions. Very recently, Wirth has written carefully in \cite[Appendix A]{Wirth} the details of this extension to Dirichlet boundary conditions. We will come back to it later in Section \ref{ss.BackKT}.  (See also Figure \ref{f.IVgff} for an illustration of the IV-GFF in $d=1$).
\begin{theorem}[Fröhlich-Spencer \cite{FS}]\label{th.FS}
There exists $0<\beta^+ \leq \beta^-<\infty$\footnote{This choice $\beta^+\leq \beta^-$ is made to highlight that these are inverse temperatures related to $T_{rec}^-\leq T_{rec}^+$.} such that for any square $\Lambda\subset \Z^2$, if we consider the IV-GFF with free boundary conditions rooted at $x_0\in \Lambda$ then we have the following dichotomy:
\bi 
\item {\bf Delocalised regime (rough regime).} If $\beta<\beta^+$, then  for any $f : \Lambda \to \R, \sum_{i\in \Lambda} f(i)=0$, 
\begin{align*}\label{}
\EFK{\beta,\Lambda}{\IV}{e^{\<{\phi, f}}} \geq e^{\frac 1 {4\beta} \<{ f, (-\Delta)^{-1} f} }
\end{align*}
(N.B. it is not hard to extract from this Laplace transform estimate, fluctuations bounds such as $\EFK{\beta,\{-n,\ldots,n\}^2,x_0=0}{\IV}{\phi(x)^2} \geq \frac{c}{\beta} \log \|x\|_2$ for any $x\in \{-n,\ldots,n\}^2$, see for example \cite{RonFS}). 
\item {\bf Localised regime.} If $\beta>\beta^-$, then for any $x\in \Lambda$, 
\begin{align*}\label{}
\EFK{\beta,\Lambda,x_0}{\IV}{\phi(x)^2} \leq \frac C \beta\,.
\end{align*}
\ei
\end{theorem}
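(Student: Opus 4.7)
My plan is to follow the two-pronged Fröhlich-Spencer strategy: a direct Peierls argument for the localized regime, and a Poisson-summation / Coulomb-gas multiscale analysis for the delocalized regime. For the localized regime ($\beta > \beta^-$), since $\phi$ is integer-valued and $\phi \equiv 0$ minimizes the Dirichlet energy, I would run a contour argument on the nested level sets $\{|\phi| \geq k\}$. Any contour of length $L$ bounding such a level set can be pushed down by one unit at an energy gain of at least $cL$, so it has Gibbs weight at most $e^{-c\beta L}$. Summing over Peierls contours surrounding a fixed vertex $x$ yields $\P[|\phi(x)| \geq k] \leq e^{-ck}$ for $\beta$ large, from which the desired bound $\E[\phi(x)^2] \leq C/\beta$ follows by summation.

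For the delocalized regime ($\beta < \beta^+$), the first step is Poisson summation, which rewrites both the numerator and the denominator of the Laplace transform as sums over integer dual configurations $q \in \Z^\Lambda$ and produces the identity
\begin{align*}
\EFK{\beta,\Lambda}{\IV}{e^{\langle \phi, f\rangle}} = \exp\Bigl(\tfrac{1}{2\beta}\langle f, (-\Delta)^{-1} f\rangle\Bigr) \cdot \frac{\sum_q \exp\bigl(-\tfrac{2\pi^2}{\beta}\langle q, (-\Delta)^{-1} q\rangle - \tfrac{2\pi i}{\beta}\langle q, (-\Delta)^{-1} f\rangle\bigr)}{\sum_q \exp\bigl(-\tfrac{2\pi^2}{\beta}\langle q, (-\Delta)^{-1} q\rangle\bigr)}.
\end{align*}
The bare prefactor is precisely the Laplace transform of a real-valued GFF at inverse temperature $\beta$. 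The content of the theorem is that, for small $\beta$, the remaining Coulomb-gas ratio is bounded below by $\exp(-\tfrac{1}{4\beta}\langle f, (-\Delta)^{-1} f\rangle)$: in physics language, the resulting dipole gas induces only a bounded dielectric renormalization, costing at most a factor of two in the effective stiffness.

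The main obstacle, and the heart of the Fröhlich-Spencer argument, is controlling this low-temperature Coulomb gas: a naive fugacity expansion diverges because of the long-range logarithmic interaction between charges. Instead I would implement their multiscale expansion: group charges into neutral multipoles on a sequence of scales $L_k = L_0^k$, resum each scale exactly via a convergent cluster expansion that exploits the cancellation from charge neutrality (which converts the long-range $\log$ interaction into an effectively short-range one at that scale), and show inductively that the residual gas is again a very dilute Coulomb gas at a slightly renormalized inverse temperature $\beta_k$. Convergence of this RG flow under iteration is the technical crux; once it is established, the Laplace lower bound follows, and differentiating twice at $f=0$ produces the logarithmic variance estimate mentioned in the statement. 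Finally, to go from the free / periodic boundary conditions of \cite{FS} to the Dirichlet setting, I would follow the adaptation of Wirth \cite[Appendix A]{Wirth}: replace the Green's function by its Dirichlet counterpart and check that the multiscale cluster-expansion bounds remain uniform in $\Lambda$.
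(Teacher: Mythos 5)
This theorem is quoted from Fr\"{o}hlich--Spencer and the paper only sketches its proof (Section \ref{ss.BackKT}); your outline follows essentially the same route --- a Peierls/contour argument for the localized regime, and the Coulomb-gas duality plus the multiscale resummation into neutral ensembles for the delocalized one --- merely entering the Coulomb gas via exact Poisson summation rather than via the sine-Gordon/trigonometric-polynomial representation and charge-ensemble expansion used in \cite{FS,RonFS}. The one ingredient worth naming explicitly, since the rest of the paper turns precisely on its failure for shifted fibers, is that after the charge expansion the spin-wave phase $\<{\sigma,\rho}$ is controlled by Jensen's inequality using the oddness of $S(\calN,\phi)$ under $\phi\mapsto-\phi$ together with the $\phi\mapsto-\phi$ invariance of the reference measure.
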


The relationship between our statistical reconstruction problem and integer-valued fields is due to the following explicit structure of the conditional law of a GFF given its values modulo $\frac {2\pi} {T}$. We stick for simplicity to the case of Dirichlet boundary conditions. Let us fix $\ba\in [0,1)^{\Lambda}$ satisfying $\ba_{\md \p \Lambda}\equiv 0$. We will see in Lemma \ref{L.aIV} that the conditional law of the GFF $\phi$ on $\Lambda$ given 
$\phi \Mod{\frac{2\pi} T} = \frac {2\pi} T \ba $
%\margin{Do you prefere $\phi(i)\Mod{\frac{2\pi} T}  = \frac {2\pi} T a_i\,, \forall i \in \Lambda\}$  ?} 
is a multiple of the following \textbf{generalized integer-valued GFF} with $\beta=\beta_T:= (2\pi)^2 \,T^{-2}$. (See Lemma \ref{L.aIV} for a precise statement). 

\begin{figure}[h]
	\begin{center}
		\includegraphics[width=0.32\textwidth]{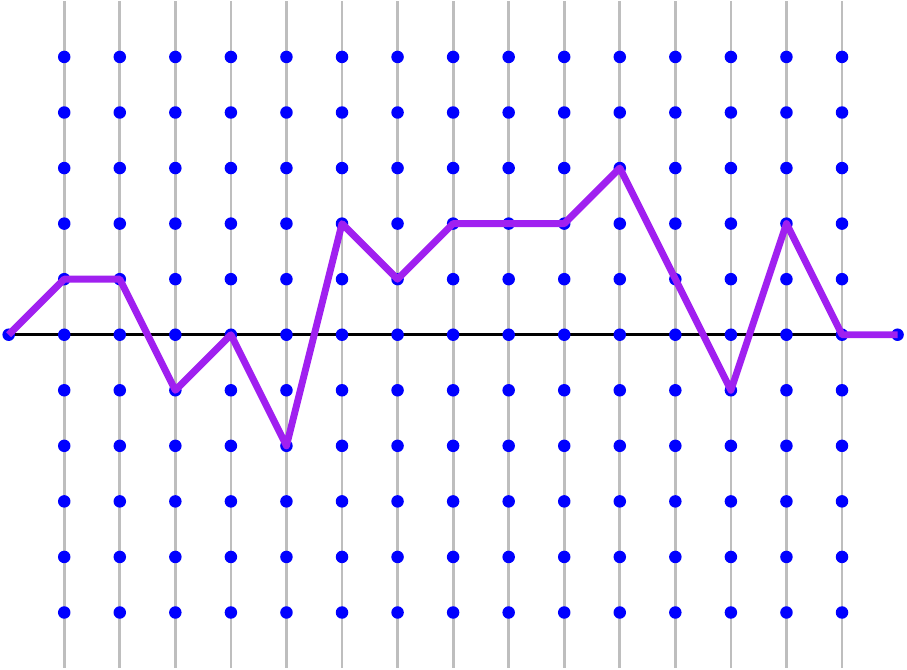}
		\includegraphics[width=0.32\textwidth]{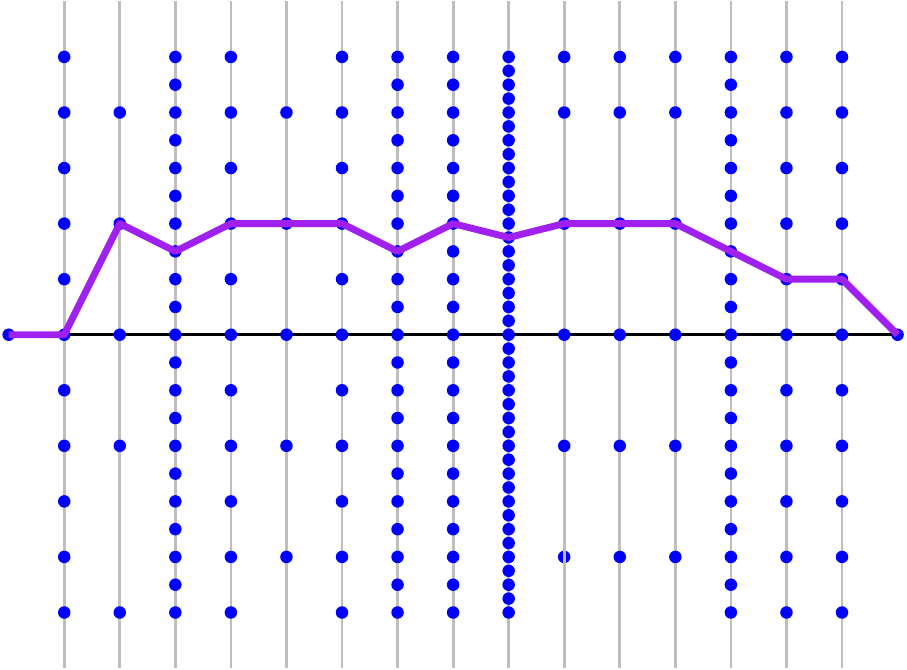}
		\includegraphics[width=0.32\textwidth]{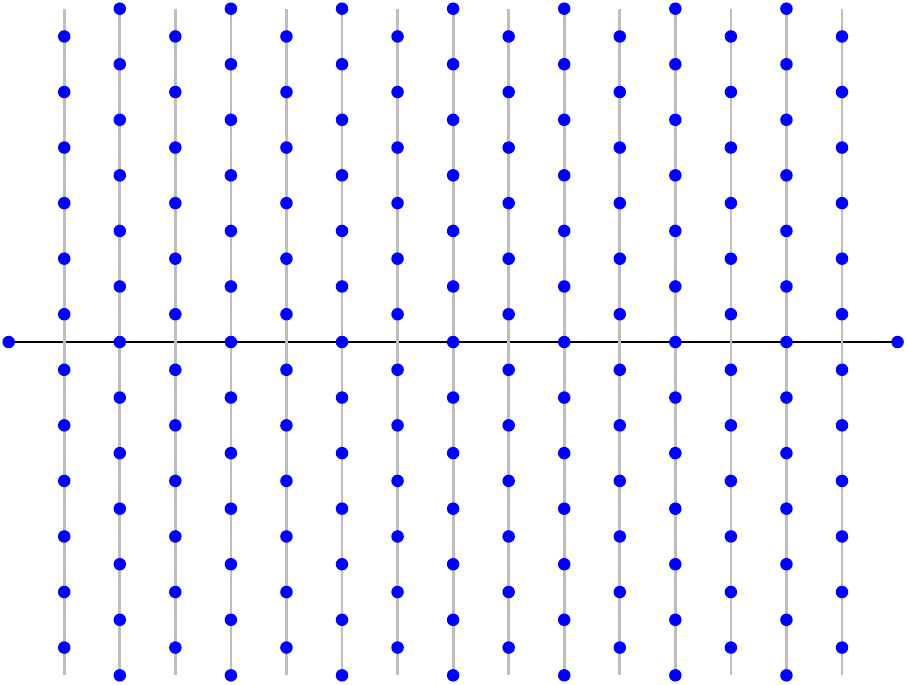}
	\end{center}
	\caption{%The integer-valued GFF (IV-GFF) is the Gaussian Free Field conditioned to take its values in $\Z$. 
		The picture on the left represents an instance of an IV-GFF in $d=1$ on a unit interval $\{0,\ldots,n\}$ with Dirichlet Boundary conditions. In $d=2$, the proof of the \textbf{roughening phase transition} for the IV-GFF in \cite{FS} (and \cite{Wirth} for the extension to Dirichlet b.c.) easily generalizes to certain shifts and scaled versions on the vertical fibers $\Z$ as illustrated in the other pictures. In the middle one, some {\em fibers} are $\Z$, some others are $2\Z$ and one may also add fibers with arbitrarily fine meshes $2^{-k}\Z$ along the interval. On the right, some fibers are $\Z$ while some others are $\frac 1 2 +\Z$. It is easy to check that in $d=2$, any of these can be handled with the techniques from \cite{FS}.}\label{f.IVgff}
\end{figure}
% easily expressed in terms of the following $\ba$-dependent probability measures:

\begin{definition}\label{d.IVba}
Let $\beta>0$ and $\ba=\{a_i\}_{i\in \Lambda}$ s.t. $\ba_{\md \p \Lambda}\equiv 0$ be any collection of real-valued numbers. We define the \textbf{$\ba$-IV-GFF} on $\Lambda$ to be the GFF $\{\phi(i)\}_{i\in \Lambda}$ (with Dirichlet b.c.s) conditioned to take its values in the shifted-fibers $\{ a_i+\Z\}_{i\in \Lambda}$ for any $i\in \Lambda$. It corresponds to the following discrete probability measure on fields:
\begin{align*}\label{}
\FK{\beta,\Lambda}{\ba,\mathrm{IV}}{d\phi}:=\frac 1 {Z}
\sum_{\bm \in \Z^\Lambda,  \bm_{\md \p \Lambda}\equiv 0} \delta_{\bm+\ba}(d\phi) 
\exp(- \frac \beta 2 %\frac {(2\pi)^2} {2 T^2} 
\<{\nabla(\phi), \nabla(\phi)})\,.
\end{align*}
\ni
Equivalently, for any $\bm \in \Z^\Lambda$ with  $\bm_{\md \p \Lambda}\equiv 0$, 
\begin{align}\label{E.P a,IV}
\FK{\beta,\Lambda}{\ba,\mathrm{IV}}{\phi=\bm+\ba} = \frac 1 Z
\exp(- \frac \beta 2 %\frac {(2\pi)^2} {2 T^2} 
\<{\nabla(\bm + \ba), \nabla(\bm+\ba)})\,.
\end{align}
\ni
Notice  that if $\ba \in \Z^{\Lambda}$, then the $\ba$-IV-GFF is nothing but the standard IV-GFF. 
See Figures \ref{f.IVgff},\ref{f.IVgff2}. Finally, this definition extends readily to the case of free boundary conditions in which case $\ba \in \R^\Lambda$ with $\ba_{x_0}=0$. 
\end{definition}

The proof of Fröhlich-Spencer (\cite{FS}) readily extends to some specific choices of the shift $\ba$ which are sufficiently symmetric. (i.e. any $\ba \in \{0,\frac 1 2\}^\Lambda$). See Figure \ref{f.IVgff}  for an illustration (in $d=1$ only) of the cases which can be analyzed using the techniques from \cite{FS} and Figure \ref{f.IVgff2} for the cases which need further analysis. See also Remark \ref{R.Wirth}.
Our main result on such integer-valued fields is the following extension of the above theorem of Fröhlich and Spencer \cite{FS}. 

%\begin{theorem}\label{th.IVgff}
%Recall $\Lambda_n:=[-1,1]^2\cap \frac 1 n \Z^2$. 
%There exists $\beta_c^{IV}>0$ and a constant $C>0$, s.t. for any $\beta<\beta_c^{IV}$ any $n\geq 1$ and \textbf{any} $\ba \in \R^{\Z^2}$, then  if $\phi^{\ba}_n$ denotes a 
%$\ba$-IV-GFF with Dirichlet boundary conditions on $\Lambda_n$, we have
%\margin{Maybe I should put an $\eps>0$ to show the near-sharpness.}
%\bi
%\item For any function $f$ \margin{Again regularity ? continuous say ?} 
%\[
%\Var{\<{\phi^{\ba}_n, f}} \geq \frac C \beta \<{f, (-\Delta)^{-1} f}
%\]
%\item The variance of the field $\phi^\ba_n$ at the origin satisfies 
%\[
%\Var{\phi^{\ba}_n(0,0)} \geq \frac {C} {\beta} \frac 1 {2\pi} \log n
%\]
%\ei
%
%The low-temperature regime ($\beta \gg 1$) happens to be less universal in the choice of the shift $\ba$. Indeed, the following different scenarios may happen (by tuning suitably $\ba$):
%\bnum
%\item $\Eb{\phi^\ba_n(0,0)} \geq 0.49 n$
%\item $\Var{\phi^\ba_n(0,0)} \geq (0.49)^2 n$
%\item $\Var{\phi^\ba_n(0,0)} \leq O(1)$ and $\Cov{\phi_n^\ba(x),\phi_n^\ba(y)}\leq e^{-c n \|x-y\|_2}$.  
%\enum
%\end{theorem}

\begin{theorem}\label{th.IVgff}
There exists $\beta_c^{IV}>0$ and a constant $C>0$, s.t. for any square domain $\Lambda \subset \Z^2$, any  $\beta<\beta_c^{IV}$, then uniformly\footnote{with  $\ba_{\md \p \Lambda }\equiv 0$ if Dirichlet b.c. and $\ba_{x_0}=0$ for free b.c.} in 
$\ba \in \R^{\Lambda}$,  if $\phi^{\ba} \sim \P_{\beta,\Lambda}^{\ba,\IV}$ (with either Dirichlet or free b.c.s) we have
%\margin{Maybe I should put an $\eps>0$ to show the near-sharpness.}
\bi
\item For any function $f \in \R^\Lambda$ 
\[
\Var{\<{\phi^{\ba}, f}} \geq \frac C \beta \<{f, (-\Delta)^{-1} f}\,,
\]
where the inverse of the Laplacian is taken here according to the b.c. 
\item If $\Lambda=\{-n,\ldots,n\}^2$ with Dirichlet boundary conditions, the variance of the field $\phi^\ba$ at the origin satisfies 
\[
\Var{\phi^{\ba}(0,0)} \geq \frac {C} {\beta}  \log n\,.
\]
\ni
The analogous statement also holds for free b.c.
\ei
\end{theorem}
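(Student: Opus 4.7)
The plan is to adapt the multi-scale / renormalisation argument of \cite{FS}, written for Dirichlet boundary conditions in \cite[Appendix~A]{Wirth}, by expressing the Laplace transform of $\<{\bm,f}$ under the $\ba$-IV-GFF as a Gaussian factor times a dual Coulomb-gas ratio in which the dependence on $\ba$ enters only through a unimodular complex phase. The Fr\"ohlich-Spencer inductive bounds, which use only the modulus of the charge activities, then apply uniformly in $\ba$.

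\emph{Step 1: Dual Coulomb-gas representation.} With $\phi=\bm+\ba$, Poisson summation applied to the sum over $\bm\in\Z^{\Lambda^{\circ}}$ in \eqref{E.P a,IV} yields
\begin{align*}
\EFK{\beta,\Lambda}{\ba,\IV}{e^{\<{\bm,f}}}\;=\;e^{-\<{\ba,f}}\,\exp\!\Bigl(\tfrac{1}{2\beta}\<{f,(-\Delta)^{-1}f}\Bigr)\,\frac{\calC_{\ba}(f)}{\calC_{\ba}(0)},
\end{align*}
where
\begin{align*}
\calC_{\ba}(f):=\sum_{\mathbf{q}\in\Z^{\Lambda^{\circ}}}e^{2\pi i\<{\ba,\mathbf{q}}}\exp\!\Bigl(-\tfrac{(2\pi)^{2}}{2\beta}\<{\mathbf{q},(-\Delta)^{-1}\mathbf{q}}-\tfrac{2\pi i}{\beta}\<{f,(-\Delta)^{-1}\mathbf{q}}\Bigr).
\end{align*}
Compared with the $\ba\equiv 0$ representation used in \cite{FS}, the only change is the unimodular phase $e^{2\pi i\<{\ba,\mathbf{q}}}$; the modulus $|z_{\mathbf{q}}|=\exp(-\tfrac{(2\pi)^{2}}{2\beta}\<{\mathbf{q},(-\Delta)^{-1}\mathbf{q}})$ of each charge activity is unchanged. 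The involution $\mathbf{q}\mapsto-\mathbf{q}$ moreover gives $\calC_{\ba}(0)\in\R$ and $\calC_{\ba}(-f)=\overline{\calC_{\ba}(f)}$, so the ratio $\calC_{\ba}(f)/\calC_{\ba}(0)$ is automatically real, consistent with the positivity of the Laplace transform. The free-b.c.\ case gives the analogous formula with the Neumann Green function and the neutrality constraint $\sum_{x}\mathbf{q}(x)=0$ built into the sum.

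\emph{Step 2: Uniform Fr\"ohlich-Spencer bound.} The FS argument is a multi-scale cluster expansion that at each scale regroups charges into approximately neutral blocks and absorbs them into a renormalised Gaussian measure; the smallness criterion at every step is an absolute $\ell^{1}$-type bound on the weighted effective activities. Revisiting each inductive estimate, the phase $e^{2\pi i\<{\ba,\mathbf{q}}}$ factors out of the relevant charge sums and is controlled by the triangle inequality, so the resulting bounds depend only on $|z_{\mathbf{q}}|$. Hence for some $\beta_{c}^{\IV}>0$ and every $\beta<\beta_{c}^{\IV}$ one obtains, uniformly in $\ba$,
\begin{align*}
\frac{\calC_{\ba}(tf)}{\calC_{\ba}(0)}\;\geq\;\exp\!\Bigl(-\tfrac{C'\,t^{2}}{\beta}\<{f,(-\Delta)^{-1}f}\Bigr),\qquad t\in[-1,1],
\end{align*}
for some $C'<1/2$. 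Inserting this in Step~1 and multiplying the same bound with its $t\leftrightarrow -t$ counterpart, so that the $e^{\mp t\<{\ba,f}}$ prefactors cancel, yields
\begin{align*}
\EFK{\beta,\Lambda}{\ba,\IV}{e^{t\<{\bm,f}}}\cdot\EFK{\beta,\Lambda}{\ba,\IV}{e^{-t\<{\bm,f}}}\;\geq\;\exp\!\Bigl(\tfrac{C\,t^{2}}{\beta}\<{f,(-\Delta)^{-1}f}\Bigr)
\end{align*}
with $C=1-2C'>0$. Expanding both sides to order $t^{2}$ at $t=0$ and using the elementary identity $\E[e^{tX}]\E[e^{-tX}]=1+t^{2}\Var{X}+O(t^{4})$ gives
\begin{align*}
\Var{\<{\phi^{\ba},f}}=\Var{\<{\bm,f}}\;\geq\;\tfrac{C}{\beta}\<{f,(-\Delta)^{-1}f},
\end{align*}
which is the first assertion. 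For the pointwise bound at the origin in $\Lambda=\{-n,\dots,n\}^{2}$ with Dirichlet b.c., one applies this with $f=\delta_{0}$ and invokes $\<{\delta_{0},(-\Delta)^{-1}\delta_{0}}\asymp\log n$; the free-b.c.\ variant is identical, restricting to mean-zero $f$.

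\emph{Main obstacle.} The delicate technical point is to verify that the FS \emph{complex-translation} step, where the imaginary source $\tfrac{2\pi i}{\beta}\<{f,(-\Delta)^{-1}\mathbf{q}}$ is deformed onto the real axis to turn oscillatory weights into positive ones, remains compatible with the additional unimodular factor $e^{2\pi i\<{\ba,\mathbf{q}}}$: one must re-examine each recursion step and check that the composite activity $e^{2\pi i\<{\ba,\mathbf{q}}}z_{\mathbf{q}}$ is controlled in modulus without a case-by-case analysis in $\ba$. A secondary concern is that the $\ba$-phases could \emph{a priori} make $|\calC_{\ba}(0)|$ anomalously small through destructive interference; this is ruled out by the crude direct bound $\calC_{\ba}(0)\geq 1-\sum_{\mathbf{q}\neq 0}|z_{\mathbf{q}}|\geq\tfrac12$, valid once $\beta$ is small enough for the charge activities to be summable. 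The same generalisation of FS to uniformly-in-phase estimates is also what underpins the random-phase Sine-Gordon analysis in Appendix \ref{a.SG}.
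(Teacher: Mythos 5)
There is a genuine gap at the heart of your Step 2. The claim that the Fr\"ohlich--Spencer inductive bounds ``depend only on $|z_{\mathbf{q}}|$'', so that the unimodular phase $e^{2\pi i\<{\ba,\mathbf{q}}}$ is harmless and the ratio bound holds uniformly in $\ba$, is precisely the point that fails. The FS lower bound on the ratio of partition functions is \emph{not} obtained by triangle-inequality control of moduli (a \emph{lower} bound on a ratio of oscillatory sums cannot be): it comes from applying Jensen's inequality to the functional $S(\calN,\phi)$, and that step needs two symmetries --- $S$ must be odd in $\phi$ and the reference measure must be invariant under $\phi\mapsto-\phi$. A generic shift $\ba$ destroys both (this is exactly the content of Figure \ref{f.IVgff2} and Remark \ref{R.Wirth}), so the uniform-in-$\ba$ Laplace-transform lower bound asserted in Step 2 does not follow from ``revisiting each inductive estimate''; the paper explicitly states that it is \emph{unable} to prove such a Laplace-transform bound for general $\ba$ and only obtains the $L^2$ statement. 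Your closing remark correctly identifies this as the main obstacle, but the proposal then assumes it away rather than resolving it. A secondary error: the bound $\calC_{\ba}(0)\geq 1-\sum_{\mathbf{q}\neq0}|z_{\mathbf{q}}|\geq\tfrac12$ is false in large volume, since nearest-neighbour dipole configurations alone contribute of order $|\Lambda|\,e^{-c/\beta}$ to the sum of moduli, and multi-dipole configurations make it exponentially large in $|\Lambda|$; this entropy-versus-activity competition is the very reason the multiscale resummation into ensembles $\calN$ is needed at all.

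What the paper does instead is worth absorbing, because your final $t\leftrightarrow-t$ product trick is close in spirit to it. Rather than seeking a Laplace-transform lower bound, the paper expands $\alpha\mapsto\EFK{\beta,\Lambda,\lambda_\Lambda,v}{\ba}{e^{\alpha\<{\phi,f}}}$ to second order at $\alpha=0$ inside the FS charge expansion. At first order, linearity in $f$ of both sides of the resulting inequality forces it to be an \emph{equality} (Proposition \ref{c.magical}, the ``modular invariance identity'' for the mean); at second order, Cauchy--Schwarz in the form $\E[\hat S^2]\geq\E[\hat S]^2$ combined with that identity converts the surviving term into $\EFK{\beta,\Lambda,\lambda_\Lambda,v}{\ba}{\<{\phi,f}}^2$, and the variance bound drops out without any Jensen step. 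To salvage your outline you would have to replace the asserted uniform ratio bound of Step 2 by such a first-/second-order analysis, or by some other genuine substitute for the broken parity symmetry.
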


\begin{remark}\label{r.LowTemp}
We wish to stress that the low-temperature regime ($\beta \gg 1$) happens to be much less universal in the choice of the shift $\ba$. Indeed, when $\Lambda=\{-n,\ldots,n\}^2$ equipped with Dirichlet boundary conditions, and if $\phi^\ba \sim \P_{\beta,\Lambda}^{\ba,\IV}$ then we expect that the following different scenarios may happen (by tuning suitably $\ba$ in each case) as $n\to\infty$. (See for example Figure \ref{f.GS} for the scenario $(1)$). 
\bnum
\item $\Eb{\phi^\ba(0,0)} \geq 0.49 n$
\item $\Var{\phi^\ba(0,0)} \geq (0.49)^2 n$
\item $\Var{\phi^\ba(0,0)} \leq O(1)$ and $\Cov{\phi^\ba(x),\phi^\ba(y)}\leq e^{-c n \|x-y\|_2}$.  
\enum
\end{remark}

\begin{remark}\label{}
We do not obtain a lower bound on the Laplace-transform of $\P_\beta^{\ba,\IV}$ only on its $L^2$ behaviour which is sufficient to detect localisation v.s. delocalisation. This is also the case in the recent works mentioned below on localisation/delocalisation of integer-valued random surfaces. 
\end{remark}

\begin{figure}
	\begin{center}
		\includegraphics[width=0.5\textwidth]{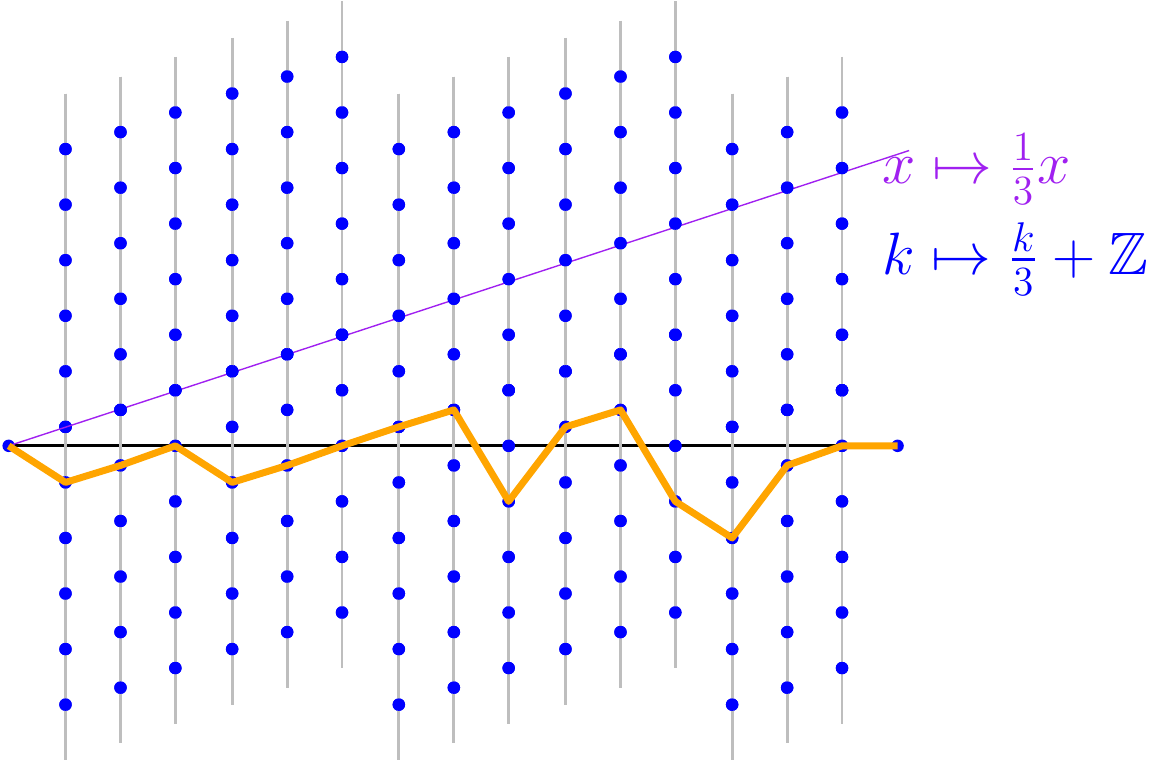}
		\includegraphics[width=0.45\textwidth]{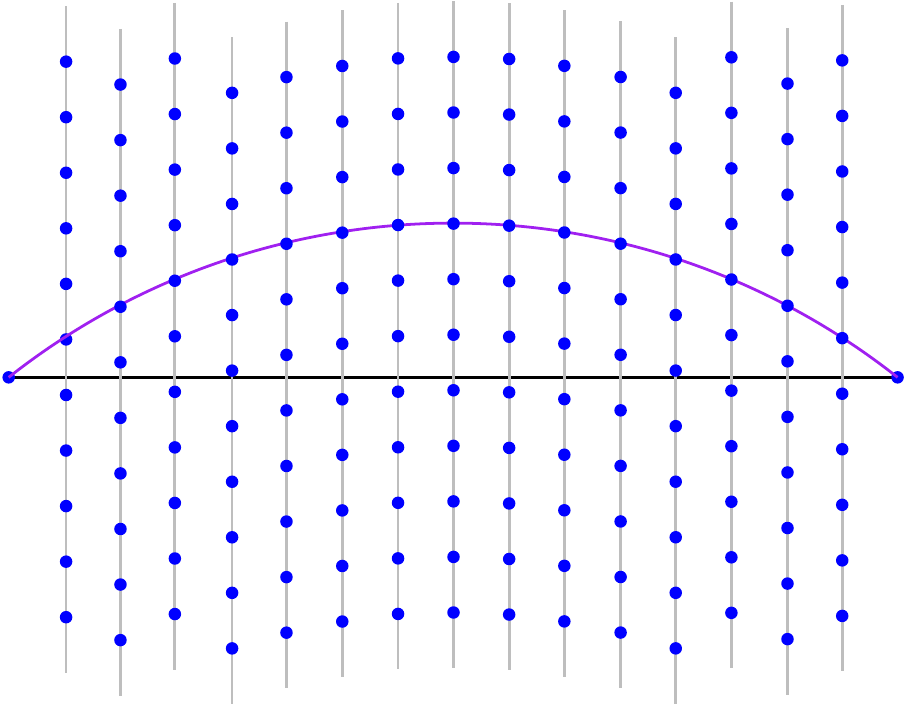}
	\end{center}
	\caption{As opposed to the examples in Figure \ref{f.IVgff}, these choices of fibers do not satisfy the sufficient symmetries to be readily analyzed by the techniques from \cite{FS}. The simplest such example is the picture on the left where $\Z$ fibers are shifted by $\{0,\frac 1 3, \frac 2 3\}$ (here $\ba:=(\frac k 3)_{k\in \Z}$). In such a case, $\sin(k\phi)$ functions appear in the Fourier transform of the periodic distribution $\sum_{i \in \frac 1 3 +\Z} \delta_i$ and this breaks the parity. For such linear shifts, Wirth obtained in \cite{Wirth} some related bounds using a nice symmetrization trick (however, even in this linear case, these bounds give a different control from the one we need here, see Remark \ref{R.Wirth}). A key property used in \cite{Wirth} is the fact that $x\mapsto a_x$ needs to be \textbf{harmonic} in $\Lambda\setminus \p \Lambda$. Otherwise the symmetrization technique breaks and one cannot rely anymore on Jensen's inequality, a key step in the proof \cite{FS}. For example the picture on the right where fibers are shifted by a quadratic curve requires an additional analysis w.r.t \cite{FS}.}\label{f.IVgff2}
\end{figure}

As we will see in Section \ref{s.deloc} and particularly in Appendix \ref{a}, 
our proof of Theorem \ref{th.IVgff} involves an exact identity (Proposition \eqref{c.magical}) which is closely related to the {\em modular invariance} identity for Riemann-theta functions. (N.B See also \cite{agostini2019} for another use of such identities in probability.) We briefly mention this connection here as it is interesting in its own and it allows us to rephrase Fröhlich-Spencer Theorem as well as our Theorem \ref{th.IVgff} easily in terms of those Riemann-theta functions. 

Indeed, the following function of $\ba\in \R^{\Lambda\setminus \p \Lambda}$:
\begin{align*}\label{}
\tilde \theta_\Lambda(\ba) := \sum_{\bm\in \Z^{\Lambda\setminus \p \Lambda}}  \exp\left (-\frac \beta 2 \<{\bm, (-\Delta) \bm}\right ) \exp(-\beta \, \bm \cdot a)
\end{align*}
can be easily written in terms of the classical Riemann-theta function $\theta(\bz \md \Omega)$ (see~\eqref{e.RT}).
Furthermore, one can check that for any $f: \Lambda \to \R$ and if $\phi^\ba \sim \P_{\beta,\Lambda}^{\ba,\IV}$, we have
\begin{align*}\label{}
\Var{\<{\phi^\ba, f}} = \left[\sigma \cdot \nabla_\ba \, \sigma \, \cdot \nabla_\ba \right] \log \tilde\theta_\Lambda\,,
\end{align*}
where $\sigma:= \frac 1 \beta (-\Delta)^{-1} f$. This expression clarifies the effect of the shift-vector $\ba \in \R^{\Lambda}$ and reveals that it plays the role of an exterior magnetic field.
% and $\log \tilde \theta$ plays the role of a free energy. 
We may now rephrase Fröhlich-Spencer as well as our main result from this Section as follows:
\bi
\item {\em (Theorem \ref{th.FS}).} If $\beta$ is small enough, then uniformly in $\Lambda=\{-n,\ldots, n\}^2$, 
\[
\left[\sigma \cdot \nabla_\ba \, \sigma \, \cdot \nabla_\ba \right]_{\ba\equiv 0} \log\, \tilde\theta_\Lambda \geq \frac C \beta \<{f, -\Delta^{-1} f} 
\]
\item {\em (Theorem \ref{th.IVgff}).} If $\beta$ is small enough, then uniformly in $\Lambda=\{-n,\ldots, n\}^2$,
\[
\inf_{\ba \in \R^{\Lambda\setminus \p \Lambda}} \sigma \cdot \nabla_\ba \, \sigma \, \cdot \nabla_\ba \; \left( \log\, \tilde\theta_\Lambda \right) \geq \frac C \beta \<{f, -\Delta^{-1} f} 
\]
\ei

Finally, let us point out that over the last few years, there have been several important works which analyzed the roughening phase transition (i.e. localisation/delocalisation) for other natural models of integer-valued random fields, such as the {\em square-ice} model, uniform Lipschitz functions $\Z^2 \to \Z$ etc: see in particular the recent works \cite{HugoRon,RonScott, Mano,HugoLog}. These works do not rely on the Coulomb-gas techniques from \cite{FS} but rather on geometric techniques such as RSW. 
% Choix different de constantes pour \tilde \theta, mais finalement redigé ainsi ca va. A CONSTANTE pres......

\subsection{Motivations behind this statistical reconstruction problem.}\label{ss.motiv}

$ $

As we will see below, one of the main reasons which lead us to consider this statistical reconstruction problem on the GFF has to do with the statistical analysis of the XY and Villain models in $d=2$. Each of these are celebrated models with continuous $O(2)$-symmetry. We briefly define what they are and we refer the reader to \cite{FS,  Roland, RonFS,friedli2017statistical} for useful background on these models.  

\begin{definition}[Villain and XY models]\label{}
Let us fix a finite graph $\Lambda \subset \Z^2$ and $\beta>0$ to be the inverse temperature. Both models are Gibbs measures on the state-space $ (\S^1)^\Lambda$. Let us parametrise this spin-space via its canonical identification with $[0,2\pi)^\Lambda$. 
\bi
\item {\em XY model (or plane rotator model)} 
\begin{align}\label{g.1}
d\FK{\beta}{\mathrm{XY}}{\{\theta_x\}_{x\in \Lambda}}
\propto  \prod_{i \sim j }\exp\left( \beta \cos(\theta_i - \theta_j)\right) \, \prod d\theta_i.
\end{align}
\item {\em Villain model}
\[
d\FK{\beta}{\mathrm{Villain}}{\{\theta_x\}_{x\in \Lambda}}
\propto \prod_{i\sim j} \sum_{m\in \Z}  \exp\left( - \frac \beta 2 (2\pi m + \theta_i - \theta_j)^2 \right)\, \prod d\theta_i .
\]
\ei
\end{definition}

\smallskip 

\ni
We may now list what are the main motivations which guided our work. 

\bnum
\item {\em Extracting macroscopic random structures from XY and Villain spins.} 
%In many ways, the success of the $\SLE_\kappa$ literature has been to associate macroscopic random interfaces to critical systems such as Ising, percolation etc. and to gain from the properties of these curves  

For spins systems such as the Ising model, Potts models or also percolation which all have discrete symmetries, it is clear how to associate natural macroscopic fluctuating objects such as interfaces which may then converge to suitable $\SLE_\kappa$ as the mesh goes to zero. On the other hand, for spin systems with continuous symmetry such as XY or Villain models, given a realization of the Gibbs measure, say $\{ \sigma_x\}_{x\in \Lambda}$ with $\sigma_x \in \S^1$, it is much less clear what macroscopic objects one may assign to $\{\sigma_x\}$. 
   
   One consequence of our present statistical reconstruction problem is that it gives strong evidence to the fact that it is possible to extract a macroscopic GFF $\phi_n$ from the observation of the spins $\{\sigma_x\}_{x\in \{-n,\ldots, n\}^2}$ (up to small microscopic errors).
   
   Indeed, at least in the case of the Villain model, it has been conjectured by Fröhlich-Spencer in \cite[Section 8.1]{FS1983} that at low temperature ($\beta\gg1 $), then up to ``microscopic errors'', one should have 
\begin{align*}\label{}
\{\sigma_x\}_{x\in \{-n,\ldots, n\}^2} \sim \P_\beta^\mathrm{Villain} \overset{\mathrm{law}}\approx 
\{ \exp\big(i\, \frac 1 {\sqrt{\beta'}} \phi_n(x) \big) \}_{x\in  \{-n,\ldots, n\}^2}\,,
\end{align*}
where $\beta'=\beta(\beta)$ satisfies $|\beta'-\beta|\leq e^{-C\beta}$ and where $\phi_n$ is a GFF on $\{-n,\ldots, n\}^2$ with either free or 0 b.c. 
%We refer to \cite{FS,RonFS} for a definition of the Villain model (whose Gibbs measure tends to the $\beta$-XY model as $\beta \to \infty$). 

Once one realises that one may extract a GFF out of the spins $\{\sigma_x\}_{x\in \Lambda_n}$, it is then natural to extract level lines and flow lines from this GFF studied in \cite{She05,Dub,MS1}. Corollary \ref{c.Levelines} is a proof of this concept. We discuss this further in Section \ref{ss.LL} where we highlight how our work lead us to conjecture that when $\beta$ is high enough, then the natural interface for the Villain model pictured in Figure \ref{f.XY} should converge to an $\SLE(4,\rho)$ process. 

\begin{figure}
	\begin{center}
		\includegraphics[width=0.7\textwidth]{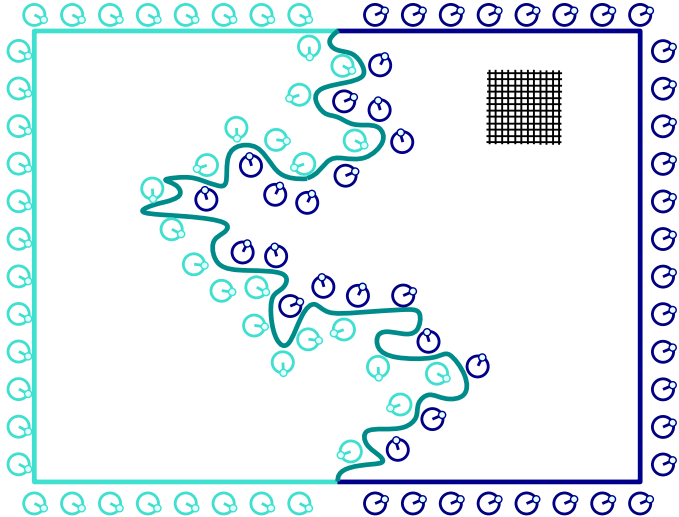}
	\end{center}
	\caption{Conjectures \ref{c.SLE4} and \ref{c.SLErho} in Section \ref{ss.LL} predict that at low temperature, the level lines of a Villain model with $e^{i/10}$ on the right boundary and $e^{-i/10}$ on the left boundary should converge to $\SLE(\kappa=4, \rho)$ processes. Furthermore, the set of all interfaces should converge to the so-called ALE (see Conjecture \ref{c.ALE}). These conjectures are supported by the present statistical reconstruction problem as well as by the techniques we have used. 
	}\label{f.XY}
\end{figure}

%Our main motivation comes from the following conjecture which seems to  The main motivation is the hope to extract macroscopic information from collections of spins in $\S^1$ ....La conjecture de Frohlich-Spencer
%\textbf{ICI notre conjectre sur le $\SLE_4$.}

\smallskip

\item {\em A different interpretation of the KT transition.} 

The classical way of understanding the KT transition for spins systems such as the XY model is to notice that vortices ($\equiv$ discrete 2-forms) come into the energy-balance when analyzing the Gibbs measure~\eqref{g.1}. 
This present work gives the following different interpretation of the role of the $\S^1$-geometry within the BKT transition which does not explicitly involve vortices. When the temperature $T$ is low, spins wiggle slowly around $\S^1$ and one should be able to recover a macroscopic GFF as we have seen in the above item (1). If instead, the temperature $T$ is large, the spins start wiggling too quickly around $\S^1$ so that one cannot extract the whole macroscopic fluctuating Gaussian field $\phi$ which leaves on the top of the spin field. 

%A different model for the KT transition, if not a different understanding / interpretation. Furthermore a Laboratory. 
%For example, see Section 5. 

\smallskip

\item {\em An integrable model for Integer-valued GFF}. 

The main tool we use for the regime $T>T_{rec}^+$ is the proof of delocalisation for the generalized IV-GFF $\phi^\ba \sim \P_\Lambda^{\ba,\IV}$ from Definition \ref{d.IVba}.  We think of $\ba$ as the random vector $\{a_i\}_{i\in \Lambda}\in [0,1)^\Lambda$ defined by  
\[
a_i := \frac T {2\pi}  \phi_i \Mod{1} \,, \,\,\, \forall i \in \Lambda,\,
\]
where $\phi$ is a GFF in $\Lambda$. As such, we may view the random measure $\P_\Lambda^{\ba,\IV}$ as a \textbf{quenched measure} on (shifted) integer-valued fields. Interestingly these highly non-trivial quenched measures have (by construction) a very simple \textbf{annealed measure}. Indeed Lemma \ref{L.aIV} readily implies that
%\purple{for any test function $F$}
\begin{align}\label{}
\int \P_{\beta_T,\Lambda}^{\IV,\ba}[d\phi] \;\P_T(d\ba) =\P_{\beta_T}^\GFF[d\phi]\,, 
\end{align}
%\purple{
%\begin{align}\label{}
%\int \E_{\beta_T,\Lambda}^{\IV,\ba}[F(\phi)] \;\P_T(d\ba) =\E^\GFF[F(\frac T {2\pi} \phi)]\,, 
%\end{align}
%}
where we denoted by $\P_T(d \ba)$ the law of the above random shift $\ba$ and $\beta_T=\frac{(2\pi)^2}{T^2}$. If one now assumes that some properties (such as fluctuations) are not very sensitive to $\ba$, this identity gives a ``useful laboratory'' to analyze the classical integer-valued GFF (i.e. $\ba\equiv 0$).  A first illustration of this is given in Section \ref{S.Information left} where we provide a new insight on the $\eps=\eps(\beta)$ correction in the bound of Fröhlich-Spencer. 
A second illustration is given in the item below.

\smallskip

\item {\em Random-phase Sine-Gordon model.}

As it was pointed out to us by Tom Spencer, our work is closely related to the \textbf{random-phase Sine-Gordon} model. This is a model of random interface with quenched disorder which has been studied extensively in physics and which is conjectured to exhibit a striking \textbf{super-roughening} behavior at low temperature. (See for example \cite{cardy1982,doussal2007}.) As we shall explain in Appendix \ref{a.SG}, our proof of Theorem \ref{th.IVgff} easily extends to the setting of the random-phase Sine-Gordon model and allows us to prove $\log n$ fluctuations in the high-temperature phase of this model. See Theorem \ref{th.SG}. 

\smallskip

\item {\em Imaginary multiplicative chaos.} 
In this work we focus on lattice fields $\phi : \Lambda \to \R$ or $\Lambda_n \to \R$, but the question in the continuum is also interesting. Namely, given a Gaussian free field $\Phi$ on $[-1,1]^2$ with 0-boundary conditions, can one recover $\Phi$ from $\Wick{e^{i \alpha \Phi}}$? This is the complex analog of the reconstruction procedure $\Wick{e^{\gamma \Phi}} \mapsto \Phi$ studied in \cite{berestycki2014equivalence}. We discuss this further in Section \ref{ss.Trec-}, where we show that the existence of a continuous reconstruction process in the imaginary case implies the existence of a discrete reconstruction process. However, let us highlight that even if this continuum process does exist, the discrete reconstruction process coming from it will converge much slower ($o(1)$) than the one we obtain in Theorem \ref{T.0-boundary theorem} using statistical mechanics (i.e. $O(1/n^2)$, see Proposition \ref{P.Var O(n^2)}). 
Also, as opposed to the discrete setting, there is no regime where the statistical reconstruction breaks in the continuum. {\em (N.B. After the first version of this work, the reconstruction problem in the continuum has been solved in the beautiful recent work \cite{Juhan} using completely different tools).}
\enum

\subsection{Idea of the proof.}
$ $

The first choice one needs to make in the proof is the reconstruction function $F_T$. We have essentially two natural choices here (see Figure \ref{f.GS} for an illustration of both). 
\bnum
\item First, if $\ba:= \frac {T} {2\pi}\left( \phi\Mod{\frac {2\pi} T}\right)$, then there is an a.s.  unique \textbf{ground-state\footnote{From the point of view of the Statistical reconstruction, ground-state should be read as the maximum-likelihood estimator.}} for $\P_{\beta}^{\ba,\IV}$ which we may call 
\begin{align*}\label{}
\hat\bm(\exp(i T \phi)) & = \hat \bm(\exp(2i \pi \ba)) \\
& := \mathrm{argmin}_{\bm \in \Z^\Lambda} \exp\left( - \frac \beta 2 \<{\bm+\ba, -\Delta(\bm+\ba)} \right)\,.
\end{align*}
It is reasonable to guess that when $T$ is small, the field $\phi$ should not fluctuate much around $\hat \bm(\exp(i T \phi))$. 
\item A second natural choice is to consider instead the conditional expectation of the field given $\exp(iT\phi)$.
\enum
\begin{figure}
	\begin{center}
		\includegraphics[width=0.9\textwidth]{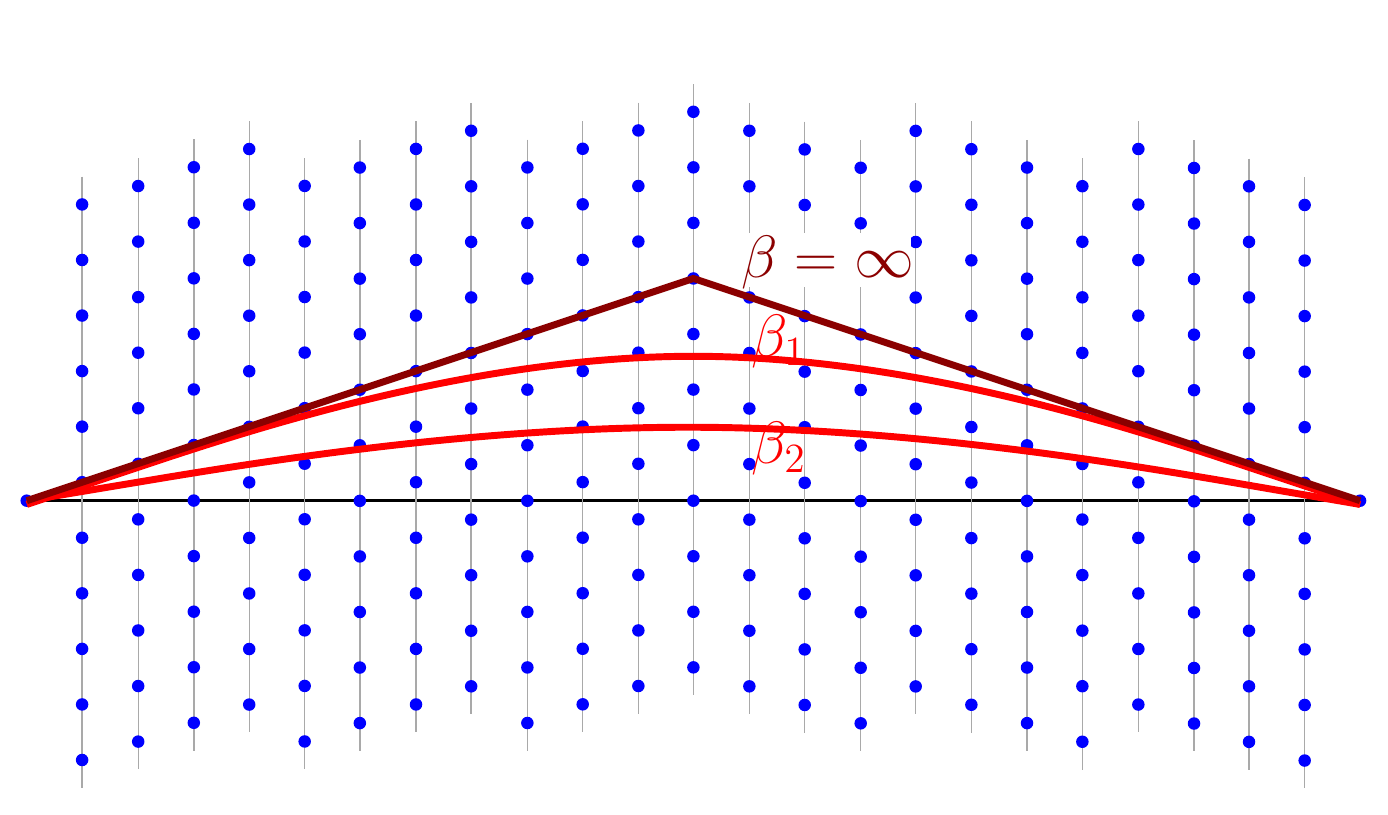}
	\end{center}
	\caption{Here $a_k:=\frac k 3$ on the L.H.S of the picture and then the slope is $-k/3$. It is easy to check that the ground-state $\hat \bm$ for the GFF conditioned to have its values in these shifted fibers is given by the purple curve $\beta=\infty$. Then, as $\beta$ decreases, the expectation $x\mapsto \EFK{\beta}{\ba,\IV}{\phi^\ba(x)}$ can be seen to decrease to $0$.}\label{f.GS}
\end{figure}

The {\em quenched} groundstate $\hat\bm(\exp(i T \phi))$ does not have enough symmetries to apply classical tools from Peierls theory and we are too far from the perturbative regime where \textbf{Pirogov-Sinai theory} can be used (see \cite[Chapter 7]{friedli2017statistical}). %\margin{check??}
\smallskip

%Therefore, for the \textbf{low-temperature regime} in the proofs of Theorem \ref{T.0-boundary theorem} and \ref{T.free-boundary theorem}, we will see that one way to recover the GFF given its phase is to take the function second choice, i.e,
Therefore, for the \textbf{low-temperature regime} in the proofs of Theorem \ref{T.0-boundary theorem} and \ref{T.free-boundary theorem}, we will recover the GFF given its phase via the  second choice, i.e,
\begin{align*}
F_T(\exp(iT\Phi))(x):=\E\left[\phi(x)\mid \exp(iT\phi) \right].
\end{align*}
It is not so easy to study this function $F$ directly. However, for any test function $f$ we can use Markov's inequality to see that
\begin{align}\label{E.Markov ineq}
\P\left( |\langle \phi-F(\exp(iT\Phi)), f \rangle|\geq \epsilon \right)\leq \frac{\E\left[ \Var{ \langle \phi, f \rangle\mid \exp(iT\phi)\ }\right] }{\epsilon ^2}.
\end{align}
This implies that to understand how well $F$ approximates $\phi$ it is enough to bound the conditional variance of $\langle \phi, f \rangle$ given $\exp(iT\phi)$. Working with the conditional variance is much easier than to work with $F$ directly. This is because one can study it by \textbf{coupling} two GFFs $(\phi_1,\phi_2)$ such that $\exp(iT \phi_1)=\exp(iT \phi_2)$ in such a way that they are conditionally independent given $\exp(iT\phi)$ (see Definition \ref{D.phis}). This is useful because
\begin{align}\label{E.Variance using phis}
\E\left[ \Var{ \langle \phi, f \rangle\mid \exp(iT\phi)}\right ]= \frac{1}{2}\E\left[\<{\phi_1 -\phi_2,f}^2 \right]. 
\end{align}
As this function does not involve any estimate of the function $F$, and both $\phi_1$ and $\phi_2$ have the law of a GFF, we set up an appropriate {\em annealed Peierl's argument} to show, in Section \ref{S.Localisation} that \eqref{E.Variance using phis} is small when $T$ is small. 
\medskip

The second part of Theorems \ref{T.0-boundary theorem} and \ref{T.free-boundary theorem}, also follows from similar ideas with a ``statistical flavour''. In fact, we are going to show that for any $f$ there exists an $\epsilon>0$ such that for all $n$ big enough 
%\margin{\avelio{I believe it works for all $n$, can you confirm?}}
\begin{align}\label{E.Variance positive}
\E\left[ \Var{ \langle \phi, f \rangle\mid \exp(iT\phi)}\right ]\geq \epsilon\Var{\langle \phi, f \rangle}.
\end{align}
This, together with some basic tension argument, implies that the probability that $\langle \phi_1 , f\rangle$ is macroscopically different from $\langle \phi_2, f \rangle$ is uniformly positive.

To obtain equation \eqref{E.Variance positive}, we need to modify the work of Fröhlich and Spencer \cite{FS}. In this seminal paper, the authors showed that the integer-valued GFF has variance similar to that of the GFF when the temperature is high enough. In our case, in Section \ref{s.deloc} we will prove a result with a similar taste (Theorem \ref{th.IVgff}) that will uniformly show that when $T$ is high enough, for any realisation of $\exp(iT\phi)$
\begin{equation}
 \Var{ \langle \phi, f \rangle\mid \exp(iT\phi)} \geq \epsilon\Var{\langle \phi, f \rangle}.
\end{equation}
This is read, in the context of \cite{FS}, as the study of \textbf{integer-valued GFF in an inhomogeneous medium}.

% Assume that you know $\exp(iT\phi)$ and you want to estimate $\phi$. Note that there is a conditional law of $\phi$ given $\exp(iT\Phi)$, so to study the quality of your approximation one could sample two independent copies of $\phi$ given that conditional law, $\phi_1$ and $\phi_2$. There is a way to recover $\phi$ if and only if $\phi_1$ and $\phi_2$ are both close enough, in other words if 
%\[\P\left((\phi_1-\phi_2,f)\geq \epsilon\right) \]
%is small.

%Let us first start with a classical identity that  will be key in this paper. Recall that if $X$ is in $L^2(d\P, \calF)$ and $\calA$ is any sub sigma-field. Then one has \footnote{The identity follows from the fact that $X= \Eb{X\md \calA}+ (X-\Eb{X \md \calA})$. Both are orthogonal in $L^2$ and $\Var{X \md \calA}$ is defined as $\Eb{(X-\Eb{X\md \calA})^2 \md \calA}$. }
%\begin{align}\label{E.conditional decomposition}
%\Eb{X^2} = \Eb{\Eb{X \md \calA}^2} + \Eb{\Var{X \md \calA}}\,.
%\end{align}

\bigskip

%\margin{$\Eb{X^2} = \Eb{\Eb{X \md \calA}^2} + \Eb{\Var{X \md \calA}}$}
\ni
{\bf Acknowledgments.}
We wish to thank J. Aru,
R. Bauerschmidt, 
V. Dang,
S. Druel,
K. Gawedzky,
P. Gille,
R. Peled,  
J-M. Stéphan and
F. Toninelli for very useful discussions. 
We also thank T. Spencer for very inspiring discussions after the first version of this work, in particular for the link with the random-phase Sine-Gordon model which he pointed out to us.
The research of the authors is supported by the ERC grant LiKo 676999. A.S. would also like to thank the hospitality of N\'ucleo Milenio ``Stochastic models of complex and disordered systems'' for repeated invitation to Santiago were a part of this paper was written.

\section{Preliminaries}\label{S.Preliminaries}

\subsection{Discrete differential calculus.} We start the preliminaries by discussing the basics of discrete differential calculus. As the only graph we work with in this paper is $\Lambda_n:= [-1,1]\cap \frac{1}{n}\Z^2$ with its canonical edge set, we only discuss the needed results in this framework. For simplicity we identify $\Lambda_n$ with its vertex set and we call and $E_{\Lambda_n}$ its edge set. For a  deeper discussion  on discrete differential calculus, we refer the reader to \cite{Sourav}.

In this section, we study two types of functions. Functions on vertices $S:\Lambda_n\mapsto \R$ and functions on directed edges $A:\overrightarrow{E}\mapsto \R$. Functions on vertices can take any values, however functions on directed edges have to always satisfy that
\begin{equation}
A(\overrightarrow{xy})=-A(\overrightarrow{yx}).
\end{equation}

Let us now present two canonical differential operators
\begin{align}
& \nabla S (\overrightarrow{xy}) = S(y)-S(x),\\
& \nabla \cdot  A (x) = \sum_{\overrightarrow{xy}} A(\overrightarrow{xy}).
\end{align}
Then, one can write the Laplacian of $S$ as follows
\begin{align}
\Delta S (x) = \nabla \cdot \nabla S (x)= \sum_{y\sim x} S(y)-S(x).
\end{align}

For a pair of functions on vertices $S_1,S_2: V\mapsto \R$, or on edges $A_1,A_2:\overrightarrow{E}\mapsto \R$,  we define
\begin{align*}
&\langle S_1, S_2\rangle := \sum_{x\in V} S_1(x) S_2(x),\\
&\langle A_1,A_2 \rangle := \frac{1}{2}\sum_{\overrightarrow{xy}} A_1(\overrightarrow{xy}) A_2 (\overrightarrow{xy}). 
\end{align*}
Furthermore, we define
\begin{align*}
\langle S_1,S_2 \rangle_{\nabla}= \langle \nabla S_1, \nabla S_2 \rangle. 
\end{align*}

Let us remark that the differentials $\nabla$ and $-\nabla\cdot$ are dual between them, i.e., 
\begin{equation}
\langle \nabla S, A \rangle = \langle S, -\nabla \cdot A \rangle.
\end{equation}
Thanks to this, we can see easily that $-\Delta$ is a positive definite operator. 

\begin{definition}[Inverse of the Laplacian]
	We fix $\partial \Lambda_n \subseteq \Lambda_n$ and we call it boundary. If $\partial \Lambda_n\neq \emptyset$, then for any function $S_1: \Lambda_n\backslash \partial \Lambda_n \mapsto \R$ there is a unique  function on the vertices $S_2: \Lambda_n\mapsto \R$ such that
	\begin{align*}
	&-\Delta S_2(x) = S_1(x), & & \forall x\in V\backslash \partial \Lambda_n\\
	& S_2(x)= 0 ,& & \forall x \in \partial \Lambda_n.
	\end{align*}
	In this case, we call $S_2:=(-\Delta^{-1})S_1$.
\end{definition}

The inverse of the Laplacian operator can be understood thanks to the Green's function
\begin{align}\label{E.Green's function definition}
G(x,\cdot)= -\Delta^{-1}(\1_x).
\end{align}
When it is needed, we will add a superscript to make explicit the boundary conditions of $G$. Let us recall a classical bound result for the Green's function in dimension 2.
	\begin{proposition} We have that for the graph $\Lambda_n$ and for both free and $0$ boundary condition and for any $x,y\in \Lambda_n$ 
		\begin{align}\label{E.Green's function}
		G(x,x)= C \log (d(x,\partial \Lambda_n))+O(1),
		\end{align}
		where $C$ does not depend on any other parameter.
	\end{proposition}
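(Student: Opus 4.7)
The plan is to use the probabilistic representation of the Green's function together with the classical asymptotics of the two-dimensional potential kernel. For simple random walk $(S_k)_{k \geq 0}$ on $\Lambda_n$, the Dirichlet Green's function admits the representation $G^{\mathrm{Dir}}(x,y) = \E^x[\#\{k < \tau_\p : S_k = y\}]$, where $\tau_\p$ is the first hitting time of $\p \Lambda_n$. Recall that the potential kernel $a : \Z^2 \to \R$ for simple random walk on $\Z^2$ --- characterised by $a(0)=0$ and $-\Delta a = \1_0$ --- satisfies the classical asymptotics $a(z) = \tfrac{2}{\pi}\log|z| + \kappa + O(|z|^{-2})$ as $|z| \to \infty$, which supplies the logarithmic scale.

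First I would treat the Dirichlet case. Applying optional stopping to the martingale $M_k := a(S_k - x) - \#\{k' < k : S_{k'} = x\}$ at time $\tau_\p$ yields the identity
\[
G^{\mathrm{Dir}}(x,x) = \E^x[a(S_{\tau_\p} - x)].
\]
The lower bound $G^{\mathrm{Dir}}(x,x) \geq C \log d(x, \p \Lambda_n) - O(1)$ then follows at once, since $|S_{\tau_\p} - x| \geq d(x, \p \Lambda_n)$ almost surely. For the matching upper bound, I would use domain monotonicity $G^{\mathrm{Dir}}_{\Lambda_n}(x,x) \leq G^{\mathrm{Dir}}_H(x,x)$, where $H$ is a half-plane containing $\Lambda_n$ with $x$ at distance $d(x, \p \Lambda_n)$ from $\p H$, combined with the explicit bound $G^{\mathrm{Dir}}_H(x,x) \leq C \log d(x, \p \Lambda_n) + O(1)$ for the half-plane Green's function --- itself obtained by a reflection-principle reduction to the free potential kernel $a$.

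For the free boundary case rooted at $x_0$, the same strategy applies after reinterpreting the zero-set of the field as $\{x_0\}$ and replacing the walk killed on $\p \Lambda_n$ by the walk reflected at $\p \Lambda_n$ and killed at $x_0$. The reflected walk still enjoys potential-kernel asymptotics of the same logarithmic order (the reflections contribute only $O(1)$ corrections near the boundary), so the same optional stopping argument yields the claim. The main obstacle will be the upper bound for points $x$ close to a corner of $\Lambda_n$, where the half-plane comparison is too loose; here one instead compares with a quadrant containing $\Lambda_n$, or invokes a harmonic-measure estimate showing that starting from $x$ the walk exits $\Lambda_n$ at a point of distance $O(d(x,\p \Lambda_n))$ with positive probability, which is enough to close the upper bound.
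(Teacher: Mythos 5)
The paper does not actually prove this proposition: it is stated as a recollection of a classical fact (the closest the paper comes to a source is its later invocation of the Chelkak--Smirnov asymptotics $G(x,y)=-(2\pi)^{-1}\log(\|x-y\|/n)+O(1)$), so there is no in-paper argument to compare yours against. What you write is the standard proof, and the Dirichlet half is essentially complete: the martingale identity $G^{\mathrm{Dir}}(x,x)=\E^x[a(S_{\tau_\partial}-x)]$ is correct (optional stopping is legitimate since $\E^x[\tau_\partial]<\infty$ and the increments of $a(S_k-x)$ and of the visit count are bounded), the lower bound follows as you say, and the half-plane comparison gives the upper bound. One remark: since $\Lambda_n$ is a square, the half-plane bounded by the line through the side of $\partial\Lambda_n$ nearest to $x$ already satisfies $d(x,\partial H)=d(x,\partial\Lambda_n)$, so the corner difficulty you anticipate does not arise and no quadrant estimate is needed. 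A cosmetic point: $d(x,\partial\Lambda_n)$ must be read in lattice units (graph distance), as you implicitly do, otherwise the statement is false.

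The free-boundary case is where your sketch has a real gap. In the paper's convention $\partial\Lambda_n=\{x_0\}$, so the claim is $G(x,x)=C\log d(x,x_0)+O(1)$, and the assertion that ``the reflections contribute only $O(1)$ corrections'' is precisely what has to be proved rather than a one-line observation: the Green's function of the walk on the box killed only at $x_0$ is not the free potential kernel plus a bounded error in any obvious sense. The clean way to close this is via effective resistance: $G(x,x)=\deg(x)\,R_{\mathrm{eff}}(x,x_0)$ for the network on $\Lambda_n$, Rayleigh monotonicity gives $R_{\mathrm{eff}}^{\Lambda_n}(x,x_0)\geq R_{\mathrm{eff}}^{\Z^2}(x,x_0)=c\log d(x,x_0)+O(1)$ for the lower bound, and for the upper bound one uses a nested-annuli (or method-of-images, folding the plane onto the box) argument showing that the resistance across each dyadic annulus around $x$ inside the box is $O(1)$. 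With that substitution the proof is sound.
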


	\subsection{The Gaussian free field.} In this subsection, we introduce the GFF and some of the properties we use throughout the paper. For a more detailed discussion of the GFF, we refer the reader to \cite{SGFF,Sznitman2012LectureIso}.

	Let us fix a boundary set $\partial \Lambda_n$, the GFF with 0-boundary condition is the random function $\phi:V\mapsto \R$  such that
	\[\P((\phi(x_v)\in dx_v)_{v\in \Lambda_n})\propto \exp\left (-\frac{\langle \phi, \phi \rangle_\nabla}{2} \right )\prod_{v\in \Lambda_n \backslash \partial \Lambda_n}dx_v \prod_{\Lambda_n\in \partial \Lambda_n} \delta_0(dx_v).\]
	
	We say that $\phi$ is a GFF with free-boundary condition if $\partial \Lambda_n=\{x_0\}$, for some $x_0\in \Lambda_n$. We say that $\phi$ is a GFF with $0$ (or Dirichlet) boundary condition if
	\[\partial \Lambda_n := \{x\in \Lambda_n: |\Re(x)|=1 \text{ or } |\Im(x)|=1\}, \]
	in other words, the points in $\Lambda_n$ that are in the boundary of $[-1,1]^2$.

	An important equivalent characterisation of the GFF, is as the centred Gaussian process with covariance
	\[\E\left[\phi(x)\phi(y) \right]=G(x,y),  \]
	where the boundary values of the Green's function are associated with the boundary values of the GFF.
	
	A key property to understand the GFF is its Markov property.
	\begin{proposition}[Weak Markov property]\label{P.Weak Markov}
		Let $\phi$ be a GFF in $\Lambda_n$ with $0$-boundary condition in $\partial \Lambda_n$. Furthermore, let $B$ be a subset of the vertices of $\Lambda_n$. Then, there are two independent random function $\phi_B$ and $\phi^B$ such that $\phi=\phi_B+\phi^B$ and
		\begin{enumerate}
			\item $\phi_B$ is harmonic in $\Lambda_n\backslash B$.
			\item $\phi^B$ is a GFF in $\Lambda_n$ with 0-boundary condition in $\partial \Lambda_n\cup B$.
		\end{enumerate}
	\end{proposition}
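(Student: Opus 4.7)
The plan is to construct $\phi_B$ as the discrete-harmonic extension of the values of $\phi$ on $B\cup \partial\Lambda_n$, set $\phi^B:=\phi-\phi_B$, and verify all claims via an orthogonal decomposition of the Dirichlet form.

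First, I would define $\phi_B(x):=\phi(x)$ for $x\in B\cup\partial\Lambda_n$ and extend $\phi_B$ to $\Lambda_n\setminus(B\cup\partial\Lambda_n)$ as the unique discrete-harmonic function with these boundary values; existence and uniqueness follow from invertibility of $-\Delta$ with Dirichlet data on $B\cup\partial\Lambda_n$ (i.e.\ the discrete maximum principle). Set $\phi^B:=\phi-\phi_B$. By construction $\phi_B$ is harmonic on $\Lambda_n\setminus(B\cup\partial\Lambda_n)$; since $\phi\equiv 0$ on $\partial\Lambda_n$ forces $\phi_B\equiv 0$ there as well, harmonicity actually extends to all of $\Lambda_n\setminus B$, giving property (1). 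Moreover $\phi^B\equiv 0$ on $B\cup\partial\Lambda_n$, which is the boundary condition required in (2).

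The key step is the orthogonal decomposition
\[
\<{\nabla\phi,\nabla\phi}=\<{\nabla\phi_B,\nabla\phi_B}+\<{\nabla\phi^B,\nabla\phi^B}.
\]
After expanding this reduces to showing $\<{\nabla\phi_B,\nabla\phi^B}=0$. By the summation-by-parts identity from Section \ref{S.Preliminaries}, $\<{\nabla\phi_B,\nabla\phi^B}=\<{\phi^B,-\Delta\phi_B}$, and this sum vanishes termwise: on $\Lambda_n\setminus(B\cup\partial\Lambda_n)$ one has $-\Delta\phi_B=0$ by harmonicity, while on $B\cup\partial\Lambda_n$ one has $\phi^B=0$. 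Hence the Dirichlet energy splits cleanly.

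To conclude, I would change variables. Parametrize the free coordinates of $\phi$ by the pair $(\bm,\bv):=(\phi|_B,\phi|_{\Lambda_n\setminus(B\cup\partial\Lambda_n)})$ and then substitute $\bv$ by the interior coordinates $\bw$ of $\phi^B$, namely $\bw:=\bv-\phi_B|_{\Lambda_n\setminus(B\cup\partial\Lambda_n)}$; at fixed $\bm$ this is a translation with unit Jacobian. The GFF density then factorizes as
\[
\frac{1}{Z}\exp\!\left(-\tfrac12\<{\nabla\phi_B,\nabla\phi_B}\right)\cdot\exp\!\left(-\tfrac12\<{\nabla\phi^B,\nabla\phi^B}\right),
\]
where the first factor depends only on $\bm$ (hence only on $\phi_B$) and the second only on $\bw$ (hence only on $\phi^B$). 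This yields simultaneously the independence of $\phi_B$ and $\phi^B$ and identifies the marginal law of $\phi^B$ as the GFF on $\Lambda_n$ with Dirichlet boundary condition on $B\cup\partial\Lambda_n$, establishing (2). I do not expect any serious obstacle: the proof is essentially a Gaussian orthogonal projection, and the only points requiring care are the disjoint-support argument killing the cross Dirichlet term and the identification of the correct boundary set $B\cup\partial\Lambda_n$ for $\phi^B$, which relies on $\phi\equiv 0$ on $\partial\Lambda_n$.
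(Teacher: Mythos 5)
Your proof is correct and is exactly the standard argument the paper has in mind: the paper states this proposition without proof (citing \cite{SGFF,Sznitman2012LectureIso}), and its own use of the result in Lemma \ref{L.Law of the norm} identifies $\phi_B$ as the orthogonal projection of $\phi$ onto $\mathrm{Harm}(B)$ for $\langle\cdot,\cdot\rangle_\nabla$, which is precisely your harmonic extension plus the Dirichlet-energy splitting and unit-Jacobian change of variables. The only cosmetic slip is the claim that $\phi_B\equiv 0$ on $\partial\Lambda_n$ makes it harmonic there — vanishing at a vertex does not imply harmonicity at that vertex; the intended (and correct) statement is harmonicity on $\Lambda_n\setminus(B\cup\partial\Lambda_n)$, as the paper itself uses when defining $\mathrm{Harm}(B)$.
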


		Let us, now, define a white noise on the edges of $\Lambda_n$.
		\begin{definition}[White noise]We denote $W$ a white noise, if $W$ is a function on the directed edges of $\Lambda_n$ such that $(W(\overrightarrow e))$ is a standard normal random variable independent of all other $W(\overrightarrow e')$ with $e\neq e'$.
		\end{definition}
		
		The discrete gradient of the GFF has an interesting relationship with the white noise. This result can be found in \cite{Aru} for this setting as well as in \cite{JC} for the same decomposition in the continuous.
%\margin{Ch: J'ai toujours trouvé cette proposition un peu disymetrique: on pourrait dire que $\zeta$ est un GFF sur les 2 formes non ? Si tu as une idee pour garder la proposition compacte, on peut changer, sinon on laisse comme ca. Ok, good, indeed il faudrait introduire le GFF sur les faces.} 
	\begin{proposition}\label{P.dphi+d*phi*}
		Let $\phi$ be a GFF  in $\Lambda_n$, then there exists a Gaussian process $\zeta(\overrightarrow{e})$ such that
		\[W:= \nabla \phi +\zeta \]
		is a white noise in $E$. Furthermore,
		\[\phi = \Delta^{-1} \nabla \cdot  W. \]
	\end{proposition}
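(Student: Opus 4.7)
The plan is to reformulate the claim as a Hodge-type decomposition statement: the edge-space $\R^{\overrightarrow{E}}$ splits orthogonally under $\langle\cdot,\cdot\rangle$ into $\mathrm{Im}(\nabla)$ and $\ker(\nabla\cdot)$, and the law of $\nabla\phi$ corresponds to a Gaussian sitting in (and being standard on) the first factor, while $\zeta$ will be an independent Gaussian supported on the second factor. The orthogonality is immediate from the adjunction $\langle\nabla S, A\rangle=\langle S,-\nabla\cdot A\rangle$ recalled in this section: if $A=\nabla S$ and $\nabla\cdot A'=0$, then $\langle A,A'\rangle=-\langle S,\nabla\cdot A'\rangle=0$. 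The complementarity of these two spaces (they span $\R^{\overrightarrow{E}}$) follows because $\nabla\cdot\nabla=\Delta$ is invertible on functions vanishing on $\p\Lambda_n$ (resp.\ at the root $x_0$ in the free case), so $\Pi:=\nabla(-\Delta)^{-1}(-\nabla\cdot)$ is a well-defined idempotent, self-adjoint operator with image $\mathrm{Im}(\nabla)$ and $I-\Pi$ projects onto $\ker(\nabla\cdot)$.

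The key computation is to identify the covariance of $\nabla\phi$ with $\Pi$. For any edge functions $A_1,A_2$, using the adjunction and $\E[\langle \phi,S_1\rangle\langle\phi,S_2\rangle]=\langle S_1,G S_2\rangle=\langle S_1,(-\Delta)^{-1}S_2\rangle$,
\[
\E\bigl[\langle\nabla\phi,A_1\rangle\langle\nabla\phi,A_2\rangle\bigr]
=\langle -\nabla\cdot A_1,(-\Delta)^{-1}(-\nabla\cdot A_2)\rangle
=\langle A_1,\Pi\, A_2\rangle.
\]
So $\nabla\phi$ is a centred Gaussian process on $\R^{\overrightarrow{E}}$ with covariance operator $\Pi$; in particular it lives (a.s.) in $\mathrm{Im}(\nabla)$, as expected.

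To produce $\zeta$, introduce a white noise $\widetilde W$ on the edges that is independent of $\phi$, and set $\zeta:=(I-\Pi)\widetilde W$. Then $\zeta$ is a centred Gaussian with covariance $(I-\Pi)^2=I-\Pi$, and it is independent of $\nabla\phi$ by construction. Defining $W:=\nabla\phi+\zeta$ we obtain a centred Gaussian whose covariance is the sum $\Pi+(I-\Pi)=I$, i.e.\ $W$ is a white noise. Finally, since $\zeta\in\ker(\nabla\cdot)$,
\[
\Delta^{-1}\nabla\cdot W=\Delta^{-1}\bigl(\nabla\cdot\nabla\phi+\nabla\cdot\zeta\bigr)=\Delta^{-1}\Delta\phi=\phi,
\]
which proves the second claim.

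The only real subtlety is boundary handling: one must work with $\phi$ living in the space of functions vanishing on $\p\Lambda_n$ (Dirichlet) or rooted at $x_0$ (free), so that $-\Delta$ is genuinely invertible and the operator $\Pi$ above is well defined; the same decomposition is then valid verbatim in both cases. I expect this bookkeeping to be the only place where care is required, while the algebraic heart of the argument is the orthogonal decomposition and the identification of $\mathrm{cov}(\nabla\phi)$ with the exact-form projector.
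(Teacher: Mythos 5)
Your argument is correct: the identification of the covariance of $\nabla\phi$ with the self-adjoint idempotent $\Pi=\nabla(-\Delta)^{-1}(-\nabla\cdot)$, the completion by an independent Gaussian with covariance $I-\Pi$ supported on $\ker(\nabla\cdot)$ (divergence-free in the interior), and the final inversion are all sound, and you correctly flag the only delicate point, namely that $-\Delta$ must be inverted on functions vanishing on $\p\Lambda_n$ (resp.\ at the root). The paper does not prove this proposition but refers to \cite{Aru} (and \cite{JC} in the continuum), where exactly this orthogonal-projection/Hodge decomposition argument is used, so your proof matches the intended one.
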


%%%!!!!%%%%
%\subsection{The KT transition of the integer-valued GFF.}\label{ss.BackKT}
\subsection{Integer-valued Gaussian free field and the KT transition.}\label{ss.BackKT}

In this section, we briefly explain how Fröhlich and Spencer proved their delocalisation Theorem \ref{th.FS} as we will rely on the technology they developed (an expansion into Coulomb charges) later in Section \ref{s.deloc}. 
We refer the reader to the excellent review \cite{RonFS} from which we borrow the notations. See \cite{RonFS} for the relevant definitions. 

For simplicity, we fix a square domain $\Lambda \subset \Z^2$ and we consider the case of free boundary conditions rooted at some vertex $v\in \Lambda$. 

The proof by Fröhlich-Spencer can essentially be decomposed into the following successive steps:

{\em 1) The first step}  is to view the singular conditioning $\{\phi_i \in 2\pi \Z, \forall i \in \Lambda\}$ using Fourier series\footnote{It is slightly more convenient to consider the GFF conditioned to leave in $(2\pi \Z)^\Lambda$ rather than $\Z^\Lambda$. Following \cite{FS,RonFS}, we will stick to this convention here as well as in Section \ref{s.deloc} and Appendix \ref{a}.} thanks to the identity
\[
2\pi \sum_{m\in \Z} \delta_{2\pi m}(\phi)  \equiv 1 + 2 \sum_{q=1}^\infty \cos(q \phi)\,.
\]
To avoid dealing with infinite series, proceeding as in \cite{RonFS}, we consider the following approximate IV-GFF
\[
\P_{\beta,\Lambda,v}[d\phi] := \frac 1 {Z_{\beta,\Lambda,v}}  \prod_{i\in \Lambda} \left(1 + 2 \sum_{q=1}^N \cos( q \phi(i)) \right) \P_{\beta,\Lambda,v}^{\GFF}[d\phi]\,.
\]
In fact, more general measures are considered in \cite{FS,RonFS}:  they fix a family of trigonometric polynomials $\lambda_\Lambda := (\lambda_i)_{i\in \Lambda}$ attached to each vertex $i\in \Lambda$. These trigonometric polynomials are parametrized as follows: for each $i\in \Lambda$, 
\[
\lambda_i(\phi) = 1 + 2 \sum_{q=1}^N \hat \lambda_i(q) \cos(q \phi(i))\,.
\]
Now given a family of trigonometric polynomials $\lambda_\Lambda$, they define 
\[
\P_{\beta,\Lambda,\lambda_\Lambda, v}[d\phi] := \frac 1 {Z_{\beta,\Lambda,,\lambda_{\Lambda},v}}  \prod_{i\in \Lambda}  \lambda_i(\phi(i)) \P_{\beta,\Lambda,v}^{\GFF}[d\phi]\,.
\] 
We mention this degree of generality to keep the same notations as in \cite{FS, RonFS} and also so that the reader will not get confused when opening these references. Also, this degree of generality will be useful later in Appendix \ref{a.SG}.  Yet, in the present case, we will stick to the case where $\hat \lambda_i(q) = 1$ for all $i\in \Lambda$ and $1\leq q \leq N$. % and will thus only work with the above measures $\P_{\beta, \Lambda, v}$. 

\medskip
{\em 2) The second step} in the proof is to fix a test function $f:\Lambda \to \R$ such that $\sum_{i\in \Lambda} f(i) =0$ and to consider the Laplace transform of $\<{\phi, f}$, $\EFK{\beta,\Lambda,\lambda_\Lambda, v}{}{e^{\<{\phi,f}}}$. As $N\to \infty$ with our choice of trigonometric polynomials\footnote{Note that, in this section, instead of conditioning the GFF to be in $\Z^\Lambda$ as in Definition \ref{d.IVgff}, our convention in this section is to condition it to be in $(2\pi \Z)^\Lambda$. Besides changing constants, this does not make much difference}, this will converge to the Laplace transform $\EFK{\beta,\Lambda}{\IV}{e^{\<{\phi,f}}}$.  

By a simple change of variables, this Laplace transform can be rewritten 
\[
\EFK{\beta,\Lambda,\lambda_\Lambda, v}{}{e^{\<{\phi,f}}} 
= \frac 1 {Z_{\beta,\Lambda,\lambda_\Lambda,v}} \exp(\frac 1 {2 \beta} \<{f, - \Delta^{-1} f}) 
\EFK{\beta,\Lambda,v}{\GFF}
{\prod_{i\in \Lambda} \lambda_i(\phi(i) \purple{+\sigma(i)})}\,,
\]
where the function $\sigma=\sigma_f$ will be used throughout and is defined by
\begin{align}\label{e.sigma}
\sigma:= \frac 1 \beta [- \Delta]^{-1} f 
\end{align}

The main difficulty in the proof in \cite{FS} is in some sense to show that the effect induced by the shift $\purple{\sigma}$ does not have a dramatic effect compared to the exponential term $\exp(\frac 1 {2 \beta} \<{f, - \Delta^{-1} f})$ so that ultimately,
\begin{align*}\label{}
\EFK{\beta,\Lambda}{\IV}{e^{\<{\phi,f}}} \geq  \exp(\frac 1 {2 \beta(1+\eps)} \<{f, - \Delta^{-1} f})\,.
\end{align*}
From such a lower bound on the Laplace transform,  one can easily extract delocalisation properties of the IV-GFF. 

\medskip

{\em 3) The third (and by far most difficult) step} is to control the effect of the shift $\purple{\sigma}$ via a highly non-trivial expansion into Coulomb charges which enables to rewrite the partition function as follows:
\begin{align*}\label{}
Z_{\beta,\Lambda,\lambda_\Lambda,v} = \sum_{\calN \in \calF} c_\calN \int \prod_{\rho\in \calN} [1+z(\beta,\rho, \calN) \cos(\<{\phi,\bar\rho})] d\mu_{\beta,\Lambda,v}^\GFF(\phi)\,.
\end{align*}

We refer to \cite{FS,RonFS} for the notations used in this expression and in particular for the concept of {\em charges} (i.e. $\rho : \Lambda \to \R$), {\em ensembles} (i.e. sets $\calN$ of mutually disjoint charges $\rho$) etc. 

One important feature of this expansion into charges is the fact that under some (very general) assumptions on the growth of the Fourier coefficients $|\hat \lambda_i(q)|$ (see (5.35) in \cite{FS}), it can be shown that the effective activities $z(\beta, \rho, \calN)$ decay fast. Namely (see (1.14) in \cite{RonFS}),
\[
|z(\beta, \rho, \calN)|
\leq \exp \left( -\frac c \beta (\|\rho\|_2^2 + \log_2(\mathrm{diam}(\rho) + 1))\right).
\]
As such we see that at high temperature, the partition function corresponds to a sum of positive measures. (Also the weights $c_\calN$ are positive and s.t. $\sum c_\calN =1$). 

\begin{remark}\label{}
In \cite{RonFS}, the authors have introduced a slightly different definition of the free b.c. GFF which makes the analysis behind this decomposition into charges more pleasant (their definition cures the presence of non-neutral charges $\rho$ very easily). One can switch to their more convenient definition in our setting since in the limit $N\to \infty$, both give the same integer-valued GFF. 
\end{remark}

This crucial third step thus allows us to rewrite the Laplace transform $\EFK{\beta,\Lambda,\lambda_\Lambda, v}{}{e^{\<{\phi,f}}}$ as follows: 
\begin{align*}\label{}
e^{\frac 1 {2 \beta} \<{f, - \Delta^{-1} f}}
\frac 
{
\sum_{\calN \in \calF} c_\calN \int \prod_{\rho\in \calN} [1+z(\beta,\rho, \calN) \cos(\<{\phi,\bar\rho} + \purple{\<{\sigma, \rho}})] d\mu_{\beta,\Lambda,v}^\GFF(\phi)
}
{
\sum_{\calN \in \calF} c_\calN \int \prod_{\rho\in \calN} [1+z(\beta,\rho, \calN) \cos(\<{\phi,\bar\rho})] d\mu_{\beta,\Lambda,v}^\GFF(\phi)
}\,.
\end{align*}
We now rewrite this ratio as  (thus defining $Z_\calN(\purple{\sigma})$ and $Z_\calN(0)$)
\begin{align*}\label{}
\EFK{\beta,\Lambda,\lambda_\Lambda, v}{}{e^{\<{\phi,f}}}
= e^{\frac 1 {2 \beta} \<{f, - \Delta^{-1} f}}
\frac
{\sum_{\calN\in \calF} c_\calN Z_{\calN}(\purple{\sigma})}
{\sum_{\calN\in\calF} c_\calN Z_\calN(0)}
\end{align*}

\medskip
{\em 4) The fourth step is an analysis} for each fixed ensemble $\calN\in \calF$ of the above ratio $\frac {Z_{\calN}(\purple{\sigma})} {Z_{\calN}(0)}$. Trigonometric inequalities are used here to in order to obtain for each $\calN$:
\begin{align*}\label{}
 \frac {Z_{\calN}(\purple{\sigma})} {Z_{\calN}(0)}  \geq &  
\exp\big[-D_4 \sum_{\rho\in \calN} |z(\beta,\rho,\calN)|\<{\sigma,\rho}^2\big] \\
& \times 
\int \frac
{e^{S(\calN,\phi)}} 
{Z_{\calN}(0)}
\prod_{\rho\in\calN}[1+z(\beta,\rho,\calN) \cos(\<{\phi,\bar \rho})]
d\mu_{\beta,\Lambda,v}^\GFF(\phi)\,, \nn
\end{align*}
where
\begin{align}\label{e.Sba}
S(\calN,\phi):=
-\sum_{\rho\in\calN} \frac
{z(\beta,\rho,\calN)\sin(\<{\phi,\bar \rho})\sin(\<{\sigma,\rho})}
{1+z(\beta,\rho,\calN)\cos(\<{\phi,\bar \rho})}
\end{align}
Two crucial observations are made at this stage:
\bnum
\item The functional $\phi\mapsto S(\calN, \phi)$ is odd in $\phi$
\item The  measure $\prod_{\rho\in\calN}[1+z(\beta,\rho,\calN) \cos(\<{\phi,\bar \rho})] d\mu_{\beta,\Lambda,v}^\GFF(\phi)$ is invariant under $\phi \to -\phi$.
\enum
All together this simplifies tremendously the above lower bound, as by using Jensen, one obtains readily 
\begin{align*}\label{}
 \frac {Z_{\calN}(\purple{\sigma})} {Z_{\calN}(0)}  \geq &  
\exp\big[-D_4 \sum_{\rho\in \calN} |z(\beta,\rho,\calN)|\<{\purple{\sigma},\rho}^2\big] \,.\nn
\end{align*}
From this lower bound together with the specific construction of the ensembles of charges $\calN$, it is then not very difficult to conclude the proof with the desired lower bound
\begin{align*}\label{}
\EFK{\beta,\Lambda,\lambda_\Lambda,c}{}{e^{\<{\phi,f}}} \geq  \exp(\frac 1 {2 \beta(1+\eps)} \<{f, - \Delta^{-1} f})\,.
\end{align*}

\medskip
As we will see in Section \ref{s.deloc}, the effect of shifting the $\Z$ fibers by $\ba\in\R^\Lambda$ will translate as follows: 
\begin{align*}\label{}
&\EFK{\beta,\Lambda,\lambda_\Lambda,v}{\ba}{e^{\<{\phi,f}}}  \nn\\
&=
e^{\frac 1 {2\beta} \<{f, - \Delta^{-1} f}} 
\frac{\sum_{\calN \in \calF} c_\calN \int \prod_{\rho\in \calN} [1+z(\beta,\rho, \calN) \cos(\<{\phi,\bar\rho}+\<{\sigma-\ba,\rho})] d\mu_{\beta,\Lambda,v}^\GFF(\phi)}
{\sum_{\calN \in \calF} c_\calN \int \prod_{\rho\in \calN} [1+z(\beta,\rho, \calN) \cos(\<{\phi,\bar\rho} - \<{\ba,\rho})]d\mu_{\beta,\Lambda,v}^\GFF(\phi)}\,.\nn 
\end{align*}

The difficulty for us will be that, generically, $\ba$ is much less regular than $\sigma$ (defined in ~\eqref{e.sigma}) which thus makes the Dirichlet energy $\<{\nabla(\sigma - \ba),\nabla( \sigma - \ba)}$ typically huge. Because of this, we will not be able anymore to rely on the two symmetries above (in particular the use of Jensen is not longer possible except for very specific choices of $\ba$, see the discussion after Definition \ref{d.IVba}). We will come back to this in Section \ref{s.deloc}.

\begin{remark}\label{R.Wirth}
The case of Dirichlet  boundary conditions has been outlined in the appendix D. of \cite{FS} and the details of the proof appeared very recently in the appendix of \cite{Wirth}. The proof structure highlighted above for free b.c. still  holds except the decomposition into charges needs to be adapted to the presence of a boundary. See the Appendix in \cite{Wirth}. 
 
We also point out that the nice symmetrization argument used in \cite{Wirth} does not apply to our case (as $\ba$ is far from being harmonic) and also because the symmetrized measure in most cases does not provide informations on the fluctuations we need.
\end{remark}

%%%%%!!!!%%%%

\subsection{Link with the $\ba$-shifted integer-valued GFF.}

In this section, we precise the link between our statistical reconstruction problem and the $\ba$-shifted IV-GFF introduced earlier (in Definition \ref{d.IVba}).
\begin{lemma}\label{L.aIV}
Let $\Lambda \subset \Z^2$, $T>0$ and $\ba \in  \R^\Lambda$ with $\ba_{\md \p \Lambda} \equiv 0$. If $\phi$ is a 0-boundary GFF (with inverse temperature $\beta=1$) on $\Lambda$, then its conditional law given $\phi\Mod{\frac {2\pi} T} = \frac {2\pi} T \ba$ is given by $\frac{2\pi} T \psi$, where $\psi \sim \P_{\beta_T, \Lambda}^{\ba,\IV}$ and where the $T$-dependent inverse temperature $\beta_T$ is given by 
\[
\beta_T:= \frac {(2\pi)^2} {T^{2}}\,.
\]
Equivalently, for any functional $F : \R^\Lambda \to \R$, 
\begin{align*}\label{}
\Eb{ F(\phi)  \md  e^{iT \Phi}=e^{2\pi i \ba }} &=
\EFK{\beta=\beta_T,\Lambda}{\ba,\mathrm{IV}}{F(\frac{2\pi}{T} \psi)}\,.
\end{align*}
\end{lemma}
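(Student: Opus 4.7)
The plan is to obtain the conditional law by a direct disintegration of the Gaussian density of $\phi$ along the fibers of the quotient map $\R^{\Lambda\setminus \p\Lambda}\to (\R/\tfrac{2\pi}{T}\Z)^{\Lambda\setminus \p\Lambda}$ induced by $\phi \mapsto e^{iT\phi}$.

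\textbf{Step 1 (rescaling).} I first rescale to $\psi := \tfrac{T}{2\pi} \phi$. This is an affine bijection of $\R^{\Lambda\setminus \p\Lambda}$ onto itself with constant Jacobian, and the Dirichlet condition $\phi_{\md \p\Lambda}\equiv 0$ becomes $\psi_{\md \p\Lambda}\equiv 0$. Under this change of variables the Gaussian density (at $\beta=1$) proportional to $\exp\bigl(-\tfrac 1 2 \<{\nabla \phi,\nabla \phi}\bigr)$ becomes proportional to
\[
\exp\Bigl(-\tfrac{1}{2}\tfrac{(2\pi)^2}{T^2}\<{\nabla \psi,\nabla \psi}\Bigr) \;=\; \exp\Bigl(-\tfrac{\beta_T}{2}\<{\nabla \psi,\nabla \psi}\Bigr),
\]
with $\beta_T = (2\pi)^2/T^2$. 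The conditioning event $\{\phi \bmod \tfrac{2\pi}{T} = \tfrac{2\pi}{T} \ba\}$ translates into $\{\psi\in \ba + \Z^{\Lambda\setminus \p\Lambda}\}$.

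\textbf{Step 2 (disintegration).} I then use the standard disintegration of Lebesgue measure on $\R^{\Lambda\setminus \p \Lambda}$ as the semidirect product of the Haar measure on the torus $(\R/\Z)^{\Lambda\setminus \p\Lambda}$ with the counting measure on the lattice $\Z^{\Lambda\setminus \p\Lambda}$: for any nonnegative measurable $G$,
\[
\int_{\R^{\Lambda\setminus \p\Lambda}} G(\psi)\,d\psi \;=\; \int_{[0,1)^{\Lambda\setminus \p\Lambda}} \sum_{\bm \in \Z^{\Lambda\setminus \p\Lambda}} G(\bm + \ba)\, d\ba.
\]
Applying this to the Gaussian density of $\psi$ identifies the conditional law of $\psi$ given $\psi\bmod 1 = \ba$ (equivalently, given the fiber) as the discrete probability measure on $\{\bm + \ba : \bm\in \Z^{\Lambda\setminus\p\Lambda}\}$ assigning mass proportional to $\exp\bigl(-\tfrac{\beta_T}{2}\<{\nabla(\bm+\ba),\nabla(\bm+\ba)}\bigr)$. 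Comparing with Definition~\ref{d.IVba} and equation~\eqref{E.P a,IV}, this is precisely $\P^{\ba,\IV}_{\beta_T,\Lambda}$.

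\textbf{Step 3 (return to $\phi$).} Inverting the rescaling, the conditional law of $\phi = \tfrac{2\pi}{T}\psi$ given $e^{iT\phi} = e^{2\pi i \ba}$ is the push-forward of $\P^{\ba,\IV}_{\beta_T,\Lambda}$ by the map $\psi \mapsto \tfrac{2\pi}{T}\psi$. Writing this out in the form of the displayed identity $\Eb{F(\phi) \md e^{iT\phi}=e^{2\pi i\ba}} = \EFK{\beta_T,\Lambda}{\ba,\IV}{F\bigl(\tfrac{2\pi}{T}\psi\bigr)}$ completes the proof.

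The only point requiring care is the bookkeeping: one must check that the Jacobian factors from the $T/(2\pi)$-rescaling get absorbed into the normalizing constant and do not affect the identification with $\beta_T$, and that the conditioning event is indeed described by a measurable disintegration (which is clean here because $\phi\mapsto e^{iT\phi}$ is a smooth submersion onto the torus). No genuine analytic difficulty arises, so this lemma is essentially a careful unwinding of definitions.
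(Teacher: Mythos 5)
Your proof is correct and is essentially the same as the paper's: the paper also proceeds by disintegrating the Gaussian density along the fibers of the modulo map and then factoring the $(2\pi/T)^2$ out of the Dirichlet form to identify $\beta_T$. The only cosmetic difference is that you rescale to $\psi=\tfrac{T}{2\pi}\phi$ before disintegrating, while the paper disintegrates first and rescales inside the resulting ratio of sums.
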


%The proof is straighforward.

\ni
{\bf Proof.}

Recall from Definition \ref{d.IVba}, 
\begin{align*}\label{}
\FK{\beta,\Lambda}{\ba,\mathrm{IV}}{d\phi}:=\frac 1 {Z}
\sum_{\bm \in \Z^\Lambda,   \bm_{\md \p \Lambda}\equiv 0} \delta_{\bm+\ba}(\phi) 
\exp(- \frac \beta 2 %\frac {(2\pi)^2} {2 T^2} 
\<{\nabla(\bm + \ba), \nabla(\bm+\ba)})
\end{align*}
Now, by desintegration, for any $\calC^0$ functional $F : \R^\Lambda \to \R$, one has
\begin{align*}\label{}
\Eb{ F(\phi)  \md  e^{iT \Phi}  = e^ {2\pi i \ba } } 
& = 
\frac
{\sum_{\bm\in \Z^\Lambda} \exp\left( -\frac 1 2 \<{\frac {2\pi} T (\ba + \bm),  (-\Delta) \frac {2\pi} T (\bm+ \ba)}\right)   F(\frac {2\pi} T (m +  \ba)) }
{\sum_{\bm\in \Z^\Lambda} \exp\left( -\frac 1 2 \<{\frac {2\pi} T (\ba + \bm),  (-\Delta) \frac {2\pi} T (\bm+ \ba)}\right)  }
\\
& = 
\frac
{\sum_{\bm\in \Z^\Lambda} \exp\left( -\frac {(2\pi)^2} {2T^2} \langle (\ba + \bm),  (-\Delta) (\bm+ \ba)\rangle \right)  F(
\frac {2\pi} {T} (m +  \ba)) }
{
\sum_{\bm\in \Z^\Lambda} \exp\left( -\frac {(2\pi)^2} {2T^2} \langle (\ba + \bm),  (-\Delta) (\bm+ \ba)\rangle \right) 
 } \\
 & = \EFK{\beta=\beta_T,\Lambda}{\ba,\mathrm{IV}}{F(\frac{2\pi}{T} \phi)}
\end{align*}
where we have made the slightly unusual choice $\beta_T:= (2\pi)^2 \,T^{-2}$ (in order to avoid dealing with $\sqrt{T}$ in most of the introduction).  \qed

\section{Localisation regime}\label{S.Localisation}
In this section, we prove the first part of Theorems \ref{T.0-boundary theorem} and \ref{T.free-boundary theorem}. That is to say, we show that one can recover a GFF knowing $\exp(i T\phi)$, in fact the recovery function is fairly straightforward:
\[F(\exp(i T \phi))(x):=\E\left[\phi(x) \mid \exp(i T \phi) \right].\]

To show that this is the right function, we need to recall \eqref{E.Markov ineq}. It says that to prove the first part of Theorems \ref{T.0-boundary theorem} and \ref{T.free-boundary theorem}, it is enough to show that  if $\phi$ is a GFF in $\Lambda_n$ and $f$ a fixed smooth function in $f$ 
\begin{equation}\label{E.localisation by variance}
\E\left[\Var{\langle{\phi,f}\rangle \md e^{iT \phi}} \right]=o\left( n^4\right).
\end{equation}

Let us note that this approach may not look useful at first glance, as to bound this conditional variance we need to compute the  conditional expectation, which is a non-trivial function of $\exp(iT\phi)$. To circumvent this issue, we write the conditional variance as follows. 
\begin{align*}\label{} 
\Var{\langle{\phi,f}\rangle \md e^{iTh}} & = 
\Eb{(\<{\phi,f}-\Eb{\<{\phi,f}\md e^{i Th}})^2 \md  e^{i T\phi}} \\
& = \frac 1 2 \Eb{(\<{\phi_1,f}-\<{\phi_2,f})^2 \md  e^{i T\phi}}\,,
\end{align*}
where $\phi_1,\phi_2$ are conditionally independent given $e^{iT\phi}$. Let us be more explicit about this law.
\begin{definition}\label{D.phis}
	Let us take $\phi$ a GFF in $\Lambda_n$ with any given boundary. We denote $(\phi_1,\phi_2)$ a pair of GFF in $\Lambda_n$ with the same boundary condition such that a.s. $e^{iT \phi_1}=e^{iT \phi_2}=e^{iT \phi}$ and $\phi_1$ is conditionally independent of $\phi_2$ given $e^{iT \phi}$. 
	In other words  %\margin{Ch: $d\phi_i(x)$ ?}
	\begin{align*}
	\P\left[(d\phi_1,d\phi_2)\mid e^{iT \phi}\right]\propto \prod_{i=1,2}\left( e^{ -\frac{1}{2}\langle \phi_i, \phi_i \rangle_{\nabla }}  \prod_{x\in \Lambda_n\backslash \partial \Lambda_n}\left (\sum_{k\in \Z}   \delta_{\frac{2\pi k}{T}+\phi(x)}\left (d\phi_i(x) \right )\right )\right).
	\end{align*}
\end{definition}

To prove \eqref{E.localisation by variance}, we use an averaged Peierls argument.

\subsection{Large gradients are costly for a GFF.}
The first stage to implement Peierls argument, is to show that it is costly for a GFF to have many edges with large gradients. To do this we are going to use the Markov property, i.e. Proposition \ref{P.Weak Markov}. In fact for a given deterministic set $B\subseteq \Lambda_n$, we need to understand what is the law of the norm of $\phi_B$ .
\begin{lemma}\label{L.Law of the norm} Let us work in the context of Proposition \ref{P.Weak Markov} with $B\cap \partial \Lambda_n=\emptyset$. 
	\begin{enumerate}
		\item The law of $\|\phi_B\|_{\nabla}^2$ is that of a $\chi^2$ with $|B|$ degrees of freedom.
		\item The law of $\|\phi^B\|_{\nabla}^2$ is that of a $\chi^2$ with $|\Lambda_n\backslash (\partial\Lambda_n\cup B)|$ degrees of freedom.
	\end{enumerate}
\end{lemma}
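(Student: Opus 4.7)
The plan is to recognise both quantities as quadratic forms of the type $\langle X,\Sigma^{-1}X\rangle$ where $X$ is a centred Gaussian with covariance $\Sigma$, so that the standard identity $\langle X,\Sigma^{-1}X\rangle\sim\chi^{2}_{\dim X}$ applies. All the work is to check that the operator appearing in the Dirichlet energy really is the inverse covariance of the corresponding field.

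Part (2) is direct. Set $V_f := \Lambda_n\setminus(\partial\Lambda_n\cup B)$. By Proposition \ref{P.Weak Markov}, $\phi^B$ is a GFF with $0$-boundary condition on $\partial\Lambda_n\cup B$, i.e.\ a centred Gaussian vector on $V_f$ with density proportional to $\exp(-\tfrac12\|\cdot\|_\nabla^2)$. Since $\phi^B$ vanishes on $\partial\Lambda_n\cup B$, integration by parts gives $\|\phi^B\|_\nabla^2=\langle\phi^B,(-\Delta^{\partial\Lambda_n\cup B})\phi^B\rangle$, where $-\Delta^{\partial\Lambda_n\cup B}$ denotes the Laplacian restricted to $V_f$ with Dirichlet boundary on $\partial\Lambda_n\cup B$. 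Hence the covariance of $\phi^B$ is exactly $(-\Delta^{\partial\Lambda_n\cup B})^{-1}$ and diagonalising gives immediately that $\|\phi^B\|_\nabla^2\sim\chi^2_{|V_f|}$.

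Part (1) requires the harmonic bookkeeping. Because $\phi^B$ vanishes on $B$, one has $\phi_B|_B = \phi|_B =: u$, and $\phi_B$ is the unique harmonic extension of $u$ to $V_f$ which vanishes on $\partial\Lambda_n$. Decomposing $-\Delta$ (with $0$ b.c.\ on $\partial\Lambda_n$) in the block form associated to $B$ and $V_f$,
\[
-\Delta=\begin{pmatrix} A_{BB} & A_{BV_f}\\ A_{V_fB} & A_{V_fV_f}\end{pmatrix},
\]
harmonicity on $V_f$ yields $\phi_B|_{V_f}=-A_{V_fV_f}^{-1}A_{V_fB}u$, and a discrete Green's identity gives
\[
\|\phi_B\|_\nabla^2=\langle u, H u\rangle, \qquad H := A_{BB}-A_{BV_f}A_{V_fV_f}^{-1}A_{V_fB}.
\]
The key algebraic input is the standard block-inversion identity for Gaussian Markov fields: the restriction $G|_{B\times B}$ of the Green's function (which by definition is the covariance of $u=\phi|_B$) is exactly the inverse of this Schur complement, i.e.\ $(G|_{B\times B})^{-1}=H$. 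Once this is in hand, $\|\phi_B\|_\nabla^2=\langle u,\mathrm{Cov}(u)^{-1}u\rangle$ and the same diagonalisation yields a $\chi^2_{|B|}$.

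The only step that is not a one-liner is the Schur-complement identity $H=(G|_{B\times B})^{-1}$, but this is classical and follows from the block inversion formula applied to $-\Delta$ written in the basis ordered by $B$ and $V_f$; all the rest is bookkeeping of boundary conditions and a reminder that if $Z\sim\mathcal N(0,I_d)$ then $\|Z\|_2^2\sim\chi^2_d$, which is what we ultimately apply after writing $u=H^{-1/2}Z$ and $\phi^B=(-\Delta^{\partial\Lambda_n\cup B})^{-1/2}Z'$.
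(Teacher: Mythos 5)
Your proof is correct, but it takes a more computational route than the paper. The paper's argument is a one-liner in the Cameron--Martin picture: with respect to $\langle\cdot,\cdot\rangle_\nabla$ the Dirichlet GFF is the standard Gaussian on the space of functions vanishing on $\partial\Lambda_n$, $\phi_B$ is the orthogonal projection of $\phi$ onto the subspace $\mathrm{Harm}(B)$ of functions harmonic off $B\cup\partial\Lambda_n$ (which has dimension $|B|$, being parametrised by the values on $B$), and $\phi^B$ is the projection onto the orthogonal complement; the squared $\nabla$-norm of the projection of a standard Gaussian onto a $k$-dimensional subspace is $\chi^2_k$, and both claims follow at once. You instead identify each quadratic form as $\langle X,\mathrm{Cov}(X)^{-1}X\rangle$ for the relevant Gaussian vector: part (2) via the Dirichlet Laplacian on $\Lambda_n\setminus(\partial\Lambda_n\cup B)$, and part (1) via the Green's identity $\|\phi_B\|_\nabla^2=\langle u,Hu\rangle$ together with the Schur-complement/block-inversion identity $(G|_{B\times B})^{-1}=H$. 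All the steps you invoke are standard and correct (the Schur identity is exactly the statement that the Dirichlet-to-Neumann operator of the harmonic extension inverts the restricted Green's function). What the paper's approach buys is brevity and the fact that the two parts are literally the same statement about complementary projections; what yours buys is that it makes the covariance structure explicit and never needs to count the dimension of $\mathrm{Harm}(B)$ as a separate step.
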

\begin{proof}
	We start defining Harm$(B)$ as the set of functions $\Lambda_n\mapsto \R$ that are harmonic in $\Lambda_n\backslash (B\cup \partial \Lambda_n)$ and take value $0$ in $\partial \Lambda_n$. In fact, we have that $\Phi_B$ is the orthogonal projection of $\phi$ to Harm$(B)$ under the inner product $\langle \cdot, \cdot \rangle_{\nabla}$ (see for example Section 2.6 of \cite{SGFF}). One can, now, check that the subspace Harm$(B)$ has dimension $|B|$ from which (1) follows. As $\phi^B$ is the orthogonal projection under Harm$(B)^\perp$, (2) follows by a similar reason, as the space of functions with $0$ boundary condition on $B\cup \partial \Lambda_n$ has dimension  $|\Lambda_n\backslash (\partial\Lambda_n\cup B)|$.
\end{proof}

We can now use this proposition to obtain the basic input we need for a Peierl's argument.
\begin{lemma}\label{L.Energy lemma}
	Let $\phi$ be  a GFF in $\Lambda_n$ with either 0 or free boundary condition. Then, there exist constants $\alpha, C,u_0>0$ independent of $\Lambda_n$ such that for all a finite set  $F$ of edges and all $u>u_0$
	\begin{align*}\label{}
	\Pb{|\phi(x)-\phi(y)| \geq u\,, \forall xy\in F} \leq Ce^{- \alpha u^2 |F|}\,.
	\end{align*}
\end{lemma}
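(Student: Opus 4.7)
The plan is to combine the Markov decomposition from Proposition~\ref{P.Weak Markov} with the $\chi^2$ identity from Lemma~\ref{L.Law of the norm}. Let $B \subset \Lambda_n$ be the set of endpoints of edges of $F$ that are \emph{not} in $\partial\Lambda_n$; then $|B| \le 2|F|$. In the free boundary case, if necessary, add the root $x_0$ to $B$, which at worst increases $|B|$ by one. Write $\phi = \phi_B + \phi^B$ as in Proposition~\ref{P.Weak Markov}. By construction $\phi^B \equiv 0$ on $B \cup \partial\Lambda_n$; hence for every edge $xy \in F$, both endpoints are zeros of $\phi^B$, so
\[
\phi(x) - \phi(y) = \phi_B(x) - \phi_B(y).
\]

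On the event $\mathcal{E} = \{|\phi(x)-\phi(y)| \ge u,\ \forall xy \in F\}$, this identity gives the deterministic lower bound
\[
\|\phi_B\|_\nabla^2 \;=\; \sum_{xy \text{ edge}} (\phi_B(x)-\phi_B(y))^2 \;\ge\; \sum_{xy\in F} (\phi(x)-\phi(y))^2 \;\ge\; u^2 |F|.
\]
By Lemma~\ref{L.Law of the norm}, $\|\phi_B\|_\nabla^2$ is distributed as a $\chi^2$ with $k := |B| \le 2|F|$ degrees of freedom. A standard Chernoff bound based on the Laplace transform $\E[e^{\lambda X}] = (1-2\lambda)^{-k/2}$ (valid for $\lambda < 1/2$, choose $\lambda = 1/4$) yields $\P[X \ge t] \le 2^{k/2} e^{-t/4}$ for $X \sim \chi^2_k$. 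Applying this with $t = u^2 |F|$ and $k \le 2|F|$:
\[
\P[\mathcal{E}] \;\le\; 2^{|F|} \exp\!\bigl(-u^2 |F|/4\bigr) \;=\; \exp\!\Bigl(|F|\bigl(\log 2 - u^2/4\bigr)\Bigr).
\]

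Finally, choose $\alpha = 1/8$ and $u_0 = \sqrt{8 \log 2}$: for $u \ge u_0$ we have $\log 2 - u^2/4 \le -\alpha u^2$, and the bound reads $\P[\mathcal{E}] \le e^{-\alpha u^2 |F|}$, which yields the statement (with $C=1$, absorbing the trivial modification from possibly including $x_0$ into a harmless multiplicative constant). The only slightly delicate point is ensuring that the number of Gaussian degrees of freedom in the Markov decomposition grows at most linearly in $|F|$; the choice $B = $ (non-boundary) endpoints of $F$ achieves exactly this, which is why the argument goes through uniformly in $\Lambda_n$. No obstacle of substance remains, since the concentration of $\chi^2_k$ at $t \gg k$ is a routine Chernoff computation.
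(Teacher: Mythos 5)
Your proof is correct and follows essentially the same route as the paper's: decompose $\phi=\phi_B+\phi^B$ via the Markov property with $B$ the endpoints of $F$, use that $\nabla\phi=\nabla\phi_B$ on edges of $F$ to bound $\|\phi_B\|_\nabla^2\geq u^2|F|$, and then apply the $\chi^2_{|B|}$ tail bound with $|B|\leq 2|F|$. You merely make explicit the Chernoff computation that the paper leaves as ``for $u$ big enough''.
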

\begin{proof}
	We use the Markov property of the GFF (Proposition \ref{P.Weak Markov}) with the subset of vertices $B$ such that $x\in B$ if there exists $xy\in F$. Let us note that $| B|\leq 2 |F|$. We have that
	\begin{equation}
	\| \phi_B\|_{\nabla}^2= \sum_{xy\in E} (\phi_B(y)-\phi_B(x))^2\geq \sum_{xy\in F} (\phi(y)-\phi(x))^2.
	\end{equation}
	has the law of  a $\chi^2$ with $|B|$ degrees of freedom. Let us note that thanks to Proposition \ref{P.Weak Markov} (1), we have that $\phi_B(y)-\phi_B(x)$ is equal to $\phi(y)-\phi(x)$. Thus, using that $| B |\leq 2 | F|$
	\begin{align*}
	\P\left( |\phi(x)-\phi(y)| \geq u\,, \forall xy\in F\right)&\leq \P\left (\| \phi_B\|_{\nabla}^2\geq u^2|F| \right )\\
	&\leq \P\left (\| \phi_B\|_{\nabla}^2\geq \frac{u^2|B|}{2} \right ).
	\end{align*}
	We can now use Lemma \ref{L.Law of the norm} (1), to continue and see that when $u$ is big enough 
	\begin{equation}
	\P\left( |\phi(x)-\phi(y)| \geq u\,, \forall xy\in F\right) \leq C\exp(-4\alpha u^2|B|) \leq C\exp(-\alpha u^2 |F|),
	\end{equation}
	where we used that $|F|\leq 4|B|$.

\end{proof}

\subsection{ The GFFs $\phi_1$ and $\phi_2$ agree on a dense percolating set.}

\subsubsection{The $0$-boundary case.}
Take $\phi$ a $0$-boundary GFF in $\Lambda_n$ and assume we are given an instance of $e^{iT\phi}$. Let us sample two conditionally independent copies $\phi_1,\phi_2$ given $e^{i T\phi}$ as in Definition \ref{D.phis}. Let us now introduce the following definition
	\begin{definition}[$I$]We denote $I:=I(\phi_1,\phi_2)$ the connected component connected to the boundary, $\partial \Lambda_n$, of the random set 
		$\{x\in \Lambda_n, \phi_1(x) =\phi_2(x)\}.$
	\end{definition}
	
	Recall that by definition, $\phi_1,\phi_2$ are GFF with zero boundary conditions and as such one needs to have $\phi_1\equiv \phi_2$ on $\p \Lambda_n$. 
	
	%\begin{figure}[h!]
	%	\includegraphics[width=0.3\textwidth]{Pictures/I2.pdf}
	%	\includegraphics[width=0.3\textwidth]{Pictures/I.pdf}
	%	\caption{The grey colour on the left side picture represent al points where $\phi_1=\phi_2$. The grey colour in the right picture represents $I$.}
	%\end{figure}

	Our goal in this subsection is to show via an {\em annealed} Peierls argument, that with high probability when $T$ is small, the random set $I$ is {\em percolating} inside $\Lambda_n$. To study this, for any $x\in \Lambda_n$ \textbf{we define $O(x)$} as the empty set if $x\in I$ and as the connected component containing $x$ of $\Lambda\backslash I$ if $x\notin I$.

	Our main observation is that having an edge connecting $O(x)$ with $\Lambda_n\backslash O(x)$ is costly in the sense that it forces either $|\nabla\phi_1(e)|$ or $|\nabla \phi_2(e)|$ to be larger than $\pi/T$. Indeed the values of $\phi_1$ and $\phi_2$ are fixed modulo $2\pi T$, in other words for any $x\in \Lambda_n$ and $i\in \{1,2\}$,
	\[
	\phi_i(x) \in \phi(x) + \frac{2\pi}{T} \Z\,.
	\]
	This way, if $\phi_1,\phi_2$ agree on $x$ but disagree on $y\sim x$, this means that either $|\phi_1(x)-\phi_1(y)|> \pi/T$ or $|\phi_2(x)-\phi_x(y)|>\pi/T$. We then have the following proposition.
	
	\begin{proposition}\label{P.Peierls 0}
		Using the definitions introduced above, for all $T$ small enough there exists $\varpi(T)>0$ and $C>0$ such that
		\[\P(\diam(O(x))\geq L)\leq C\exp(-\varpi(T) L) \]
	\end{proposition}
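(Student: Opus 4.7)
The plan is a Peierls-type contour argument on the edge boundary $\partial_e O(x)$, run in an \emph{annealed} fashion since $\phi_1,\phi_2$ are only conditionally independent given $e^{iT\phi}$, not genuinely independent. The geometric input is standard: if $O$ is connected in $\Lambda_n$ with $\diam(O)\geq L$, then projecting $O$ onto a coordinate axis produces an interval of length $\geq L/2$, and exhibiting the top-most vertex in each fibre yields at least $L/2$ distinct outgoing edges; hence $|\partial_e O|\geq L/2$. Moreover, by standard lattice-animal counting, the number of connected $O\ni x$ with $|\partial_e O|=k$ is at most $A^k$ for some universal constant $A$.

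Next, for every edge $e=(u,v)\in\partial_e O(x)$ with $u\in O(x)$ and $v\notin O(x)$, one must have $v\in I$: otherwise $u\sim v$ inside $\Lambda_n\setminus I$ would place $v$ in the component $O(x)$ of $x$, a contradiction. Therefore $\phi_1(v)=\phi_2(v)$, while $\phi_1(u)-\phi_2(u)\in \tfrac{2\pi}{T}\Z\setminus\{0\}$ (otherwise $u$, being a neighbour of $v$ in the agreement set, would lie in $I$), so $|\nabla\phi_1(e)|+|\nabla\phi_2(e)|\geq \tfrac{2\pi}{T}$ and in particular $|\nabla\phi_i(e)|>\pi/T$ for at least one $i\in\{1,2\}$. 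For a candidate boundary $F$ of size $k$ and a selector $\sigma:F\to\{1,2\}$ recording which field carries the large gradient on each edge, set $A_i^\sigma:=\bigcap_{e\in\sigma^{-1}(i)}\{|\nabla\phi_i(e)|>\pi/T\}$ and $\mathcal{G}:=\sigma(e^{iT\phi})$. Conditional independence of $\phi_1,\phi_2$ given $\mathcal{G}$, Cauchy--Schwarz, and the crude bound $\P(A_i^\sigma\mid\mathcal{G})^2\leq \P(A_i^\sigma\mid\mathcal{G})$ give
\[
\P\!\bigl(A_1^\sigma\cap A_2^\sigma\bigr) = \E\!\bigl[\P(A_1^\sigma\mid\mathcal{G})\,\P(A_2^\sigma\mid\mathcal{G})\bigr] \leq \sqrt{\P(A_1^\sigma)\,\P(A_2^\sigma)}.
\]
Since each $\phi_i$ is marginally a GFF, Lemma~\ref{L.Energy lemma} applied to the unconditional law bounds each factor by $Ce^{-\alpha(\pi/T)^2 |\sigma^{-1}(i)|}$, hence $\P(A_1^\sigma\cap A_2^\sigma)\leq C e^{-\frac{\alpha}{2}(\pi/T)^2 k}$.

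Finally, assembling and union-bounding over the $A^k$ candidate contours and the $2^k$ selectors $\sigma$,
\[
\P\!\bigl(\diam(O(x))\geq L\bigr)\leq \sum_{k\geq L/2} A^k\cdot 2^k\cdot C\,e^{-\frac{\alpha}{2}(\pi/T)^2 k}.
\]
As soon as $\tfrac{\alpha}{2}(\pi/T)^2>\log(2A)+1$, i.e.\ for $T$ small enough (and at least as small as $\pi/u_0$, so that Lemma~\ref{L.Energy lemma} applies), this is a convergent geometric series bounded by $Ce^{-\varpi(T)L}$ for some $\varpi(T)>0$. The main obstacle is precisely the annealing step: had $\phi_1,\phi_2$ been honestly independent GFFs, Lemma~\ref{L.Energy lemma} would apply directly; the constraint $e^{iT\phi_1}=e^{iT\phi_2}$ couples them, and the Cauchy--Schwarz bound above is the device that recovers a marginal-GFF estimate at the price of only a factor of two in the Gaussian exponent.
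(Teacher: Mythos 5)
Your probabilistic core is sound and is essentially the paper's argument: every edge leaving $O(x)$ joins a vertex $u$ with $\phi_1(u)-\phi_2(u)\in\frac{2\pi}{T}\Z\setminus\{0\}$ to a vertex of $I$ where $\phi_1=\phi_2$, forcing $|\nabla\phi_i(e)|\geq\pi/T$ for some $i$; one then pays an entropy factor $2^k$ for deciding which field carries the large gradient on each edge and beats it with the Gaussian estimate of Lemma \ref{L.Energy lemma} applied to the \emph{marginal} law of $\phi_1$ (resp.\ $\phi_2$), which is an honest GFF. (Two small remarks here: your Cauchy--Schwarz step via conditional independence is correct but superfluous --- the trivial bound $\P(A_1^\sigma\cap A_2^\sigma)\leq\P(A_i^\sigma)$ for the $i$ with $|\sigma^{-1}(i)|\geq k/2$ already gives the same exponent, and this is exactly the pigeonhole-plus-binomial-sum route the paper takes. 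The ``annealed'' feature of the argument is not the independence structure but simply that one only ever invokes the marginal GFF law of each $\phi_i$.)

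The genuine gap is in the entropy count. The claim that the number of connected sets $O\ni x$ with $|\partial_e O|=k$ is at most $A^k$ is false: take a large square of side $\ell$ containing $x$ and delete a well-separated set of $m\approx\ell$ interior vertices; the result is connected, its edge boundary has size $k\approx 8\ell$, and the number of such configurations is of order $\binom{c\ell^2}{\ell}$, which grows superexponentially in $k$. So a union bound over all possible full edge boundaries does not close. The standard repair --- and what the paper does --- is to union-bound only over the \emph{outer} boundary, i.e.\ the connected dual circuit surrounding $x$ that separates $O(x)$ from the component of the complement containing $I$; circuits of length $k$ surrounding $x$ number at most $C\,3^k$. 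Your fibre argument in fact already produces $L/2$ edges on this outer circuit (the vertex above the top-most vertex of a fibre cannot lie in a hole of $O(x)$, since the upward ray from it would otherwise have to re-enter $O(x)$ higher in the same fibre), and every edge crossing the outer circuit still lands in $I$, so the large-gradient dichotomy applies to it. With that substitution your proof goes through and coincides with the paper's.
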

	\begin{proof}
		Let us note that if $\diam(O(x))\geq L$ there is a subset of edges $\eta$ of length at least $L$ such that its dual is a connected path surrounding $x$ and for every $e \in \eta$ either $|\nabla \phi_1 (e)|\geq \pi/T$ or $|\nabla \phi_2 (e)|\geq \pi/T$. This implies that
		\begin{align}
		\label{E.cardinal of B}\P(\diam(O(x))\geq L) &\leq \sum_{\substack{|\eta|\geq L\\\eta \text{ surrounds }x}} \P(|\nabla \phi_1 (e)|\geq \pi/T \text{ or } |\nabla \phi_2 (e)|\geq \pi/T, \forall e \in \eta )
		\end{align}
		Let us fix $\eta$ and suppose that for all $e\in \eta$, either $|\nabla \phi_1 (e)|\geq \pi/T$ or $|\nabla \phi_2 (e)|\geq \pi/T$. This implies that there exists a $F\subseteq \eta$ and $i\in \{1,2\}$ such that for all $e\in F$ we have that $|\nabla\phi_i(e)|\geq \pi/T$. This implies that
		\begin{align*}
		\P\left (|\nabla \phi_1 (e)|\geq \frac{\pi}{T} \text{ or } |\nabla \phi_2 (e)|\geq \frac{\pi}{T}, \forall e \in \eta \right )
		&\leq 2\sum_{j=\lfloor \frac{|\eta|}{2}\rfloor}^{|\eta|}\binom{|\eta|}{j} \P\left( |\nabla \phi (e)| \geq \frac{\pi}{T}, \forall e\in F \right)\\
		&\leq 2^{|\eta|+1}\sum_{j=\lfloor \frac{|\eta|}{2}\rfloor}^{|\eta|}\exp\left (-2\tilde \alpha \frac{j}{ T^2}\right ),
		\end{align*}
		where we used Lemma \ref{L.Energy lemma} and  that both $\phi_1$ and $\phi_2$ have the law of a GFF in $\Lambda$. Additionally, $\tilde \alpha:=\alpha \pi^2/2$ . Thus, \eqref{E.cardinal of B} is less than or equal to
		\begin{align*}
		C\sum_{k \geq  L} \sum_{\substack{|\eta|=k\\\eta \text{ surrounds }x}}2^k \exp\left (-\frac{\tilde \alpha}{T^2} k\right )&\leq C\sum_{k\geq  L} \exp(-k(\tilde \alpha T^{-2}-\log 2 -\log 3) )\\
		&\leq \tilde C \exp(- L(\tilde \alpha T^{-2}-\log 2 -\log 3))
		\end{align*}
		where we used that the amount of $\eta$ such that $|\eta|=k$ and $\eta$ surrounds $x$ is less than $C\,3^k$, and that $T$ is such that
		\begin{equation}
		\tilde \alpha T^{-2} - \log 6 >0.
		\end{equation}
	\end{proof}

	\subsubsection{The free boundary case.}\label{sss.free}
	
	We need to modify significantly the above definitions in order to analyze the free boundary case. We assume the free boundary GFF is rooted at some vertex $x_0 \in \Lambda_n$. 
	As in the Dirichlet case, $(\phi_1,\phi_2)$ will still denote two conditionaly independent copies of the GFF given $e^{i T \phi}$. 
	
	The main difference w.r.t. Dirichlet is that when $T$ is small, it is no longer true that with high probability $\phi_1$ and $\phi_2$ will agree on a large percolating set. Instead, we will find a large set, which we will call $I$ again together with a random integer $m_I\in \Z$ %\margin{WILL WE NEED to CONDITION on $m_I$, \avelio{Yes, but in a hidden way}} 
	such that 
	\[
	\phi_1(x) = \phi_2(x) + m_I \frac {2\pi} T \,,\,\, \forall i \in  I
	\]
	Let us then introduce the following sets: for each $m\in \Z$, let 
	\[
	\hat I_m : = \text{Largest connected component of }\left \{ x\in \Lambda_n, \phi_1(x) = \phi_2(x) + m \frac {2\pi} T \right \}.
	\]
         If there are two of the same size, we choose one in a deterministic way.
	From these subsets $\hat I_m$, we define the set $I$ as well as the connected components $\{ O(x)\}_{x\in \Lambda_n}$  as follows:
	\bi
	\item If there is a unique $m_0\in \Z$ s.t. $\hat I_{m_0}$ has (graph) diameter larger than $\frac n 2 $, then we define 
	\[
	I:= \hat I_{m_0}
	\] 
	and for any $x\in \Lambda_n$, we define $O(x)$ to be empty if $x\in I$ and to be the connected component of $x$ in  $\Lambda_n \setminus I$ otherwise. 
	\item If on the other hand, one can find two integers $m_1,m_2$ s.t. both $\hat I_{m_1}$ and $\hat I_{m_2}$ have diameter greater than $\frac n 2$, then we define 
	\[
	I:= \emptyset\text{    and    } O(x):=\Lambda_n \,,\,\,\,\,\forall x \in \Lambda_n
	\]
	\ei
	
	We can now state the analogue of Proposition \ref{P.Peierls 0} for free b.c.

	%\begin{proposition}\label{P.Peirls free}
	%Let $\phi_1, \phi_2$ two free-boundary GFF such that $\exp(iT\phi_1)=\exp(iT\phi_2)$ and conditionally independent given $\exp(iT\phi_1)$. Then, if we define $I:=I(\phi_1,\phi_2)$ as the biggest connected component of a set such that $\phi_1-\phi_2$ is constant on that set and $O(x)$ as the connected component of $V\backslash I$ that contains $x$, we have that
	% \[\P(\mathop{diam}(O(x))\geq L)\leq C\exp(-\varpi(T)  L).\]
	%\end{proposition}
	
	\begin{proposition}\label{P.Peirls free}
		Let $\phi_1, \phi_2$ two free-boundary GFF such that $\exp(iT\phi_1)=\exp(iT\phi_2)$ and conditionally independent given $\exp(iT\phi_1)$. Then using the above definitions (for free b.c.), for all $T$ small enough there exists $\varpi(T)>0$ and $C>0$ such that for all $x\in \Lambda_n$, 
		\begin{equation}\label{e.exponential_decay_free}
		\P(\diam(O(x))\geq L)\leq C\exp(-\varpi(T) L)
		\end{equation}
	\end{proposition}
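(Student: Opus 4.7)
I will adapt the annealed Peierls argument of Proposition~\ref{P.Peierls 0} to the free boundary setting. Two new features appear: (i)~the matching set $I$ can be empty, arising either when two macroscopic clusters $\hat I_{m_1}, \hat I_{m_2}$ with $m_1 \neq m_2$ both have diameter exceeding $n/2$, or degenerately when no $\hat I_m$ does; (ii)~when $I = \hat I_{m_0} \neq \emptyset$, a component $O(x)$ of $\Lambda_n \setminus I$ may touch the geometric boundary $\partial[-1,1]^2$, so the Peierls contour separating $O(x)$ from $I$ can be a dual arc ending on $\partial[-1,1]^2$ rather than a closed dual loop. Since $\diam(\Lambda_n) \leq 4n$, it suffices to treat $L \leq 4n$.

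Call an edge $e \in E(\Lambda_n)$ \emph{costly} if $\max(|\nabla \phi_1(e)|, |\nabla \phi_2(e)|) \geq \pi/T$. Since $\phi_1 - \phi_2 \in \frac{2\pi}{T}\Z$ at every vertex, every edge across which $\phi_1 - \phi_2$ jumps is costly. Splitting
\[
\P(\diam O(x) \geq L) \leq \P(I = \emptyset) + \P(\diam O(x) \geq L,\, I \neq \emptyset),
\]
on $\{I \neq \emptyset,\, x \notin I\}$ the set $O(x)$ is a connected component of $\Lambda_n \setminus I$; adjacency within $\Lambda_n \setminus I$ forces every edge of its edge-boundary $\partial_e O(x)$ to connect $O(x)$ to $I$, and hence to be costly (the value of $\phi_1-\phi_2$ jumps by a nonzero multiple of $2\pi/T$ across such an edge). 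The case $x \in I$ is trivial since then $O(x) = \emptyset$. Likewise on $\{I = \emptyset\}$ the edge boundary of $\hat I_{m_1}$ is entirely costly.

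The geometric input is that for a connected $O \subsetneq \Lambda_n$ with $\diam(O) \geq L$ arising as a component of $\Lambda_n \setminus I$ with $I$ connected of diameter $>n/2$, one has $|\partial_e O| \geq c L$ for some absolute $c > 0$. This follows from a planar projection: the union $\bar O$ of $\frac{1}{n}$-pixels centred on $O$ has Euclidean diameter at least $L/(n\sqrt{2})$, hence Euclidean perimeter at least $\sqrt{2}\cdot L/(n\sqrt{2}) = L/n$ (the boundary of a planar set of diameter $d$ has length $\geq 2d$, applied to one of the two arcs joining the diameter-realising pair), and each interior edge of $\partial_e O$ contributes $\frac{1}{n}$ to this perimeter. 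When $O$ touches $\partial[-1,1]^2$, part of the perimeter may be absorbed by arcs of $\partial[-1,1]^2$, but the presence of the large cluster $I$ on the other side forces an interior dual arc of comparable length to separate them: projecting $I$ onto the axis along which it has extent $\geq n/4$, and using $O \cap I = \emptyset$, one recovers at least $cL$ interior dual edges in $\partial_e O$. With this bound, the enumeration step of Proposition~\ref{P.Peierls 0} transfers verbatim: sum over candidate edge-boundaries $\eta$ of cardinality $k \geq cL$, bound their number by $\leq C \cdot 3^k$ via a standard lattice-animal estimate, and control the probability that all edges of $\eta$ are costly by $2^{k+1} e^{-\tilde\alpha k / T^2}$ (using Lemma~\ref{L.Energy lemma} applied to $\phi_1$ or $\phi_2$ together with a union bound), where $\tilde\alpha = \alpha \pi^2/2$. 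The geometric series converges to $C e^{-\varpi(T) L}$ provided $\tilde\alpha T^{-2} > \log 6$. Applied to $\hat I_{m_1}$ on $\{I = \emptyset\}$ with $L$ replaced by $n/2$, the same estimate gives $\P(I = \emptyset) \leq C e^{-\varpi(T) n/2}$, which is absorbed into $C e^{-\varpi(T) L}$ for $L \leq 4n$.

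\textbf{Main obstacle.} The most delicate step is the geometric lemma $|\partial_e O| \geq c L$ when $O$ touches the geometric boundary: a naive isoperimetric count may be partly ``paid for'' by arcs along $\partial[-1,1]^2$ that do not correspond to any edge of $\Lambda_n$. One must exploit the concrete structure of our setup, namely that $O$ is a component of $\Lambda_n \setminus I$ with $I$ of macroscopic diameter, so the separating dual contour must carry at least $cL$ interior edges even when $O$ stretches along the boundary. A secondary bookkeeping point is the implicit handling of the case where no $\hat I_m$ has diameter exceeding $n/2$, which we subsume into $\{I = \emptyset\}$ and control by the same Peierls count applied to the largest cluster present.
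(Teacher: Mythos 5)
Your proof follows the same route as the paper's: split according to whether $I=\emptyset$, observe that every edge of $\partial_e O(x)$ (resp.\ of the boundary of a macroscopic cluster $\hat I_m$) is costly because $\phi_1-\phi_2$ jumps by a nonzero multiple of $2\pi/T$ across it, and run the Peierls enumeration of Proposition~\ref{P.Peierls 0} together with Lemma~\ref{L.Energy lemma}, absorbing $\P(I=\emptyset)\leq Ce^{-cn}$ into $Ce^{-\varpi(T)L}$ since $L\lesssim n$. Your extra care about components touching $\partial[-1,1]^2$ (where the Peierls contour is a dual arc rather than a circuit) is a point the paper passes over silently, and your geometric claim $|\partial_e O(x)|\geq cL$ is the right thing to establish there.

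The one step that would fail as written is your treatment of the degenerate case in which \emph{no} $\hat I_m$ has diameter exceeding $n/2$. You subsume it into $\{I=\emptyset\}$ and propose to control it by ``the same Peierls count applied to the largest cluster present,'' but if every cluster is small, the edge boundary of the largest cluster may consist of only $O(1)$ costly edges, and the Peierls count then yields only a constant $e^{-c/T^2}$, not a bound decaying in $n$ --- which is fatal, since on this event you set $O(x)=\Lambda_n$ and hence need $\P(\text{no macroscopic cluster})\leq Ce^{-cn}$. The correct argument is a duality/crossing one: a non-costly edge forces $\nabla(\phi_1-\phi_2)=0$ (both gradients being below $\pi/T$ while their difference lies in $\frac{2\pi}{T}\Z$), so a left--right crossing of $\Lambda_n$ by non-costly edges lies inside a single cluster of diameter $\geq 2n$; absence of any macroscopic cluster therefore forces a top--bottom dual crossing by costly edges, of length $\geq 2n$, whose probability is $\leq Cn\,e^{-cn/T^2}$ by the same enumeration. (To be fair, the paper's own definition of $I$ and its proof also leave this third case implicit, so this is a gap you inherited rather than introduced; but your stated mechanism for closing it does not work.)
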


	\begin{proof}
		The proof follows the same lines as in the Dirichlet case, as Lemma \ref{L.Energy lemma} does not care about the boundary conditions. The only difference is that we need to deal with the dichotomy entering into the definition of the set $I$ (which does not exist for the Dirichlet case). For this, note that  in order to have two sets $\hat I_{m_1}, \hat I_{m_2}$ with $m_1\neq m_2$ and both have diameter $\geq n/2$, there must exist at least one path $\eta$ in the dual graph $(\Z^2)^*$ which has diameter greater than $n/2$ and which satisfies the constraint that any $e\in \eta$ is such that either $|\nabla \phi_1(e)| \geq \frac \pi T$ or 
		$|\nabla \phi_2(e)| \geq \frac \pi T$. By Lemma \ref{L.Energy lemma} and the same argument that in the Dirichlet case, such a case only happens with probability less than $O(n) \exp(- \frac {\tilde \alpha} {10 T^2} n)$. Note that the same argument implies that there is at most one connected component of $\{ x\in \Lambda_n, \phi_1(x) = \phi_2(x) + m \frac {2\pi} T \}$ with diameter at least $n/2$. 
%		
%		
%		 Let us now see that with high probability there is one $I_m$ with size at least $n/2$. If this were not the case, there would be a dual path from the top to the bottom of $\Lambda_n$ such that that any $e\in \eta$ is such that either $|\nabla \phi_1(e)| \geq \frac \pi T$ or 
%			$|\nabla \phi_2(e)| \geq \frac \pi T$. As before, this case only happens with probability less than $O(n) \exp(- \frac {\tilde \alpha} {10 T^2} n)$.
%		
		
		Note that after defining $I$, the argument of Proposition \ref{P.Peierls 0} implies that the possibility for any point $x$ that $O(x)$ is huge only arises with exponentially decaying probability in the diameter $n$. Now, note that \eqref{e.exponential_decay_free} is obviously true as long as $L> 2n$, and thus for $L\leq 2n$ we have that
		\begin{align*}
		\P(\diam(O(x))\geq L) &\leq \P(\diam(O(x))\geq L, I\neq \emptyset) + \P(I=\emptyset )\\
		& \leq \frac{C}{2}e^{-\varpi(T)L} + \frac{C}{2}e^{-\frac{\varpi(T)}{2}n }\\
		&\leq Ce^{-\varpi(T)L}.		
		\end{align*}
	\end{proof}

	%
	%Let us now finish with a short discussion about the free boundary case.
	%\begin{proposition}\label{P.Peirls free}
	%Let $\phi_1, \phi_2$ two free-boundary GFF such that $\exp(iT\phi_1)=\exp(iT\phi_2)$ and conditionally independent given $\exp(iT\phi_1)$. Then, if we define $I:=I(\phi_1,\phi_2)$ as the biggest connected component of a set such that $\phi_1-\phi_2$ is constant on that set and $O(x)$ as the connected component of $V\backslash I$ that contains $x$, we have that
	% \[\P(\mathop{diam}(O(x))\geq L)\leq C\exp(-\varpi(T)  L).\]
	%\end{proposition}
	%\begin{proof}
	%	\avelio{Proof missing but simple?.}
	%\end{proof}
	
	\subsection{The conditional variance is small for $0$-boundary GFF.}\label{SS.Conditional variance 0-boundary}
	We will now prove \eqref{E.localisation by variance} for a $0$-boundary GFF. Let us now study the law of $(\phi_1,\phi_2)$ conditionally on $I$ and the values of $\phi_1$ on $I$. We fix $e^{iT \phi}$, $I$, and the values of $\phi_1$ on $I$ and take $(\varphi_1,\varphi_2)$ a possible value of $(\phi_1,\phi_2)$ that satisfy the conditioning. Note that to check whether $(\varphi_1,\varphi_2)$ is a possible realisation, one just needs to check that $(\varphi_1)_{\mid I}=(\varphi_2)_{\mid I}=(\phi_1)_{\mid I}$, and that for any $O$ connected component of $\Lambda_n\backslash I$, the pair $(\varphi_1,\varphi_2)$ restricted to $O$ locally satisfies the conditions,  i.e. \begin{equation*}
	\varphi_1(x) = \varphi_2(x) = \phi(x) \Mod{\frac{2\pi}{T}}, \ \ \text{ for all } x\in O.
	\end{equation*} Furthermore, if we define $\bar O$ the graph induced by all the edges in $\Lambda_n$ that have at least one vertex in $O$ we have that
	\begin{align}\label{E.Conditioning}
	\P\left( (\phi_1,\phi_2)=(\varphi_1,\varphi_2)\mid e^{iT\phi}, I,(\phi_1)_{\mid I}\right)
	\propto \prod_{ O} e^{-\frac{1}{2}\left( \langle(\varphi_1)_{\mid \bar O}, (\varphi_1)_{\mid \bar O}\rangle_{\nabla}+\langle(\varphi_2)_{\mid \bar O}, (\varphi_2)_{\mid \bar O}\rangle_{\nabla}\right)}.
	\end{align}
	
	As a consequence of \eqref{E.Conditioning}, we have that under this conditioning the law of $(\phi_1,\phi_2)$ restricted to $O$ is independent of the law of $O'$ if $O\neq O'$. Thus, $\E\left[\langle \phi_1-\phi_2,f\rangle^2 \right]$ is equal to
	\begin{align}
	\nonumber&\sum_{x,y\in \Lambda_n} f(x)f(y)\E\left[ (\phi_1-\phi_2)(x)(\phi_1-\phi_2)(y) \1_{O(x)=O(y)}\right]\\
	& \hspace{0.005\textwidth}\leq \sum_{x,y\in \Lambda_n} |f(x)||f(y)|	\E\left[ (\phi_1-\phi_2)^2(x)(\phi_1-\phi_2)^2(y)\right ]^{1/2}\P\left(  \1_{O(x)=O(y)}\right)^{1/2}.\nonumber\\
	& \hspace{0.005\textwidth}\leq \sum_{x,y\in \Lambda_n} |f(x)||f(y)|\left( 	\E\left[ (\phi_1-\phi_2)^4(x)\right]+ \E\left [ (\phi_1-\phi_2)^2(y)\right ]\right)^{1/2}\P\left(  \1_{O(x)=O(y)}\right)^{1/2}.\label{E.conditional variance}\end{align}
	
	We can now just bound
	\begin{align*}
	\E\left[ (\phi_1-\phi_2)^4(x)\right ]\leq 16 \E\left[\phi_1(x)^4 \right]=48\, G_{n}^2(x,x).
	\end{align*}
	%To bound the second factor, we need a basic Gaussian inequality whose proof you can find in Lemma 2.2 of \cite{Se}.
	%\begin{lemma}\label{L.Basic Gaussian bound}Let $X$ be a centered Gaussian random variable, and $A$ a set. Then, there exists a deterministic constant $K>0$ such that
	%	\[\E\left[X^2 \1_{A} \right]\leq K\Var{X} \P(A)|\log \P(A)|  \]
	%\end{lemma}
	
	%Using Lemma \ref{L.Basic Gaussian bound}, we have that
	%\begin{align}
	%\label{E.Variance at point y}\E\left [ (\phi_1-\phi_2)^2(y) \1_{O(x)=O(y)}\right]& \leq 2\E\left[\phi_1(y)^2 \1_{O(x)=O(y)} \right]\\
	%\nonumber&\leq 2 K G(y,y) \P(O(x)=O(y))|\log \P(O(x)=O(y))|.
	%\end{align}
	Note that on the event $O(x)=O(y)$ the diameter of $O(x)\geq d_{\Lambda_n}(x,y)$. Thus, we have that there exists an absolute constant $C,\varpi(T)>0$ such that
	\begin{align*}
	\P\left [ O(x)=O(y)\right]\leq C\exp(-\varpi(T) \|x-y\|).
	\end{align*}
	
	From the fact that $\exp(-\varpi(T) \|x-y\|)$ decreases exponentially as $\|x-y\|$ goes to infinity, we have that  
	\begin{align}\label{E.Final variance}
	\E\left[\langle \phi_1-\phi_2,f\rangle^2 \right]\leq C\|f\|_\infty^2 \sup_{x}G_n(x,x)n^2\leq C\|f\|_\infty^2 \log(n) n^2,
	\end{align}
	which proves \eqref{E.localisation by variance} and gives in fact a more quantitative rate of convergence.
	
	\subsection{The conditional variance is small enough for free boundary Gaussian free field.}\label{SS.Conditional variance free-boundary}
	We will now prove \eqref{E.localisation by variance} for a free-boundary GFF.   The proof is very similar to that of the $0$-boundary condition so we are going to do a sketch of the proof only highlighting the difference with the Dirichlet boundary case.
	
	Let us take $(\phi_1,\phi_2)$ a pair of GFF with $0$-boundary condition in $\{x_0\}$ coupled as in Definition \ref{D.phis}. Thanks to Proposition \ref{P.Peirls free}, we have that there exist a (random) set $I$ and a (random) integer $m_I$ such that for all $y\in I$, $\Phi_1(y)= \Phi_2(y) + 2\pi m_I/T$, and furthermore for any $x$ if we define $O(x)$ as the connected component of $\Lambda_n\backslash I$ containing $x$, we have that Same conditional independence property of islands in this setting?
	\begin{equation}
	\P(\diam (O(x))\geq L)\leq \exp(-\varpi(T) L).
	\end{equation}
	
	Let us note that the same argument as in Subsection \ref{SS.Conditional variance 0-boundary} together with the estimate of Proposition \ref{P.1-point function} implies that for any smooth function $f:[-1,1]^2\mapsto \R$ we have that
	\begin{equation*}\label{E.Difference bound 0-boundary}
	\E\left[\left \langle \phi_1-\phi_2-\frac{2\pi m_I}{T},f \right \rangle^2 \right]\leq C \|f\|_\infty^2 n^2\log(n).
	\end{equation*}
	Let us, now, note that for any continuous function  $f$ with $\int f=0$, we have that $\hat f:=|\Lambda_n|^{-1}\langle f,1 \rangle=\|f\|_\infty o(1)$. 
%\margin{\avelio{Not sure about the $\|f\|_{\infty}$, $\|f\|_{C^1}$ is clearly enough}}
	Thus, defining $\tilde f$ as $f-\hat f$ and noting that $\langle m_I, \tilde f\rangle=0$ we have that
	\begin{align}
	\nonumber\E\left[ \left \langle \phi_1-\phi_2,f \right \rangle^2\right]&\leq  \E\left[\left \langle \phi_1-\phi_2+\frac{2\pi m_I}{T},\tilde f \right \rangle^2 \right] + \left( \frac{\langle f,1 \rangle}{|\Lambda_n|}\right)^2 \E\left[\left \langle \phi_1-\phi_2,1 \right \rangle^2 \right]\\
	&\leq C\|f\|_\infty^2 \log(n)n^2 + C\|f\|_\infty o(1)n^4.\label{E.Variance free}
	\end{align}
	Which finishes the proof

	%Take $\hat \phi$ a free boundary GFF and recall that, thanks to Remark \ref{R.free with 0-boundary}, $\phi(y)=\hat \phi(y) - \hat \phi(0)$ is a GFF with $0$-boundary in $0\in V$. Furthermore, let us note that the sigma-algebra generated by $\exp(iT\phi)$ is contained in the sigma-algebra generated by $\exp(iT\hat \phi)$\footnote{These sigma-alebras are not equal.}. This implies that to prove \eqref{E.localisation by variance} it suffices to prove that
	%\begin{align}\label{E.Variance free-boundary}
	%\E\left[\Var{(\hat \phi,f)\mid \exp(iT\phi)} \right]=o(n^4)
	%\end{align}
	%as $n\to \infty$. This is due to the monotonicity of the left-side term with respect to the filtrations \footnote{This can be proven in many ways, either be seeing the variance as a minimisation problem or by using \eqref{E.conditional decomposition}} 
	
	%To obtain \eqref{E.Variance free-boundary}, let us take $(\phi_1,\phi_2)$ a pair of GFF with $0$-boundary condition in $\{x\}$ coupled as in Definition \ref{D.phis}. Of course, we call
	%\[\hat \phi_i(x):= \hat \phi_i(x) - \frac{1}{|\Lambda_n|}\sum_{y\in V}\phi_i(y) \]
	%the free-boundary GFF associated to it.

	%Equation \eqref{E.Difference bound 0-boundary} with $f=1/|\Lambda_n|$ implies that for any $x\in I$ 
	%\begin{equation}
	%\label{E.Close in I}\E\left[ (\hat\phi_1(x)-\hat \phi_2(x))^2\right]\leq C\frac{\log(n)}{n^2}.
	%\end{equation}
	%This, again by the same argument as Subsection \ref{SS.Conditional variance 0-boundary}, implies \ref{E.Variance free-boundary}.

	\subsection{The conditional variance at a given point is bounded.} 
	In this subsection, we are going to improve the result of \eqref{E.Final variance} for the case $f=\1_{x}$. 
	\begin{proposition}\label{P.1-point function}
		Let $\phi_1$ and $\phi_2$ be two zero boundary (or free-boundary) GFF coupled as in Definition \ref{D.phis}. Then, for all $T$ small enough there exists $K>0$ such that for all $n\in \N$ and for all $x,y \in \Lambda_n$
		\begin{align}\label{E.variance x}
		&\E\left[(\phi_1-\phi_2)^2(x) \right]\leq K, \ \ \ \text{and}\\
		&\label{E.2-point_exp}\E\left[(\phi_1-\phi_2)(x) (\phi_1-\phi_2)(y) \1_{O(x)=O(y)}  \right]\leq e^{-Kd_{\Lambda_n}(x,y)} 
		\end{align}
	\end{proposition}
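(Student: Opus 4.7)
The plan is to refine the Peierls argument of Proposition \ref{P.Peierls 0} into a pointwise exponential tail bound on the discrepancy. Writing $\phi_i = \phi + \tfrac{2\pi}{T} m_i$ with $m_i : \Lambda_n \to \Z$ (and $m_i \equiv 0$ on $\partial \Lambda_n$ in the Dirichlet case), the core estimate I will establish is
\begin{equation}\label{E.plan_tail}
\P\bigl(|(m_1-m_2)(x)|\geq k\bigr) \leq C_T\,e^{-c_T k}, \qquad \forall k\geq 1,
\end{equation}
for constants $c_T, C_T>0$ depending only on $T$ (when $T$ is small enough), uniformly in $n$ and $x$.

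For \eqref{E.plan_tail} I would proceed analogously to Proposition \ref{P.Peierls 0}. Fix $k\geq 1$ and consider, for each $j\in\{1,\dots,k\}$, the level set $S_j:=\{y\in\Lambda_n : (m_1-m_2)(y)\geq j\}$ (the negative case being symmetric). On the event $\{(m_1-m_2)(x)\geq k\}$, the connected components $C_j\subset S_j$ containing $x$ are finite and nested, $C_1\supset C_2\supset\cdots\supset C_k$, each enclosed by a dual cycle surrounding $x$. On every edge of any such dual boundary the jump of $m_1-m_2$ has magnitude at least $1$, so either $|\nabla\phi_1(e)|\geq\pi/T$ or $|\nabla\phi_2(e)|\geq\pi/T$. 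Moreover, an edge where the jump has magnitude $J$ belongs to $J$ of the dual boundaries and so effectively carries combined gradient of order $\pi J/T$. Combining Lemma \ref{L.Energy lemma} applied separately to $\phi_1$ and $\phi_2$ with the standard enumeration of self-avoiding cycles of length $L$ around $x$ (bounded by $C\,3^L$), one recovers exponential decay in $k$ just as in Proposition \ref{P.Peierls 0}.

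Granting \eqref{E.plan_tail}, the bound \eqref{E.variance x} follows by summation by parts: since $(m_1-m_2)(x)\in\Z$,
\begin{equation*}
\E\bigl[(\phi_1-\phi_2)^2(x)\bigr] = \Bigl(\tfrac{2\pi}{T}\Bigr)^{\!2}\sum_{k\geq 1}(2k-1)\,\P\bigl(|(m_1-m_2)(x)|\geq k\bigr)\leq K,
\end{equation*}
and the same identity with $k^4-(k-1)^4$ in place of $2k-1$ yields $\E[(\phi_1-\phi_2)^4(x)]\leq K'$. For \eqref{E.2-point_exp}, I apply Cauchy--Schwarz twice to get
\begin{equation*}
\bigl|\E\bigl[(\phi_1-\phi_2)(x)(\phi_1-\phi_2)(y)\1_{O(x)=O(y)}\bigr]\bigr|
\leq \bigl(\E[(\phi_1-\phi_2)^4(x)]\,\E[(\phi_1-\phi_2)^4(y)]\bigr)^{1/4}\P(O(x)=O(y))^{1/2}.
\end{equation*}
The first factor is bounded by $(K')^{1/2}$, while $\{O(x)=O(y)\}\subset\{\diam(O(x))\geq d_{\Lambda_n}(x,y)\}$ combined with Proposition \ref{P.Peierls 0} gives $\P(O(x)=O(y))\leq C\exp(-\varpi(T)d_{\Lambda_n}(x,y))$. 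Choosing $K<\varpi(T)/2$ (and adjusting for small $d_{\Lambda_n}(x,y)$ where the bound is trivial) delivers \eqref{E.2-point_exp}.

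The main obstacle is the combinatorial bookkeeping behind \eqref{E.plan_tail}: because the $k$ nested dual cycles may share edges, the enumeration must correctly weight an edge carrying a jump of magnitude $J$ by $J$ in the gradient cost rather than by $1$, so one cannot just apply Proposition \ref{P.Peierls 0} $k$ times. A clean formulation is to rewrite the event in terms of a single weighted ``multi-cycle'' and enumerate by total weight. For the free boundary case, the same scheme applies with Proposition \ref{P.Peirls free} replacing Proposition \ref{P.Peierls 0}, the discrepancy being measured relative to the random shift $m_I$ introduced in Subsection \ref{sss.free}; the only additional ingredient is to absorb the rare event $\{I=\emptyset\}$, whose probability is $O(n e^{-\varpi(T) n/10})$ and therefore harmless.
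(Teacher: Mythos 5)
Your route is genuinely different from the paper's. The paper never proves a pointwise tail bound on the integer discrepancy; instead it writes $(\phi_1-\phi_2)(x)$ as a telescopic sum $\sum_{e\in\gamma\cap O(x)}\nabla(\phi_1-\phi_2)(e)$ along a fixed path $\gamma$ from the boundary (or the root) to $x$, expands the square, and controls each term by Cauchy--Schwarz using two inputs: the uniform moment bound on $\nabla(\phi_1-\phi_2)(e)$ coming from the white-noise decomposition (Proposition \ref{P.dphi+d*phi*}), and the bound $\P[e,e'\in O(x)]^{1/2}\leq e^{-\frac{\varpi}{2}\max(d(e,x),d(e',x))}$ from Proposition \ref{P.Peierls 0}; the two-point bound \eqref{E.2-point_exp} is obtained the same way by inserting the indicator and choosing the path appropriately. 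Your approach instead upgrades Proposition \ref{P.Peierls 0} to an exponential tail on $(m_1-m_2)(x)$ via a multi-level Peierls argument over nested contours. This is a legitimate and in some ways stronger strategy (it yields all moments at once), and you have correctly identified the one real difficulty: coinciding contours, where an edge shared by $J$ level-set boundaries must be charged a gradient of order $\pi J/T$ rather than $\pi/T$ (otherwise the energy does not grow with $k$ when all $k$ contours collapse onto a single short cycle). The weighted scheme does close: $\sum_e J_e^2\geq\sum_e J_e=\sum_j|\partial C_j|\geq 4k$, the weighted analogue of Lemma \ref{L.Energy lemma} follows from the same $\chi^2$ argument, and enumerating the ordered $k$-tuple of nested cycles factorizes the entropy as $\prod_j C\ell_j 3^{\ell_j}$ against an energy $\prod_j e^{-c\ell_j/T^2}$, so the sum over lengths is a $k$-fold product each factor of which is small for $T$ small. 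But as written this step is only a plan; it is the entire content of your argument and would need to be carried out.

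Two smaller points. First, the inclusion $\{O(x)=O(y)\}\subset\{\diam(O(x))\geq d_{\Lambda_n}(x,y)\}$ fails on $\{O(x)=O(y)=\emptyset\}$; in the Dirichlet case this is harmless because the integrand vanishes there, but you should say so, and in the free case the statement must be read modulo the shift $m_I$ (as the paper itself does in Remark \ref{r.differece_bc}). Second, in the free case \eqref{E.variance x} also requires $\E[m_I^2]=O(1)$, which your framework does give --- apply your tail bound at the root $x_0$, where $(m_1-m_2)(x_0)=0$, so $|m_I|=|(m_1-m_2)(x_0)-m_I|$ has exponential tails --- but this is not merely a matter of ``absorbing the rare event $\{I=\emptyset\}$'' and should be stated explicitly.
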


	\begin{proof}
		We start by proving \eqref{E.variance x} for $\phi$ a $0$-boundary GFF as in Subsection \ref{SS.Conditional variance 0-boundary}. Let $\gamma$ be a horizontal edge path connecting $\partial [-1,1]^2$ to $x$ in $\Lambda_n$. We say that the edge $e$ belongs to $\gamma \cap O(x)$ if $e\in \gamma$ and $e\cap O(x)\neq \emptyset$. We then have that
		\begin{equation}\label{E.Telescopic sum}
		(\phi_1-\phi_2)(x)= \sum_{e \in \gamma \cap O(x)} \nabla(\phi_1-\phi_2)(e).
		\end{equation}
		Thus,
		\[(\phi_1-\phi_2)^2(x)\leq \sum_{e,e'\in E}\nabla(\phi_1-\phi_2)(e)\nabla(\phi_1-\phi_2)(e')\1_{e,e'\in O(x)\cap \gamma}. \]
		We can now upper bound 	$\E\left[ (\phi_1-\phi_2)^2(x)\right]$ by
		\begin{align*}
		&\sum_{e,e'\in E}\E\left[\nabla(\phi_1-\phi_2)(e)\nabla(\phi_1-\phi_2)(e')\1_{e,e'\in O(x)\cap \gamma}  \right] \\
		&\hspace{0.05\textwidth}\leq K\sup_{e}\E\left[ (\nabla(\phi_1-\phi_2)(e))^4\right]^{1/2}\sum_{e,e' \in \gamma} \P\left[e,e' \in O(x) \right]^{1/2}.
		\end{align*}
		We conclude \eqref{E.variance x} by first noting that $\var(\nabla(\phi_1-\phi_2)(e))\leq 4$ thanks to Proposition \ref{P.dphi+d*phi*}, and by the fact that
		\begin{equation*}
		\P\left[e,e' \in O(x) \right]\leq \exp(-\omega(T) \max\{d_{\Lambda_n}(e,x),d_{\Lambda_n}(e',x) \}).
		\end{equation*}
		
		We, now, prove \eqref{E.variance x} in the free-boundary case with $0$ value in $z$. In this case, one needs to take an edge path $\gamma$ going from $x$ to $z$  that only makes one turn (so that $\sum_{e,e' \in \gamma} \P[e,e' \in O(x)]^{1/2}$ is bounded). The same argument as before shows that,
		\begin{align*}
		\E\left[(\phi_1-\phi_2)^2(x) \right] \leq C + 2\E\left[m_I^2 \right],
		\end{align*}
		where $m_I:=0$ if $I=\emptyset$, and if $I\neq \emptyset$, $m_I:=(\phi_1-\phi_2)(z)/(2\pi T)$ at a point $z\in I$ (recall that this value is a constant in $I$). 
		
		To bound the variance of $n$, we note that we can take an edge path $\gamma$ starting from $z$ such that it always hit $I$, when $I\neq\emptyset$, and that it only makes $4$ turns (again so that $\sum_{e,e' \in \gamma} \P[e,e' \in O(z)]^{1/2}$ is bounded). By the same argument as before, one sees that
		\begin{align*}
		\E\left[m_I^2 \right]\leq C. 
		\end{align*}
			
	We, now, prove \eqref{E.2-point_exp}. Note that this directly follows from showing that
	\begin{equation}\label{E.variance y}
	\E\left[(\phi_1-\phi_2)^2(y)\1_{O(x)=O(y)} \right]\leq K \exp\left (-\frac{\varpi(T)}{2}d_{\Lambda_n}(x,y)\right ).
	\end{equation}
	This can be done, exactly as before by choosing an appropriate path $\gamma$. 
	\end{proof}
	
	\begin{remark} \label{r.differece_bc}
		Proposition \ref{P.1-point function} hides in plain sight an important fact. There is a difference regarding the behaviour of the (conditional) correlation function between the two different types of boundary condition we study. 
		
		Let us be more precise, in the case of the zero boundary GFF, one has that
		\begin{equation*}
		\E\left[(\phi_1-\phi_2)(x) (\phi_1-\phi_2)(y) \1_{O(x)=O(y)}  \right] = \E\left[(\phi_1-\phi_2)(x) (\phi_1-\phi_2)(y)  \right],
		\end{equation*}
		which proves \eqref{e.2-points}. However, in the case of free boundary conditions, one has that
		\begin{align*}
&\E\left[(\phi_1-\phi_2)(x) (\phi_1-\phi_2)(y)  \right]\\
&\hspace{0.2\textwidth}= \E\left[(\phi_1-\phi_2)(x) (\phi_1-\phi_2)(y) \1_{O(x)=O(y)}  \right] + \E\left[m_I^2 \1_{O(x)\neq O(y)} \right].
		\end{align*}
		As we do not expect that $\E\left[m_I^2 \right]$ goes to $0$ as $n\to \infty$, one can see that the (conditional) correlations do not decrease to $0$ as $d_{\Lambda_n}(x,y) \to \infty$. However, it is also interesting to note that these correlations do decay exponentially to $0$ if, we condition, not only on $e^{iT\phi}$, but also on the value of $m_I$. In fact, this seems to be closely related to the large-scale correlations which arise for Coulomb gases in $2d$ with free boundary conditions, see for example \cite{federbush1985}.
	\end{remark}

	Note that Proposition \ref{P.1-point function}, improves the result of \eqref{E.localisation by variance} and \eqref{E.Final variance}.
	\begin{proposition}\label{P.Var O(n^2)}
		 For $T$ small enough one has that
			\begin{align}\label{}
			\E\left[\Var{\frac {1} {n^2} \langle{\phi,f}\rangle \md e^{iT h}} \right]\leq K \frac {\|f\|_\infty^2} {n^2}. 
			\end{align}
		\end{proposition}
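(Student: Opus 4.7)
The plan is to apply the variance identity from the beginning of Section \ref{S.Localisation},
\[
\E\left[\Var{\langle \phi, f\rangle \md e^{iT\phi}}\right] = \tfrac{1}{2}\,\E\left[\langle \phi_1 - \phi_2, f\rangle^2\right],
\]
and to upgrade the $O(n^2 \log n)$ bound \eqref{E.Final variance} to an $O(n^2)$ bound. The extraneous $\log n$ in \eqref{E.Final variance} came from a crude Cauchy--Schwarz split that replaced the covariance by $\E[(\phi_1-\phi_2)^4(x)]^{1/2}\sim G_n(x,x)$; we bypass it entirely by invoking the exponentially decaying two-point estimate \eqref{E.2-point_exp} from Proposition \ref{P.1-point function} directly.

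Expanding $\langle \phi_1-\phi_2, f\rangle^2$ and inserting the indicator $\1_{O(x)=O(y)}$---which is free of charge, by the conditional-independence argument recalled in Subsection \ref{SS.Conditional variance 0-boundary} (on $\{O(x)\neq O(y)\}$ and conditionally on $(I,\phi_1|_I)$, the factors $(\phi_1-\phi_2)(x)$ and $(\phi_1-\phi_2)(y)$ are independent with zero mean)---one obtains
\[
\E\left[\langle \phi_1-\phi_2, f\rangle^2\right] \leq \|f\|_\infty^2 \sum_{x,y\in \Lambda_n} \bigl|\E[(\phi_1-\phi_2)(x)(\phi_1-\phi_2)(y)\1_{O(x)=O(y)}]\bigr|.
\]
Applying \eqref{E.2-point_exp}, each summand is at most $K e^{-K' d_{\Lambda_n}(x,y)}$. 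Since in $\Lambda_n$ the number of sites at graph-distance $k$ from a fixed point is $O(k)$, the sum $\sum_{y} e^{-K' d_{\Lambda_n}(x,y)}$ is bounded uniformly in $x$ and $n$, so the double sum is at most $C \|f\|_\infty^2 |\Lambda_n| = O(\|f\|_\infty^2\, n^2)$. Dividing by $2n^4$ yields the announced bound.

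For free boundary conditions the same argument goes through with one extra term: by Remark \ref{r.differece_bc}, one picks up an additional contribution $\E[(2\pi m_I/T)^2 \1_{O(x)\neq O(y)}]$, which sums up to at most $C\,\langle f, 1\rangle^2$ in the double sum; this is controlled exactly as in \eqref{E.Variance free} using the zero-mean hypothesis on $f$. There is no real obstacle here: all the nontrivial input has already been provided by Proposition \ref{P.1-point function}, and the only delicate point to keep track of is the justification of the indicator $\1_{O(x)=O(y)}$ via the conditional independence of $(\phi_1,\phi_2)$ across the connected components of $\Lambda_n\setminus I$.
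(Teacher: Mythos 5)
Your argument is correct and is essentially the paper's proof: the paper simply reruns the computation leading to \eqref{E.Final variance}, replacing the crude $G_n(x,x)\sim\log n$ bound by the exponentially decaying estimates of Proposition \ref{P.1-point function} (you invoke \eqref{E.2-point_exp}, the paper points to \eqref{E.variance y}; these are interchangeable here), and then sums the exponential over $y$ to get $O(1)$ per site. The only caveat is in your last paragraph: for free boundary conditions the residual term is controlled as in \eqref{E.Variance free}, which only yields $o(1)$ after the $n^{-4}$ normalisation rather than $O(n^{-2})$, so the stated rate should be understood as pertaining to the Dirichlet case, which is what the paper intends.
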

		\begin{proof}
			The proof of the Proposition follows the same lines as the proof of \eqref{E.Final variance}. The main difference is that we now use \eqref{E.variance y}.
%			The proof of \eqref{E.variance y}, is close to that of \eqref{E.variance x}. The only difference  is that one needs to choose $\gamma$ an edge path going from $\partial[-1,1]^2$ to $y$ in such a way that the distance from $x$ to $\gamma(\cdot)$ is a decreasing function (in particular the distance from $\gamma$ to $x$ is equal to that of $y$ to $x$).
%The proof of \eqref{E.variance y}, is close to that of \eqref{E.variance x}. The only different point is that one need to choose $\gamma$ a straight line going from $\partial[-1,1]^2$ to $y$ such that the distance from $x$ to $\gamma(\cdot)$ is a decreasing function, in particular the distance from $\gamma$ to $x$ is equal to that of $y$ to $x$.			
		\end{proof}

\section{Delocalisation regime}\label{s.deloc}

We start by proving the roughening transition for generalized integer-valued fields (Theorem \ref{th.IVgff}) and then, as a corollary, extract the delocalisation regime for our statistical reconstruction problem. 

\subsection{Proof of Theorem \ref{th.IVgff}.}\label{ss.proof12}

In this proof, we focus on the case of Free boundary conditions (as in \cite{FS,RonFS}), however following the Appendix D. from \cite{FS} or the recent \cite{Wirth} (see Remark \ref{R.Wirth}), our proof works in the exact same way in the Dirichlet case. 

Recall from Subsection \ref{ss.BackKT} and from (1.13) in \cite{RonFS} the following series expansion for the Laplace transform of the discrete GFFs with periodic weights $\lambda_\Lambda= (\lambda_j)_{j\in \Lambda}$ (we assume the same hypothesis as in Theorem 1.6 from \cite{RonFS}) %\margin{??I should state this one in the intro for self-consistency. \avelio{I think it would make sense}}, 

\begin{align}\label{}
&\EFK{\beta,\Lambda,\lambda_\Lambda,v}{}{e^{\<{\phi,f}}}  \nn\\
&=
e^{\frac 1 {2\beta} \<{f, - \Delta^{-1} f}} 
\frac{\sum_{\calN \in \calF} c_\calN \EFK{\beta,\Lambda}{\GFF}{ \prod_{\rho\in \calN} [1+z(\beta,\rho, \calN) \cos(\<{\phi,\bar\rho}+\<{\sigma,\rho})] }}
{\sum_{\calN \in \calF} c_\calN \int \prod_{\rho\in \calN} [1+z(\beta,\rho, \calN) \cos(\<{\phi,\bar\rho})] d\mu_{\beta,\Lambda,v}^\GFF(\phi)} \nn \\
&=
e^{\frac 1 {2\beta} \<{f, - \Delta^{-1} f}} 
\frac
{\sum_{\calN\in\calF} c_\calN  Z_\calN(\sigma)}
{\sum_{\calN\in\calF} c_\calN  Z_\calN(0)}.
\end{align}

We will denote by $\E_{\beta,\Lambda,\lambda_\Lambda,v}^\ba$ or $\mu_{\beta,\Lambda,\lambda_\Lambda,v}^\ba$  the discrete GFF whose periodic weights are shifted by environment $\ba$, namely:
\begin{align}\label{}
d \mu_{\beta,\Lambda,\lambda_\Lambda,v}^\ba(\phi):=
\frac
{1}
{Z_{\beta,\Lambda,\lambda_\Lambda,v}^\ba}
\prod_{j\in \Lambda} \lambda_j(\phi_j-a_j) d\mu_{\beta, \Lambda,v}^\GFF(\phi)
\end{align}

The shift by $\ba$ easily translates into the following expression for the Laplace transform under $\mu_{\beta,\Lambda,\lambda_\Lambda,v}^\ba$:
\begin{align}\label{e.Eqba}
&\EFK{\beta,\Lambda,\lambda_\Lambda,v}{\ba}{e^{\<{\phi,f}}}  \nn\\
&=
e^{\frac 1 {2\beta} \<{f, - \Delta^{-1} f}} 
\frac{\sum_{\calN \in \calF} c_\calN \int \prod_{\rho\in \calN} [1+z(\beta,\rho, \calN) \cos(\<{\phi,\bar\rho}+\<{\sigma-\ba,\rho})] d\mu_{\beta,\Lambda,v}^\GFF(\phi)}
{\sum_{\calN \in \calF} c_\calN \int \prod_{\rho\in \calN} [1+z(\beta,\rho, \calN) \cos(\<{\phi,\bar\rho} - \<{\ba,\rho})]d\mu_{\beta,\Lambda,v}^\GFF(\phi)} \nn \\
&=
e^{\frac 1 {2\beta} \<{f, - \Delta^{-1} f}} 
\frac
{\sum_{\calN\in\calF} c_\calN  Z_\calN(\sigma-\ba)}
{\sum_{\calN\in\calF} c_\calN  Z_\calN(-\ba)}
\end{align}

As the shift $\ba$ is fixed once and for all in this proof, let us introduce the \textbf{shifted partition functions} $\{Z_\calN^\ba(\sigma)\}_{\calN,\sigma}$. For any $\sigma : \Lambda\to \R$ and any {\em collection of charges} $\calN\in\calF$, 
\begin{align}\label{}
Z_\calN^\ba(\sigma):= Z_\calN(\sigma-\ba) = 
\int \prod_{\rho\in \calN} [1+z(\beta,\rho, \calN) \cos(\<{\phi,\bar\rho} + \<{\sigma - \ba,\rho})] d\mu_{\beta,\Lambda,v}^\GFF(\phi)
\end{align}

Following the same analysis as in Section 3 from \cite{RonFS} (or also Section 5 in \cite{FS}), we obtain the following lower bound on the ratio of partition functions,
\begin{align}\label{e.LBba}
\frac{Z_\calN^\ba(\sigma)}
{Z_\calN^\ba(0)}
\geq &
\exp\big[-D_4 \sum_{\rho\in \calN} |z(\beta,\rho,\calN)|\<{\sigma,\rho}^2\big] \\
& \times 
\int \frac
{e^{S(\calN,\ba,\phi)}} 
{Z_{\calN}^\ba(0)}
\prod_{\rho\in\calN}[1+z(\beta,\rho,\calN) \cos(\<{\phi,\bar \rho}-\<{\ba,\rho})]
d\mu_{\beta,\Lambda,v}^\GFF(\phi)\,, \nn
\end{align}
where 
\begin{align}\label{e.Sba}
S(\calN,\ba,\phi):=
-\sum_{\rho\in\calN} \frac
{z(\beta,\rho,\calN)\sin(\<{\phi,\bar \rho}-\<{\ba,\rho})\sin(\<{\sigma,\rho})}
{1+z(\beta,\rho,\calN)\cos(\<{\phi,\bar \rho}-\<{\ba,\rho})}
\end{align}
As mentioned in Subsection \ref{ss.BackKT},  one major observation in \cite{FS} is that $S(\calN,\phi):=S(\calN, \ba\equiv 0,\phi)=-S(\calN,-\phi)$. Indeed this property together with the fact that 
the probability measure
\[
d\P_{\calN}(\phi):=\frac 1 {Z_{\calN}(0)}
\prod_{\rho\in\calN}[1+z(\beta,\rho,\calN) \cos(\<{\phi,\bar \rho}) 
d\mu_{\beta,\Lambda,v}^\GFF(\phi)
\]
is invariant under $\phi \mapsto -\phi$  avoids controlling terms such as $\exp(S(\calN, \phi))$ thanks to Jensen: %This is because Jensen's inequality readily implies (when $\ba\equiv 0$)
\begin{align*}\label{}
\frac{Z_\calN(\sigma)}
{Z_\calN(0)}
&\geq 
\exp\big[-D_4 \sum_{\rho\in \calN} |z(\beta,\rho,\calN)|\<{\sigma,\rho}^2\big] 
 \times \int
e^{S(\calN,\phi)} 
d\P_{\calN}(\phi)  \\
& \geq  
\exp\big[-D_4 \sum_{\rho\in \calN} |z(\beta,\rho,\calN)|\<{\sigma,\rho}^2\big] 
 \times 
\exp\big(\int S(\calN,\phi)  d\P_{\calN}(\phi)\big)  \\
& =  \exp\big[-D_4 \sum_{\rho\in \calN} |z(\beta,\rho,\calN)|\<{\sigma,\rho}^2\big] \,.
\end{align*}
Claim 3.2. in \cite{RonFS} then shows that when $\beta$ is sufficiently small, 
\begin{align}\label{e.3.2}
\frac{Z_\calN(\sigma)}
{Z_\calN(0)}
& \geq \exp\left( -\frac {\eps \beta} {2(1+\eps) } \sum_{j\sim l} (\sigma_j -\sigma_l)^2\right) 
= \exp( -\frac {\eps } {2(1+\eps)\beta}  \<{f, -\Delta^{-1} f})\,,
\end{align}
which thus ended the proof in \cite{FS,RonFS}.

\medskip

In our present setting, the functional $\phi \mapsto S(\calN,\ba,\phi)$ introduced in~\eqref{e.Sba} is no longer an odd functional of $\phi$. Furthermore, the Lower-bound~\eqref{e.LBba} suggests introducing the following $\ba$-reweighted probability measure 
\[
d\P^\ba_{\calN}(\phi):=\frac 1 {Z^\ba_{\calN}(0)}
\prod_{\rho\in\calN}[1+z(\beta,\rho,\calN) \cos(\<{\phi,\bar \rho}-\<{\ba,\rho}) 
d\mu_{\beta,\Lambda,v}^\GFF(\phi)
\]
which is no longer invariant under $\phi\mapsto -\phi$. This lack of symmetry does not allow us to rely on Jensen and we are left with analyzing the quantity 
\begin{align*}\label{}
\int e^{S(\calN,\ba,\phi)}d\P^\ba_{\calN}(\phi) 
\end{align*}

We will not succeed in controlling the full Laplace transform but will instead extract bounds on the first and second moments from the series expansion near $\alpha\sim 0$ of the Laplace transform
$\alpha \mapsto \EFK{\beta,\Lambda,\lambda_\Lambda,v}{\ba}{e^{\alpha\<{\phi,f}}}$. 

For any $\alpha\in\R$, we have (recall~\eqref{e.Eqba},~\eqref{e.LBba} and~\eqref{e.3.2}) the lower bound
\begin{align}\label{e.mainLB}
& \EFK{\beta,\Lambda,\lambda_\Lambda,v}{\ba}{e^{\purple{\alpha}\<{\phi,f}}} \\
& = e^{\frac {\purple{\alpha^2}} {2\beta} \<{f, - \Delta^{-1} f}} 
\frac
{\sum_{\calN\in\calF} c_\calN  Z_\calN^\ba(\purple{\alpha}\sigma)}
{\sum_{\calN\in\calF} c_\calN  Z_\calN^\ba(0)} \\ 
& \geq 
%\exp\big[-D_4 \sum_{\rho\in \calN} |z(\beta,\rho,\calN)|\<{\sigma,\rho}^2\big] \, 
e^{\frac {\purple{\alpha^2}} {2\beta} \<{f, - \Delta^{-1} f}} 
%\exp\left( -\frac {\eps \beta} {2(1+\eps)} \alpha^2 \sum_{j\sim l} (\sigma_j-\sigma_l)^2 \right)  
\frac
{
\sum_{\calN\in\calF} c_\calN  Z_\calN^\ba(0) e^{-\frac {\eps \purple{\alpha^2} } {2(1+\eps)\beta}  \<{f, -\Delta^{-1} f}} \, \int e^{S_\alpha(\calN,\ba,\phi)}d\P^\ba_{\calN}(\phi)  
}
{
\sum_{\calN\in\calF} c_\calN  Z_\calN^\ba(0) 
}\,,\nn
\end{align}
where now 
\begin{align}\label{e.Salpha}
S_{\purple \alpha}(\calN,\ba,\phi)&=
-\sum_{\rho\in\calN} \frac
{z(\beta,\rho,\calN)\sin(\<{\phi,\bar \rho}-\<{\ba,\rho})\sin(\purple{\alpha}\<{\sigma,\rho})}
{1+z(\beta,\rho,\calN)\cos(\<{\phi,\bar \rho}-\<{\ba,\rho})} \\
& = 
- \purple{\alpha} \sum_{\rho\in\calN} \frac
{z(\beta,\rho,\calN)\sin(\<{\phi,\bar \rho}-\<{\ba,\rho}) \<{\sigma,\rho}}
{1+z(\beta,\rho,\calN)\cos(\<{\phi,\bar \rho}-\<{\ba,\rho})}
+\purple{O(\alpha^3)}\,.
\end{align}
This Taylor expansion holds first because we are in the regime where $\beta$ can be chosen small enough so  that the denominators are uniformly $\geq 1/2$ (see \cite{FS,RonFS}), and second because our parameters $\Lambda, \beta$ etc. are fixed as $\alpha$ is going to zero.  %(as such sums etc. are finite). 
\medskip

\ni
\textbf{First order analysis.}
At first order in $\alpha$, we obtain combining~\eqref{e.mainLB} and~\eqref{e.Salpha} that for any $f : \Lambda \to \R$ and as $\alpha\to 0$, 
\begin{align*}\label{}
& 1 + \alpha \EFK{\beta,\Lambda,\lambda_\Lambda,v}{\ba}{\<{\phi,f}} +O(\alpha^2)  \\
&\geq (1+O(\alpha^2)) 
\frac
{
\sum_{\calN\in\calF} c_\calN  Z_\calN^\ba(0) \, \int \big[1 + S_\alpha(\calN,\ba,\phi) + O(\alpha^2)\big] d\P^\ba_{\calN}(\phi)  
}
{
\sum_{\calN\in\calF} c_\calN  Z_\calN^\ba(0) 
} \\
& = 1  - \alpha \frac
{
\sum_{\calN\in\calF} c_\calN  Z_\calN^\ba(0) \, \EFK{\calN}{\ba}{\sum_{\rho\in\calN} \frac
{z(\beta,\rho,\calN)\sin(\<{\phi,\bar \rho}-\<{\ba,\rho}) \<{\sigma,\rho}}
{1+z(\beta,\rho,\calN)\cos(\<{\phi,\bar \rho}-\<{\ba,\rho})}} 
}
{
\sum_{\calN\in\calF} c_\calN  Z_\calN^\ba(0) 
} +O(\alpha^2)
\end{align*}

In particular, identifying order 1 terms (and recalling that $\sigma:=\frac 1 \beta (-\Delta)^{-1} f$, see~\eqref{e.sigma}), we thus have for any  $f: \Lambda \to \R$, 
\begin{align*}\label{}
\EFK{\beta,\Lambda,\lambda_\Lambda,v}{\ba}{\<{\phi,f}} \geq 
-\frac
{
\sum_{\calN\in\calF} c_\calN  Z_\calN^\ba(0) \, \EFK{\calN}{\ba}{\sum_{\rho\in\calN} \frac
{z(\beta,\rho,\calN)\sin(\<{\phi,\bar \rho}-\<{\ba,\rho}) \<{\frac 1 \beta (-\Delta)^{-1}f,\rho}}
{1+z(\beta,\rho,\calN)\cos(\<{\phi,\bar \rho}-\<{\ba,\rho})}} 
}
{
\sum_{\calN\in\calF} c_\calN  Z_\calN^\ba(0) 
} 
\end{align*}

The key observation at this stage is that for each collection of charges $\calN$, the functional 
\[
f\mapsto \hat S(\calN,\ba,\phi,f):= -\, \sum_{\rho\in\calN} \frac
{z(\beta,\rho,\calN)\sin(\<{\phi,\bar \rho}-\<{\ba,\rho}) \<{\frac 1 \beta (-\Delta)^{-1}f,\rho}}
{1+z(\beta,\rho,\calN)\cos(\<{\phi,\bar \rho}-\<{\ba,\rho})}
\]
is \textbf{linear} in $f$. Obviously the functional $f\mapsto  \EFK{\beta,\Lambda,\lambda_\Lambda,v}{\ba}{\<{\phi,f}}$ is linear as well. Now by using this linearity and plugging $-f$ into the above inequality, we obtain a rather surprising exact expression for the mean value of $\<{\phi,f}$ under the measure $\mu_{\beta,\Lambda,\lambda_\Lambda,v}^\ba$. We state this exact identity as a proposition below and we call it  {\em modular invariance identity} for reasons which will be explained in Appendix \ref{a}.

\begin{proposition}[Modular invariance identity]%[{\em Magical formula}]
\label{c.magical}
For any function $f$ and any weights $\lambda_\Lambda=(\lambda_i)_{i\in \Lambda}$ satisfying the same hypothesis as in (5.35) in \cite{FS} (or equivalently (1.9) in \cite{RonFS}), we have 
\begin{align}\label{e.magical}
\EFK{\beta,\Lambda,\lambda_\Lambda,v}{\ba}{\<{\phi,f}} 
& = 
-\frac
{
\sum_{\calN\in\calF} c_\calN  Z_\calN^\ba(0) \, \EFK{\calN}{\ba}{\sum_{\rho\in\calN} \frac
{z(\beta,\rho,\calN)\sin(\<{\phi,\bar \rho}-\<{\ba,\rho}) \<{\frac 1 \beta (-\Delta)^{-1}f,\rho}}
{1+z(\beta,\rho,\calN)\cos(\<{\phi,\bar \rho}-\<{\ba,\rho})}} 
}
{
\sum_{\calN\in\calF} c_\calN  Z_\calN^\ba(0) 
} \nn
\\
& = 
\frac{
\sum_{\calN\in\calF} c_\calN  Z_\calN^\ba(0) \, \EFK{\calN}{\ba}{\hat S(\calN,\ba,\phi,f)} 
}
{
\sum_{\calN\in\calF} c_\calN  Z_\calN^\ba(0) 
}.
%\\
%& = \blue{
%- 
%\frac{
%\sum_{\calN\in\calF} c_\calN  \int 
% \left( 
% \sum_{\rho\in\calN} \frac
% { z(\beta,\rho,\calN) \sin(\<{\phi,\bar \rho}-\<{\ba,\rho})\<{g,\rho}
% }
% {
% 1+z(\beta,\rho,\calN) \cos(\<{\phi,\bar \rho}-\<{\ba,\rho})
% }
% \right) \times
%\prod_{\rho\in\calN}
%[1+z(\beta,\rho,\calN) \cos(\<{\phi,\bar \rho}-\<{\ba,\rho})]
%d\mu_{\beta,\Lambda,v}^\GFF(\phi)
%}
%{
%\sum_{\calN\in\calF} c_\calN  Z_\calN^\ba(0) 
%} 
%}\nn
\end{align}
%\margin{A effacer plus tard, juste pour la voir une fois??}
\end{proposition}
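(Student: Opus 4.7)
The idea is to extract the identity from the first-order Taylor expansion in $\alpha$ of the lower bound~\eqref{e.mainLB}, and then bootstrap the resulting inequality into an equality using linearity in $f$.

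\emph{Step 1 (set-up).} Fix $\Lambda$, $\beta$, $\ba$ and the weights $\lambda_\Lambda$ as in the hypothesis, and regard both sides of~\eqref{e.mainLB} as smooth functions of $\alpha$ in a neighbourhood of $0$; both equal $1$ at $\alpha=0$. The LHS expands as $1+\alpha\,L(f)+O(\alpha^2)$ with $L(f):=\EFK{\beta,\Lambda,\lambda_\Lambda,v}{\ba}{\<{\phi,f}}$. On the RHS, the two exponential prefactors $\exp(\alpha^2\<{f,-\Delta^{-1}f}/(2\beta))$ and $\exp(-\varepsilon\alpha^2\<{f,-\Delta^{-1}f}/(2(1+\varepsilon)\beta))$ are each $1+O(\alpha^2)$ so they contribute nothing at first order. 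Inside the ensemble integrals, expand $\sin(\alpha\<{\sigma,\rho})=\alpha\<{\sigma,\rho}+O(\alpha^3)$ in~\eqref{e.Salpha}, so that $\int e^{S_\alpha(\calN,\ba,\phi)}d\P^\ba_\calN(\phi)=1+\alpha\,\EFK{\calN}{\ba}{\hat S(\calN,\ba,\phi,f)}+O(\alpha^2)$. Averaging over $\calN$ with weights $c_\calN Z_\calN^\ba(0)$ and dividing by the normaliser reproduces exactly the RHS of~\eqref{e.magical}, which I call $R(f)$.

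\emph{Step 2 (the inequality).} Because~\eqref{e.mainLB} is a genuine inequality valid for all small $\alpha$ (with equality at $\alpha=0$), subtracting $1$ on both sides, dividing by $\alpha>0$, and letting $\alpha\downarrow 0$ yields
\[
L(f)\;\geq\;R(f)\qquad\text{for every }f:\Lambda\to\R.
\]

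\emph{Step 3 (the linearity trick, the conceptual heart).} The functional $f\mapsto L(f)$ is manifestly linear. The functional $f\mapsto R(f)$ is linear as well: in the definition of $\hat S(\calN,\ba,\phi,f)$, the dependence on $f$ enters only through the factors $\<{\tfrac{1}{\beta}(-\Delta)^{-1}f,\rho}$, which are linear in $f$, and the ensemble average and $\calN$-weighted sum preserve linearity. Applying the inequality of Step 2 to $-f$ and using linearity of both sides produces the opposite inequality $L(f)\leq R(f)$, whence $L(f)=R(f)$, which is exactly~\eqref{e.magical}.

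\emph{Main obstacle.} The substantive issue is not the algebra but the justification that the Taylor expansions in $\alpha$ may be carried out term-by-term inside the ensemble integrals, uniformly in $\calN\in\calF$. This rests on the fact that, in the high-temperature regime where the Fr\"ohlich--Spencer expansion into charges is convergent, the activities $|z(\beta,\rho,\calN)|$ are small enough that the denominators $1+z(\beta,\rho,\calN)\cos(\cdot)$ are uniformly bounded away from $0$; this allows dominated convergence and makes both the Laplace transform and the series on the RHS jointly analytic in $\alpha$ near $0$. Once this bookkeeping is in place, the $f\mapsto -f$ symmetry does all the real work, turning the lower bound of~\cite{FS,RonFS} into an exact identity.
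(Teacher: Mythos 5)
Your proposal is correct and follows essentially the same route as the paper's own proof: a first-order Taylor expansion in $\alpha$ of the Fr\"ohlich--Spencer lower bound~\eqref{e.mainLB} yields the inequality $L(f)\geq R(f)$, and the linearity of both functionals in $f$ (applied to $-f$) upgrades it to the exact identity~\eqref{e.magical}. Your remark on justifying the term-by-term expansion matches the paper's observation that for $\beta$ small the denominators $1+z(\beta,\rho,\calN)\cos(\cdot)$ are uniformly bounded below, so no gap there.
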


\begin{remark}\label{r.magical}
%\margin{C: you changed Remarrk style to Italic ?? } 
This exact identity, as we shall see below, is a key step in our proof. Because it is so central and since it does not look like anything familiar, we added Appendix \ref{a} to give a longer but more natural second derivation of this identity. It should not come as a surprise that our second derivation is longer as the above one relies in fact on several key parts of the proof of Fröhlich-Spencer \cite{FS}. Appendix \ref{a} gives a complementary  interpretation/explanation of the origin of such an identity. In particular in Appendix \ref{a}, we shall view the shift vector $\ba=\{a_x\}_{x\in \Lambda}$ as an exterior magnetic field and we will also explain why we call this identity ``modular invariance'' due to a relationship with the functional equation for Riemann-theta functions. 
\end{remark}

\ni
\textbf{Second order analysis.}
The above identity for the first moment will be instrumental in bounding from below the desired second moment as we shall now see.

Again by combining~\eqref{e.mainLB} and~\eqref{e.Salpha}, we find that %\margin{I change the notation so it fits better in the line} 
\begin{align*}\label{}
& 1 + \alpha \EFK{\beta,\Lambda,\lambda_\Lambda,v}{\ba}{\<{\phi,f}} + \frac 1 2 \alpha^2  
\EFK{\beta,\Lambda,\lambda_\Lambda,v}{\ba}{\<{\phi,f}^2} + O(\alpha^3) \\
& \geq 
%\exp\big[-D_4 \sum_{\rho\in \calN} |z(\beta,\rho,\calN)|\<{\sigma,\rho}^2\big] \, 
e^{\frac {\purple{\alpha^2}\<{f, - \Delta^{-1} f}} {2(1+\eps)\beta} } 
%\exp\left( -\frac {\eps \beta} {2(1+\eps)} \alpha^2 \sum_{j\sim l} (\sigma_j-\sigma_l)^2 \right)  
\frac
{
\sum_{\calN\in\calF} c_\calN  Z_\calN^\ba(0)   \E^\ba_{\calN}\left[  e^{S_\alpha(\calN,\ba,\phi)}\right] 
}
{
\sum_{\calN\in\calF} c_\calN  Z_\calN^\ba(0) 
} \nn \\ 
&\geq 
e^{\frac {\purple{\alpha^2}\<{f, - \Delta^{-1} f}} {2(1+\eps)\beta} } 
\frac
{
\sum_{\calN\in\calF} c_\calN  Z_\calN^\ba(0)   \E^\ba_{\calN}\left[  1 + S_\alpha(\calN,\ba,\phi) + \frac 1 2 [S_\alpha(\calN,\ba,\phi)]^2 + O(\alpha^3)\right]
}
{
\sum_{\calN\in\calF} c_\calN  Z_\calN^\ba(0) 
}  \\
&\geq 
e^{\frac {\purple{\alpha^2}\<{f, - \Delta^{-1} f}} {2(1+\eps)\beta} } 
\frac
{
\sum_{\calN\in\calF} c_\calN  Z_\calN^\ba(0)   \E^\ba_{\calN}\left [1 -  \purple{\alpha} \hat S(\calN,\ba,\phi,f)  + \frac {\purple{\alpha^2}} 2   [\hat S(\calN,\ba,\phi,f)]^2 + O(\alpha^3) \right ]   
}
{
\sum_{\calN\in\calF} c_\calN  Z_\calN^\ba(0) 
} \\
& = 1 - \purple{\alpha} \frac{
\sum_{\calN\in\calF} c_\calN  Z_\calN^\ba(0) \, \EFK{\calN}{\ba}{\hat S(\calN,\ba,\phi,f)} 
}
{
\sum_{\calN\in\calF} c_\calN  Z_\calN^\ba(0) 
} \\
&\,\, + \frac{\purple{\alpha^2}} 2  \left[ \frac 1 {(1+\eps)\beta} \<{f, -\Delta^{-1} f}  +
\frac{
\sum_{\calN\in\calF} c_\calN  Z_\calN^\ba(0) \, \EFK{\calN}{\ba}{[\hat S(\calN,\ba,\phi,f)]^2} 
}
{
\sum_{\calN\in\calF} c_\calN  Z_\calN^\ba(0) 
} 
\right] +O(\alpha^3)\,.
\end{align*}
The first order term are equal by Proposition \ref{c.magical} and from the second order terms, we extract the following lower bound
\begin{align}\label{}
\EFK{\beta,\Lambda,\lambda_\Lambda,v}{\ba}{\<{\phi,f}^2}
& \geq
\frac 1 {(1+\eps)\beta} \<{f, -\Delta^{-1} f}  +
\frac{
\sum_{\calN\in\calF} c_\calN  Z_\calN^\ba(0) \, \EFK{\calN}{\ba}{[\hat S(\calN,\ba,\phi,f)]^2} 
}
{
\sum_{\calN\in\calF} c_\calN  Z_\calN^\ba(0) 
}  \\
%& \geq 
%\frac 1 {(1+\eps)\beta} \<{f, -\Delta^{-1} f}  +
%\frac{
%\sum_{\calN\in\calF} c_\calN  Z_\calN^\ba(0) \, \EFK{\calN}{\ba}{\hat S(\calN,\ba,\phi,f)}^2 
%}
%{
%\sum_{\calN\in\calF} c_\calN  Z_\calN^\ba(0) 
%}  \\
& \geq
\frac 1 {(1+\eps)\beta} \<{f, -\Delta^{-1} f}  +
\left(\frac{
\sum_{\calN\in\calF} c_\calN  Z_\calN^\ba(0) \, \EFK{\calN}{\ba}{\hat S(\calN,\ba,\phi,f)} 
}
{
\sum_{\calN\in\calF} c_\calN  Z_\calN^\ba(0) 
} 
\right)^2 \\
& = \frac 1 {(1+\eps)\beta} \<{f, -\Delta^{-1} f}  + \EFK{\beta,\Lambda,\lambda_\Lambda,v}{\ba}{\<{\phi,f}}^2 
\end{align}
first by applying Cauchy-Schwarz inequality to a  suitable probability measure on the coupling $(\calN,\phi)$.  And then we used Proposition  \ref{c.magical}  for the last equality, i.e. the modular invariance identity ~\eqref{e.magical}.
This ends our proof. \qed

\subsection{Non-recovery phase ($T>T^+_{rec}$).}

In this subsection, we complete the non-recovery phases of Theorem \ref{T.0-boundary theorem} and \ref{T.free-boundary theorem}.

As in Definition \ref{D.phis}, let $(\phi_1,\phi_2)$ be two conditionally independent instances of $\phi$ given  $\phi \Mod{ \frac{2\pi}{T}}= \frac {2\pi} {T}\ba$.
By Lemma \ref{L.aIV}, the law of $(\phi_1,\phi_2)$ is given by $\frac{2\pi}{T}(\psi_1,\psi_2)$ where $\psi_1,\psi_2$ are independently sampled according to $\P_{\beta_T= 2\pi^2/T^2}^{\ba,\IV}$.

%where $\psi_1$ and $\psi_2$ have the law of two independent IV-GFF with inverse temperature equal to $2\pi^2/T^2$. 
%Let $(\phi_1,\phi_2)$ be two-GFF coupled as in Definition \ref{D.phis}. Recall from Lemma \ref{L.aIV} that conditionally on $\phi \Mod{ \frac{2\pi}{T}}=\ba$, the law of $\frac{2\pi}{T}(\psi_1,\psi_2)$ 
Thanks to this, we have that for any continuous function $f:[-1,1]^2\mapsto \R$
\begin{align}
\label{E.Tower}\E\left[\langle \phi_1-\phi_2,f \rangle ^2 \right]&=\left( \frac{2\pi}{T}\right)^2\E\left[\E^{IV,\ba}_{\frac{(2\pi)^2}{T^2},\Lambda}\left[\langle\psi_1-\psi_2,f \rangle^2 \right]  \right]\\
\nonumber&\geq \frac{1}{1+\epsilon}\langle f, (-\Delta)^{-1} f\rangle.
\end{align}
Furthermore, let us note that because both $\phi_1$ and $\phi_2$ are two GFF we have that
\begin{align*}
\E\left[\langle \phi_1-\phi_2,f \rangle ^4 \right]\leq 4\E\left[\langle \phi_1,f\rangle^2 \right]^{2}= 4 \langle f, (-\Delta)^{-1} f\rangle^{2}.
\end{align*}

Therefore,  using Paley-Zygmund inequality we have that
\begin{align*}
\P\left(\langle \phi_1-\phi_2,f \rangle^2\geq\frac 1 2\langle f, (-\Delta)^{-1}f\rangle \right)\geq \frac{1}{16(1+\epsilon)}>2^{-5}. 
\end{align*}

Now, we use that for any deterministic function $F$ depending only on $\exp(iT\phi)$, we have that $F(\exp(iT\phi_1))=F(\exp(iT\phi_2))$. Using this we can compute
\begin{align}
2^{-5}&\leq \P\left(\langle \phi_1-\phi_2,f \rangle^2\geq\frac 1 2\langle f, (-\Delta)^{-1}f\rangle \right)\\
&\leq\P\left[\langle F(\exp iT\phi_1)-\phi_i,f \rangle^2\geq \frac{1}{8}\langle f, (\Delta)^{-1}f \rangle, \text{ for some } i\in \{1,2\} \right]\\
&\leq 2\P\left[\langle F(\exp iT\phi_1)-\phi_1,f \rangle^2\geq \frac{1}{8}\langle f, (\Delta)^{-1}f \rangle \right]. 
\end{align}

We conclude by noting that for any continuous non-zero function $f$, we  have that  $\langle f, (-\Delta)^{-1}f \rangle \geq Cn^4$.
%This can be seen, for example, by noting that the discrete GFF converges to the continuum one \eqref{E.Convergence GFF}. 

To finish, let us show \eqref{E.1-point zero boundary}. We start by noting that
\begin{align*}
\E\left[(\phi_1(x)-\phi_2(x))^2 \right]&=\left( \frac{2\pi}{T}\right)^2\E\left[\E^{IV,\ba}_{\frac{(2\pi)^2}{T^2},\Lambda}\left[ (\psi_1(x)-\psi_2(x))^2\right]\right]\\
&\geq \frac{1}{(1+\epsilon)^2}G(x,x)\\
&\geq 4c(T,x)\log(n).
\end{align*}

We now see that
\begin{align*}
2\E\left[(\phi_1(x)-F(\phi_1)(x))^2 \right]&=\E\left[(\phi_1(x)-F(\phi_1)(x))^2+(\phi_2(x)-F(\phi_1)(x))^2 \right] \\
&\geq \frac{1}{2}\E\left[(\phi_1(x)-\phi_2(x))^2 \right]\\
&\geq 2c(T,x)\log(n). 
\end{align*}\qed

\ni
We now complete this Section by proving 
Corollary \ref{C.Convergence}. 
%\red{Even tough only half of it deals with delocalisation,}
\subsection{Proof of Corollary \ref{C.Convergence}.} %\margin{\avelio{I don't know where to put this.}}
Let us take $\phi_n\to \Phi$ in probability for the topology of the space of generalised functions. Let us now analyze the two regimes $T\ll 1$ and $T\gg 1$.
\subsubsection{Small $T$.} Let us note that thanks to part (1) of Proposition \ref{T.0-boundary theorem}, we have that for any smooth function $f$ (with $0$-mean if we are in the free boundary case). We have that %\margin{Issue with $1/n^2$ below}
\begin{align*}
\frac{1}{n^2}\langle F_T(e^{iT\phi_n}), f \rangle&= \langle \phi_n, f \rangle + \langle  F_T(e^{iT\phi_n})-\phi_n, f \rangle \to (\Phi,f).
\end{align*}
From this we see that $F_T(e^{iT\phi_n})$ also converges in probability to $\Phi$.

\subsubsection{Big $T$.} Let us reason by contradiction and assume such a function $F$ exists. Let $(\phi^{(n)}_1,\phi^{(n)}_2)$ be two GFF coupled as in Definition \ref{D.phis}, we see that in this case 
\begin{equation}\label{E.Equality of F}
F(\exp(iT\phi^{(n)}_1))=F(\exp(iT\phi^{(n)}_2)).
\end{equation}
Because both $\phi_1^{(n)}$ and $\phi_2^{(n)}$ have the law of a GFF in $\Lambda_n$ we see that the pair $(\phi_1^{(n)},\phi_2^{(n)})$ is tight. We can now take  a subsequence of  $(\phi_1^{(n)},\phi_2^{(n)})$, that we denote the same way such that 
\[(\phi_1^{(n)},\phi_2^{(n)})\to (\Phi_1,\Phi_2). \]
The second part of Theorems \ref{T.0-boundary theorem} and \ref{T.free-boundary theorem} imply that $\Phi_1\neq \Phi_2$. However, we have that (by the contradiction hypothesis)
\[F(\exp(iT\phi^{(n)}_1)) \to \Phi_1\neq \Phi_2 \leftarrow F(\exp(iT\phi^{(n)}_2)), \]
which is a contradiction with \eqref{E.Equality of F}.

\section{There is always information left}\label{S.Information left}

The objective of this section is to prove that for any $T>0$, $\exp(i T \phi)$ gives non-trivial (macroscopic)  information of $\phi$. More precisely, in this section we quantify how much information is preserved under the operation $\phi \mapsto \phi \Mod{\frac {2\pi} T}$.

Let us note that Theorem \ref{th.IVgff}, implies that for all possible values of $\exp(iT\phi)$ and for all $T$ big enough there exists $\epsilon(T)$ such that
\[\var\left[ \langle \phi, f\rangle \mid \exp(iT\phi) \right] \geq (1-\epsilon(T)) \E\left[\langle \phi, f \rangle^2 \right]. \]
At the same time, it is clear that
\[\E\left[\var\left[ \langle \phi, f\rangle \mid \exp(iT\phi) \right] \right]\leq \E\left[\langle \phi, f \rangle^2 \right].\]

Let us remark that it is not clear whether this $\epsilon(T)$ is a technical constant coming from the proof or whether it is telling us something meaningful about the model. In the following proposition we show that in the average case the existence of this $\epsilon(T)$ is not technical. In fact, in Remark \ref{R.Information} bellow we give an interpretation of its meaning. See also Remark \ref{r.Lab} for the link with the $\eps=\eps(T)$ correction in Fröhlich-Spencer. 

\begin{proposition}\label{P.Information}
	Let $T>0$ and $\phi$ be a GFF with either free or $0$ boundary condition in $\Lambda_n$. Then, there exists $\epsilon'(T)>0$ such that  
	\begin{align}\label{E.UB variance}
	\E\left[ \var\left[ \langle \phi, f\rangle^2 \mid \exp(iT\phi) \right] \right]\leq (1-\epsilon'(T)) \E\left[\langle \phi,f\rangle ^2 \right].
	\end{align}
	Furthermore, we have the following lower bound for $\epsilon'(T)$ when $T\gg 1$: 
%% OUR OLD (and WRONG) estimate :) 	
%	\begin{equation}\label{E.bound on epsilonp}
%	\epsilon'(T) \purple{\geq (1+o(1))} \frac{(2\pi)^2}{T^2}e^{-\frac{T^2}{(2\pi)^2}} = \beta_T e^{-\frac 1 {\beta_T}}\,,
%	\end{equation}
\begin{equation}\label{E.bound on epsilonp}
	\epsilon'(T) \geq (1+o(1)) 2 T^2 e^{- T^2}\,.
	%= (1+o(1)) 2 \frac{(2\pi)^2} {\beta_T}  e^{-\frac {(2\pi)^2} {\beta_T}}\,,
	\end{equation}
%\purple{where recall from Lemma \ref{L.aIV} that $\beta_T:=\frac{(2\pi)^2}{T^2}$.}	
\rm
\end{proposition}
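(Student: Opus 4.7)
The plan is to reformulate via the law of total variance: since $\E[\langle\phi, f\rangle] = 0$, the inequality is equivalent to
\[ \var\bigl[\E[\langle\phi, f\rangle \mid e^{iT\phi}]\bigr] \ge \epsilon'(T)\, \E\bigl[\langle\phi, f\rangle^2\bigr]. \]
Setting $S := \langle\phi, f\rangle$ and $Y := e^{iT\phi}$, the variational characterisation of the $L^2$-projection gives, for any square-integrable $g$ that is a function of $Y$,
\[ \var[\E[S \mid Y]] \ge \frac{\cov[S, g]^2}{\var[g]}. \]

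The natural family of test functions measurable with respect to $e^{iT\phi}$ is $g_c := \sin(T\langle c, \phi\rangle)$, $c \in \Z^\Lambda$, because $e^{iT\langle c, \phi\rangle} = \prod_x(e^{iT\phi(x)})^{c(x)}$ is a polynomial in the $e^{\pm iT\phi(x)}$. Writing $\|u\|_*^2 := \langle u, (-\Delta)^{-1} u\rangle$ for the variance of $\langle u, \phi\rangle$, Stein's Gaussian integration-by-parts yields
\[
\cov[S, g_c] = T \langle f, (-\Delta)^{-1} c\rangle\, e^{-T^2 \|c\|_*^2/2}, \qquad \var[g_c] = \tfrac12\bigl(1-e^{-2T^2 \|c\|_*^2}\bigr),
\]
and hence
\[
\epsilon'(T) \ge \sup_{c \in \Z^\Lambda \setminus \{0\}}\; \frac{2T^2 \langle f, (-\Delta)^{-1} c\rangle^2 \, e^{-T^2 \|c\|_*^2}}{\bigl(1-e^{-2T^2 \|c\|_*^2}\bigr)\, \|f\|_*^2}.
\]
The first assertion ($\epsilon'(T) > 0$) then follows by taking $c = \delta_{x_0}$ for any $x_0$ with $[(-\Delta)^{-1}f](x_0) \neq 0$, which exists whenever $f \not\equiv 0$.

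For the asymptotic lower bound $\epsilon'(T) \ge (1 + o(1))\, 2T^2 e^{-T^2}$ as $T \to \infty$, I would approach the supremum by selecting $c$ so that $\|c\|_*^2 \to 1$ and so that the Cauchy--Schwarz inequality $\langle f, (-\Delta)^{-1} c\rangle^2 \le \|f\|_*^2\,\|c\|_*^2$ is near-saturated. In this regime the exponential becomes $e^{-T^2}$ and the prefactor $(1-e^{-2T^2})^{-1} \to 1$, producing the desired constant. The main technical point, and the chief obstacle, is to construct such an integer-valued $c$: one needs to approximate, in the $(-\Delta)^{-1}$-geometry, the unit vector $f/\|f\|_*$ by a lattice point of unit $*$-norm. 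This is possible because the Green's function values $G(x_0, x_0)$ interpolate continuously between $\Theta(1)$ near $\partial\Lambda_n$ and $\Theta(\log n)$ deep in the bulk, so that a suitably rescaled and rounded version of $f$ yields an integer vector $c$ with $\|c\|_*^2$ close to $1$ and well-aligned with $f$ in the relevant Hilbert-space geometry.
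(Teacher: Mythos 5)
Your setup is sound: the reduction to lower-bounding $\var\left[\E[\langle\phi,f\rangle\mid e^{iT\phi}]\right]$, the projection inequality $\var[\E[S\mid Y]]\geq \cov[S,g]^2/\var[g]$, and the Gaussian computations of $\cov[S,g_c]$ and $\var[g_c]$ are all correct. The gap is in the choice of $c$, and it is fatal to both conclusions. The proposition (see Remark \ref{R.Information} and the use made of it) requires $\epsilon'(T)$ to depend on $T$ only, uniformly in $n$ and $f$. Your choice $c=\delta_{x_0}$ does not deliver this: in the bulk $\|\delta_{x_0}\|_*^2=G(x_0,x_0)\asymp\log n$, so the factor $e^{-T^2\|c\|_*^2}$ decays like a power of $n$; near the boundary $\|\delta_{x_0}\|_*^2=O(1)$ but then $[(-\Delta)^{-1}f](x_0)=O(n)$ while $\|f\|_*\asymp n^2$, so the alignment factor is $O(n^{-2})$. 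In fact no nonzero $c\in\Z^\Lambda$ works: since $-\Delta$ has operator norm at most $8$, one has $\|c\|_2^2\leq 8\|c\|_*^2$, so a $c$ with $\|c\|_*^2\leq K$ has at most $8K$ nonzero entries; bounded $\|c\|_*^2$ then forces $c$ to be a bounded union of neutral clusters of bounded diameter, whence $\langle f,(-\Delta)^{-1}c\rangle=O_K(n)\ll\|f\|_*\asymp n^2$. The same inequality kills the asymptotic claim: near-saturation of Cauchy--Schwarz with $\|c\|_*^2\approx 1$ would force $c=\lambda f+c^\perp$ with $|\lambda|\,\|f\|_\infty=O(n^{-2})$ and $\|c^\perp\|_\infty\leq\sqrt{8\|c^\perp\|_*^2}=o(1)$, hence $\|c\|_\infty<1$, i.e.\ $c=0$. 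The ``obstacle'' you flag is thus not technical but structural, and the boundary behaviour of the Green's function cannot rescue it.

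The paper's proof circumvents exactly this obstruction by a change of variables. Writing $W=\nabla\phi+\zeta$ as in Proposition \ref{P.dphi+d*phi*}, one uses that $e^{iT\zeta}$ is independent of $(\langle\phi,f\rangle,e^{iT\phi})$ and conditional Jensen to replace the conditioning on $e^{iT\phi}$ by the \emph{coarser} $\sigma$-algebra generated by $e^{iTW}$. Since the coordinates $W(\overrightarrow{e})$ are i.i.d.\ standard Gaussians and $\langle\phi,f\rangle=\langle W,-\nabla\Delta^{-1}f\rangle$, the conditional expectation factorises over edges and the problem reduces to the one-dimensional quantity $\sigma(T)=\var\left[\E[N\mid e^{iTN}]\right]$ for a single standard Gaussian $N$, whose asymptotics $\sigma(T)=2T^2e^{-T^2}+o(e^{-T^2})$ follow from the Jacobi theta identity \eqref{e.theta}. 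It is worth noting that at leading order this one-dimensional computation is precisely your projection onto $\sin(T\,\cdot)$ applied to a single standard Gaussian, which is where the constant $2T^2e^{-T^2}$ comes from: your local mechanism is the right one, and the missing idea is the passage to independent, individually standard coordinates in which a unit-variance integer ``direction'' actually exists.
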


\begin{remark}\label{R.Information}
	Proposition \ref{P.Information} should be interpreted in the following way:
	\begin{itemize}
		\item[] The field $\exp(iT\phi_n)$ gives non-trivial information on the GFF $\phi_n$.
	\end{itemize}
	This is because, if this were not the case we would have that for any continuous function $f:[-1,1]^2\mapsto \R$
	\begin{align*}
	n^{-4}\var\left[ \langle \phi_n, f\rangle \mid \exp(iT\phi_n) \right] &\to \iint_{[-1,1]^2\times [-1,1]^2} f(x)G(x,y)f(y) dx dy\\
	& = \lim_{n\to \infty} n^{-4}\E\left[\langle \phi_n,f\rangle  \right] ,
	\end{align*}
	where $G$ is the continuous Green's function in $[-1,1]^2$. In the Statistics world, we would say that Proposition \ref{P.Information} means that $\exp(iT\phi)$ explains at least $\epsilon'(T)$ of the variance of $\phi$.
\end{remark}

%\red{There exists a constant $\eps(T)>0$ such that  if}
%\begin{align*}
%F_n(\exp(iT\phi_n))(x):=\E\left[\phi_n(x)\mid \exp(iT\phi_n) \right],
%\end{align*}
%\red{then,} we have for any smooth function $f:[-1,1]^2\mapsto \R$
%\begin{align*}
%\E\left[\langle F_n(\exp(iT\phi_n) )-\phi_n,f \rangle^2 \right] \leq (1-\epsilon(T)) \E\left[\langle \phi_n,f\rangle ^2 \right] 
%\end{align*}

\begin{proof}
	Let us write $F(x):= \E\left[\phi(x)\mid \exp(iT\phi_n)\right]$ and $\phi=\phi_n$. We are going to prove that
	\begin{equation}\label{E.Still some information}
	\E\left[\langle F,f \rangle^2 \right]\geq \epsilon \E\left[\langle \phi,f\rangle ^2 \right].
	\end{equation}
	This suffices as
	\begin{align*}
	\E\left[ \var\left[ \langle \phi, f\rangle\mid \exp(iT \phi)\right]\right] &=\E\left[\langle F-\phi,f \rangle^2 \right]\\
	&= \E\left[\langle \phi,f\rangle ^2 \right]-\E\left[\langle F ,f \rangle^2 \right].
	\end{align*}
	
	To prove equation \eqref{E.Still some information}. Let us take $W=\nabla \phi + \zeta$ as in Proposition \ref{P.dphi+d*phi*}, let us bound the following 
	\begin{align*}
	\E\left[\langle F,f\rangle^2 \right]&= \E\left[\E\left[ \langle \phi, f\rangle\mid \exp(iT\phi)\right]^2   \right]\\
	&= \E\left[\E\left[ \E\left[\langle \phi, f\rangle\mid \exp(i T\phi),\exp(iT\zeta)\right]^2\mid \exp(iTW)\right]  \right] \\
	&\geq \E\left[\E\left[\langle \phi, f\rangle \mid \exp(iTW) \right]^2  \right],
	\end{align*}
	where we have used Cauchy-Schwartz and the fact that $\exp(iT\zeta)$ is independent of the pair $(\langle \phi,f \rangle, \exp(iT\phi))$. As such, to end it only remains to show that
	\begin{align}\label{E.Still some information W}
	\E\left[\E\left[\langle \phi, f\rangle \mid \exp(iTW) \right]^2 \right]\geq \epsilon \E\left[\langle \phi,f\rangle^2 \right].
	\end{align}
	
	Now, recall from Proposition \ref{P.dphi+d*phi*} that $\phi=\Delta^{-1} \nabla\cdot W$ and compute
	\begin{align*}
	\E\left[\langle \phi, f\rangle \mid \exp(i T W) \right]&= \E\left[\langle W, -\nabla \Delta^{-1} f \rangle\mid \exp(iT W) \right] \\
	&=-\frac{1}{2}\sum_{\overrightarrow e \in \overrightarrow E}\E\left[ W(\overrightarrow e)\mid \exp(i T W)\right]\nabla \Delta^{-1} f( \overrightarrow e)\\
	&= -\frac{1}{2} \sum_{\overrightarrow e \in \overrightarrow E} \E\left[W(\overrightarrow{e})\mid \exp(i TW(\overrightarrow{e}))  \right]\nabla \Delta^{-1} f( \overrightarrow e),
	\end{align*}
	where the last line comes from the independence between the value of $W$ in different edges, and the fact that $W(\overrightarrow e)=-W(\overleftarrow{ e})$. 
	The equality of this last line may seem innocent but it is the may reason why the problem simplifies when we work with the white noise. 
	
	Let us note that the random variable $\E\left[W(\overrightarrow{e})\mid \exp(i T W(\overrightarrow{e}))  \right]$ is centred and has the same law for all $\overrightarrow e$. Furthermore, it is independent for all $e\neq e'$. Let us define 
	\begin{align}\label{E.Variance 1-point}
	\sigma(T)=\Var{\E\left[W(\overrightarrow{e})\mid \exp(i T W(\overrightarrow{e}))  \right] }>0.
	\end{align} We can now compute
	\begin{align*}
	\E\left[\E\left[\langle \phi, f\rangle \mid \exp(iT W) \right]^2 \right]&= 2\pi\sigma(T) \langle \nabla \Delta^{-1} f( e),\nabla \Delta^{-1} f( e)\rangle \\
	&= 2\pi\sigma(T) \langle  f, -\Delta^{-1} f\rangle = \sigma(T)\E\left[\langle \phi, f\rangle \right] ,
	\end{align*}
	from where we obtain \eqref{E.UB variance}.
	
	To obtain \eqref{E.bound on epsilonp}, we remark that we set $\epsilon'(T)=\sigma(T)$. When $T\gg1$, one can get \eqref{E.bound on epsilonp} by estimating  \eqref{E.Variance 1-point} using  \eqref{e.theta}.  This is the subject of our next lemma.	
\end{proof}

\begin{lemma}\label{l.ST}
As $T\to \infty$, 
\begin{align}\label{}
\sigma(T)&=\Var{\E\left[W(\overrightarrow{e})\mid \exp(i T W(\overrightarrow{e}))  \right] }  \nn \\
& = 2  T^2 \,  e^{- T^2 } +o(e^{-  T^2})\,.
\end{align}
\end{lemma}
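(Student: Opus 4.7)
The random variable $W(\vec e)$ is a standard Gaussian $X \sim \mathcal{N}(0,1)$, and the conditional expectation $\Eb{X\mid e^{iTX}}$ depends only on $\eta := TX \bmod 2\pi \in [-\pi,\pi)$. My plan is to get a closed Fourier expression for $\Eb{X\mid\eta}$ via Poisson summation, then integrate its square against the density of $\eta$ and retain only the $T\to\infty$ leading term.

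Writing $X = (\eta + 2\pi N)/T$ with $N\in\Z$, the conditional law of $N$ given $\eta$ is proportional to $e^{-(\eta + 2\pi k)^2/(2T^2)}$, so
\[
\Eb{X\mid \eta} = \frac{1}{T}\cdot \frac{\sum_{k\in\Z}(\eta+2\pi k)\,e^{-(\eta+2\pi k)^2/(2T^2)}}{\sum_{k\in\Z} e^{-(\eta+2\pi k)^2/(2T^2)}} =: \frac{N(\eta)}{T\,S(\eta)}.
\]
Applying Poisson summation to both sums (using that the Fourier transform of a Gaussian is Gaussian, and differentiating once under the integral for the numerator) yields
\[
S(\eta) = \tfrac{T}{\sqrt{2\pi}}\Bigl(1 + 2\sum_{m\ge 1} e^{-m^2 T^2/2}\cos(m\eta)\Bigr), \qquad N(\eta) = \tfrac{2T^3}{\sqrt{2\pi}}\sum_{m\ge 1} m\, e^{-m^2 T^2/2}\sin(m\eta).
\]
Consequently,
\[
\Eb{X\mid\eta} = \frac{2T\sum_{m\ge 1} m\,e^{-m^2 T^2/2}\sin(m\eta)}{1 + 2\sum_{m\ge 1} e^{-m^2 T^2/2}\cos(m\eta)}.
\]

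The density of $\eta$ is $p(\eta) = S(\eta)/(T\sqrt{2\pi}) = \frac{1}{2\pi}(1 + O(e^{-T^2/2}))$, uniformly in $\eta$. Moreover, keeping only $m=1$ in the numerator and bounding the tail by a geometric series in $e^{-T^2/2}$,
\[
\Eb{X\mid\eta} = 2T\,e^{-T^2/2}\sin(\eta)\,\bigl(1 + O(e^{-T^2/2})\bigr) + O\bigl(T\,e^{-2T^2}\bigr),
\]
again uniformly in $\eta$. Since $\Eb{\Eb{X\mid\eta}}=\Eb{X}=0$, we have $\sigma(T) = \int_{-\pi}^\pi \Eb{X\mid\eta}^2\,p(\eta)\,d\eta$. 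Substituting the two expansions and using $\int_{-\pi}^\pi \sin^2\eta\,d\eta = \pi$,
\[
\sigma(T) = 4T^2 e^{-T^2}\cdot \tfrac{1}{2\pi}\cdot \pi + o(e^{-T^2}) = 2T^2 e^{-T^2} + o(e^{-T^2}),
\]
which is the claimed asymptotic.

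The only slightly delicate step is to justify that all neglected terms (the multiplicative correction to the Fourier ratio, the $m\ge 2$ terms in the numerator, and the correction to $p(\eta)$) combine into $o(e^{-T^2})$ after squaring and integrating; this is straightforward since each of them is $O(e^{-T^2/2})$ times the leading piece, giving a total error of order $T^2 e^{-3T^2/2}$, which is indeed $o(e^{-T^2})$.
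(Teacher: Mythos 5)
Your proof is correct and follows essentially the same route as the paper: the paper's derivation also expresses $\E[W\mid W \bmod \frac{2\pi}{T}]$ as a ratio of wrapped Gaussian sums, converts it to the dual Fourier series via the Jacobi theta identity (which is exactly your Poisson summation step), and then extracts the leading $m=1$ term and integrates its square against the (asymptotically uniform) law of the conditioning variable. The only differences are cosmetic normalizations ($Z=\frac{T}{2\pi}W$ modulo $1$ in the paper versus $\eta=TW \bmod 2\pi$ in yours) and that you rederive the summation identity rather than citing the paper's equation \eqref{e.theta}.
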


\begin{remark}\label{}
Equivalently, if $Z\sim \calN(0,\beta)$, then as $\beta \to \infty$, 
\begin{align}\label{}
\Var{\E\left[Z \mid  Z \Mod{1}  \right] } =  2 (2\pi)^2\, \beta^2 e^{- (2\pi)^2 \beta} + o(e^{-(2\pi)^2\beta})\,.
\end{align}
This straightforward rewriting of the lemma will happen to be useful in our coming work \cite{GS2}. 
\end{remark}

\begin{proof}
Let $Z\sim \calN(0,\beta_T^{-1})$ with $\beta_T:= \frac {(2\pi)^2}{T^2}$ as in Lemma \ref{L.aIV} so that $Z\overset{(d)}=\frac T {2\pi} W$.  
\begin{align}\label{e.ST2}
\sigma(T)&=\Var{\E\left[W(\overrightarrow{e})\mid \exp(i T W(\overrightarrow{e}))  \right] }  \nn \\
& = 
\Var{\E\left[W(\overrightarrow{e})\mid W(\overrightarrow{e}) \Mod{\frac {2\pi} T}  \right]} \nn \\
&= \beta_T \,  \Var{\E\left[Z \mid  Z \Mod{1}\right]} \nn \\
&= \beta_T \,  \Eb{\E\left[Z \mid  Z \Mod{1}\right]^2}\,.
\end{align}
Notice that 
\begin{align*}\label{}
\E\left[Z \mid  Z \Mod{1}=a\right]
&= \frac{\sum_{n\in \Z} \exp(-\frac {\beta_T} 2 (n+a)^2) \cdot (n+a)}
{\sum_{n\in \Z} \exp(-\frac {\beta_T} 2 (n+a)^2)}\,.
\end{align*}
As $\beta_T\to 0$, it will be convenient to rely on the Jacobi's identity~\eqref{e.theta} which plays the role of a {\em temperature inversion}.  Below, we start by slightly rewriting this identity via a straightforward change of variable, so that it matches with integer-valued field (as opposed to fields in $2\pi \Z$). The following three identities are equivalent
\begin{align}
&\frac
{
\sum_{n\in \Z} \exp(-\frac \beta 2 (2\pi n + 2\pi a)^2)\cdot(2\pi n +2\pi a)
}
{
\sum_{n\in \Z} \exp(-\frac \beta 2 (2\pi n + 2\pi a)^2)
}
=
\frac
{
\frac 1 \beta 
\sum_{q\in \Z}
e^{-\frac {q^2} {2 \beta}}
\sin(q \cdot 2\pi a) \cdot q
}
{
\sum_{q\in \Z}
e^{-\frac {q^2} {2\beta}}
\cos(q\cdot 2\pi a)
},\nonumber \\
&
\frac
{
\sum_{n\in \Z} \exp(-\frac {(2\pi)^2 \beta}  2 ( n +  a)^2)\cdot(n + a)
}
{
\sum_{n\in \Z} \exp(-\frac  {(2\pi)^2 \beta} 2 (n +  a)^2)
}
=
\frac
{
\frac 1 {2\pi \beta}  
\sum_{q\in \Z}
e^{-\frac {q^2} {2 \beta}}
\sin(q \cdot 2\pi a) \cdot q
}
{
\sum_{q\in \Z}
e^{-\frac {q^2} {2\beta}}
\cos(q\cdot 2\pi a)
},\nonumber \\
&\nonumber
\frac
{
\sum_{n\in \Z} \exp(-\frac {\beta_T}  2 ( n +  a)^2)\cdot(n + a)
}
{
\sum_{n\in \Z} \exp(-\frac  {\beta_T} 2 (n +  a)^2)
}
=
\frac
{
\frac {2\pi} { \beta_T}  
\sum_{q\in \Z}
e^{-\frac {q^2 (2\pi)^2} {2 \beta_T}}
\sin(q \cdot 2\pi a) \cdot q
}
{
\sum_{q\in \Z}
e^{-\frac {q^2 (2\pi)^2} {2\beta_T}}
\cos(q\cdot 2\pi a)
}.
\end{align}
This rewriting of ~\eqref{e.theta} implies the following useful expression for the conditional expectation
\begin{align*}\label{}
\E\left[Z \mid  Z \Mod{1}=a\right]
& = 
\frac
{
\frac {2\pi} { \beta_T}  
\sum_{q\in \Z}
e^{-\frac {q^2 (2\pi)^2} {2 \beta_T}}
\sin(q \cdot 2\pi a) \cdot q
}
{
\sum_{q\in \Z}
e^{-\frac {q^2 (2\pi)^2} {2\beta_T}}
\cos(q\cdot 2\pi a)
}\,.
\end{align*}

This implies readily
\begin{align*}\label{}
\sigma(T)&= \beta_T \,  \Eb{\E\left[Z \mid  Z \Mod{1}\right]^2} \\
& = \beta_T \frac{(2\pi)^2} {\beta_T^2} 
\Eb{\frac
{
\big(
2 \sin(2\pi a) e^{-\frac{(2\pi)^2}{2\beta_T}} 
+ 4 \sin(4\pi a) e^{-\frac{(2\pi)^2}{\beta_T}}
\big)^2
}
{
(1+2 \cos(2\pi a) e^{-\frac{(2\pi)^2}{2\beta_T}})^2
}
+o(e^{-\frac{(2\pi)^2}{\beta_T}})}\\
&= 
\frac{(2\pi)^2} {\beta_T} 
\Eb{4 \sin^2(2\pi a) e^{-\frac{(2\pi)^2}{\beta_T}} + o(e^{-\frac{(2\pi)^2}{\beta_T}})}\\
&= 2 \frac{(2\pi)^2} {\beta_T}  + o(e^{-\frac{(2\pi)^2}{\beta_T}}) = 2 T^2 e^{-T^2}+o(e^{-T^2})\,,
\end{align*}
where we relied on the convenient abuse of notation $a$ for the random variable $Z\Mod{1}$ throughout. 
\end{proof}

\begin{remark}\label{r.Lab}
	Proposition \ref{P.Information} is one of the reasons why this model is a laboratory for  IV-GFF especially with quenched disorder. In this case, it allows us to obtain explicit lower bounds on the $\epsilon(T)$- correction between the GFF and the integer-valued GFF with quenched disorder $\ba$ given by a GFF (at inverse temperature $\beta_T^{-1}$) modulo 1. We will discuss in more details such explicit bounds in our work in preparation \cite{GS2}.
\end{remark}

%\begin{remark}\label{r.Lab} %OLD version of this remark
%	Proposition \ref{P.Information} is one of the reasons that this model is a laboratory for the IV-GFF. In this case, it allows us to conjecture that the best $\epsilon(T)$ that one can obtain in the result of Fröhlich and Spencer at inverse temperature $\beta\ll1$ is $\Omega(\beta e^{-1/\beta})$.
%\end{remark}

\section{Conjectures on $T_{rec}$  and the interfaces of the models.}\label{s.conjectures}
%\section{Conjectures on $T_{rec}$  and the interfaces of the models.}\label{s.conjectures}
%In this section, we conjecture certain results that should hold for our model and other models close to it. Yet, when possible, we will also prove some intermediates results  that will give support to these conjectures.
The main focus of this section is to state several conjectures. However, we also prove some intermediate results  which are interesting on their own and which will give support to each of these predictions. As such, this section has more mathematical content than a list of open questions.

\subsection{Lower bound on the value of $T_{\rec}^-$.}\label{ss.Trec-}
The objective of this part is to justify the following conjecture:
\begin{conjecture}\label{c.Trec}
	We have that $T_{\rec}^-\geq 2\sqrt{\pi}$.
\end{conjecture}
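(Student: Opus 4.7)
The plan is to justify the lower bound by reducing the discrete reconstruction problem to its continuum counterpart. The threshold $2\sqrt{\pi}$ is the sharp boundary of the $L^2$-phase of imaginary multiplicative chaos: below it, the object $\mu_T := \Wick{e^{i T \Phi}}$ is a bona fide random distribution with finite second moment when tested against smooth functions. This matches exactly the motivation (5) in the introduction.

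The first step would be to establish the $L^2$-convergence of the rescaled discrete exponentials. Set $\xi_n(x) := n^{T^2/(4\pi)} e^{i T \phi_n(x)}$ and use the asymptotics $G_n(x,x) = \tfrac{1}{2\pi}\log n + O(1)$ together with $G_n(x,x) + G_n(y,y) - 2G_n(x,y) = \tfrac{1}{\pi} \log(n |x-y|) + O(1)$ to obtain, for smooth $f,g:[-1,1]^2 \to \R$,
\[
\frac{1}{n^4}\sum_{x,y}f(x)\bar g(y)\, \E[\xi_n(x)\overline{\xi_n(y)}] \xrightarrow[n\to\infty]{} \int f(x)\, \bar g(y)\, |x-y|^{-T^2/(2\pi)} dx\, dy,
\]
where the limit is finite precisely when $T < 2\sqrt{\pi}$. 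A standard second-moment argument then shows that $\xi_n$, tested against smooth functions, converges in $L^2$ (jointly with $\phi_n \to \Phi$) to the continuum imaginary multiplicative chaos $\mu_T$.

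The second step would invoke the continuum reconstruction of \cite{Juhan}: for $T < 2\sqrt{\pi}$, there exists a deterministic measurable map $F_T^{\mathrm{cont}}$ with $F_T^{\mathrm{cont}}(\mu_T)=\Phi$ almost surely. Define the candidate discrete reconstruction
\[
F_T(e^{i T \phi_n}) := F_T^{\mathrm{cont}}\bigl(n^{T^2/(4\pi)} e^{i T \phi_n}\bigr).
\]
Combining the convergence $\xi_n \to \mu_T$, the convergence $\phi_n \to \Phi$ in the joint law, and the continuum identity $F_T^{\mathrm{cont}}(\mu_T)=\Phi$, one hopes to conclude (in the spirit of Corollary \ref{C.Convergence}) that $F_T(e^{iT\phi_n}) - \phi_n = o(1)$ macroscopically, which is exactly the reconstruction statement needed for all $T < 2\sqrt{\pi}$.

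The main obstacle lies in this last transfer step: the map $F_T^{\mathrm{cont}}$ produced by \cite{Juhan} is only a measurable functional, and weak convergence $\xi_n \to \mu_T$ alone does not let one pass to the limit through a discontinuous measurable map. Making this rigorous would require either a quantitative stability version of the continuum reconstruction, or working in a coupling where $\xi_n \to \mu_T$ in a topology fine enough that $F_T^{\mathrm{cont}}$ is continuous on a set of full $\mu_T$-measure. Even once this is achieved, the resulting discrete reconstruction only achieves the weak rate $o(1)$, strictly worse than the $O(1/n^2)$ obtained in Proposition \ref{P.Var O(n^2)}; the two approaches are complementary, with the present one sacrificing the rate in exchange for reaching the sharp threshold $T_{\rec}^- \geq 2\sqrt{\pi}$.
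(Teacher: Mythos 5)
Your proposal assembles the right ingredients (discrete-to-continuum convergence of the normalized exponential $e^{iT\phi_n}$ to the imaginary chaos, plus the continuum reconstruction of \cite{Juhan}), identifies the correct threshold $2\sqrt{\pi}$ as the $L^2$-boundary of the chaos, and honestly diagnoses where your argument breaks. But that break is a genuine gap, and it is exactly the step the paper's argument is designed to avoid. Your candidate reconstruction $F_T(e^{iT\phi_n}) := F_T^{\mathrm{cont}}\bigl(n^{T^2/(4\pi)}e^{iT\phi_n}\bigr)$ requires pushing a merely measurable map through a weak limit, and no stability statement of that kind is available.

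The missing idea is that one never needs to apply $F_T^{\mathrm{cont}}$ to a discrete object. By the reduction of Section~\ref{S.Localisation}, the discrete reconstruction function is always taken to be the conditional expectation $F_T(e^{iT\phi_n})(x)=\E[\phi_n(x)\mid e^{iT\phi_n}]$, and it suffices to show
$\E\bigl[\Var{\<{\phi_n,f}\md e^{iT\phi_n}}\bigr]=\tfrac12\,\E\bigl[\<{\phi_1^{(n)}-\phi_2^{(n)},f}^2\bigr]=o(n^4)$,
where $\phi_1^{(n)},\phi_2^{(n)}$ are conditionally independent copies given $e^{iT\phi_n}$ (Definition~\ref{D.phis}). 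One then considers the $4$-tuple $(\phi_1^{(n)},\V_{1,n}^\alpha,\phi_2^{(n)},\V_{2,n}^\alpha)$: it is tight, the identity $\V_{1,n}^\alpha=\V_{2,n}^\alpha$ holds exactly for every $n$, hence every subsequential limit satisfies $\V_1^\alpha=\V_2^\alpha$ almost surely; and only in the limit, where each pair $(\Phi_i,\V_i^\alpha)$ has the law of $(\Phi,\V^\alpha(\Phi))$ by Proposition~\ref{P.Convergence of discrete chaos}, does one invoke the almost-sure identity $\Phi_i=F_T^{\mathrm{cont}}(\V_i^\alpha)$ to conclude $\Phi_1=\Phi_2$. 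Uniform integrability (bounded fourth moments of $n^{-2}\<{\phi_i^{(n)},f}$) upgrades convergence in law to convergence of second moments, giving the required $o(n^4)$. No continuity of $F_T^{\mathrm{cont}}$ is ever needed; measurability suffices because the map is only evaluated on the limiting continuum pair, where the reconstruction identity holds a.s. Two further remarks: the discrete-to-continuum convergence is established in the paper by the moment method of \cite{JSW} rather than a bare second-moment computation (which by itself does not give joint convergence with $\phi_n$ across scales); and the statement is a conjecture in the paper precisely because what is actually proved is the conditional implication of Proposition~\ref{P.C Mes implies D Mes}, with hypothesis (H$_1$) at $\hat\alpha=2\sqrt{\pi}$ supplied by \cite{Juhan} — the same external input your proposal relies on.
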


We have two reasons to believe this conjecture, both of them being related to the continuum Gaussian free field. The first reason concerns the so-called imaginary chaos and the second one is related to the flow lines of the continuum GFF.

\subsubsection{Reason 1: Imaginary chaos.} 
We will not introduce all the definitions here. We refer to \cite{LacoinRhodesVargas2014Hyperb,JSW} for context and the definition. Take $\Phi$ a $0$-boundary continuum Gaussian free field in a domain $D\subseteq \C$ and let $\nu_{\epsilon}^x$ be the uniform measure on $\partial B(x,\epsilon)$. We normalise $\Phi$ so that if $d(x,y)\geq \epsilon$
\[\E\left[(\Phi,\nu_{\epsilon}^x ) (\Phi,\nu_{\epsilon}^y) \right] =G_D(x,y). \]
Note that in our normalisation $G_D(x,y)\sim \frac{1}{2\pi}|\log(\|x-y\|)|$.

Let us take $\alpha$ and define $\V^\alpha$ as the imaginary chaos associated with $\alpha$.
\[\V^\alpha=\V^{\alpha}(\Phi):= \lim_{\epsilon \to 0} \exp\left (i\alpha \phi_\epsilon(\cdot)+\frac{\alpha^2}{2}\E\left[\phi^2_\epsilon(\cdot) \right] \right ). \]
Here the limit is taken in the space of distribution, and it is only non-trivial in the case $\alpha<2\sqrt{\pi}$. Note that our normalisation is different from the one in these references, in which our $\alpha$ correspond to $\tilde \alpha=\sqrt{2}$.

We can now prove the following result.
\begin{proposition}\label{P.C Mes implies D Mes}
	\hypertarget{Assumption sun}{Assume }
	\begin{enumerate}
		\item[(H$_1$)]There exists $\hat \alpha$ such that for all $\alpha< \hat \alpha$ the GFF $\Phi$ can be measurably recovered from $\V^\alpha(\Phi)$, i.e., that there exists a deterministic measurable function $F$ such that a.s $F(\V^\alpha)=\Phi$.
	\end{enumerate}
	Then, we have that $T_{\rec}^{-}\geq \hat \alpha$.
\end{proposition}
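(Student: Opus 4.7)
The plan is to transfer the continuum reconstruction to the discrete setting through a discrete imaginary chaos. Fix $T<\hat\alpha$ and let $F$ be the reconstruction function provided by (H$_1$) with $\alpha=T$. From the observation $e^{iT\phi_n}$ on $\Lambda_n$, I would build the Wick-renormalised random measure on $[-1,1]^2$,
\begin{equation*}
\mu_n := \frac{1}{n^2}\sum_{x\in\Lambda_n} e^{\frac{T^2}{2}G_n(x,x)}\, e^{iT\phi_n(x)}\,\delta_x,
\end{equation*}
which is a deterministic function of $e^{iT\phi_n}$ since $G_n(x,x)$ depends only on $x$ and $n$. The proposed discrete reconstruction is essentially $F_T(e^{iT\phi_n}):=F(\mu_n)$, realised as a function on $\Lambda_n$ by averaging $F(\mu_n)$ against a local kernel at each lattice point, so that $n^{-2}\<{F_T(e^{iT\phi_n}),f}$ agrees asymptotically with the continuum pairing $(F(\mu_n),f)$ for every continuous test function $f:[-1,1]^2\to\R$.

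The first main step is the joint convergence in law
\begin{equation*}
(\phi_n,\mu_n)\;\longrightarrow\;(\Phi,\V^T(\Phi))\qquad \text{as } n\to\infty,
\end{equation*}
valid throughout the sub-critical range $T<2\sqrt\pi$, where $\Phi$ is the continuum GFF limit of $\phi_n$ and the right-hand side is built from this same $\Phi$. The first marginal is~\eqref{E.Convergence GFF}; the second follows from the $L^2$-construction of the imaginary chaos via mollified Gaussians, using that the discrete Green's function satisfies $G_n(x,y)=-\frac{1}{2\pi}\log\|x-y\|+O(1)$ uniformly away from the diagonal, so that the Gaussian moments of $\mu_n$ converge to those of $\V^T(\Phi)$ in a suitable negative Sobolev space. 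Joint convergence is automatic because $\mu_n$ is a measurable functional of $\phi_n$.

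By Skorokhod representation we may then realise the sequence so that $(\phi_n,\mu_n)\to(\Phi,\V^T(\Phi))$ almost surely in the product Polish topology. By hypothesis $F(\V^T(\Phi))=\Phi$ almost surely, and the problem reduces to transferring this identity along the approximating sequence $\mu_n\to\V^T(\Phi)$. The main obstacle is that $F$ is only assumed measurable, so no continuity is a priori available. I would resolve this via Lusin's theorem applied to the law of $\V^T(\Phi)$: for every $\delta>0$ there is a compact set $K_\delta$ with $\P[\V^T(\Phi)\in K_\delta]>1-\delta$ on which $F|_{K_\delta}$ is continuous. The weak convergence $\mu_n\Rightarrow\V^T(\Phi)$ also gives $\P[\mu_n\in K_\delta]>1-2\delta$ for $n$ large, and on the event $\{\mu_n,\V^T(\Phi)\in K_\delta\}$ the almost sure convergence combined with the continuity of $F|_{K_\delta}$ yields $F(\mu_n)\to\Phi$. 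Letting $\delta\to 0$ produces convergence in probability of $F(\mu_n)$ to $\Phi$, which combined with $n^{-2}\<{\phi_n,f}\to(\Phi,f)$ gives $n^{-2}\<{F_T(e^{iT\phi_n})-\phi_n,f}\to 0$ in probability for every continuous $f$. This proves $T_{\rec}^-\geq T$, and letting $T\uparrow\hat\alpha$ concludes.

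The hardest step is the measurable-to-continuous transition of $F$ via Lusin, which requires choosing consistently the Polish space carrying both $\V^T(\Phi)$ and $\mu_n$ (a negative Sobolev or Besov space, with imaginary chaos being a genuine element there when $T<2\sqrt\pi$); a secondary technicality is to verify that the local-kernel discretisation of $F(\mu_n)$ indeed defines a deterministic functional of the raw data $e^{iT\phi_n}$ and is compatible with the discrete pairing $\<{\cdot,\cdot}$ that appears in the definition of $T_{\rec}^-$.
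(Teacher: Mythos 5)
Your overall strategy --- pushing the continuum reconstruction map $F$ down to the discrete level by applying it to a discrete imaginary chaos $\mu_n$ --- is not the paper's route, and it contains a genuine gap at the Lusin step. Lusin's theorem produces a compact set $K_\delta$ with $\P[\V^T(\Phi)\in K_\delta]>1-\delta$ on which $F$ is continuous, but your claim that the weak convergence $\mu_n\Rightarrow \V^T(\Phi)$ forces $\P[\mu_n\in K_\delta]>1-2\delta$ for large $n$ is false: the Portmanteau theorem for a \emph{closed} set gives only $\limsup_n \P[\mu_n\in K_\delta]\le \P[\V^T(\Phi)\in K_\delta]$, i.e.\ an inequality in the wrong direction, and no lower bound is available. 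Worse, the laws of the $\mu_n$ (supported on finite atomic measures) are mutually singular with the law of $\V^T(\Phi)$, so $K_\delta$ may perfectly well be disjoint from the supports of all the $\mu_n$, in which case $\P[\mu_n\in K_\delta]=0$ for every $n$ and $F(\mu_n)$ is not even well defined (recall $F$ is only specified up to null sets of the law of $\V^T(\Phi)$). This is precisely the obstruction that prevents a naive transfer of a merely measurable continuum functional to its discrete approximations.

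The paper circumvents this by never evaluating $F$ on a discrete object. The discrete reconstruction function stays equal to the conditional expectation $F_T(e^{iT\phi_n})=\E[\phi_n\mid e^{iT\phi_n}]$ as in Section \ref{S.Localisation}, and one uses the coupling of Definition \ref{D.phis}: two conditionally independent copies $\phi_1^{(n)},\phi_2^{(n)}$ with $e^{iT\phi_1^{(n)}}=e^{iT\phi_2^{(n)}}$ share the same discrete chaos, so by the joint convergence $(\phi^{(n)},\V_n^\alpha)\to(\Phi,\V^\alpha(\Phi))$ (your first step, which is essentially Proposition \ref{P.Convergence of discrete chaos} and is sound), any subsequential limit $(\Phi_1,\V_1^\alpha,\Phi_2,\V_2^\alpha)$ of the $4$-tuple satisfies $\V_1^\alpha=\V_2^\alpha$ and hence $\Phi_1=\Phi_2$ by (H$_1$). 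The hypothesis is thus only invoked in the continuum, where $F$ is applied to a genuine sample of $\V^\alpha(\Phi)$; uniform fourth-moment bounds then upgrade the identification of limits to $\E[(n^{-2}\langle\phi_1^{(n)}-\phi_2^{(n)},f\rangle)^2]\to 0$, i.e.\ $\E[\Var{\langle\phi_n,f\rangle\md e^{iT\phi_n}}]=o(n^4)$, and one concludes by Markov's inequality. To repair your argument you should replace the evaluation of $F$ at $\mu_n$ by this coupling device.
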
 
%In fact, we expect that \hyperlink{Assumption sun}{(H$_1$)} holds for  $\hat \alpha=2\sqrt{\pi}$. Indeed, it is conceivable that a technique similar to the one developed by \cite{berestycki2014equivalence} may  apply in this case. However, let us emphasize that this case is more subtle as one needs to connect the local fluctuations all the way to the values of the boundary. 

\begin{remark}\label{}
After the first version of this work, hypothesis \hyperlink{Assumption sun}{(H$_1$)} was proved up to $\hat \alpha=2\sqrt{\pi}$ in the recent work \cite{Juhan}. Let us emphasize that the imaginary case is more subtle than the same question for the real chaos  analyzed in \cite{berestycki2014equivalence} as one needs to control the local fluctuations all the way to the values of the boundary. 
\end{remark}

To prove Proposition \ref{P.C Mes implies D Mes}, we need to show that the discrete imaginary chaos is converging to the continuous one. 
\begin{proposition}\label{P.Convergence of discrete chaos}
	Let $\phi^{(n)}$ be a discrete $0$-boundary GFF in $\Lambda_n$ and let
	\[\V_n^\alpha(\cdot):= \exp\left (i\alpha \phi^{(n)}(\cdot)+\frac{\alpha^2}{2}\E\left[\phi^{(n)}(\cdot) \right]  \right ), 
	\]
	then for all $\alpha < 2\sqrt{\pi}$, as $n\to \infty$
	\[(\phi^{(n)}, \V_n^\alpha)\to (\Phi,\V^\alpha(\Phi)), \ \ \text{ in law}, \]
	for the topology of generalised functions. Here $\Phi$ is a $0$-boundary GFF in $[-1,1]²$.
\end{proposition}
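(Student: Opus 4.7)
\medskip

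\noindent\textbf{Proof plan for Proposition \ref{P.Convergence of discrete chaos}.}

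The plan is to combine a second moment computation (which both identifies the limit and provides tightness) with a coupling that links the discrete and continuum GFF, allowing us to identify the joint subsequential limits.

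\emph{Step 1: second moment of the discrete chaos.} For any $f,g\in C^\infty_c([-1,1]^2)$, pairing the lattice density $\V_n^\alpha$ against $f$ via Riemann sums $(\V_n^\alpha,f):=n^{-2}\sum_{x\in\Lambda_n}\V_n^\alpha(x)f(x)$, the Gaussian formula together with the Wick normalisation yields
\[
\E\bigl[(\V_n^\alpha,f)\overline{(\V_n^\alpha,g)}\bigr]
=\frac1{n^4}\sum_{x,y\in\Lambda_n}f(x)\overline{g(y)}\,e^{\alpha^2 G_n(x,y)}.
\]
Using the uniform asymptotic $G_n(x,y)=-\tfrac{1}{2\pi}\log\|x-y\|+O(1)$ on $\Lambda_n$, one has $e^{\alpha^2 G_n(x,y)}\le C\,\|x-y\|^{-\alpha^2/(2\pi)}$, which is integrable on $[-1,1]^2\times[-1,1]^2$ precisely when $\alpha<2\sqrt{\pi}$. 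Dominated convergence then gives
\[
\E\bigl[(\V_n^\alpha,f)\overline{(\V_n^\alpha,g)}\bigr]\ \longrightarrow\ \iint f(x)\overline{g(y)}\,e^{\alpha^2 G(x,y)}\,dx\,dy=\E\bigl[(\V^\alpha(\Phi),f)\overline{(\V^\alpha(\Phi),g)}\bigr].
\]
This uniform $L^2$ bound, together with the known tightness of $\phi^{(n)}$ in $H^{-1-\eps}$, implies joint tightness of $(\phi^{(n)},\V_n^\alpha)$ in, say, $H^{-1-\eps}\times H^{-s}$ for $s$ large enough.

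\emph{Step 2: coupling with the continuum GFF.} I would couple $\Phi$ and $\phi^{(n)}$ using the orthogonal (Markov) decomposition: let $\Phi_n$ be the harmonic extension of the restriction of $\Phi$ to $\Lambda_n$ inside each dyadic square, so that $\Phi=\phi^{(n)}+R_n$ with $R_n$ an independent ``small-scale'' field whose covariance is supported at scales $\lesssim 1/n$. In this coupling, $\phi^{(n)}\to\Phi$ in probability in $H^{-1-\eps}$. I would then show that $\V_n^\alpha\to\V^\alpha(\Phi)$ in $L^2$ when tested against smooth functions, by comparing $\V_n^\alpha$ to $\V^\alpha(\Phi_\eta)$ for the mollification $\Phi_\eta$ of $\Phi$ at scale $\eta=1/n$. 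For this, one computes the cross second moment and uses that the covariances of $\phi^{(n)}$ and $\Phi_{1/n}$ match up to $O(1)$ corrections supported on the diagonal, which integrate to a vanishing contribution thanks to the same integrability $\alpha<2\sqrt{\pi}$. Combined with the classical fact that $\V^\alpha(\Phi_\eta)\to\V^\alpha(\Phi)$ in $L^2$ as $\eta\to 0$ (see \cite{JSW,LacoinRhodesVargas2014Hyperb}), this yields $(\V_n^\alpha-\V^\alpha(\Phi),f)\to 0$ in $L^2$.

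\emph{Step 3: conclusion.} Tightness from Step 1 together with the identification in Step 2 shows that any subsequential limit of $(\phi^{(n)},\V_n^\alpha)$ equals $(\Phi,\V^\alpha(\Phi))$ in joint law, which is the desired convergence.

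\emph{Main obstacle.} The delicate point is Step 2: one must control the $L^2$ distance between $\V_n^\alpha$ (built from the lattice field) and $\V^\alpha(\Phi_{1/n})$ (built from the mollified continuum field) inside a single coupling. Because the imaginary exponentials amplify any mismatch in the covariances exponentially, the comparison of the two covariance kernels on all scales---including diagonal singularities---has to be sharp, and the integrability on the diagonal is exactly what fails as $\alpha\uparrow 2\sqrt{\pi}$. This is the step where the constraint $\alpha<2\sqrt{\pi}$ is genuinely used and which requires the most care.
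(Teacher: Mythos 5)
Your route is genuinely different from the paper's. The paper proves this proposition by the method of moments: it invokes Theorem 1.3 of \cite{JSW}, which says that the pair $(\Phi,\V^\alpha(\Phi))$ is characterised by its mixed correlation functions, and then shows that every discrete mixed moment $\E\bigl[\prod_i n^{-2}\langle\phi_n,f^1_i\rangle\prod_j n^{-2}\langle\V_n^\alpha,f^2_j\rangle\prod_k\overline{n^{-2}\langle\V_n^\alpha,f^3_k\rangle}\bigr]$ converges to the corresponding continuum correlation function --- first for test functions with disjoint supports, using convergence of the discrete Green's function (Corollary 3.11 of \cite{ChSm}), and then for general test functions by dominated convergence based on the uniform Green's function estimate of Theorem 2.5 of \cite{ChSm} and the arguments of Section 3.2 of \cite{JSW}. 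Your Step 1 is essentially the second-moment instance of that computation and is correct, including the identification of $\alpha<2\sqrt{\pi}$ as the integrability threshold. Your Steps 2--3 replace the moment-determinacy input by an $L^2$ argument inside a coupling; if carried out, this gives the stronger conclusion of convergence in probability on a common probability space and sidesteps the moment problem entirely. The price is that you must actually construct and control the coupling, whereas the paper only ever computes Gaussian expectations.

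That construction is where your proposal has a genuine gap. The continuum GFF has no pointwise values, so ``the harmonic extension of the restriction of $\Phi$ to $\Lambda_n$'' is not defined; and even after regularising (say by cell averages at scale $1/n$) the resulting lattice field is not exactly a discrete $0$-boundary GFF, so the exact orthogonal decomposition $\Phi=\phi^{(n)}+R_n$ with $R_n$ independent does not hold. What you actually need is a joint Gaussian coupling of $(\phi^{(n)},\Phi)$ whose cross-covariance $\E[\phi^{(n)}(x)\,\Phi_{1/n}(y)]$ equals $G(x,y)+O(1)$ uniformly, with the error concentrated near the diagonal. Such couplings can be built (e.g.\ from a common white noise, in the spirit of Proposition \ref{P.dphi+d*phi*} and of \cite{SchSh2}), but the two-sided covariance estimate is exactly the delicate point, is not supplied by your sketch, and without it the cross term $\E\bigl[\langle\V_n^\alpha,f\rangle\,\overline{(\V^\alpha(\Phi_{1/n}),f)}\bigr]$ cannot be evaluated, so the $L^2$ identification in Step 2 does not go through. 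If you prefer not to build the coupling, the cleanest repair is the paper's: upgrade Step 1 to all mixed moments (where only the known discrete Green's function asymptotics enter) and conclude by quoting the moment-determinacy theorem of \cite{JSW}.
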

As this section is concerned mostly with conjectures we will only do a sketch of the proof of this result. The main input is the fact that Theorem 1.3 of \cite{JSW} which states that $(\Phi,\V^\alpha(\Phi))$ is characterised by its moments. 
\begin{proof}
	We start by recalling that, thanks to  Theorem 1.3 of \cite{JSW}, the field $(\Phi,\V^\alpha(\Phi))$ is characterised by its moments. By this, we mean that it is characterised by
	\begin{align}
	\label{E.correlation function}&\E\left[\left( \prod_{i} (\Phi,f^1_i)\right)\left (\prod_{j} (\V^\alpha,f^2_j)\right )\left (\prod_{k} \overline{(\V^\alpha,f^3_k)}\right ) \right]\\
	&\hspace{0.05\textwidth}=\int \left( \prod_{i} f^1_i(x_i)dx_i\right)\left (\prod_{j}f^2_j(y_j)dy_j \right )\left (\prod_{k} \overline{f^3_k(z_k)}dz_k\right )C((x_i)_{i},(y_j)_j,(z_k)_k),\nonumber
	\end{align}
	where all $f^{\ell}_\cdot$ are smooth functions in $[-1,1]^2$ (with 0-mean if $\Phi$ is a free-boundary GFF). The function $C(\cdot, \cdot, \cdot)$ is called the correlation function of this model. By a simple (but lenghty) computation one can see that \eqref{E.correlation function} also appears from the discrete setting
	\begin{align}
	\label{E.correlation function2}&\E\left[\left( \prod_{i} n^{-2}\langle\phi_n,f^1_i\rangle\right)\left (\prod_{j} n^{-2}\langle\V_n^\alpha,f^2_j\rangle \right )\left (\prod_{k} \overline{n^{-2}\langle\V_n^\alpha,f^3_k\rangle}\right ) \right]\to\\
	&\hspace{0.05\textwidth}\int \left( \prod_{i} f^1_i(x_i)dx_i\right)\left (\prod_{j}f^2_j(y_j)dy_j \right )\left (\prod_{k} \overline{f^3_k(z_k)}dz_k\right )C((x_i)_{i},(y_j)_j,(z_k)_k),
	\end{align}
	at least when all function $f$s have different support. This can be proven by noting  that $C$ is obtained only from the Green's function and that the discrete Green's function is converging to the continuum one (Corollary 3.11 of \cite{ChSm}). To finish, one needs to show that \eqref{E.correlation function2} is true for all possible $f$s. This can be done using the dominated convergence theorem. To see that the sum coming from the LHS of \eqref{E.correlation function2} is uniformly dominated one uses Theorem 2.5 of \cite{ChSm}, i.e., that \[G(x,y)= -(2\pi)^{-1}\log\left (\frac{\|x-y\|}{n}\right )+O(1),\]
	and uses the same techniques as Section 3.2 of \cite{JSW}.
\end{proof}

We can now prove Proposition \ref{P.C Mes implies D Mes}.
\begin{proof}[Proof of Proposition \ref{P.C Mes implies D Mes}]
	Take $\phi_1^{(n)}$ and $\phi_2^{(n)}$ two $0$-boundary GFF coupled as in Definition \ref{D.phis}. Thanks to Proposition \ref{P.Convergence of discrete chaos}, we have that the $4$-tuple
	\[(\phi_1^{(n)},\V_{1,n}^\alpha,\phi_2^{(n)},\V_{2,n}^{\alpha}) \]
	is tight. Take $(\Phi_1,\V^{\alpha}_1,\Phi_2,\Phi_2,\V^{\alpha}_2)$, any accumulation point of the sequence and note that because for all $n\in \N$, a.s. $\V_{1,n}^{\alpha}=\V_{2,n}^{\alpha}$ we have that $\V^{\alpha}_1=\V^{\alpha}_2$. This equality implies, thanks to Assumption \hyperlink{Assumption sun}{(H$_1$)} 
	that a.s. $\Phi_1=\Phi_2$. Then, as all accumulation points are the same we have that, in fact, as $n\to \infty$
	\[(\phi_1^{(n)},\V_{1,n}^\alpha,\phi_2^{(n)},\V_{2,n}^{\alpha})\to (\Phi_1,\V_1^\alpha,\Phi_1,\V_1^\alpha)\ \  \ \text{in distribution}. \]
	Let us, now, take any smooth function $f$, we have that for all $j\in \{1,2\}$
	\[\sup_{n}\E\left[\left (\frac{1}{n^2}\langle\Phi_j,f\rangle\right )^4\right] < K, \]
	which implies that
	\[\E\left[\left (\frac{1}{n^2}\langle \Phi_1-\Phi_2,f\rangle\right )^2 \right]\to \E\left[(\Phi_1-\Phi_1,f)^2 \right]=0. \]
	As this implies that $\E\left[\var{\langle \phi_1^{(n)},f\rangle}\mid \exp(i\alpha \phi_1^{(n)}) \right]=o(n^4)$, we conclude as in the beginning of Section \ref{S.Localisation}. 
\end{proof}

\begin{remark}
	Let us note that even if we Assumption \hyperlink{Assumption sun}{(H$_1$)} is proven, this only shows that $\E\left[\var{\langle \phi_1^{(n)},f\rangle} \right]=o(n^4)$, which is a weaker result than the one in Proposition \ref{P.Var O(n^2)} in which we showed that $\E\left[\var{\langle \phi_1^{(n)},f\rangle} \right]=O(n^2)$.
\end{remark}

\subsubsection{Reason 2: flow lines.}
Flow lines of the Gaussian free field were introduced in \cite{She05,Dub} and were studied in depth in \cite{MS1,MS2,MS3,MS4}. Informally, they can be described as the curve which is the solution of 
\[\eta'(t)= e^{i(\sqrt{2\pi}\frac{\Phi}{\chi}+u)}, \ \ \ \eta(0)=z\in \partial D, \] 
where $\Phi$ is a GFF in a simply connected domain $D$ and $u$ is a harmonic function. For us it is important to note that the curve $\eta$ should only be determined by $e^{i(\sqrt{2\pi}\frac{\Phi}{\chi})}$. This will motivate Assumption \hyperlink{Assumption AC}{(H$_2$)}.

Flow lines can be defined using the concept of local sets \cite{SchSh2,WWln2}. In other words, $\eta$ is a flow line of a GFF $\Phi$ if for any stopping time $\tau$ of the natural filtration of $\eta$ we have
\[\Phi= \Phi^{\eta_\tau}+ h_{\eta_\tau}, \]
where $\eta_\tau=\eta([0,\tau])$, $\Phi^{\eta_\tau}$ has the law of a GFF of $D\backslash \eta_\tau$ and $h_{\eta_\tau}$ is a harmonic function in $D\backslash \eta_\tau$. Let us remark that in this case the function $h_{\eta_\tau}$ is, in fact, a measurable function of $\eta_{\tau}$. In fact, it can be found in Theorem 1.1 of \cite{MS1}.

A generalisation of flow-lines is given by the angle-varying flow lines defined in Section 5.2 of \cite{MS1}, which can be roughly described as running a flow line with initial angle $\theta_1$ until a stopping time\footnote{w.r.t. the natural filtration of $\eta$}) $\tau_1$, and then continue with an angle $\theta_2$ until a stopping time $\tau_w$, and continue until finitely many iterations. This lines are called $\eta_{\theta_1...\theta_\ell}^{\tau_1...\tau_\ell}$ and they are a measurable function of $\Phi$, the GFF they are coupled with (Lemma 5.6 of \cite{MS1}).

In fact, Proposition 5.9 of \cite{MS1}, shows that if $\chi\geq 1/\sqrt{2}$, there exists a countable set of angle-varying flow lines $(\eta_{\theta^{k}_1...\theta^k_{\ell_k}}^{\tau^{k}_1...\tau^k_{\ell_k}})_{k\in \N}$ such that a.s.
\[\bigcup_n \eta_{\theta^k_1...\theta^k_{\ell_k}}^{\tau^k_1...\tau^k_{\ell_k}} \]
is dense (because SLE$_8$ is a space-filling curve).  Now, define $\F_n$ as the $\sigma$-algebra generated by $\eta_{\theta^{\ell_n}_1...\theta^n_{\ell_n}}^{\tau^{\ell_n}_1...\tau^n_{\ell_n}}$. The discussion in the paragraph before and the fact that $h_{\eta_{\theta^{\ell_n}_1...\theta^n_{\ell_n}}^{\tau^{\ell_n}_1...\tau^n_{\ell_n}}}$ is a measurable function of the set $\eta_{\theta^{\ell_n}_1...\theta^n_{\ell_n}}^{\tau^{\ell_n}_1...\tau^n_{\ell_n}}$ implies that the $\F=\bigvee_{n}\F_n$ is equal to the sigma algebra generated by $\Phi$ (see for example Lemma 2.3 of \cite{ALS1}). In other words, $\Phi$ is a deterministic function of $(\eta_{\theta^{\ell_n}_1...\theta^n_{\ell_n}}^{\tau^{\ell_n}_1...\tau^n_{\ell_n}})_{n\in \N}$.

This allows us to show the following proposition.
\begin{proposition}\label{P.Con flow lines}
	Take $\phi_n$ a $0$-boundary GFF in $\Lambda_n$,\hypertarget{Assumption AC}{assume }
	\begin{enumerate}
		\item[(H$_2$)]There exists $\hat \chi\geq 1/\sqrt 2$ such that for all $\chi>\hat \chi$ and for any $\eta_{\theta_1...\theta_\ell}^{\tau_1...\tau_\ell}$ an angle-varying flow line, there exists an approximated angle-varying flow line $\eta^{(n)}$ depending on $\exp(i\sqrt{2\pi}\phi_n/\chi)$ such that $(\phi_n,\eta^{(n)})$ converges in law to $(\Phi,\eta_{\theta_1...\theta_\ell}^{\tau_1...\tau_\ell}(\Phi))$.
	\end{enumerate}
	Then $T_{\rec}^{-}\geq \sqrt{2\pi}/\chi$.
\end{proposition}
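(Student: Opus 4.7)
The plan is to adapt the strategy of Proposition \ref{P.C Mes implies D Mes}, with the countable family of angle-varying flow lines playing the role of the imaginary chaos. Fix any $T>0$ such that $\chi:=\sqrt{2\pi}/T>\hat\chi$ (which, together with $\hat\chi\geq 1/\sqrt 2$, allows us to invoke both hypothesis (H$_2$) and Proposition 5.9 of \cite{MS1}). Note that $\exp(i\sqrt{2\pi}\phi/\chi)=\exp(iT\phi)$, so (H$_2$) at level $\chi$ produces, for each angle-varying flow line $\eta_{\theta_1\ldots\theta_\ell}^{\tau_1\ldots\tau_\ell}$, an approximation $\eta^{(n)}$ depending measurably on $\exp(iT\phi_n)$. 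By the reduction carried out at the start of Section \ref{S.Localisation}, to conclude $T\leq T_{\rec}^-$ it suffices to prove
\[
\E\bigl[\Var[n^{-2}\langle\phi_n,f\rangle\mid\exp(iT\phi_n)]\bigr]\longrightarrow 0
\]
for every smooth test function $f$.

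Let $(\phi_1^{(n)},\phi_2^{(n)})$ be coupled as in Definition \ref{D.phis}. For any angle-varying flow line, the approximations $\eta^{(n)}(i)$ furnished by (H$_2$) satisfy $\eta^{(n)}(1)=\eta^{(n)}(2)$ a.s., since they are deterministic functions of $\exp(iT\phi_i^{(n)})$ and these observations coincide. Invoking Proposition 5.9 of \cite{MS1} (applicable since $\chi\geq 1/\sqrt 2$), fix once and for all a countable collection $(\eta_{\vec\theta^k}^{\vec\tau^k})_{k\in\N}$ of angle-varying flow lines whose union is dense in $[-1,1]^2$, and let $(\eta_k^{(n)}(i))_{k,i}$ be the corresponding approximations. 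The Gaussian pair $(\phi_1^{(n)},\phi_2^{(n)})$ is tight, and each family of curves is tight in the topology of curves used in (H$_2$), so a diagonal extraction yields a subsequential joint limit
\[
\bigl(\Phi_1,\Phi_2,(\bar\eta_k(1))_k,(\bar\eta_k(2))_k\bigr).
\]
Each marginal $(\phi_i^{(n)},(\eta_k^{(n)}(i))_k)$ converges by (H$_2$) to $(\Phi,(\eta_{\vec\theta^k}^{\vec\tau^k}(\Phi))_k)$, which identifies $\bar\eta_k(i)=\eta_{\vec\theta^k}^{\vec\tau^k}(\Phi_i)$; and, via a Skorokhod representation, the prelimit equality $\eta_k^{(n)}(1)=\eta_k^{(n)}(2)$ transfers to $\eta_{\vec\theta^k}^{\vec\tau^k}(\Phi_1)=\eta_{\vec\theta^k}^{\vec\tau^k}(\Phi_2)$ for every $k$, a.s.

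Using the measurability statements recalled just before the proposition (Theorem 1.1 of \cite{MS1} and Lemma 2.3 of \cite{ALS1}), the $\sigma$-algebra generated by $(\eta_{\vec\theta^k}^{\vec\tau^k}(\Phi))_{k\in\N}$ coincides with $\sigma(\Phi)$; the equality of all of these flow lines therefore forces $\Phi_1=\Phi_2$ a.s. Finally, since each $\phi_i^{(n)}$ is a GFF, the scalar $n^{-2}\langle\phi_i^{(n)},f\rangle$ has uniformly bounded fourth moments, so convergence in law of the difference to $0$ upgrades to $L^2$-convergence:
\[
\E\bigl[(n^{-2}\langle\phi_1^{(n)}-\phi_2^{(n)},f\rangle)^2\bigr]\longrightarrow 0,
\]
which is twice $\E[\Var[n^{-2}\langle\phi^{(n)},f\rangle\mid\exp(iT\phi^{(n)})]]$; combined with \eqref{E.Markov ineq}, this yields the reconstruction property at $T$, hence $T\leq T_{\rec}^-$. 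The main obstacle will be upgrading (H$_2$), which is stated for a single flow line paired with a single GFF, to joint convergence of the entire countable family of flow lines together with the pair $(\phi_1^{(n)},\phi_2^{(n)})$: this requires setting up tightness for the full infinite tuple and a Skorokhod-type representation ensuring that the a.s.\ prelimit equalities $\eta_k^{(n)}(1)=\eta_k^{(n)}(2)$ persist in the limit simultaneously for every $k$.
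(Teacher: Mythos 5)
Your proposal is correct and follows essentially the same route as the paper's proof: reduce to the conditional-variance criterion via the coupled pair of Definition \ref{D.phis}, pass to the countable dense family of angle-varying flow lines from Proposition 5.9 of \cite{MS1}, use tightness and marginal identification to transfer the prelimit equality of the approximated flow lines to the limit, conclude $\Phi_1=\Phi_2$ by measurability of $\Phi$ with respect to that family, and upgrade to $L^2$ via uniform fourth moments. The "obstacle" you flag at the end (joint convergence of the whole countable tuple) is handled in the paper by exactly the argument you sketch — marginal convergence gives tightness in the product topology and each coordinate of any accumulation point is identified by (H$_2$) — so it is not a genuine gap.
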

Before proving the proposition, let us recall that it is expected that the flow lines related to the discrete GFF are converging to the flow lines of the continuum GFF, as this is already the case for $\chi=\infty$, the SLE$_4$ case \cite{SchSh}. If this were the case, Proposition \ref{P.Con flow lines} implies that $T_{\rec}^-\geq 2\sqrt{\pi}$.
\begin{proof}
	Let us take $\Phi$ a continuous GFF with $0$-boundary condition. Thanks to Assumption \hyperlink{Assumption AC}{(H$_2$)}, we can define $\eta^{(n)}_k$ such that as $n\to \infty$
	\begin{equation}\label{E.Convergence k}(\phi_n,\eta^{(n)}_k)\to \left (\Phi,\eta_{\theta^k_1...\theta^k_{\ell_k}}^{\tau^k_1...\tau^k_{\ell_k}}(\Phi) \right ),
	\end{equation}
	in law. We then, have that
	\begin{equation}\label{E.convergence of infinite flow lines}
	(\phi_n,(\eta_k^{(n)})_{k\in \N})\to \left (\Phi,(\eta_{\theta^k_1...\theta^k_{\ell_k}}^{\tau^k_1...\tau^k_{\ell_k}}(\Phi))_{k\in \N}\right )
	\end{equation}
	in law for the product topology. This follows because \eqref{E.Convergence k} implies that $(\phi_n,(\eta_k^{(n)})_{k\in \N})$ is tight for the product topology. We can then check, again thanks to \eqref{E.Convergence k}, that any accumulation point $(\Phi,(\eta^\infty_k)_{k\in \N})$ has to be such that
	\[(\Phi, \eta^\infty_k)= \left (\Phi,\eta_{\theta^k_1...\theta^k_{\ell_k}}^{\tau^k_1...\tau^k_{\ell_k}}(\Phi)\right ).  \]
	As a consequence, we have that
	\[(\Phi,(\eta^\infty_k)_{k\in \N})= \left (\Phi,(\eta_{\theta^k_1...\theta^k_{\ell_k}}^{\tau^k_1...\tau^k_{\ell_k}}(\Phi))_{k\in \N}\right ),\]
	which implies \eqref{E.convergence of infinite flow lines}.
	
	We can now conclude in a similar way as in Proposition \ref{P.C Mes implies D Mes}. We take $(\phi_1,\phi_2)$ coupled as in Definition \ref{D.phis} and we study the $4$-tuple 
	\begin{equation*}\label{E.4-tuple discrete}
	\left (\phi^{(n)}_1,(\eta^n_k(\phi^{(n)}_1))_{k\in \N},\phi^{(n)}_2, (\eta^n_k(\phi^{(n)}_2)_{k\in \N} \right ).
	\end{equation*}
	Again, we have that this $4$-tuple is tight and that any accumulation point is such that
	\[\left (\Phi_1,(\eta_{\theta^k_1...\theta^k_{\ell_k}}^{\tau^k_1...\tau^k_{\ell_k}}(\Phi_1))_{k\in \N},\Phi_2, (\eta_{\theta^k_1...\theta^k_{\ell_k}}^{\tau^k_1...\tau^k_{\ell_k}}(\Phi_2))_{k\in \N} \right ), \]
	as for all $n,k\in \N$, we have that $\eta^n_k(\phi^{(n)}_1)=\eta^n_k(\phi^{(n)}_2)$, we have that in this accumulation point a.s.
	\[ \eta_{\theta^k_1...\theta^k_{\ell_k}}^{\tau^k_1...\tau^k_{\ell_k}}(\Phi_1)=\eta_{\theta^k_1...\theta^k_{\ell_k}}^{\tau^k_1...\tau^k_{\ell_k}}(\Phi_2). \]
	As $\Phi_i$ is a function of this $(\eta_{\theta^k_1...\theta^k_{\ell_k}}^{\tau^k_1...\tau^k_{\ell_k}}(\Phi_i))_{k\in \N}$, we see that $\Phi_1=\Phi_2$, which implies that  $(\phi_1^{(n)},\phi_2^{(n)})$ converges in law to $(\Phi_1,\Phi_1)$. By the same reasoning as the end of \ref{P.C Mes implies D Mes} we have that for any continuous function $f$, $\E\left[\var\left [\langle\phi^{(n)},f\rangle\mid e^{\sqrt{2\pi} \phi^{(n)}/\chi}\right ]\right]=o(n^4)$.

\end{proof}

\subsection{Interfaces of $\exp(iT\phi)$.}\label{ss.LL}
In this section, we discuss the possible scaling limit of certain interfaces naturally appearing in $\exp(iT\phi)$ and how they may relate with the interfaces of the GFF $\phi$.
\subsubsection{Level lines of $\exp(iT\phi)$}
In \cite{SchSh}, the authors showed that the level line of a zero boundary GFF with a special boundary condition converges in law to an SLE$_4$. We believe a similar story holds for both $\exp(iT\phi)$, and more importantly for the Villain model. Let us be more explicit.

We define $u_n$ as the bounded harmonic function in $\Lambda_n\backslash \partial \Lambda_n$ with boundary condition $\lambda=\sqrt{\pi/8}$ in $\partial \Lambda_n \cap \{x:\Re(x)\geq 0\}$ and $-\lambda=-\sqrt{\pi/8}$ in $\partial \Lambda_n\cap \{x:\Re(x)<0\}$. It is shown in \cite{SchSh}, that if $\phi_n$ is a GFF in $\Lambda_n$ with $0$-boundary condition and $\eta$ the level line of $\phi+ u_n$. That is to say $\eta^{(n)}(\cdot)$ is a path in the dual of $\Lambda_n$ that has the following properties (see Figure \ref{F.dGFF}):
\begin{itemize}
	\item It goes from the dual of the edge $(-i+1/n,-i)$ to the dual of the edge $(i-1/n,i)$.
	\item The primal edge associated to a dual edge in the path is such that $\phi_n$ is negative to its left and positive to its right.
\end{itemize}
\begin{figure}[h!]
	\includegraphics[width=0.44\textwidth]{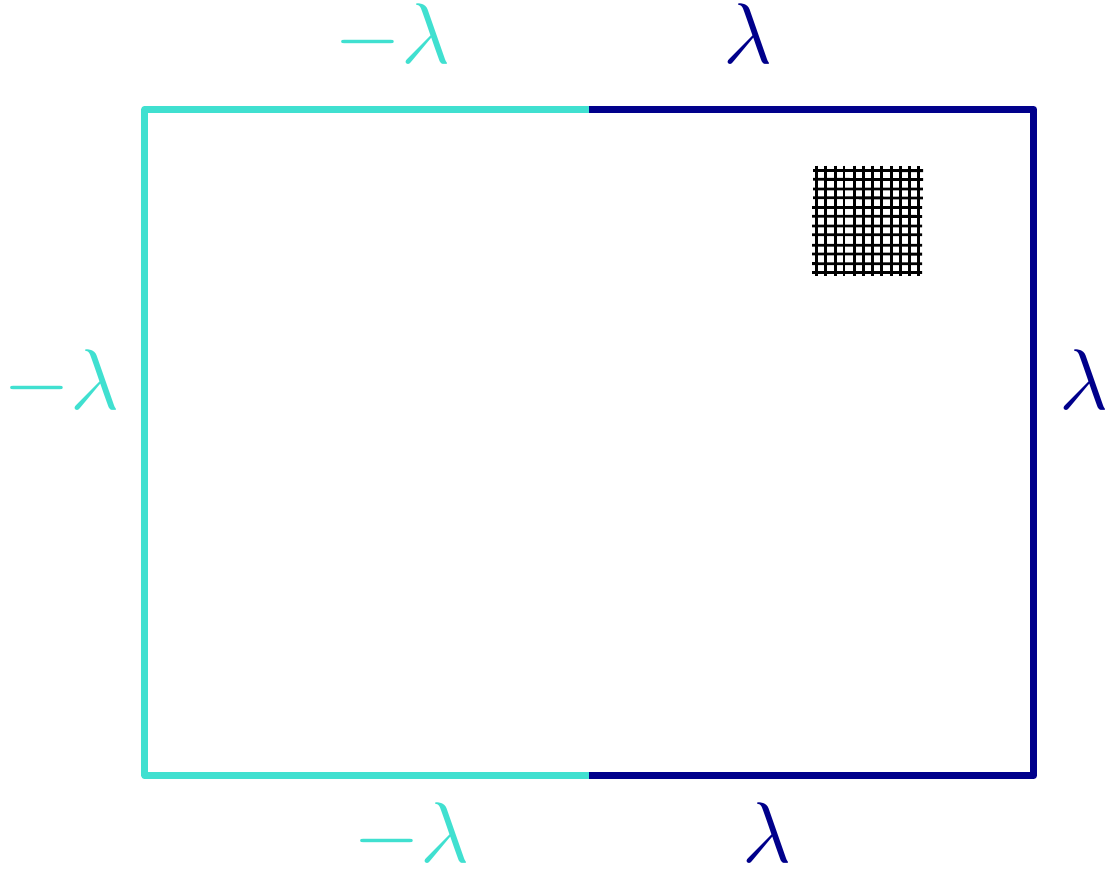}
	\includegraphics[width=0.44\textwidth]{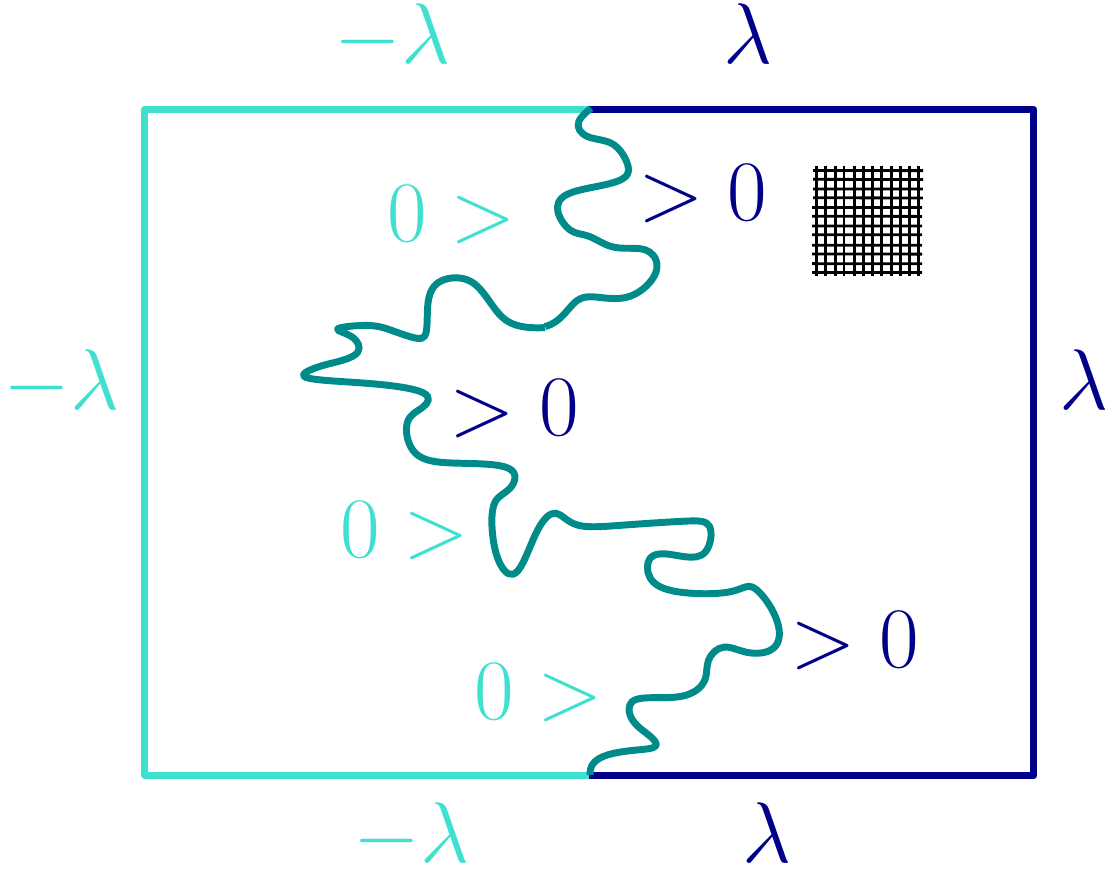}
	\caption{The image of the left depicts the boundary values of the harmonic function $u_n$. The image to the right represents the level line of $\phi+u_n$, note that $\phi+u_n$ takes positive values to the left and negative to the right.}
	\label{F.dGFF}
\end{figure}

Theorem 1.4 of \cite{SchSh} is that $\eta^{(n)}(\cdot)$ parametrised by capacity converges in the uniform topology to an SLE$_4$. This result is improved in \cite{SchSh2} by showing that as $n\to\infty$
\[(\phi_n,\eta^{(n)})\to (\Phi,\eta) \ \ \ \text{in law.} \]
Here $(\Phi,\eta)$ are such that $\Phi$ is a GFF in $[-1,1]$ and $\eta$ is the so-called level line of the continuous GFF. More precisely, $\eta$ is a measurable function of $\Phi$ and the law of $\Phi$ conditioned on $\eta$ is such that
\[\Phi+u_\infty= \Phi^L + \Phi^R, \]
where $\Phi^L$, resp. $\Phi^R$, is a GFF in the domain to the left, resp. right, of $\eta$ with $-\lambda$, resp. $\lambda$, boundary condition (see Figure \ref{F.cGFF}).

\begin{figure}[h!]
	\includegraphics[width=0.44\textwidth]{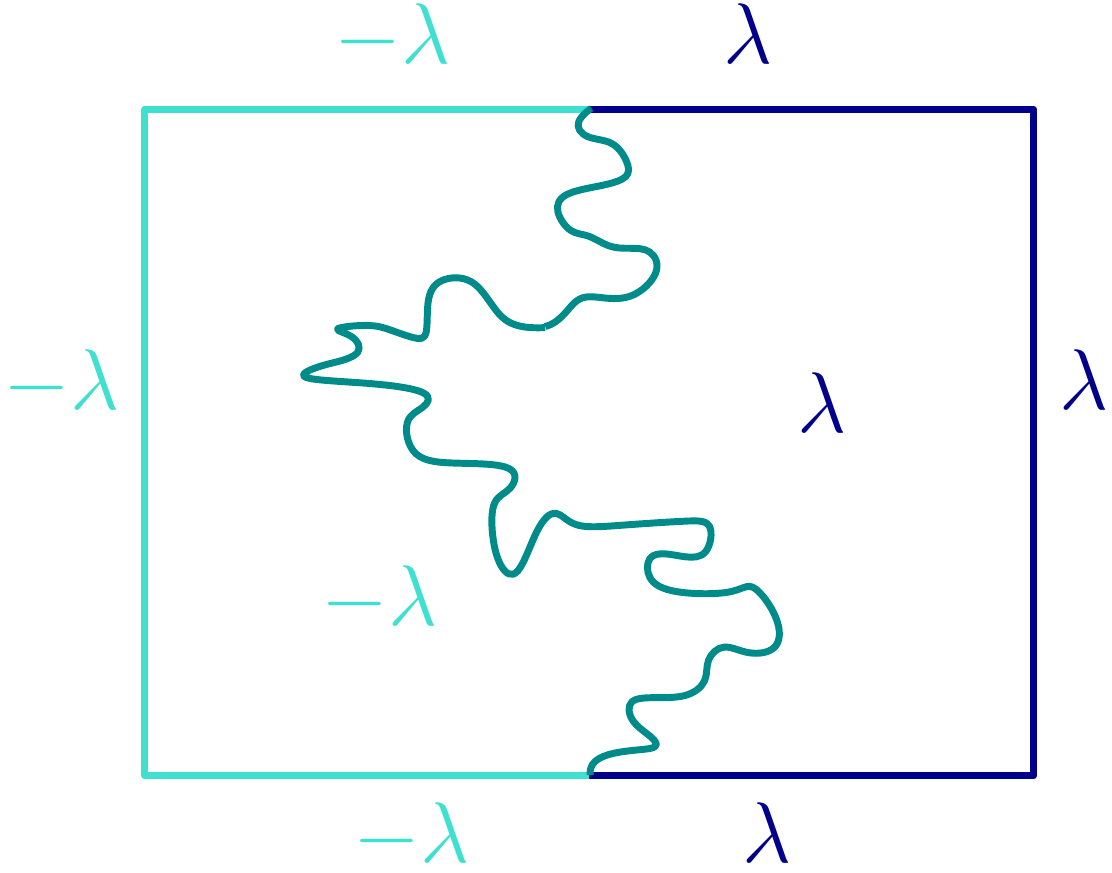}
	\caption{The image shows how the limiting curve $\eta$ separates the domain in to two different domains, the left where the GFF has $-\lambda$ boundary condition and the right where its boundary condition is $\lambda$.}
	\label{F.cGFF}
\end{figure}

We now have the tools to prove Corollary \ref{c.Levelines}.
	\begin{proof}[Proof of Corollary \ref{c.Levelines}]
		We assume that $\phi_n\to \Phi$ a continuum GFF and define $L^{(n)}=L^{(n)}_T(\exp(iT\phi_n))$ as a set parametrised by $q$, where
		\[L^{(n)}(q)=\E\left[\eta^{(n)}(q)\mid \exp(iT\phi) \right]. \]
		Let us now prove that the set $L^{(n)}$ converges in probability to $\eta$ the level line of $\phi$. To do this, it is enough to show that for all $q$, $L^{(n)}(q)$ converges in probability to $\eta(q)$. Thanks to Theorem 1.4 of \cite{SchSh} we have that $\eta^{(n)}(q)$ converges in law to $\eta(q)$, now it suffices to show that as $n\to \infty$
		\begin{equation}\label{e.variance_eta}
		\var[\eta^{(n)}(q)\mid \exp(iT\phi) ]\to 0, \ \ \ \text{ in probability.}
		\end{equation}
		To do this, we use the same trick as always. Let $(\phi_1^{(n)},\phi_2^{(n)})$ be two GFFs coupled as in Definition \ref{D.phis}, we know that thanks to (1) of Theorem \ref{T.0-boundary theorem} $(\phi_1^{(n)},\eta_1^{(n)},\phi_2^{(n)},\eta_2^{(n)})$ converges in law to $(\Phi, \eta, \Phi,\eta)$. Here the topology on the curves is that of the uniform distance for continuous curves. As a consequence of the convergence we have that for any $q\in Q$
		\begin{equation*}
		\P(\|\eta_1^{(n)}(q)-\eta_2^{(n)}(q)\| \geq \delta ) \to 0 \ \ \ \text{ as } n\to \infty.
		\end{equation*} 
		Due to the fact that the set $\Lambda_n$ is bounded, we conclude that
		\begin{equation*}
		\E\left[\|\eta_1^{(n)}(q)-\eta_2^{(n)}(q)\|^2 \right] \to 0, \ \ \ \text{ as } n\to \infty.
		\end{equation*}
		This concludes the proof, as it proves \eqref{e.variance_eta}.
	\end{proof}
	
	Corollary \ref{c.Levelines} gives us a explicit way to recover the level line of the GFF given its $e^{iT\phi_n}$. However, this recovery process does not locally depend on the field. We also believe that it is possible to recover the level line via an explicit local function of the $e^{iT(\phi_n+u_n)}$: its own level line.

Now, we let $T$ be small enough such that $T\lambda<\pi$, in this way the imaginary part of $\exp(i T u_n(x))$ has the same as sign as the real part of $x$. We also define $\eta^{(n),T}(\cdot)$, the level line of the imaginary part $\exp(iT(\phi_n+u_n))$. We conjecture the following.
\begin{conjecture}\label{C.level lines of the dGFF}
	There exists a small enough $T_c$ such that for all $T<T_c$, $\eta^{(n),T}$ converges in law to a SLE$_4$. Furthermore, $\eta^{(n)}$ and $\eta^{(n),T}$ converge to the same limit.
\end{conjecture}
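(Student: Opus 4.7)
The plan is to establish both assertions simultaneously by comparing $\eta^{(n),T}$ to the classical Schramm--Sheffield level line $\eta^{(n)}$, which is known to converge to $\SLE_4$ by Theorem 1.4 of \cite{SchSh}. More precisely, I would aim to show that for $T<T_c$ small enough, the two discrete curves are Hausdorff-close with probability tending to $1$ as $n\to\infty$; convergence of $\eta^{(n),T}$ to $\SLE_4$ and the identification of the limit then follow. The two curves share their endpoints and, at any edge where $|T(\phi_n+u_n)|<\pi$ at both vertices, they must agree, since $\sin(Tx)$ and $x$ have the same sign in that range. Thus they can differ only near places where $\phi_n+u_n$ crosses $\pm\pi/T,\pm 2\pi/T,\ldots$.

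The concrete strategy is therefore to show that, for $T$ small, the auxiliary level sets $\mathcal{L}_k:=\{\phi_n+u_n=k\pi/T\}$ with $k\neq 0$ are, with high probability, macroscopically confined to small neighborhoods of the boundary arcs where $u_n$ has the sign of $k$. The natural route is to condition on $\eta^{(n)}$ and apply the Markov property of Proposition \ref{P.Weak Markov} on each of the two connected components of $\Lambda_n\setminus\eta^{(n)}$: each conditional field is an independent GFF with boundary values $\pm\lambda$ on $\eta^{(n)}$ and $0$ on the outer arc. In the continuum, its level set at height $\pm\pi/T$ is an $\SLE_4(\rho)$-type curve for a GFF with boundary data of the form $(0,\pm\lambda\mp\pi/T)$, which, for $T$ small (so $\pi/T\gg\lambda$), should be pushed close to the outer boundary arc where the boundary data matches the sign. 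A Peierls-type contour argument in the spirit of Proposition \ref{P.Peierls 0} would then rule out the possibility that $\eta^{(n),T}$ aggregates microscopic deviations into a macroscopic detour.

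The main obstacle is that at fixed $T>0$, the discrete GFF typically reaches values of order $\sqrt{\log n}$, so for $|k|\lesssim T\sqrt{\log n}/\pi$ the level sets $\mathcal{L}_k$ are genuinely present in the bulk and cannot be dismissed by a direct Gaussian tail estimate at a single point. The rescuing idea is that $\eta^{(n),T}$ is the \emph{specific} interface connecting the two designated boundary edges, and that at the macroscopic scale it is still forced by the boundary conditions $\pm\lambda$ of $u_n$ to separate the left and right arcs in essentially the same way as $\eta^{(n)}$. Making this heuristic precise---either by establishing an approximate domain Markov property for $\eta^{(n),T}$ in the Schramm--Sheffield style, or by constructing a simultaneous coupling of all the $\mathcal{L}_k$ that transfers the Schramm--Sheffield convergence from $\mathcal{L}_0=\eta^{(n)}$ to the interface $\eta^{(n),T}$ seen by $\exp(iT(\phi_n+u_n))$---is the genuinely hard step, and is the reason this statement is left as a conjecture rather than a theorem.
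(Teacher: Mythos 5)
This statement is a \emph{conjecture} in the paper: no proof is given, and the authors explicitly leave it open. What the paper offers in its place are two heuristics, both of which your sketch essentially contains in expanded form: (i) by Lemma 3.1 of \cite{SchSh} the gradient of $\phi_n$ along $\eta^{(n)}$ is bounded above and below in mean, so for $T$ small most edges of $\eta^{(n)}$ already have ${\rm Im}(\exp(iT(\phi_n+u_n)))$ of the correct sign on either side (your observation that $\sin(Tx)$ and $x$ agree in sign on $|Tx|<\pi$); and (ii) level lines do not come close to themselves or to each other, which is the paper's stand-in for your requirement that the auxiliary interfaces not deflect $\eta^{(n),T}$ macroscopically. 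The paper also proves the nearby Corollary \ref{c.Levelines}, but the reconstruction function there is a conditional expectation, not the local interface $\eta^{(n),T}$, so it does not settle the conjecture.

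As for your proposal itself: it is a reasonable plan, and you correctly identify the genuinely missing step (an approximate domain Markov property for $\eta^{(n),T}$, or a coupling transferring the Schramm--Sheffield convergence to it). One intermediate step, however, would fail as literally stated: the sets $\mathcal{L}_k=\{\phi_n+u_n=k\pi/T\}$ for $k\neq 0$ are \emph{not} confined to neighborhoods of the boundary for fixed $T>0$; as the discussion of the sets $\mathbb{A}_{-a,b}$ in Section \ref{s.conjectures} indicates, nonzero-height level sets of the GFF are macroscopic bulk objects. The viable version of the argument is not to banish the $\mathcal{L}_k$ but to show they stay at positive distance from $\eta^{(n)}$ while the field adjacent to $\eta^{(n)}$ is $O(1)$, so that sign agreement holds along the curve itself and microscopic discrepancies cannot aggregate into a macroscopic detour. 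That last aggregation step is precisely what neither you nor the paper can currently control, which is why the statement remains a conjecture.
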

A part from Corollary \ref{c.Levelines}, we have two other reasons to believe in this conjecture. The first one is the fact that the gradient of $\phi_n$ in its level line $\eta^{(n)}$ is, in mean, upper and lower bounded (see Lemma 3.1 of \cite{SchSh}). Thus, one could expect that most edges in $\eta$ have corresponding primal edges for which $Im(\exp(iT\phi_n))$ is negative on its left vertex and positive on its right one.

The second reason is that level lines do not get close to each other, neither to itself. This can be seen in Section 3.4 and 3.5 of \cite{SchSh}, or by understanding their scaling limit as in Remark 1.5 of \cite{WaWu}.

As we said before, we conjecture that we have a similar result for the Villain model.
In fact, Fröhlich and Spencer  conjectured that the Villain model at low temperature $T$ is close to the imaginary exponential of a GFF with a slightly different  temperature $T_{\text{Vil}}:=T_{\text{Vil}}(T)>T$ (see Section 8.1 of \cite{FS1983}). This allows us to interpret Conjecture \ref{C.level lines of the dGFF} as follows.
\begin{conjecture}\label{c.SLE4}
	Take $T$ small enough and let $\psi_n$ be a Villain model in $\Lambda_n$ with temperature $T$ and boundary values given by $\exp(-i\lambda\sqrt{T_{\text{Vil}}'})$ in the left side of the boundary, i.e. $\partial \Lambda_n\cap \{x:\Re(x) < 0\}$, and $\exp(i\lambda\sqrt{T_{\text{Vil}}'})$ in $\partial \Lambda_n\cap \{x:\Re(x)\geq0\}$. If we take $\eta^{(n)}$ to be the level line of the imaginary part of $\psi$, then $\eta^{(n)}$ converges in law to an SLE$_4$ (see Figure \ref{F.Villain}). 
\end{conjecture}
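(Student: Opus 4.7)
The plan is to proceed in three steps, bridging the Villain model to the discrete GFF and then invoking the existing Schramm--Sheffield scaling limit together with the reconstruction result of Corollary \ref{c.Levelines}.

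First, the key input one needs to put in place is the (still conjectural) Fr\"ohlich--Spencer correspondence mentioned in Section~\ref{ss.LL}: at sufficiently small temperature $T$, the Villain model $\psi_n$ with the prescribed boundary data should be coupled with a discrete GFF $\phi_n$ on $\Lambda_n$, whose boundary values agree with the harmonic function $u_n$ of Section~\ref{ss.LL} (namely $\pm \lambda$ on the two halves of $\partial\Lambda_n$), in such a way that $\psi_n = \exp(i\,T_{\text{Vil}}^{-1/2}(\phi_n+u_n))$ up to a microscopic error. The phases $\exp(\pm i\lambda\sqrt{T_{\text{Vil}}'})$ appearing in the boundary condition are chosen precisely so that this identification holds on $\partial \Lambda_n$.

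Second, given such a coupling, the statistical reconstruction machinery of the paper applies: setting $T':=T_{\text{Vil}}^{-1/2}$, which can be made smaller than $T_{\rec}^-$ by taking $T$ small, Corollary \ref{c.Levelines} provides a deterministic map $L_{T'}$ such that $L_{T'}(\exp(iT'(\phi_n+u_n)))$ is within Hausdorff distance $o(1)$ of the discrete Schramm--Sheffield level line $\eta^{(n)}_{\text{SS}}$ of $\phi_n+u_n$. Combining with Theorem~1.4 of \cite{SchSh} together with the joint coupling improvement of \cite{SchSh2}, $\eta^{(n)}_{\text{SS}}$ converges in law (parametrized by capacity) to an $\mathrm{SLE}_4$. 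Transporting through the coupling of the first step then yields that $L_{T'}(\psi_n) \to \mathrm{SLE}_4$ in law.

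Third --- and this is where Conjecture \ref{C.level lines of the dGFF} enters --- one must check that the combinatorially defined level line $\eta^{(n)}$ of $\mathrm{Im}(\psi_n)$ actually tracks $\eta^{(n)}_{\text{SS}}$. The heuristic is that along the Schramm--Sheffield interface, the typical gradient of $\phi_n+u_n$ is of order one (Lemma~3.1 of \cite{SchSh}), so for $T$ small, $T'\cdot\nabla\phi_n$ stays well inside $(-\pi,\pi)$ on all but an $o(n)$-fraction of dual edges crossed by $\eta^{(n)}_{\text{SS}}$; at these edges the sign of $\mathrm{Im}(\psi_n)$ matches the sign of $\phi_n+u_n$. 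The fact that Schramm--Sheffield level lines do not come close to themselves or to other interfaces (Sections~3.4--3.5 of \cite{SchSh}) is used to rule out spurious branches of the imaginary-part level set fusing with $\eta^{(n)}_{\text{SS}}$. This gives $d_{\mathrm{Haus}}(\eta^{(n)},\eta^{(n)}_{\text{SS}}) \to 0$ in probability, and hence $\eta^{(n)} \to \mathrm{SLE}_4$.

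The principal obstacle is, unsurprisingly, the first step: establishing rigorously the low-temperature equivalence between the Villain model and the imaginary exponential of a GFF in a sense strong enough to transport curve convergence is the Fr\"ohlich--Spencer conjecture and is open in the generality needed here. Even granting such a coupling, the third step is not automatic: the sign-matching argument requires quantitative pointwise control of the gradient of $\phi_n$ along $\eta^{(n)}_{\text{SS}}$, substantially stronger than the distributional bounds available from \cite{SchSh}. These two difficulties are why the statement is phrased as a conjecture rather than a theorem.
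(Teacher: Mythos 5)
The statement you are addressing is stated in the paper as a conjecture, not a theorem, and the paper offers no proof of it — only the same heuristic chain you describe (the Fr\"ohlich--Spencer correspondence between the low-temperature Villain model and $\exp(i\phi/\sqrt{\beta'})$, Corollary \ref{c.Levelines} together with the Schramm--Sheffield convergence, and Conjecture \ref{C.level lines of the dGFF} supported by the gradient bound of Lemma~3.1 of \cite{SchSh} and the non-self-approaching of level lines). Your proposal follows essentially the same route as the paper's discussion and correctly identifies the same open gaps (the rigorous Villain--GFF coupling and the pointwise sign-matching along the interface), so it is an accurate account of why the statement remains a conjecture rather than a proof of it.
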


\begin{figure}[h!]
	\includegraphics[width=0.44\textwidth]{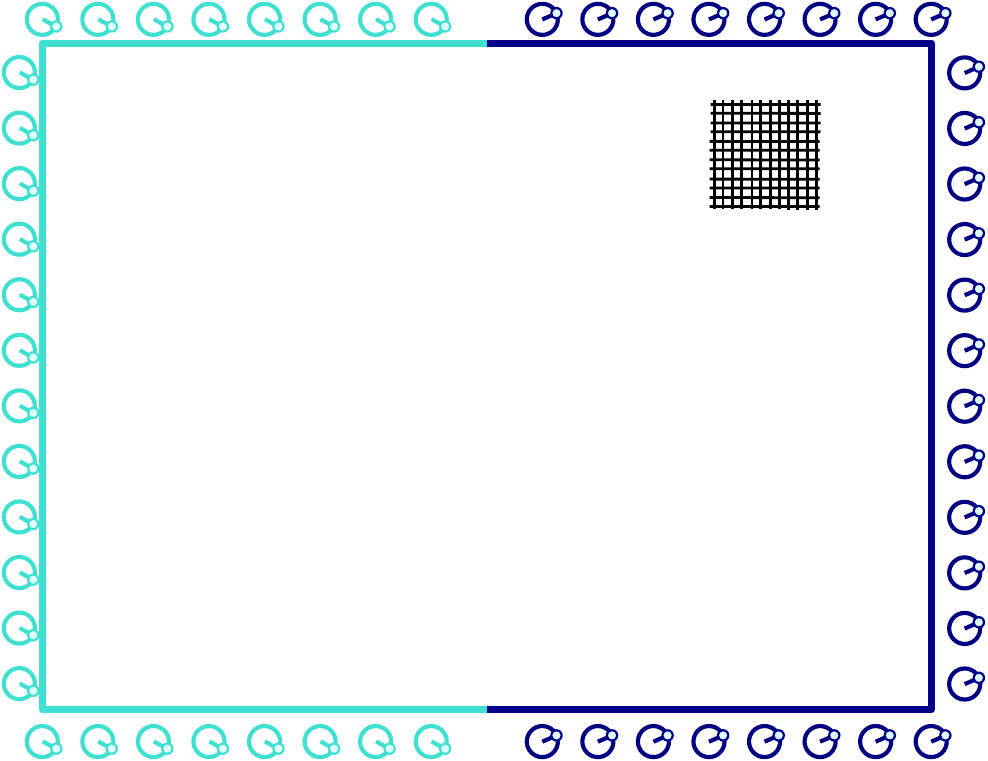}
	\includegraphics[width=0.44\textwidth]{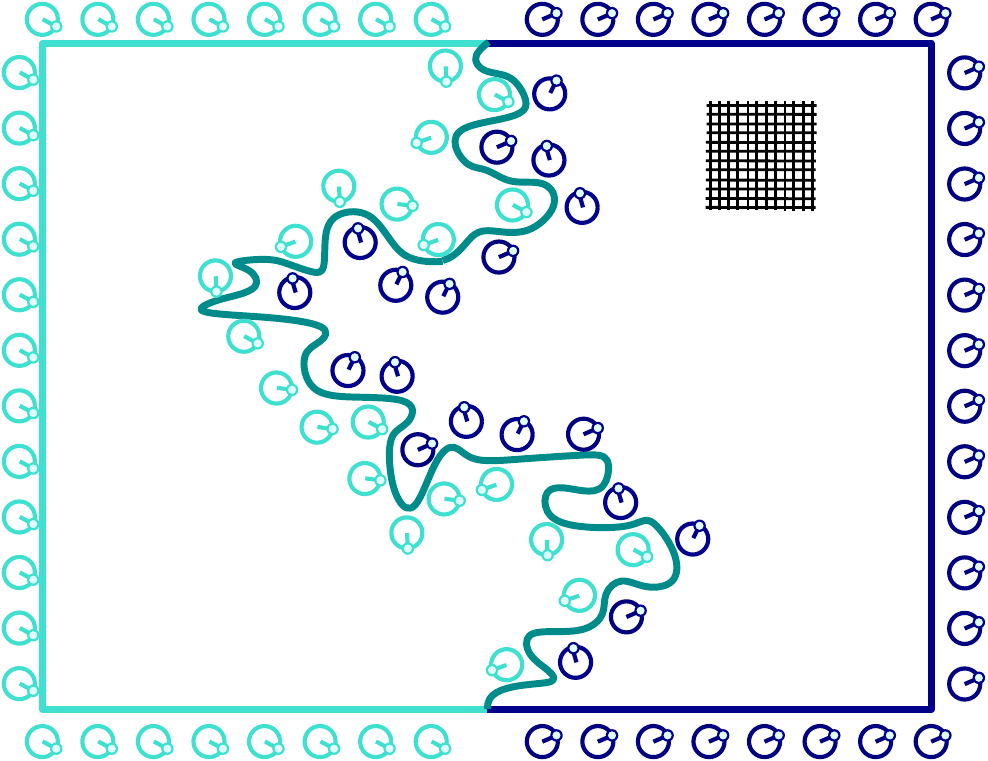}
	\caption{The left image depicts the boundary values of the Villain model. The right image represents the level line of the imaginary part of this Villain model. We believe that this lines converges in law to an SLE$_4$ when the temperature of the system is low enough.}
	\label{F.Villain}
\end{figure}

In fact, the result should hold for a more generally boundary values.
\begin{conjecture}\label{c.SLErho}
	Take $T$ small enough and let $\psi_n$ be a Villain model in $\Lambda_n$ with temperature $T$ and boundary values given by $\exp(-ia)$ in the left side of the boundary, i.e. $\partial \Lambda_n\cap \{x:\Re(x)< 0\}$, and $\exp(i a)$ in $\partial \Lambda_n\cap \{x:\Re(x)\geq0\}$. Then for $a$ small enough, we take $\eta^{(n)}$ to be the level line of the imaginary part of $\psi$, then $\eta^{(n)}$ converges in law to an SLE$_4(\rho)$, with $\rho=a/(\lambda\sqrt t)-1$. 
\end{conjecture}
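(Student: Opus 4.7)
The strategy I would follow is to reduce this conjecture to a combination of the Fröhlich-Spencer prediction relating low-temperature Villain spins to imaginary exponentials of a GFF, and the Schramm-Sheffield theory of level lines of the continuum GFF with asymmetric boundary data.

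First I would invoke the coupling conjectured in Section 8.1 of \cite{FS1983}: at inverse temperature $\beta = 1/T \gg 1$, up to microscopic corrections,
\[
\psi_n \overset{(d)}{\approx} \exp\!\left( i\sqrt{T_{\text{Vil}}}\,(\phi_n + u_n^a) \right),
\]
where $T_{\text{Vil}} = T_{\text{Vil}}(T)$ differs from $T$ by an exponentially small amount, $\phi_n$ is a discrete zero-boundary GFF on $\Lambda_n$, and $u_n^a$ is the bounded discrete harmonic function with boundary values $+ a/\sqrt{T_{\text{Vil}}}$ on $\partial \Lambda_n\cap\{\Re(x)\geq 0\}$ and $- a/\sqrt{T_{\text{Vil}}}$ on the opposite side. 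Since $a$ is assumed small, we have $a/\sqrt{T_{\text{Vil}}} < \pi$ and no branch ambiguity arises when matching the prescribed Villain boundary values $e^{\pm ia}$ to the above phase representation.

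Second, I would transfer the problem of tracking the level line of $\Im(\psi_n)$ to tracking the standard Schramm-Sheffield level line of the underlying discrete GFF $\phi_n + u_n^a$. This is exactly the content of Conjecture \ref{C.level lines of the dGFF} applied to the shifted field: when the boundary jump $2a/\sqrt{T_{\text{Vil}}}$ is strictly smaller than $\pi/\sqrt{T_{\text{Vil}}}$, the zero set of $\Im(e^{i\sqrt{T_{\text{Vil}}}(\phi_n + u_n^a)})$ should lie microscopically close to the zero set of $\phi_n + u_n^a$ itself. A rigorous implementation would combine the gradient-tension estimates used in Lemma 3.1 of \cite{SchSh} (showing that the conditional gradient along the interface stays of order one) with the avoidance estimates which prevent the interface from returning to itself, so that the signs of $\Im(\psi_n)$ and of $\phi_n+u_n^a$ agree on both sides of the interface. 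Once this is in place, by the level-line convergence theorem of \cite{SchSh,SchSh2} together with its $\SLE_4(\rho)$ extension (see e.g.\ \cite{WaWu} and \cite{MS1}), the discrete level line of $\phi_n + u_n^a$ converges in law, parametrized by capacity, to the continuum level line of $\Phi + u^a$ with boundary values $\pm a/\sqrt{T_{\text{Vil}}}$. By the Miller-Sheffield classification, this limiting interface is an $\SLE_4(\rho)$ process with $\rho$ determined by the ratio of the boundary height to the critical value $\lambda=\sqrt{\pi/8}$, namely
\[
\rho \;=\; \frac{a/\sqrt{T_{\text{Vil}}}}{\lambda} - 1 \;=\; \frac{a}{\lambda\sqrt{T_{\text{Vil}}}} - 1,
\]
which matches the announced formula under the identification $t = T_{\text{Vil}}$ and recovers Conjecture \ref{c.SLE4} in the special case $a = \lambda\sqrt{T_{\text{Vil}}}$ ($\rho = 0$).

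The principal obstacle is clearly the first step: the Fröhlich-Spencer coupling between the low-temperature Villain model and the imaginary exponential of a GFF, at a level of precision sufficient to conclude convergence of macroscopic interfaces \emph{and} with uniform control all the way up to the boundary of $\Lambda_n$, is one of the deepest open problems in the rigorous analysis of two-dimensional continuous-symmetry spin systems. A secondary difficulty is that the second step (Conjecture \ref{C.level lines of the dGFF}), while expected to be more tractable, is itself still open: even at the level of the discrete GFF one needs to rule out that the microscopic oscillations of $\phi_n$ around the interface create spurious sign changes of $\Im(e^{i\sqrt{T_{\text{Vil}}}\phi_n})$ which would spawn short-lived loops perturbing the scaling limit. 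The third step (identification of the continuum limit with the appropriate $\SLE_4(\rho)$) is, by contrast, essentially a consequence of the existing Schramm-Sheffield machinery once the first two steps are in place.
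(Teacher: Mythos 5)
The statement you are asked to prove is stated in the paper as an open conjecture (Conjecture \ref{c.SLErho} in Section \ref{ss.LL}); the paper supplies no proof, only heuristic support. Your proposal reconstructs essentially the same chain of reasoning the authors use to motivate it: (i) the Fr\"ohlich--Spencer prediction from Section 8.1 of \cite{FS1983} that the low-temperature Villain model is, up to microscopic errors, $\exp(i\sqrt{T_{\text{Vil}}}(\phi_n+u_n^a))$ for a discrete GFF $\phi_n$ and a harmonic shift $u_n^a$; (ii) the claim that the level line of $\Im(\psi_n)$ tracks the Schramm--Sheffield level line of $\phi_n+u_n^a$ (this is the paper's Conjecture \ref{C.level lines of the dGFF}, supported by the gradient estimates of Lemma 3.1 of \cite{SchSh} and the non-self-approach of level lines, exactly as you say); and (iii) the identification of the limit as $\SLE_4(\rho)$ with $\rho=a/(\lambda\sqrt{T_{\text{Vil}}})-1$ via \cite{SchSh,SchSh2,WaWu,MS1}, consistent with Conjecture \ref{c.SLE4} when $\rho=0$. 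To your credit, you flag honestly that steps (i) and (ii) are themselves open, so what you have written is a reduction of one open conjecture to two others plus known SLE machinery, not a proof; it cannot be otherwise given the current state of the art. Two small remarks: the condition you need for the sign-matching at the boundary is simply that the prescribed phase $a$ satisfies $\sin(a)>0$ with the correct sign on each side (the paper's condition $T\lambda<\pi$ in the analogous setting), rather than a comparison of the boundary jump $2a/\sqrt{T_{\text{Vil}}}$ with $\pi/\sqrt{T_{\text{Vil}}}$, though both are implied by ``$a$ small enough''; and the paper's own rigorous contribution in this direction is only Corollary \ref{c.Levelines}, which recovers the level line from $e^{iT\phi_n}$ by a non-local conditional-expectation construction, not by the local interface of $\Im(\psi_n)$ that the conjecture concerns.
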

\subsubsection{Full set of interfaces} Instead of studying a single interface of the GFF, one could also study the whole set of interfaces arising from a $0$ boundary condition GFF. This sets are called ALE, and were introduced in \cite{ASW} and further studied in \cite{AS2,QWCoupling,ALS1}. 

ALE's are characterised as the only random set $\A_{-\lambda,\lambda}$ such that a continuum GFF $\Phi$ can be written as
\begin{equation}
\Phi:=\sum_{O} \Phi^O+\sigma^O\lambda\,,
\end{equation}
where the sum is over
connected components $O$ of the complement of $\A_{-\lambda,\lambda}$, i.e., $[-1,1]^2\backslash \A_{-2\lambda,2\lambda}$. Furthermore, $\sigma^O\in \{-1,1\}$ and where conditionally on $\A_{-\lambda,\lambda}$, $\Phi^O$ is a $0$-boundary GFF in $O$ (conditionally )independent of $(\Phi_{O'})_{O'\neq O}$. The existence and uniqueness of such a set was proven in \cite{ASW}. Furthermore, as it was shown in Lemma 3.6 of \cite{AS2}, this set can be thought as the union of the $0$-level lines of the continuum GFF $\Phi$.

In fact, for this discussion it is useful to define $\eta$ the $0$-level line of a discrete GFF going between $x\in \partial \Lambda_n$ and $y\in \partial \Lambda_n$. $\eta$ is then a dual path connecting an edge containing $x$ to an edge containing $y$ such that for all vertices in $\Lambda_n\backslash \partial \Lambda_n$ to the left of $\eta$, one has that $\phi_n(x)<0$ and that for all vertices to the right of $\eta$, $\phi_n(x)>0$. Furthermore, let us define the discrete ALE, $\A_{-\lambda,\lambda}^n$ as the union over all starting points and end points of its associated $0$-level line.

The $0$-level line is known to converge for the Hausdorff topology by Theorem 1.3 of \cite{SchSh}, and furthermore the techniques of \cite{SchSh2} allow us to see that it converges to the $0$-level line of a continuum GFF. These techniques, together with the above-mentioned Lemma 3.6 of \cite{AS2}, allow to show that $\A_{-\lambda,\lambda}^n$ converges for the Hausdorff topology to the ALE\footnote{The exact argument is not written anywhere, even though this proof has been known to a small community. As the main focus of this section is not this result, but rather to shade light on this interesting direction we will not formalise this result further here.}.

We can now discuss similar results as the ones we did for the level lines. In particular, as before we have that
\begin{proposition}
	We have that for $T<T_{rec}^{-}$, there exists a deterministic function $A_T(\cdot)$ such that when $\phi_n\to \Phi$, we have that $A_T(e^{iT\phi_n})\to \A_{-\lambda,\lambda}$.
\end{proposition}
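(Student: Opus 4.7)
The strategy mirrors the proof of Corollary \ref{c.Levelines}, but is applied to the full ALE rather than a single level line. Recall from Lemma 3.6 of \cite{AS2} and the discussion preceding the statement that $\A_{-\lambda,\lambda}(\Phi)$ coincides almost surely with the closure of the union of continuum $0$-level lines between pairs of boundary points, and analogously the discrete ALE $\A_{-\lambda,\lambda}^n$ is the closure of the union of discrete $0$-level lines $\eta^{(n),x,y}$. First I would fix a countable dense collection of boundary points $\{z_k\}_{k \in \mathbb{N}} \subset \partial [-1,1]^2$ and, for each pair $(i,j)$, denote by $\eta^{(n),i,j}$ the discrete $0$-level line of $\phi_n$ between boundary vertices approximating $z_i$ and $z_j$, with a deterministic tie-breaking rule to ensure measurability in $e^{iT\phi_n}$.

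Next, for each fixed $(i,j)$ I would follow Corollary \ref{c.Levelines} verbatim. Using the coupling $(\phi_1^{(n)}, \phi_2^{(n)})$ of conditionally independent GFFs of Definition \ref{D.phis}, together with the first part of Theorem \ref{T.0-boundary theorem} and the scaling limit of discrete $0$-level lines from \cite{SchSh, SchSh2}, the $4$-tuple $(\phi_1^{(n)}, \eta_1^{(n),i,j}, \phi_2^{(n)}, \eta_2^{(n),i,j})$ is tight and any subsequential limit must be of the form $(\Phi, \eta^{i,j}(\Phi), \Phi, \eta^{i,j}(\Phi))$ (the two GFFs are a.s.\ equal by Theorem \ref{T.0-boundary theorem}, and $\eta^{i,j}$ is a measurable function of the continuum GFF). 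Consequently, the conditional variance of $\eta^{(n),i,j}(q)$ given $e^{iT\phi_n}$ tends to $0$ in probability for every parameter $q$, and setting
\[
L_T^{i,j}\!\left(e^{iT\phi_n}\right)(q) := \mathbb{E}\!\left[\eta^{(n),i,j}(q) \,\big|\, e^{iT\phi_n}\right]
\]
gives $L_T^{i,j}(e^{iT\phi_n}) \to \eta^{i,j}(\Phi)$ in Hausdorff distance in probability.

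Finally, I would define
\[
A_T\!\left(e^{iT\phi_n}\right) := \overline{\bigcup_{i,j \le N_n} L_T^{i,j}\!\left(e^{iT\phi_n}\right)}
\]
for a sequence $N_n \to \infty$ chosen slowly enough so that a diagonal argument yields
\[
A_T\!\left(e^{iT\phi_n}\right) \longrightarrow \overline{\bigcup_{i,j \in \mathbb{N}} \eta^{i,j}(\Phi)} = \A_{-\lambda,\lambda}(\Phi)
\]
in Hausdorff distance in probability, the last equality being Lemma 3.6 of \cite{AS2}.

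\textbf{Main obstacle.} The most delicate point is to transfer the density of the union $\bigcup_{i,j} \eta^{i,j}$ in $\A_{-\lambda,\lambda}$ to the discrete level, i.e., to control the uniform closeness of the reconstructions as $(i,j)$ ranges over an infinite family. Rather than invoking a full Hausdorff-scaling-limit theorem for the discrete ALE (alluded to as folklore in the excerpt but not formally recorded), I would work pair-by-pair and use the growth rate $N_n$ as a parameter to be tuned via a diagonal extraction. A secondary subtlety is giving a deterministic, measurable definition of the discrete $0$-level lines $\eta^{(n),i,j}$ between arbitrary boundary vertices of a zero-boundary GFF, since these lines are not guaranteed to exist uniquely without a sign-changing boundary convention as in \cite{SchSh}.
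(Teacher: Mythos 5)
Your proposal is correct in substance and follows the same mechanism the paper has in mind: the paper gives no written proof of this proposition, saying only ``as before'' after asserting (in a footnote, as folklore) that the discrete ALE $\A^n_{-\lambda,\lambda}$ converges in the Hausdorff topology to the continuum ALE; the intended argument is exactly the coupling trick of Definition \ref{D.phis} plus Theorem \ref{T.0-boundary theorem}(a) and a conditional expectation, as in Corollary \ref{c.Levelines}. The one genuine difference is how you handle the full set of interfaces: rather than invoking the unrecorded scaling limit of the whole discrete ALE, you decompose $\A_{-\lambda,\lambda}$ into a countable union of $0$-level lines via Lemma 3.6 of \cite{AS2}, run the Corollary \ref{c.Levelines} argument pair by pair, and reassemble with a diagonal extraction over $N_n\to\infty$. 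This buys you a proof that only uses the recorded single-level-line scaling limits of \cite{SchSh,SchSh2}, at the price of the density/diagonalization step and of having to give a measurable definition of discrete $0$-level lines between arbitrary boundary points --- a point the paper itself leaves informal, so flagging it as you do is appropriate. Both routes are sound; yours is the more self-contained given what is actually written in the literature.
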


The problem, as before, is that we do not know whether this function $A_T$ can be taken to be the discrete ALE associated to the imaginary part of $e^{iT\phi_n}$. This is the content of the next conjecture.
\begin{conjecture}
	Take $T$ small enough and $\phi_n$ a $0$-boundary GFF converging to $\Phi$. One has that the ALE associated to the imaginary part of $e^{iT\phi_n}$ converges to $\A_{-\lambda,\lambda}$.
\end{conjecture}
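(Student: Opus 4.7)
The plan is to combine the single-interface techniques underlying Conjecture \ref{C.level lines of the dGFF} and Corollary \ref{c.Levelines} with the description of the ALE as a union of level lines. As recalled in Section \ref{ss.LL}, the discrete ALE $\A^n_{-\lambda,\lambda}$ of $\phi_n$ already converges in the Hausdorff topology to $\A_{-\lambda,\lambda}$. So it suffices to show that the ALE $\tilde{\A}^n_T$ associated with $\Im(e^{iT\phi_n})$ (i.e. the union over all pairs of boundary points of the $0$-level lines of $\sin(T\phi_n)$) satisfies $d_H(\tilde{\A}^n_T, \A^n_{-\lambda,\lambda}) \to 0$ in probability.

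First I would establish a quantitative single-line comparison: for each pair of boundary points $(x,y)$, the dual path $\eta^{(n)}_{x,y}$ separating $\{\phi_n<0\}$ from $\{\phi_n>0\}$ and the analogous path $\tilde{\eta}^{(n),T}_{x,y}$ associated to the sign of $\sin(T\phi_n)$ are Hausdorff-close with high probability. The key input is that for $T$ small, $\mathrm{sign}(\sin(T\phi_n))=\mathrm{sign}(\phi_n)$ at every vertex $v$ where $|\phi_n(v)|<\pi/T$. Lemma 3.1 of \cite{SchSh} shows that in a macroscopic tube around $\eta^{(n)}_{x,y}$ the field $\phi_n$ is uniformly $O(1)$ with high probability, so the two interfaces literally coincide edge-by-edge in such a neighborhood. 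A Peierls-type argument in the spirit of Proposition \ref{P.Peierls 0}, using Lemma \ref{L.Energy lemma}, shows that $\tilde{\eta}^{(n),T}_{x,y}$ cannot excurse macroscopically away from $\eta^{(n)}_{x,y}$, since such an excursion would have to cross an annulus of edges with $|\nabla\phi_n|\geq c/T$, an event of probability exponentially small in the diameter of the annulus.

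Second, I would upgrade the comparison from one interface to the entire ALE. Picking a countable dense family of boundary endpoint pairs $\{(x_k,y_k)\}_{k\in\N}$, one applies the single-line comparison to each, then uses the uniform modulus of continuity for capacity-parameterized Schramm--Sheffield level lines to pass to the supremum. This yields the inclusion $\A^n_{-\lambda,\lambda}\cap\{v:|\phi_n|<\pi/T \text{ on a neighborhood}\} \subseteq \tilde{\A}^n_T$, and the excluded set is macroscopically small with high probability. For the reverse inclusion $\tilde{\A}^n_T\subseteq \mathcal{N}_\epsilon(\A^n_{-\lambda,\lambda})$, one must rule out that extra $0$-level lines of $\sin(T\phi_n)$ coming from $\phi_n$ crossing the heights $k\pi/T$ with $k\neq 0$ contribute macroscopic pieces to $\tilde{\A}^n_T$.

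The main obstacle is precisely this last point: controlling the spurious $k\pi/T$-level lines of $\phi_n$ which are $0$-level lines of $\sin(T\phi_n)$. Even though for $T$ small the exceptional set $\{v:|\phi_n(v)|\geq \pi/T\}$ has typical density tending to $0$ (since $\phi_n$ is of order $\sqrt{\log n}$), its level-set topology is delicate: one needs to exclude long thin macroscopic connected components of $\{\sin(T\phi_n)=0\}\setminus\A^n_{-\lambda,\lambda}$. Local $L^\infty$-bounds as in \cite{SchSh} do not suffice here, and a careful global argument is needed --- either a second Peierls-type estimate on the boundaries of the exceptional regions $\{|\phi_n|\geq k\pi/T\}$ in the spirit of Section \ref{S.Localisation}, or an argument relying on Theorem \ref{T.0-boundary theorem}(a) applied to a family of test functions detecting macroscopic interfaces. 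This is the step where one likely needs $T$ genuinely smaller than $T_{\rec}^-$.
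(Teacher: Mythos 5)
The statement you are trying to prove is stated in the paper as a \emph{conjecture}: the authors explicitly write that they ``do not know whether this function $A_T$ can be taken to be the discrete ALE associated to the imaginary part of $e^{iT\phi_n}$,'' and they offer no proof. What they \emph{do} prove (the proposition just before the conjecture) is the much weaker statement that \emph{some} deterministic function $A_T(e^{iT\phi_n})$ converges to $\A_{-\lambda,\lambda}$, obtained via the conditional-expectation/two-copies trick of Corollary \ref{c.Levelines}, which says nothing about the local, geometrically defined ALE of $\Im(e^{iT\phi_n})$. So there is no paper proof to compare yours to, and your proposal does not close the gap either: you yourself flag the ``main obstacle'' (spurious $0$-level lines of $\sin(T\phi_n)$ coming from $\phi_n$ crossing heights $k\pi/T$, $k\neq 0$) and leave it open. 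A sketch that ends by naming the missing step is not a proof.

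Beyond the acknowledged gap, two of your intermediate claims are wrong as stated. First, Lemma 3.1 of \cite{SchSh} controls the \emph{expected gradient of $\phi_n$ along the level line} (this is exactly how the paper cites it); it does not give a uniform $L^\infty$ bound $|\phi_n|=O(1)$ in a macroscopic tube around $\eta^{(n)}$, and no such bound holds, since $\Var{\phi_n(x)}\asymp \log n$ at bulk points. Consequently the assertion that the two interfaces ``literally coincide edge-by-edge'' in a macroscopic neighborhood is unsupported. Second, and for the same reason, your density claim is backwards: because $\phi_n(v)$ is typically of order $\sqrt{\log n}$, the set $\{v: |\phi_n(v)|\geq \pi/T\}$ has density tending to \emph{one}, not zero, for fixed $T$. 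This means the spurious $k\pi/T$-crossings are not a sparse perturbation but the generic situation away from the zero level set, which is precisely why the conjecture is delicate (and why the paper's supporting heuristics instead invoke the fact that distinct level lines of the GFF do not come close to one another, cf.\ the discussion after Conjecture \ref{C.level lines of the dGFF}). Any honest attack would have to control the geometry of the excursion sets $\{|\phi_n|\geq k\pi/T\}$ for all $k$ simultaneously, which is not achieved by the Peierls estimates of Section \ref{S.Localisation} (those bound gradients across single edges, not the values of the field).
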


\begin{figure}[h]
	\includegraphics[width=0.44\textwidth]{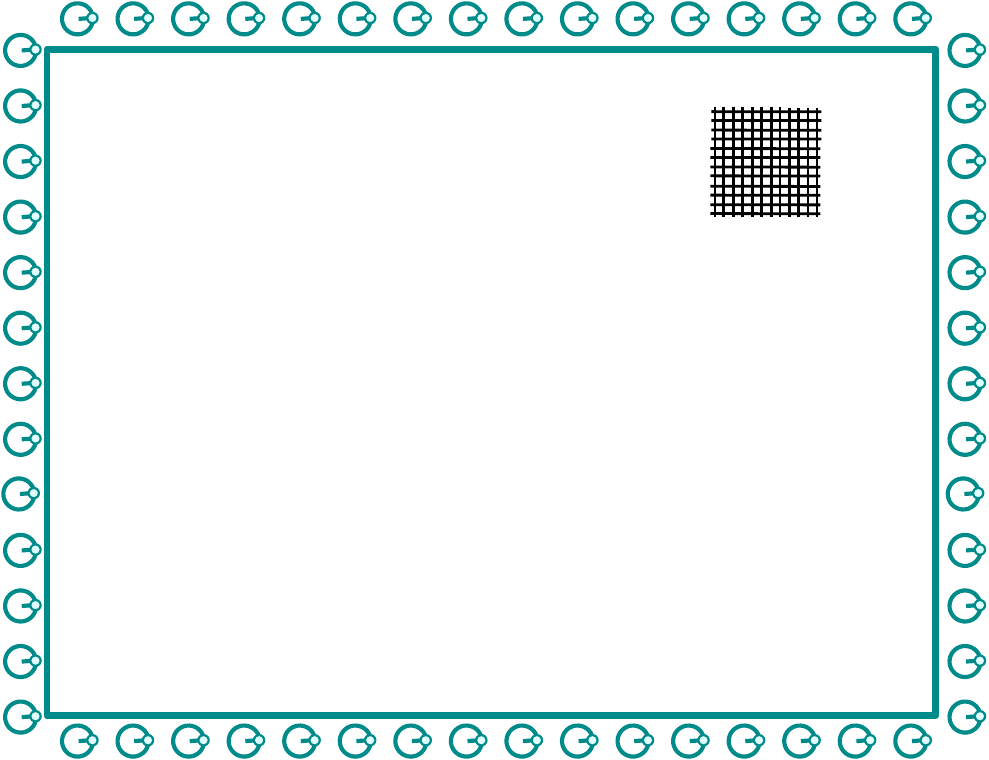}
	\includegraphics[width=0.44\textwidth]{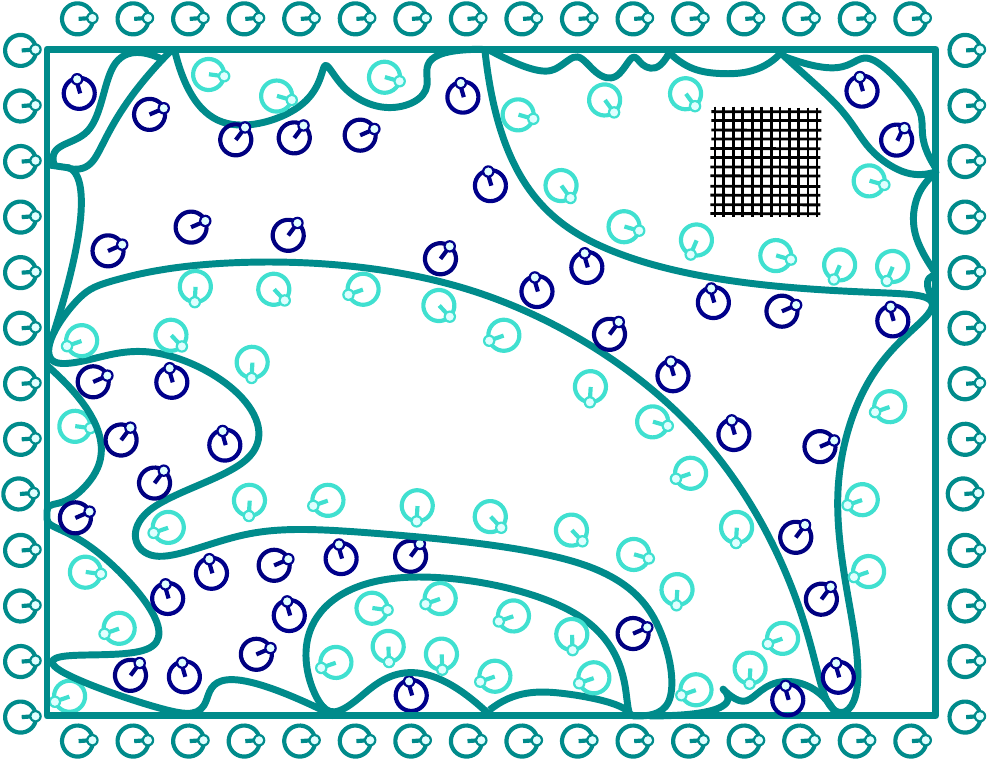}
	\caption{The left image depicts the boundary values of the Villain model for this case. The right image represents the ALE associated to the imaginary part of this Villain model. We believe that this set converges in law to the ALE $\A_{-\lambda,\lambda}$ when the temperature of the system is low enough. A striking consequence of this conjecture is that the law of the limiting set does not depend on the temperature, as long as the system is cold enough.}
	\label{F.Villain_ALE}
\end{figure}

An even more daring conjecture, proposes that the same is true for a Villain model at small enough temperature.
\begin{conjecture}\label{c.ALE}
	Take $T$ small enough and let $\psi_n$ be a Villain model in $\Lambda_n$. One has that as $n\to \infty$ the discrete ALE associated to the imaginary part of $\psi_n$ converges to $\A_{-\lambda,\lambda}$.
\end{conjecture}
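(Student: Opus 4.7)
The plan is to reduce the statement on the Villain model to its GFF counterpart (the preceding conjecture) and then to an already accessible convergence statement for the discrete ALE of the GFF. The chain of reductions I have in mind is
\[
\text{Villain ALE} \;\longrightarrow\; \text{ALE of } \sin(T\phi_n) \;\longrightarrow\; \text{discrete ALE of } \phi_n \;\longrightarrow\; \A_{-\lambda,\lambda},
\]
where $\phi_n$ is a discrete GFF at a suitably renormalised inverse temperature $\beta'=\beta'(T)$ and $T=1/\sqrt{\beta'}$. The first arrow would be the Fröhlich-Spencer approximation discussed around Conjecture \ref{c.SLE4} (an input of statistical-mechanics nature, itself conjectural): at low temperature, $\psi_n$ is close in law, up to microscopic perturbations, to $\{e^{iT\phi_n(x)}\}_{x\in\Lambda_n}$. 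Since the discrete ALE is continuous in the Hausdorff distance under such microscopic perturbations of the underlying $\S^1$-valued field (small vortices do not create macroscopic interfaces), this reduces the statement to the previous conjecture about $e^{iT\phi_n}$.

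The second arrow is where the novel analysis lies. The zeros of $\sin(T\phi_n)$ lie on the level sets $\{\phi_n = k\pi/T\}_{k\in\Z}$, so \emph{a priori} the ALE of $\sin(T\phi_n)$ is a superposition of \textbf{all} these level-line networks and not just the $k=0$ one. The key observation is that, for $T$ small, only $k=0$ produces macroscopic interfaces. Indeed, standard Gaussian concentration combined with the Markov-type estimates of Lemma \ref{L.Energy lemma} and the annealed Peierls argument of Section \ref{S.Localisation} show that the probability that $\phi_n$ exceeds the barrier $\pi/T - C\sqrt{\log n}$ along a macroscopic path decays like $\exp(-c/T^2)$ in $T$, uniformly in $n$, once $T$ is below the constant from Proposition \ref{P.Peierls 0}. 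The remaining small loops where $\phi_n$ locally excurses past $\pi/T$ are microscopic decorations attached to the $0$-level network and vanish in Hausdorff distance; the argument is essentially the one already implemented around Proposition \ref{P.1-point function} to control rare disagreement islands.

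The third arrow is a folklore consequence, sketched in the paper just before Conjecture \ref{c.ALE}, of the Schramm-Sheffield convergence of the discrete $0$-level line to $\SLE_4$ together with the characterisation of $\A_{-\lambda,\lambda}$ as the union of all boundary-to-boundary $0$-level lines in \cite{AS2}. Assuming the Hausdorff version of this convergence, the proof is then completed by chaining the three convergences using the coupling of $\phi_n$ and $\eta^{(n)}$ already used in Corollary \ref{c.Levelines}, together with the reconstruction estimate \eqref{e.2-points} to propagate the coupling to the ALE level.

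The main obstacle is clearly the first reduction: the Fröhlich-Spencer approximation at the quantitative level needed here (matching of all macroscopic topological features of the spin configuration, not merely of finite-dimensional moments) is genuinely open and, unlike the other two steps, cannot be circumvented by the techniques developed in this paper. A secondary obstacle, internal to the second arrow, lies near $\partial\Lambda_n$: the Villain boundary data is prescribed on $\S^1$ and has no direct GFF interpretation, so one needs a boundary-localisation argument (for instance by a dyadic exhaustion of $\Lambda_n$ by rectangles, coupled with the free-boundary Peierls bound of Proposition \ref{P.Peirls free}) to rule out exceptional macroscopic interfaces created near the boundary by the non-GFF nature of the imposed phases.
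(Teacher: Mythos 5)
This statement is labelled a conjecture in the paper, and the paper offers no proof of it; there is therefore nothing to compare your argument against, and what you have written is not a proof but a conditional reduction. You are candid about the first gap, but the proposal has more unproven inputs than you acknowledge, and at least one step is argued incorrectly.

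First, your middle arrow (from the ALE of $\sin(T\phi_n)$ to the discrete ALE of $\phi_n$) is precisely the content of the conjecture stated immediately before Conjecture \ref{c.ALE} in Section \ref{ss.LL}, which the authors also leave open. Your sketch of it does not work as stated: the maximum of the discrete $2d$ GFF in $\Lambda_n$ grows like $c\log n$, so for any fixed $T>0$ the field exceeds the barrier $\pi/T$ at a growing set of thick points as $n\to\infty$, and ``Gaussian concentration'' gives no bound that is uniform in $n$ of the form $\exp(-c/T^2)$ for the event that nonzero levels are reached. Ruling out macroscopic interfaces at the levels $k\pi/T$, $k\neq 0$, is a genuine level-set geometry question (of the type addressed by the two-valued sets $\A_{-a,b}$ and their existence threshold $a+b\ge 2\lambda$ discussed in Section \ref{s.conjectures}), not a large-deviation estimate; the Peierls machinery of Section \ref{S.Localisation} controls gradients of the \emph{coupled pair} $(\phi_1,\phi_2)$, not excursions of a single field above a fixed height, so it cannot be imported here. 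Likewise, the assertion that the discrete ALE is continuous in Hausdorff distance under microscopic perturbations of the spin field is false in general: near points where the field sits close to a level, an arbitrarily small perturbation can reroute an interface macroscopically, and this instability is exactly why the authors state even the GFF version as a conjecture rather than deducing it from Corollary \ref{c.Levelines}.

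Second, even your third arrow is not available off the shelf: the paper explicitly notes (footnote in Section \ref{s.conjectures}) that the convergence of the discrete ALE of $\phi_n$ to $\A_{-\lambda,\lambda}$ ``is not written anywhere.'' Combined with the open Fr\"ohlich--Spencer approximation needed for the first arrow, every link in your chain is conjectural. As a roadmap your decomposition is reasonable and matches the heuristics the authors themselves give, but it should be presented as such, not as a proof.
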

It is interesting to note that we expect the set of interfaces of the Villain model at low temperature resemble a lot to each other at various $\beta$. That is to say that this geometry will not distinguish the temperature from which the ALE arises. However, we expect that the law inside each connected component of the complement of this ALE's will  look pretty different. To be more precise, we expect that the boundary conditions generated by this ALE get closer and closer to $1=e^{i0}$ as the temperatures goes to $0$.

\subsection{Upper bound on the value of $T_{\rec}^+$.} 
In fact, the analysis of level lines of the GFF, makes us believe the following conjecture.
\begin{conjecture}
	We have that $T_{\rec}^+\leq 2\sqrt{2\pi}$.
\end{conjecture}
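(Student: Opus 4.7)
The conjecture identifies $T = 2\sqrt{2\pi}$ as the critical threshold, which is not coincidental: the SLE$_4$ height gap $2\lambda = \sqrt{\pi/2}$ between consecutive Schramm--Sheffield level lines of the continuum GFF coincides exactly with the period $\frac{2\pi}{T}$ when $T = 2\sqrt{2\pi}$. Via the correspondence $\beta_T = (2\pi)^2/T^2$ from Lemma \ref{L.aIV}, the threshold $T = 2\sqrt{2\pi}$ corresponds to $\beta_T = \pi/2$, which is the conjecturally sharp roughening temperature for the $2d$ integer-valued GFF (see the discussion in \cite{FS,RonFS}). The plan is therefore to prove that Theorem \ref{th.IVgff} extends all the way down to $\beta_c^{\IV} = \pi/2$, uniformly in the quenched shift $\ba \in \R^\Lambda$; once established, combining this with the argument of Subsection 5.2 (the Paley--Zygmund step applied to $\langle \phi_1 - \phi_2, f \rangle$) yields $T_{\rec}^+ \leq 2\sqrt{2\pi}$ exactly as in the proof of the non-recovery half of Theorem \ref{T.0-boundary theorem}.

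First I would try to sharpen the Coulomb gas expansion used in Subsection \ref{ss.proof12}. The bottleneck in the Fröhlich--Spencer analysis is the $(1+\eps)^{-1}$ factor in ~\eqref{e.3.2}, which comes from multiscale renormalisation and from controlling the activities $z(\beta,\rho,\calN)$ of large charges; tracking the optimal scaling exponents, one expects the method to work precisely up to the self-duality point $\beta = \pi/2$. The modular invariance identity (Proposition \ref{c.magical}) should be preserved unchanged in this regime because it is an exact identity valid as soon as the Coulomb gas expansion converges. The hard part is that the Cauchy--Schwarz step at the end of the proof of Theorem \ref{th.IVgff} only exploits the first-moment identity and loses information; to reach $\pi/2$ one would need a genuine Ward-type identity or a convexity input for the quenched partition function $Z^{\ba}_\calN(0)$, in the spirit of what is done for the XY model in the most recent sharp approaches.

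A second, more geometric route would mirror the level-line discussion of Subsection \ref{ss.LL}. The plan would be to couple two conditionally independent GFFs $\phi_1, \phi_2$ given $e^{iT\phi}$ as in Definition \ref{D.phis}, and then to show that when $\frac{2\pi}{T} < 2\lambda$ the macroscopic SLE$_4$ level lines extracted from $\phi_1$ and from $\phi_2$ cannot be forced to coincide by the modular data alone. Concretely, the difference field $m := \frac{T}{2\pi}(\phi_1 - \phi_2)$ takes integer values, and the question reduces to showing that with uniformly positive probability $m$ admits a macroscopic ``$\pm 1$'' domain. For $T$ just above $2\sqrt{2\pi}$ this should follow from an energy-entropy balance identical to the one governing the roughening transition: the energetic cost of a loop of contour length $\ell$ in $m$ scales as $\frac{\beta_T}{2\pi}\ell \log \ell$ (Coulomb gas energy of a dipole), while the entropy is $\ell\log 3$, and the two cross at $\beta_T = \pi/2$.

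The main obstacle in either route is of course that obtaining the sharp KT constant is a notoriously delicate open problem: it has never been carried out even for the classical integer-valued GFF (with $\ba \equiv 0$), and here it is further complicated by the need for uniform estimates over the quenched shift $\ba$, for which the symmetries used by Wirth \cite{Wirth} (see Remark \ref{R.Wirth}) are unavailable. A reasonable intermediate goal would be to prove a quantitative upper bound $T_{\rec}^+ \leq T_\star$ for some explicit $T_\star$ (possibly larger than $2\sqrt{2\pi}$), by carefully tracking the constants $D_4$ and $\eps(\beta)$ through the proof of Theorem \ref{th.IVgff}, combined with the quantitative estimate on $\sigma(T)$ from Lemma \ref{l.ST}, which already gives a first non-trivial bound.
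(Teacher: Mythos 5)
The statement you are addressing is a \emph{conjecture}: the paper offers no proof of $T_{\rec}^+\leq 2\sqrt{2\pi}$, only two heuristic justifications in Section \ref{s.conjectures}. So there is no proof to match, and your proposal --- which you correctly flag as a research programme rather than an argument --- is on the same footing. Your numerological identification of the threshold, namely that the period $\frac{2\pi}{T}$ equals the height gap $2\lambda=\sqrt{\pi/2}$ exactly at $T=2\sqrt{2\pi}$, is equivalent to the paper's first heuristic ($2\sqrt{2\pi}$ is the smallest $T$ for which $e^{iT\lambda}=e^{-iT\lambda}$, so that the $\pm\lambda$ boundary data on the two sides of the Schramm--Sheffield level line become indistinguishable after reduction mod $\frac{2\pi}{T}$). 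The paper's second heuristic, which you do not mention, concerns the level sets $\mathbb{A}_{-a,b}$: $a+b=2\lambda$ is exactly the critical case where boundaries of distinct complementary components can touch along a curve, while $x\mapsto e^{i2\sqrt{2\pi}x}$ cannot distinguish the values $-a$ and $b$, so the set should not be recoverable.

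Two cautions about your proposed routes. First, the reduction ``uniform delocalisation of the $\ba$-IV-GFF for all $\beta<\pi/2$ implies $T_{\rec}^+\leq 2\sqrt{2\pi}$'' is logically sound given the non-recovery argument of Section \ref{s.deloc}, but identifying $\beta=\pi/2$ as the sharp roughening point of the \emph{bare} integer-valued coupling is imprecise: the KT prediction of $\pi/2$ concerns the renormalised stiffness, and for $\ba\equiv 0$ the bare critical coupling is not expected to equal it. (What does sit exactly at $\pi/2$ in this problem is the annealed stiffness of the quenched model, since by Lemma \ref{L.aIV} the annealed law is exactly Gaussian --- that would be the honest way to phrase why no renormalisation correction is expected here.) Second, your energy--entropy balance for contours of the integer-valued difference field is not the mechanism of the roughening transition: a contour of length $\ell$ costs order $\beta\ell$, not $\beta\ell\log\ell$ (the logarithmic interactions live on the vortices of the dual Coulomb gas, not on the contours), and the naive Peierls balance yields a threshold of order $\log 3$ rather than $\pi/2$. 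Extracting the sharp constant from the Coulomb-gas renormalisation, uniformly in the quenched shift $\ba$, is precisely the part that remains open --- which is consistent with the statement being left as a conjecture in the paper.
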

Let us note that the value $2\sqrt{2\pi}$, it is the smallest value of $T$ so that $\exp(i T \lambda)=\exp(-iT\lambda)$. This is to say that this is the value for which we could not expect to recognize the macroscopic difference between the left and the right side of the level line $\eta$ introduced in Section \ref{ss.LL}. 

The level line $\eta$ is fundamental to be able to recover the GFF. This is shown, for example, in the construction of the free-boundary GFF given in \cite{QWCoupling}.

There is another reason why we believe that one cannot recover $\phi$ when $T=2\sqrt{2\pi}$. It has to do with the level set of the GFF.

Although the GFF is not a function, one can still define $\mathbb A_{-a,b}$. This is informally, the (connected component connected to the boundary of the)  preimage of $[-a,b]$. These sets were introduced\footnote{See \cite{SSV} to better understand the relationship between $\mathbb A_{-a,b}$ and the imaginary chaos.} in \cite{ASW,ALS1} and their existence is conditional on the size of the interval $[-a,b]$
\begin{itemize}
	\item[] \textit{The set $\mathbb A_{-a,b}$ if and only if $a,b>0$ and $a+b\geq 2\lambda$.}
\end{itemize}
The case $a+b=2\lambda$ is special. These are the values such that $\exp(-i Ta)=\exp(iTb)$. Furthermore, in \cite{AS2}, it is shown that these are the only values of $a$ and $b$ such that the following happens
\begin{itemize}
	\item[] \textit{Fix two-points $x,y\in [-1,1]$ and let $O(x)$ and $O(y)$ be the connected component of $[-1,1]^2\backslash \mathbb A_{-a,b}$ containing $x$ and $y$ respectively. Then, there is a positive probability that $O(x)\neq O(y)$ and $\partial O(x)\cap \partial O(y)$ is a continuous curve.}
\end{itemize} 
This property implies that the places where the GFF takes values $-a$ and the ones where it takes values $b$ are mesoscopically separated, i.e. they are not macroscopically far apart. As the function $x\mapsto \exp(i2\sqrt{ 2\pi} x)$ cannot distinguish between $-a$ and $b$, we believe it is not possible to recover $\mathbb A_{-a,b}$ just by knowing $\exp(i2\sqrt 2 \pi \phi)$. This would make impossible to recover all the macroscopic information of the GFF.

\appendix
\section{Viewing the shift $\ba=\{a_i\}_{i\in \Lambda}$ as an exterior magnetic field}\label{a}

The goal of this appendix is to provide a different proof of Proposition \ref{c.magical}. 
The idea of this proof was inspired to us by an inspection of this exact identity in the simplest possible case of a Gaussian free field on a single point $\{x\}$ with Dirichlet boundary condition, namely  a Gaussian $\calN(0,\frac 1 \beta)$. The appendix is organized as follows, first we investigate the case of one point, then we make a link with Riemann-theta functions (thus explaining the name {\em modular invariance}) and finally we give a second proof of Proposition \ref{c.magical}.

\subsection{Warm up: GFF with one point and Jacobi-theta function.}

%\begin{figure}[!htp]
%\begin{center}
%\includegraphics[width=0.8\textwidth]{simus3}
%\end{center}
%\caption{}\label{}
%\end{figure}

Let us consider the GFF on a graph with two points $\{x,y\}$ with 0-boundary conditions in $y$. The partition function of the $a$-shifted integer valued field (here the vector $\ba$ is just one parameter which we call $a$) reads as follows:
\begin{align*}\label{}
Z(\beta,a)& = \int \big(\sum_{n\in \Z} \delta_{2\pi n +a}(\phi)\big)  \frac 1 {\sqrt{2\pi /\beta}}  e^{-\frac {\beta} {2}  \phi^2}   d \phi \\
& = \frac 1 {\sqrt{2\pi /\beta}}  \sum_{n\in \Z} \exp(-\frac \beta 2 (2\pi n + a)^2)\,.
\end{align*}

In the limiting case where we plug the following infinite Fourier series 
\[
1+2\sum_{q=1}^\infty \cos(q(\phi-a)) \equiv  2\pi \sum_{n\in \Z} \delta_{2\pi n + a}(\phi)
\] into the Fröhlich-Spencer expansion on one point, it can be checked that the identity~\eqref{e.magical} reads
as follows
%\begin{align*}\label{}
%T_N(\beta,a)&:=
%\frac
%{
%\sum_{n=-N}^N \exp(-\frac \beta 2 (2\pi n + a)^2)\cdot(2\pi n +a)
%}
%{
%\sum_{n\in \Z} \exp(-\frac \beta 2 (2\pi n + a)^2)
%} \\
%\tilde T_M(\beta,a) &:=
%\frac
%{
%\frac 1 \beta 
%\sum_{q=1}^{M}
%2(1-\frac q {1000})
%e^{-\frac {q^2} {2 \beta}}
%\sin(q \cdot a) \cdot q
%}
%{
%1+ 
%\sum_{q=1}^{M}
%2(1-\frac q {1000})
%e^{-\frac {q^2} {2\beta}}
%\cos(q\cdot a)
%}\,.
%\end{align*}
\begin{align}\label{e.theta}
\frac
{
\sum_{n\in \Z} \exp(-\frac \beta 2 (2\pi n + a)^2)\cdot(2\pi n +a)
}
{
\sum_{n\in \Z} \exp(-\frac \beta 2 (2\pi n + a)^2)
}
&=
\frac
{
\frac 1 \beta 
\sum_{q\in \Z}
e^{-\frac {q^2} {2 \beta}}
\sin(q \cdot a) \cdot q
}
{
\sum_{q\in \Z}
e^{-\frac {q^2} {2\beta}}
\cos(q\cdot a)
}\,,
\end{align}
which is correct for any $\beta>0$ and any real $a\in(-\pi,\pi)$.  (Note interestingly that it is degenerate for the L.H.S as $\beta \to 0$ but not for the R.H.S!) 

One way to prove this identity is to notice its link with Jacobi's theta function. Indeed the later is classically defined as follows (see for example \cite{Mumford}).
\begin{align*}\label{}
\theta(z \md \tau):= \sum_{n\in \Z} \exp(i \pi n^2 \tau + 2 i \pi n z)\,, %\,\,\, z\in \C, \tau\in \H
\end{align*}
defined for all $z\in\C, \tau\in \H$. %\margin{D. Mumford says p28, the behaviour in $\tau$ is also extremely beautiful, but rather deeper and more subtle. In our setting $\tau$=inverse temperature}
Now if one plugs 
\[
z:=i \beta a\,, \,\, \tau:= 2 i \pi \beta 
\]
into $\theta$, we find 
\begin{align*}\label{}
\theta(z\md \tau)= e^{\frac \beta 2 a^2} \sum_{n\in \Z} \exp(-\frac \beta 2 (2\pi n + a)^2)\,.
\end{align*}
Jacobi's first modular identity states that
\begin{align}\label{e.J1}
\theta(\frac z \tau \md \frac {-1} \tau) = \alpha \, \theta(z \md \tau),
\end{align}
where $\alpha=(-i\tau)^{1/2} \exp(\frac \pi \tau i z^2) = \sqrt{2\pi \beta} \exp(-\frac \beta 2 a^2)$. 
This identity gives us: 
\begin{align*}\label{}
\sum_{q\in \Z} e^{-\frac {q^2} {2\beta}}\cos(q a) = \sqrt{2\pi \beta} \sum_{n\in \Z} \exp(-\frac \beta 2 (2\pi n + a)^2)
= \sqrt{2\pi \beta} Z(\beta,a)\,,
\end{align*}
from which one can prove the identity~\eqref{e.theta} by taking a  log-derivative in $a$. Note that one may also avoid using Jacobi's identity and reprove things using a Poisson summation formula. We indicate the link here as our shift-parameter $\ba$ which is central to our work is naturally associated to the first argument $z$ of the theta function $\theta$ (while the second argument $\tau$ is related to the inverse temperature).

%
%The identity we numerically checked is whether for $\beta>0$ small enough, 
%\begin{align*}\label{}
%\lim _{N\to \infty} T_N(\beta,a)
%&= 
%\lim_{M\to \infty} \tilde T_M(\beta,a)\,.
%\end{align*}
%See the plot we obtained in Figure \ref{f.??} for the values $N=100$ and $M\in \{10,20,100\}$. 

%\begin{align*}\label{}
%\frac
%{
%\sum_{n=-200}^{200} \exp(-\frac \beta 2 (2\pi n + a)^2)\cdot(2\pi n +a)
%}
%{
%\sum_{n=-200}^{200} \exp(-\frac \beta 2 (2\pi n + a)^2)
%}
%\approx
%\frac
%{
%\frac 1 \beta 
%\sum_{q=1}^{1000}
%2(1-\frac q {1000})
%e^{-\frac {q^2} {2 \beta}}
%\sin(q \cdot a) \cdot q
%}
%{
%1+ 
%\sum_{q=1}^{1000}
%2(1-\frac q {1000})
%e^{-\frac {q^2} {2\beta}}
%\cos(q\cdot a)
%}\,,
%\end{align*}
%where we noticed, somewhat surprisingly, that the convergence of the series on the R.H.S seems to be very slow. 

\medskip
The argument we just outlined bares some resemblance with the fact that, in an Ising model with an exterior magnetic field $h$, one can compute the average magnetization as a derivative w.r.t $h$ of the free energy $\log Z$. In our context, we used that 
\begin{align*}\label{}
\frac
{
\sum_{n\in \Z} \exp(-\frac \beta 2 (2\pi n + a)^2)\cdot(2\pi n +a)
}
{
\sum_{n\in \Z} \exp(-\frac \beta 2 (2\pi n + a)^2)
}
&
= -\frac 1 {\beta} \p_a \log(Z(\beta,a))\,.
\end{align*}

This suggests that our key identity~\eqref{e.magical} for a general domain $\Lambda\subset \Z^2$ should be reminiscent of the way to recover the average magnetic field of the Ising model from the derivative in $h$ of its Free energy $\log Z$. We implement this idea in the rest of the appendix by viewing the vector shift $\ba=\{a_i\}_{i\in \Lambda}$ acting as an external magnetic field. 
We prove Proposition \ref{c.magical} along these lines in two steps: %\margin{I don't understand what we are doing here. Is it a justification for the next subsection? In A.3, we work also with the non-limiting case so 1) does not feel really true?}:
\bi
\item[A.2)] First, as in the case of one-point, we work in the limiting case of infinite Fourier series at each vertex $x\in \Lambda$. This makes the analogy with an Ising-model clearer and makes a connection with the modular invariance of certain Riemann-theta functions. From the intuition gathered here, we notice that the key identity \eqref{e.magical} is an appropriate log-derivative w.r.t $\ba$, namely $-\<{\sigma, \nabla_\ba \log Z} = -\<{\frac 1 \beta (-\Delta)^{-1} f, \nabla_\ba \log Z}$. 
\item[A.3)] In the second part, we work in the finite cut-off case. Here it is not so clear how to recognise the integral against $\<{f,\phi}$ on the R.H.S of the identity~\eqref{e.magical}. The reason comes from the fact that expansion into charges from \cite{FS} (and particularly the effect of the complex translation under spin-waves) somehow obfuscates the readability of $\EFK{\beta,\Lambda,\lambda_\Lambda,v}{\ba}{\<{\phi,f}} $. To end the proof, we first get around the blurring effect caused by the expansion into charges from \cite{FS} (using the matching of partition functions before and after expansions into charges) and then connect to an actual average of $\<{\phi, f}$ by running Gaussian integration by parts. 
\ei

%\subsection{Second proof of the magical identity~\eqref{e.magical} through gradients in $\ba$ of suitable partition functions.}
\subsection{Riemann-theta function and $\ba$-shifted integer valued GFF.}
$ $

\smallskip
In this section, we implicitly rely on expansions into infinitly many charge configurations in \cite{FS,RonFS} by attaching to each vertex $i\in \Lambda$ the following infinite trigonometric series 
\[
\lambda_i(\phi_i) = 1 + 2 \sum_{k=1}^\infty \cos(k(\phi_i -a_i))\,.
\]
We will not properly justify here %\margin{You told me FS properly justify it?}
 that the series are well defined as our goal is to justify properly in the next section the key identity~\eqref{e.magical} which holds in the finite cut-off case 
\[
\lambda_i(\phi_i) = 1 + 2 \sum_{k=1}^N \cos(k(\phi_i -a_i))\,,
\]
with $N$ large. 
\smallskip

Let us introduce the following two partition functions in the general case of $\Lambda\subset \Z^2$ with, say, Dirichlet boundary conditions. 
\begin{align*}\label{}
%Z(\beta, \ba) & := \sum_{m\in \Z^\Lambda} \exp(-\frac \beta 2 \sum_{i\sim j} (2\pi (m_i -m_j) +a_i-a_j)^2 )\\
%Z(\beta, \ba) & := \frac{1}{\sqrt{2\pi \beta^{-1}\det \Delta^{-1} }}\sum_{m\in \Z^\Lambda} \exp(-\frac \beta 2 \sum_{i\sim j} (2\pi (m_i -m_j) +a_i-a_j)^2 )\\
Z(\beta, \ba) & := \frac{1}{ \sqrt{(2\pi \beta^{-1})^{|\Lambda|}\det (-\Delta)^{-1} }}\sum_{m\in \Z^\Lambda} \exp(-\frac \beta 2 \sum_{i\sim j} (2\pi (m_i -m_j) +a_i-a_j)^2 )\\
\tilde Z(\beta,\ba) & = \sum_{\calN \in \calF} c_\calN Z_\calN^\ba(0) \\
& = \sum_{\calN\in \calF} c_\calN \, \int \prod_{\rho\in\calN}[1+z(\beta,\rho,\calN) \cos(\<{\phi,\bar \rho}-\<{\ba,\rho})]
d\mu_{\beta,\Lambda,v}^\GFF(\phi).
\end{align*}
(N.B. As hinted above $\calF$ must be an infinite set of charge configurations here). 

The expansion from Fröhlich-Spencer (in this limiting case) reads as follows: for all $\beta<\beta_0$ and $\ba \in \R^\Lambda$
\begin{align}\label{e.dual}
Z(\beta, \ba) = \tilde Z(\beta,\ba).
\end{align}

Inspired by the analogy with Ising, we now compute for any $g : \Lambda \to \R$, %\margin{Sorry, I change $\exp$ by $e^x$ because that way it fits in the line.}
\begin{align*}\label{}
& \<{g, \nabla_\ba  \log Z(\beta, \ba)} \\
& = \sum_{i\in \Lambda} g_i \p_{a_i} \log Z(\beta, \ba) \\
& =  -\beta\frac{\sum_{i\in \Lambda} g_i  \sum_{m\in \Z^\Lambda} e^{-\frac \beta 2 \sum_{i\sim j} (2\pi (m_i -m_j) +a_i-a_j)^2}  (\sum_{i\sim j}  (2\pi (m_i -m_j) +a_i-a_j) ) }   {\sum_{m\in \Z^\Lambda} e^{-\frac \beta 2 \sum_{i\sim j} (2\pi (m_i -m_j) +a_i-a_j)^2} } \\
& = \beta
\frac
{
\sum_{i\in \Lambda} \sum_{m\in \Z^\Lambda} e^{-\frac \beta 2 \sum_{i\sim j} (2\pi (m_i -m_j) +a_i-a_j)^2}    [\Delta(2\pi m + \ba)]_i g_i } 
 {\sum_{m\in \Z^\Lambda} e^{-\frac \beta 2 \sum_{i\sim j} (2\pi (m_i -m_j) +a_i-a_j)^2}  )}  \\
& =\beta
\frac
{
\sum_{m\in \Z^\Lambda} e^{-\frac \beta 2 \sum_{i\sim j} (2\pi (m_i -m_j) +a_i-a_j)^2} \<{g,\Delta(2\pi m + \ba)} } 
 {\sum_{m\in \Z^\Lambda} e^{-\frac \beta 2 \sum_{i\sim j} (2\pi (m_i -m_j) +a_i-a_j)^2} }.
\end{align*}
Choose (as in \cite{FS,RonFS}), 
\[
g=\sigma := \frac 1 \beta \red{-}\Delta^{-1} f .
\]
This gives us
\begin{align*}\label{}
 \<{\sigma, \nabla_\ba \log Z(\beta,\ba)} &= \red{-} 
\frac
{
\sum_{m\in \Z^\Lambda} e^{-\frac \beta 2 \sum_{i\sim j} (2\pi (m_i -m_j) +a_i-a_j)^2} \<{f ,2\pi m + \ba} } 
 {\sum_{m\in \Z^\Lambda}e^{-\frac \beta 2 \sum_{i\sim j} (2\pi (m_i -m_j) +a_i-a_j)^2}}  \\
& = \red{-} \EFK{\beta,\Lambda,\ba}{\mathrm{IV}}{\<{\phi,f}}.
\end{align*}

Now, from ~\eqref{e.dual}, we know that for any function $g :\Lambda \to \R$,  we have 
\begin{align*}\label{}
\<{g, \nabla_\ba  \log Z(\beta, \ba)} = \<{g, \nabla_\ba \log \tilde Z(\beta, \ba)}.
\end{align*}

As such, this implies with $g=\sigma$ the following formula for $\EFK{\beta,\Lambda,\ba}{\mathrm{IV}}{\<{\phi,f}}$:
\begin{align*}\label{}
\EFK{\beta,\Lambda,\ba}{\mathrm{IV}}{\<{\phi,f}} = \red{-} \<{\sigma,\nabla_\ba  \log \tilde Z(\beta,\ba)}.
\end{align*}

%where 
%\begin{align*}\label{}
%\tilde Z(\beta,\ba) & = \sum_{\calN \in \calF} c_\calN Z_\calN^\ba(0) \\
%& = \sum_{\calN\in \calF} c_\calN \, \int \prod_{\rho\in\calN}[1+z(\beta,\rho,\calN) \cos(\<{\phi,\bar \rho}-\<{\ba,\rho})]
%d\mu_{\beta,\Lambda,v}^\GFF(\phi)
%\end{align*}
%
%
%(CHECK signs, but it seems fine). \purple{\bf I think both partition functions are the same up to a constant (which does not matter when differentiating w.r.t $\ba$) BUT the only difference is that there is intrinsically  $-\ba$ in Frolich-Spencer way of rewriting things while this is naturally $+\ba$ on the side of the Integer-valued partition function.}

Let us then compute this gradient $\nabla_\ba$ and check that it gives us the desired identity: 
\begin{align*}\label{}
& \<{\sigma,\nabla_\ba  \log \tilde Z(\beta,\ba)} \\
& = \frac 1 {\tilde Z(\beta,\ba)}\sum_i \sigma_i  \sum_{\calN\in\calF} c_\calN 1_{i\in \rho=\rho_i \in \calN} \int z_i(\beta,\rho,\calN)
[\red{-}\sin(\<{\phi,\bar \rho}-\<{\ba,\rho})\red{-}\red{\rho_i}] \\
& \;\;\;\;\times
\prod_{\rho\in\calN\setminus \{\rho_i\}}
[1+z(\beta,\rho,\calN) \cos(\<{\phi,\bar \rho}-\<{\ba,\rho})]
d\mu_{\beta,\Lambda,v}^\GFF(\phi) \\
& = 
 \frac 1 {\tilde Z(\beta,\ba)} \sum_{\calN\in\calF} c_\calN  \int  
 \left( 
 \sum_{\rho\in\calN} \frac
 { z(\beta,\rho,\calN) \sin(\<{\phi,\bar \rho}-\<{\ba,\rho})\<{\sigma,\rho}
 }
 {
 1+z(\beta,\rho,\calN) \cos(\<{\phi,\bar \rho}-\<{\ba,\rho})
 }
 \right) \\
& \;\;\;\;\times
\prod_{\rho\in\calN}
[1+z(\beta,\rho,\calN) \cos(\<{\phi,\bar \rho}-\<{\ba,\rho})]
d\mu_{\beta,\Lambda,v}^\GFF(\phi)
\end{align*} 
and we thus recover the R.H.S of~\eqref{e.magical} in the limiting case of infinite trigonometric polynomials at each site. 

\smallskip

Let us briefly highlight now the link with Riemann-theta functions which we believe illustrates what is beneath the identity~\eqref{e.magical}. It is not hard to rewrite the partition function 
\[
Z(\beta, \ba) = \frac{1}{\sqrt{(2\pi \beta^{-1})^{|\Lambda|}\det (-\Delta)^{-1} }}\sum_{m\in \Z^\Lambda} \exp\left(-\frac \beta 2 \sum_{i\sim j} (2\pi (m_i -m_j) +a_i-a_j)^2 \right)
\]
as a theta function in several variables (i.e, the Riemann-theta function). The later generalized theta functions may be defined as follows (see for example \cite{Mumford}): for any $g\geq1$, $\bz = (z_1,\ldots,z_g) \in \C^g$ and $\Omega$ a symmetric $g\times g$ complex matrix whose imaginary part is positive definite, set
\begin{align}\label{e.RT}
\theta(\bz \md \Omega):= \sum_{\bm \in \Z^g}  \exp(\pi i \, \bm^T \Omega \bm + 2 i \pi \,  \bm \cdot \bz).
\end{align}
The Riemann-theta functions therefore match exactly with our model when 
\[
\begin{cases}
\bz &:= i \beta (-\Delta) \ba  \\ 
\Omega& := 2 i \, \pi \beta (-\Delta) .
\end{cases}
\]
We claim that the identity~\eqref{e.magical} is reminiscent of the suitable  $\log$-derivative (i.e. taking $F(\ba) \mapsto - \<{\sigma, \nabla_\ba F(\ba)}$) of the modular invariance identity for Riemann-theta function (see for example 5.1 in \cite{Mumford}) which states that 
%The extension of Jacobi's modular identity~\eqref{e.J1} to Riemann-theta function is the following modular identity:
\begin{align}\label{e.J2}
\theta(\Omega^{-1} \bz, - \Omega^{-1}) = \sqrt{\det(-i \Omega)} \exp(i \bz^T \Omega^{-1} \bz) \theta(\bz,\Omega)\,.
\end{align}

\subsection{Blurring effect of the decomposition into charges.}
In this subsection, we work with finite cut-off Fourier series (and therefore do not need to worry with convergences of series) and we end our alternative proof of Proposition \ref{c.magical}. 

By running the same computation as the one outlined above for the infinite trigonometric series, we have that the R.H.S in the identity~\eqref{e.magical} is given by 
\[
-\<{\sigma, \nabla_\ba  \log \tilde Z_N(\beta, \ba)} 
\]
where  $\tilde Z_N(\beta, a)$ denotes here the rewriting by Fröhlich-Spencer of the finite-cutoff partition function (i.e. $\tilde Z_N(\beta,\ba) =  \sum_{\calN \in \calF} c_\calN Z_\calN^\ba(0)$). 
The key point here is that it is not clear how to recognise the expectation $\EFK{\beta,\Lambda,\lambda_\Lambda,v}{\ba}{\<{\phi,f}}$ from the above $\log$-derivative. To make that identification easier, one should rely instead on an easier expression of the partition function $\tilde Z_N(\beta, \ba)$. Indeed we will instead work with the initial expression of the partition function before subtle expansions into charges are made. Namely, we consider
\begin{align*}\label{}
\hat Z_N(\beta,\ba) 
& := \int \prod_{x\in \Lambda} \left(1 + 2 \sum_{k=1}^N \cos(k (\phi_x-a_x)) \right) d \P_{\beta,\Lambda}^\GFF
\end{align*}
and we then compute
\begin{align*}\label{}
 -&\<{\sigma, \nabla_\ba  \log \tilde Z(\beta, \ba)}  \\
 & = 
-\<{\sigma, \nabla_\ba  \log \hat Z_N(\beta, \ba)}  \\
& = -
\sum_{i\in \Lambda} \sigma_i  
\frac
{\EFK{\beta}{\GFF}{
2 \left(\sum_{k=1}^N  k \sin (k (\phi_i - a_i)) \right) \prod_{x\in \Lambda\setminus i} \left(1 + 2 \sum_{k=1}^N \cos(k(\phi_x-a_x)) \right) 
}}
{\hat Z_N(\beta, \ba)}\,.
\end{align*}
We now wish to compare this with an expression for
$\EFK{\beta,\Lambda,\lambda_\Lambda,v}{\ba}{\<{\phi,f}}$:
\begin{align*}\label{}
& \EFK{\beta,\Lambda,\lambda_\Lambda,v}{\ba}{\<{\phi,f}} \\
& = \EFK{\beta,\Lambda}{\GFF}{\prod_{x\in \Lambda} \left( 1 +  2 \sum_1^N \cos(k(\phi_x-a_x))\right) \purple{\<{\phi,f}} } \\
& = \sum_{i\in \Lambda} f_i \EFK{\beta,\Lambda}{\GFF}{
\prod_{x\in \Lambda} \left( 1 +  2 \sum_1^N \cos(k(\phi_x-a_x))\purple{\phi_i}\right)} \\
& =   \sum_{i\in \Lambda} f_i  \sum_{j\in \Lambda} \<{\purple{\phi_i} \phi_j}_\beta^\GFF 
\EFK{\beta,\Lambda}{\GFF}{ \p_j
\prod_{x\in \Lambda} \left( 1 + 2 \sum_1^N \cos(k(\phi_x-a_x)) \right)}\,,
\end{align*}
by Gaussian integration by parts. Continuing, this gives us
\begin{align*}\label{}
& \EFK{\beta,\Lambda,\lambda_\Lambda,v}{\ba}{\<{\phi,f}} \\
&= \sum_{i\in \Lambda} f_i  \sum_{j\in \Lambda} \frac 1 \beta (-\Delta)^{-1}(i,j)
\EFK{\beta,\Lambda}{\GFF}{ - 2 \sum_{k=1}^N \sin(k(\phi_j -a_j)) k 
\prod_{x\in \Lambda \setminus j} \left( 1 +  2 \sum_1^N \cos(k(\phi_x-a_x)) \right)} \\
& = \purple{-} 
\sum_{i\in \Lambda} f_i   \frac 1 \beta (-\Delta)^{-1}(i,j) \Psi(j)\,,
\end{align*}
where 
\[
\Psi(j):= 2 \EFK{\beta,\Lambda}{\GFF}{ \sum_{k=1}^N \sin(k(\phi_j -a_j)) k 
\prod_{x\in \Lambda \setminus j} \left( 1 +  2 \sum_1^N \cos(k(\phi_x-a_x)) \right)}\,.
\]
\begin{align*}\label{}
 \EFK{\beta,\Lambda,\lambda_\Lambda,v}{\ba}{\<{\phi,f}} & = \purple{-} 
\sum_{i\in \Lambda} f_i   \frac 1 \beta [-\Delta^{-1}]\Psi(i) \\
& = \purple{-} \frac 1 \beta \<{f, (-\Delta^{-1}) \Psi}  = \purple{-}  \<{\frac 1 \beta (-\Delta^{-1})f,  \Psi} \\
& = \purple{-} \sum_{i\in \Lambda} \sigma_i 
 \EFK{\beta,\Lambda}{\GFF}{ 2 \sum_{k=1}^N \sin(k(\phi_i -a_i)) k 
\prod_{x\in \Lambda \setminus i} \left( 1 +  2 \sum_1^N \cos(k(\phi_x-a_x)) \right)}\,,
\end{align*}
which ends our proof as we obtained, as desired, the same expression as for $- \<{\sigma, \nabla_\ba \log \hat Z_N(\beta,\ba)}$. \qed

\section{Link with the random-phase Sine-Gordon model}\label{a.SG}

As it was pointed out to us by Tom Spencer, our work turns out to be closely related to the  \textbf{random-phase Sine-Gordon} model which has been studied extensively in the physics literature (\cite{cardy1982,doussal2007}) and which we now introduce.

\begin{definition}\label{}
Let $\Lambda\subset \Z^2$ be a finite domain. Let $z\in[0,\infty]$ (this is called the \textbf{activity}) and $\ba=\{a_i\}_{i\in \Lambda}$ be a \textbf{quenched disorder} on the vertices given by i.i.d random variables $a_i$ uniform in $[0,2\pi)$. 

We equip the domain $\Lambda$ with either Dirichlet or free boundary conditions. The \textbf{random-phase Sine-Gordon model} is the following quenched disorder probability measure on fields $\{\phi_i\}_{i\in \Lambda}$
\begin{align*}
\P^{\ba-\SG}_{\beta,z,\Lambda}[d\phi] := \frac 1 {Z^{\ba-\SG}_{\beta,z,\Lambda}}  \; 
\exp(z \sum_{i\in \Lambda}  \cos(\phi_i - a_i))
\P_{\beta,\Lambda}^{\GFF}[d\phi]\,.
\end{align*}
\end{definition}

\begin{remark} $ $
\bnum
\item 
Note that if we let the {\em activity} $z\to \infty$, the measure $\P^{\ba-\SG}_{\beta,\Lambda}$ converges to the $\ba$-shifted IV-GFF on $\Lambda$.  (With $\ba\in[0, 2\pi)^\Lambda)$). 
\item If $z\to \infty$ and if the disorder $\ba$, instead of being uniform in $[0,2\pi)^\Lambda$, is sampled as follows
\[
\ba:= \varphi \Mod{2\pi}, \text{ with }\varphi\sim \P_{\beta}^\GFF\,,
\] 
then,  the annealed law $\int \P(d\ba) \P^{\ba-\SG}_{\beta,\infty,\Lambda}[d\phi]$ is very simple and is given by $\P_{\beta}^\GFF[d\phi]$. 
\enum
\end{remark}

When the \textbf{disorder} $\ba$ is uniform, it turns out that the annealed law is very different from the law of a GFF. Indeed in a series a works including \cite{cardy1982,doussal2007}, the following {\em roughening/super-roughening} phase transition has been predicted: 
\bi
\item If the temperature is high enough, it is predicted that on large domains $\Lambda_n:=\{-n,\ldots,n\}^2$, the random phase Sine-Gordon model will fluctuate as the GFF, namely for any fixed $z\in[0,\infty]$ and $\beta$ small enough, 
\begin{align*}\label{}
\EFK{\ba}{}{\EFK{\beta,z,\Lambda_n}{\ba-\SG}{\phi^2(0)} } \asymp_{n\to \infty} \log n\,. 
\end{align*}
\item On the other hand, if the temperature is low enough, fluctuations are predicted to be larger! The following super-roughening behaviour is predicted (see \cite{cardy1982,doussal2007}): for any fixed positive activity $z\in(0,\infty]$ (note that here $z>0$ is required) and $\beta$ high enough, 
\begin{align*}\label{}
\EFK{\ba}{}{\EFK{\beta,z,\Lambda_n}{\ba-\SG}{\phi^2(0)} } \asymp_{n\to \infty} (\log n)^2\,.
\end{align*}
\ei

Our present work does not allow us to investigate the more surprising low temperature phase with expected $(\log n)^2$ variances (see for example Remark \ref{r.LowTemp}). Yet, it enables us to prove rigorously that the fluctuations for the {\em random-phase Sine-Gordon model} in the the high temperature regime are at least as large as for the GFF. (N.B. note that with the quenched disorder $\ba$, one cannot rely on classical correlation inequalities such as Ginibre). Namely a very mild generalization of the proof of Theorem \ref{th.IVgff} implies the following result (which also clarifies the link between high enough temperature and the choice of activity $z$).

\begin{theorem}\label{th.SG}
For simplicity, we state our result for 0-boundary conditions around $\Lambda_n$ (but the analogous statement also holds for free boundary conditions by considering $\phi(a)-\phi(b)$ for two distant points $a,b$ in the bulk). 

There exists $\beta_0$ s.t. for all $\beta<\beta_0$ and all activity $z\in[0,\infty]$, then uniformly in the disorder $\ba \in [0,2\pi)^{\Lambda_n}$, we have 
\begin{align*}\label{}
\mathrm{Var}_{\beta,z,\Lambda_n}^{\ba-\SG}\left[\phi(0)\right] \geq \Omega(1) \log n\,.
\end{align*}
This implies in particular the following lower-bound for the fluctuations of random phase Sine-Gordon (i.e. with a quenched disorder $\ba\sim$ i.i.d) when $\beta<\beta_0$:
\begin{align*}\label{}
\EFK{\ba}{}{\EFK{\beta,z,\Lambda_n}{\ba-\SG}{\phi^2(0)} } \geq \Omega(1) \log n\,. 
\end{align*}
\end{theorem}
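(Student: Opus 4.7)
My plan is to identify the random-phase Sine-Gordon measure with a particular instance of the $\ba$-shifted Gibbs measure $\mu_{\beta,\Lambda,\lambda_\Lambda,v}^{\ba}$ from Section \ref{ss.proof12}, and then to re-run the proof of Theorem \ref{th.IVgff} essentially verbatim. The central observation is the Jacobi--Anger expansion
$$e^{z\cos(\phi_i - a_i)} = I_0(z)\Bigl[1 + 2\sum_{k=1}^{\infty} \frac{I_k(z)}{I_0(z)}\cos\bigl(k(\phi_i - a_i)\bigr)\Bigr],$$
where the $I_k$ are modified Bessel functions. After truncating the sum at cutoff $N$ (and sending $N\to\infty$ at the end as in \cite{FS, RonFS}), this gives exactly the structure of $\mu_{\beta,\Lambda,\lambda_\Lambda,v}^{\ba}$ with the (vertex-independent) trigonometric polynomial weight $\lambda(\phi) = 1 + 2\sum_{k=1}^N \hat\lambda(k)\cos(k\phi)$ and Fourier coefficients $\hat\lambda(k) = I_k(z)/I_0(z)$.

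The first step is to verify that these coefficients satisfy the hypothesis (5.35) of \cite{FS} uniformly in $z\in[0,\infty]$. Since $0 \leq I_k(z)/I_0(z) \leq 1$ for every $z\geq 0$ and every $k$, this is immediate; moreover the bound (1.14) of \cite{RonFS} on the effective activities $|z(\beta,\rho,\calN)|$ relies only on the uniform bound $|\hat\lambda(k)|\leq 1$, so every constant appearing in the Fr\"ohlich--Spencer expansion is uniform in $z$. This uniformity matters because we want a single threshold $\beta_0$ to work for all $z\in[0,\infty]$, including the singular limit $z=\infty$ in which one formally recovers the $\ba$-IV-GFF of Theorem \ref{th.IVgff} (note that $I_k(z)/I_0(z)\to 1$ for each fixed $k$ as $z\to\infty$).

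Once we are in the Fr\"ohlich--Spencer framework, the proof of Theorem \ref{th.IVgff} applies verbatim. The modular invariance identity of Proposition \ref{c.magical}, combined with the second-order Taylor expansion of $\alpha\mapsto \EFK{\beta,\Lambda,\lambda_\Lambda,v}{\ba}{e^{\alpha \<{\phi,f}}}$ and Cauchy--Schwarz, yields
$$\mathrm{Var}^{\ba\text{-}\SG}_{\beta,z,\Lambda_n}\bigl[\<{\phi,f}\bigr] \geq \frac{1}{(1+\eps)\beta}\<{f,\, -\Delta^{-1}f},$$
uniformly in the quenched disorder $\ba$ and the activity $z$. Specializing to $f = \delta_0$ and invoking the lower bound $G^{\Lambda_n}(0,0) \geq c\log n$ from \eqref{E.Green's function} gives the desired $\Omega(1)\log n$ lower bound on the variance of $\phi(0)$, and the annealed statement follows by Fubini. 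The main (mild) obstacle is purely bookkeeping: one must track that every quantitative estimate in the FS expansion carries through with constants independent of $z$, and pass the variance bound through the truncation limit $N\to\infty$ where the truncated Sine-Gordon measure converges to the genuine one. No new probabilistic idea beyond the proof of Theorem \ref{th.IVgff} is required.
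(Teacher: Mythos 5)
Your proposal is correct and follows essentially the same route as the paper: the Jacobi--Anger expansion you invoke is exactly the Fourier expansion the paper writes with coefficients $\alpha(q)=\frac{1}{2\pi}\int_0^{2\pi}e^{-iq\theta}e^{z\cos\theta}\,d\theta$ (i.e.\ $\alpha(q)=I_q(z)$), and the key points --- the uniform bound $|\hat\lambda(q)|\le 1$ giving the Fr\"ohlich--Spencer hypothesis uniformly in $z$, absorbing $\alpha(0)$ into the partition function, the cutoff $N\to\infty$, and then re-running the first/second moment analysis of Theorem \ref{th.IVgff} before specializing to $f=\1_0$ --- are precisely the steps of the paper's sketch.
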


\ni
{\em Sketch of proof.}
Since Theorem \ref{th.IVgff} is stated uniformly in the disorder $\ba$, the latter theorem implies the limiting case $z=\infty$. It remains to notice that the proof also handles the case of finite activities $z\in[0,\infty)$ using the following minor modifications.  Indeed, recall that we wrote the proof of Theorem \ref{th.IVgff} for general trigonometric polynomials 
\[
\lambda_i(\phi) = 1 + 2 \sum_{q=1}^N \hat \lambda_i(q) \cos(q \phi(i))\,.
\]
where the set of weights $\lambda_\Lambda=(\lambda_i)_{i\in \Lambda}$ is assumed to satisfy the same hypothesis as in (5.35) in \cite{FS} (or equivalently (1.9) in \cite{RonFS}). 

In our present setting, at any site $i\in \Lambda$, we need to work with the following periodic function:
\begin{align*}\label{}
\phi_i \mapsto e^{z \cos(\phi_i - a_i)} = \sum_{q\in \Z} \alpha(q) \cos(q(\phi_i-a_i))\,,
\end{align*}
with
\[
\alpha(q)= \frac 1 {2\pi} \int_0^{2\pi} e^{- i q \theta }e^{z \cos \theta} d\theta\,.
\]
It is sufficient to notice that $0< \alpha(0)\leq e^z$ and that for each $q\in \Z$, $|\alpha(q)| \leq \alpha(0)$. Indeed, we may rewrite our periodic function as follows:
\[
e^{z \cos(\phi_i - a_i)} = \alpha(0) \left( 1 + 2\sum_{q\geq 1} \hat \lambda(q) \cos(q(\phi_i-a_i)) \right)\,,
\]
where $|\hat \lambda(q)| \leq 1$. Since the conditions on $\hat \lambda(q)$ in \cite{FS,RonFS} are conditions on the growth of these coefficients, it is immediate to see that this ``Sine-Gordon'' trigonometric polynomial satisfies the conditions required to run the same proof as in Theorem \ref{th.IVgff}. To fully match with the setup in that proof, note that one can absorb the multiplicative constant $\alpha(q)$ in the trigonometric polynomial into the partition function without any impact on the fluctuations. Also, we wrote the proof as in \cite{RonFS}, with a cut-off $N$ on large frequencies (i.e. looking at $1+2\sum_{q=1}^N \hat \lambda(q)$). The same limiting argument $N\to \infty$) as in \cite{RonFS} applies here.  The rest of the proof (in particular the analysis of the first and second moments in Subsection \ref{ss.proof12}) is identical in this setting. This ends this extension of Theorem \ref{th.IVgff} to the case of the random-phase Sine-Gordon model.
\qed

\bibliographystyle{alpha}
\bibliography{biblio}

\end{document}